\newtheorem*{thma}{Theorem~A}
\newtheorem*{thmb}{Theorem~B}
\newtheorem*{thmc}{Theorem~C}
\newtheorem*{thmd}{Theorem~D}
\newtheorem{thm}{Theorem}[section]
\newtheorem{cor}[thm]{Corollary}
\newtheorem{prop}[thm]{Proposition}
\newtheorem{fact}[thm]{Fact}
\newtheorem{lemma}[thm]{Lemma}
\newtheorem{claim}{Claim}[thm]
\newtheorem{subclaim}{Subclaim}[claim]
\theoremstyle{definition}
\newtheorem{defn}[thm]{Definition}
\newtheorem{q}[thm]{Question}
\theoremstyle{remark}
\newtheorem{remark}[thm]{Remark}
\newenvironment{cproof}{\paragraph{\emph{Proof.\,}}}{\hfill{$\boxtimes$}\par\vspace{2mm}}
\newenvironment{scproof}{\paragraph{\emph{Proof.\,}}}{\hfill{$\boxminus$}\par\vspace{2mm}}
\renewcommand\restriction{\mathbin\upharpoonright}
\renewcommand\mid{\mathrel{|}\allowbreak}
\newcommand*\axiomfont[1]{\textsf{\textup{#1}}}
\newcommand\ch{\axiomfont{CH}}
\newcommand\MA{\axiomfont{MA}_{\omega_1}}
\newcommand\MM{\axiomfont{MM}}
\newcommand\PFA{\axiomfont{PFA}}
\newcommand\nsp{\axiomfont{NSP}}
\newcommand\mrp{\axiomfont{MRP}}
\newcommand\pid{\axiomfont{PID}}
\newcommand\zfc{\axiomfont{ZFC}}
\newcommand\gch{\axiomfont{GCH}}
\newcommand\sch{\axiomfont{SCH}}
\newcommand\s{\subseteq}
\newcommand\sq{\sqsubseteq}
\newcommand\forces{\Vdash}
\newcommand\br{\blacktriangleright}
\newcommand\diagonal{\bigtriangleup}
\newcommand\sd{\framebox[2.7mm][l]{$\diamondsuit$}\hspace{0.4mm}{}}
\DeclareMathOperator{\ap}{AP}
\DeclareMathOperator{\sap}{SAP}
\DeclareMathOperator{\cp}{CP}
\DeclareMathOperator{\cspec}{Cspec}
\DeclareMathOperator{\subadditive}{subadditive}
\DeclareMathOperator{\tcf}{tcf}
\DeclareMathOperator{\ind}{ind}
\DeclareMathOperator{\reg}{Reg}
\DeclareMathOperator{\cf}{cf}
\DeclareMathOperator{\cl}{cl}
\DeclareMathOperator{\Tr}{Tr}
\DeclareMathOperator{\tr}{tr}
\DeclareMathOperator{\im}{Im}
\DeclareMathOperator{\otp}{otp}
\DeclareMathOperator{\dom}{dom}
\DeclareMathOperator{\add}{Add}
\DeclareMathOperator{\acc}{acc}
\DeclareMathOperator{\nacc}{nacc}
\DeclareMathOperator{\refl}{Refl}
\DeclareMathOperator{\U}{U}
\DeclareMathOperator{\p}{P}
\DeclareMathOperator{\pr}{Pr}
\DeclareMathOperator{\ssup}{ssup}
\newcommand\inds{\boxminus^{\ind}}
\newcommand\US{\U^{\subadditive}}
\subjclass[2010]{Primary 03E35; Secondary 03E02, 03E05, 06E10}
\author{Chris Lambie-Hanson}
\address{Department of Mathematics and Applied Mathematics, Virginia Commonwealth University,
Richmond, VA 23284, USA}
\urladdr{http://people.vcu.edu/~cblambiehanso}
\author{Assaf Rinot}
\address{Department of Mathematics, Bar-Ilan University, Ramat-Gan 5290002, Israel.}
\urladdr{http://www.assafrinot.com}
\begin{document}
\title[Knaster and friends III]{Knaster and friends III: Subadditive colorings}
\begin{abstract}
  We continue our study of strongly unbounded colorings, this time
  focusing on subadditive maps. In Part I of this
  series, we showed that, for many pairs of infinite cardinals $\theta < \kappa$,
  the existence of a strongly unbounded coloring $c:[\kappa]^2 \rightarrow \theta$
  is a theorem of $\zfc$. Adding the requirement of subadditivity to a strongly
  unbounded coloring is a significant strengthening, though, and here we see that
  in many cases the existence of a \emph{subadditive} strongly unbounded coloring
  $c:[\kappa]^2 \rightarrow \theta$ is independent of $\zfc$. We connect the
  existence of subadditive strongly unbounded colorings with a number of
  other infinitary combinatorial principles, including the narrow system property,
  the existence of $\kappa$-Aronszajn trees with ascent paths, and square principles.
  In particular, we show that the existence of a closed, subadditive, strongly
  unbounded coloring $c:[\kappa]^2 \rightarrow \theta$ is equivalent to a certain
  weak indexed square principle $\inds(\kappa, \theta)$. We conclude the paper with
  an application to the failure of the infinite productivity of $\kappa$-stationarily
  layered posets, answering a question of Cox.
\end{abstract}

\maketitle
\section{Introduction}

For infinite regular cardinals $\theta < \kappa$, the positive partition relation
$\kappa \rightarrow (\kappa)^2_\theta$, which asserts that every coloring
$c:[\kappa]^2 \rightarrow \theta$ has a homogeneous set of cardinality $\kappa$,
is equivalent to $\kappa$ being weakly compact. For non-weakly-compact cardinals
$\kappa$, though, one can seek to measure the incompactness of $\kappa$ by
asking whether certain strengthenings of the negative relation $\kappa \nrightarrow
(\kappa)^2_\theta$ hold. One natural such strenthening is to require that there
exist colorings $c:[\kappa]^2 \rightarrow \theta$ witnessing certain strong
unboundedness properties. In \cite{paper34}, which forms Part~I of this series of papers,
the authors introduce the following
coloring principle, which asserts the existence of such strongly unbounded colorings, and
use it to answer questions about the infinite productivity of the $\kappa$-Knaster
condition for uncountable $\kappa$.

\begin{defn}
  $\U(\kappa, \mu, \theta, \chi)$ asserts the existence of a coloring $c:[\kappa]^2
  \rightarrow \theta$ such that for every $\sigma < \chi$, every pairwise disjoint subfamily
  $\mathcal{A} \s [\kappa]^{\sigma}$ of size $\kappa$,
  and every $i < \theta$, there exists $\mathcal{B} \in [\mathcal{A}]^\mu$
  such that $\min(c[a \times b]) > i$ for all $(a, b) \in [\mathcal{B}]^2$.
\end{defn}

Much of \cite{paper34} is devoted to analyzing situations in which
$\U(\ldots)$ necessarily holds and, moreover, is witnessed
by \emph{closed} or \emph{somewhere-closed} colorings (see Definition~\ref{def21} below).
In Part~II of this series \cite{paper35}, we studied $\cspec(\kappa)$,
the \emph{$C$-sequence spectrum of $\kappa$} (see Definition~\ref{cspecdef} below),
which is another measure
of the incompactness of $\kappa$, and found some unexpected connections between $\cspec(\kappa)$
and the validity of instances of $\U(\kappa,\ldots)$.

In this paper, which can be read largely independently
of \cite{paper34,paper35}, we investigate \emph{subadditive} witnesses to $\U(\ldots)$.
\begin{defn}\label{def12}
A coloring $c:[\kappa]^2\rightarrow \theta$ is
  \emph{subadditive} if, for all $\alpha < \beta < \gamma < \kappa$, the following inequalities
  hold:
  \begin{enumerate}
    \item $c(\alpha, \gamma) \leq \max\{c(\alpha, \beta), c(\beta, \gamma)\}$;
    \item $c(\alpha, \beta) \leq \max\{c(\alpha, \gamma), c(\beta, \gamma)\}$.
  \end{enumerate}
\end{defn}
Adding the requirement of subadditivity significantly strengthens the coloring
principle, and we prove that the existence of closed, subadditive witnesses to
$\U(\ldots)$ is equivalent to a certain
indexed square principle.
Our first main result improves Clause~(1) of \cite[Theorem~A]{paper34}.

\begin{thma} Let $\theta<\kappa$ be a pair of infinite regular cardinals.
The following are equivalent:
  \begin{enumerate}
    \item $\inds(\kappa,\theta)$ holds;
    \item There is a closed, subadditive witness to $\U(\kappa,2,\theta,2)$;
    \item There is a closed, subadditive witness to $\U(\kappa,\kappa,\theta,\sup(\reg(\kappa))$.
  \end{enumerate}

In addition, $\square(\kappa, \sq_{\theta})$ implies (1)--(3).
\end{thma}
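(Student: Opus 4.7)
The plan is to prove the cycle $(3)\Rightarrow(2)\Rightarrow(1)\Rightarrow(3)$ and then separately derive $(1)$ from $\square(\kappa,\sq_{\theta})$. The implication $(3)\Rightarrow(2)$ is immediate, since a witness to $\U(\kappa,\kappa,\theta,\sup(\reg(\kappa)))$ is \emph{a fortiori} a witness to $\U(\kappa,2,\theta,2)$.

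For $(2)\Rightarrow(1)$, given a closed, subadditive $c$ witnessing $\U(\kappa,2,\theta,2)$, set
\[
  D^i_\alpha := \{\beta<\alpha \mid c(\beta,\alpha)\leq i\}
\]
for each $\alpha<\kappa$ and $i<\theta$. Subadditivity clause~(1) says that if $\alpha\in D^i_\beta$ and $\beta\in D^i_\gamma$, then $\alpha\in D^i_\gamma$; subadditivity clause~(2) says that if $\alpha<\beta$ both lie in $D^i_\gamma$, then $\alpha\in D^i_\beta$. Together these yield the full coherence $D^i_\gamma\cap\beta=D^i_\beta$ whenever $\beta\in D^i_\gamma$, the hallmark of an indexed square sequence. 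Closedness of $c$ delivers topological closedness of each $D^i_\alpha$, the $D^i_\alpha$ are monotone in $i$ with union $\alpha$, and the strong unboundedness of $c$ as a witness to $\U(\kappa,2,\theta,2)$ rules out a thread: any $\kappa$-sized $i$-homogeneous $C\s\kappa$ would violate the unboundedness, but a thread would produce exactly such a $C$.

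For $(1)\Rightarrow(3)$, given an indexed square sequence $\langle D^i_\alpha \mid \alpha<\kappa,\, i<\theta\rangle$, define $c(\alpha,\beta):=\min\{i<\theta \mid \alpha\in D^i_\beta\}$. The coherence $D^i_\gamma\cap\beta=D^i_\beta$ immediately yields both subadditivity inequalities, and closedness of each $D^i_\alpha$ in $\alpha$ passes to closedness of $c$. The substantive part is the strongest unboundedness requirement: given a regular $\sigma<\sup(\reg(\kappa))$, a pairwise disjoint $\mathcal{A}\s[\kappa]^\sigma$ of size $\kappa$, and $i<\theta$, I would thin $\mathcal{A}$ via a Fodor-style argument to canonicalize the $\ssup$- and $\min$-patterns of its members, and then leverage the non-thread property of $\inds(\kappa,\theta)$ to find a $\kappa$-sized $\mathcal{B}\s\mathcal{A}$ whose pairwise rectangles all escape color $i$. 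The main technical hurdle is arranging the thinning to yield a $\mathcal{B}$ that works \emph{simultaneously} for all pairs, not merely one escaping pair at a time; this typically requires combining coherence with walks-on-ordinals or canonical-club arguments.

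Finally, for $\square(\kappa,\sq_\theta)\Rightarrow(1)$, a $\square(\kappa,\sq_\theta)$-sequence supplies at each $\alpha<\kappa$ a family of at most $\theta$ clubs that are $\sq$-coherent in pairs. By fixing a uniform well-ordering of the clubs at each point and using $\sq$-coherence to stabilize the indexing across levels, one converts the sequence into an indexed sequence $\langle D^i_\alpha \mid \alpha<\kappa,\, i<\theta\rangle$ satisfying the coherence required by $\inds$; the non-existence of a thread through the $\square(\kappa,\sq_\theta)$-sequence transfers directly to the non-threading clause of $\inds(\kappa,\theta)$.
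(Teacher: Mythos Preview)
Your cycle $(2)\Rightarrow(1)$ is essentially the paper's construction, though you gloss over the need to restrict to $\Gamma:=\acc(\kappa)\setminus\partial(c)$ and to define $i(\alpha)$ as the least $i$ with $\sup(D^c_{\le i}(\alpha))=\alpha$; without this, $D^i_\alpha$ need not be a club in $\alpha$, so the resulting matrix is not quite a $\inds(\kappa,\theta)$-sequence. Similarly, in $(1)\Rightarrow(3)$ your formula $c(\alpha,\beta):=\min\{i\mid\alpha\in D^i_\beta\}$ is undefined when $\alpha$ or $\beta$ falls outside $\Gamma$; the paper repairs this by routing through $\tilde\alpha:=\min(\Gamma\setminus\alpha)$.

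The real gap is in how you obtain the strong unboundedness in $(1)\Rightarrow(3)$. You identify this as ``the main technical hurdle'' and gesture at Fodor-style thinning and walks, but give no argument. The paper avoids this hurdle entirely: it proves only that $c$ witnesses $\U(\kappa,2,\theta,2)$ (which follows immediately from the non-threading clause of $\inds$, exactly as you say), and then invokes the general Lemma~\ref{uPLUSsub}(3): any closed subadditive witness to $\U(\kappa,2,\theta,2)$ is automatically a witness to $\U(\kappa,\kappa,\theta,\chi)$ for every $\chi\in\reg(\kappa)$. That lemma is where the substantive work lives---it uses subadditivity to show the associated tree $(\kappa,<_i)$ has no $\chi$-antichains and appeals to Kurepa's theorem. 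So the equivalence $(2)\iff(3)$ is a stand-alone fact about closed subadditive colorings, not something to be re-derived from the square sequence.

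Your sketch for $\square(\kappa,\sq_\theta)\Rightarrow(1)$ misreads the hypothesis: a $\square(\kappa,\sq_\theta)$-sequence supplies a \emph{single} club $C_\alpha$ at each $\alpha$, with a weakened coherence relation, not ``a family of at most $\theta$ clubs.'' Converting this to an indexed matrix is genuinely delicate; the paper's proof (Theorem~\ref{revised_lh_lucke}) fixes a critical ordinal $\zeta$ and a sequence $\langle\tau_i\mid i<\theta\rangle$ of ``pinning'' ordinals, trims the $C_\alpha$'s accordingly, and then builds the indexed sequence by a lengthy case analysis. The idea of ``stabilizing the indexing'' via a well-ordering does not survive contact with the actual coherence available.
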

We also prove that a version of square with built-in
diamond for a singular cardinal $\lambda$ gives rise to somewhere-closed subadditive witnesses
to $\U(\lambda^+, \ldots)$, which in turn imply that the $C$-sequence spectrum of $\lambda^+$ is rich:

\begin{thmb}
  Suppose that $\lambda$ is a singular cardinal,
  $\vec f$ is a scale for $\lambda$ in some product $\prod\vec\lambda$,
  and $\sd(\vec{\lambda})$ holds.
  Let $\Sigma$ denote the set of good points for $\vec f$.

  Then, for every $\theta\in\reg(\lambda)\setminus(\cf(\lambda)+1)$,
  there exists a $\Sigma$-closed, subadditive witness to $\U(\lambda^+, \lambda^+,
  \theta, \lambda)$.
  In particular, $\reg(\lambda)\s \cspec(\lambda^+)$.
\end{thmb}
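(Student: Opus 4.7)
The plan is to construct the coloring $c:[\lambda^+]^2\to\theta$ explicitly by overlaying a coherent walks skeleton (drawn from the ``square'' content of $\sd(\vec\lambda)$) with a diamond-driven perturbation (from its ``diamond'' content), and then to read off subadditivity, $\Sigma$-closedness, and strong unboundedness in turn. Throughout, I fix $i^\ast<\cf(\lambda)$ minimal with $\lambda_{i^\ast}\geq\theta$; after thinning $\vec\lambda$, which preserves $\vec f$ and $\Sigma$, I may assume $\lambda_{i^\ast}=\theta$.

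The skeleton is built as follows. Unpack $\sd(\vec\lambda)$ as a coherent $C$-sequence $\langle C_\alpha:\alpha<\lambda^+\rangle$ together with a diamond sequence $\langle D_\alpha:\alpha<\lambda^+\rangle$. I will define a base coloring $c_0:[\lambda^+]^2\to\theta$ by walks along $\langle C_\alpha\rangle$, packaged so that $c_0(\alpha,\beta)$ records, modulo a truncation at coordinate $i^\ast$ of the scale, the largest $f_\gamma(i^\ast)$ appearing on the upper trace from $\beta$ to $\alpha$. Subadditivity of $c_0$ then follows from the standard walks calculus. For $\Sigma$-closedness, observe that if $\beta\in\Sigma$ is good as witnessed by $C_\beta$, then the fibre $\{\alpha<\beta:c_0(\alpha,\beta)\leq i\}$ is an initial segment of $\beta$ controlled by $\{\gamma\in C_\beta:f_\gamma(i^\ast)\leq i\}$, which is closed below $\beta$ since $\langle f_\gamma(i^\ast):\gamma\in C_\beta\rangle$ is strictly increasing on a tail.

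The diamond half is used to force strong unboundedness. Given a pairwise-disjoint family $\mathcal A\s[\lambda^+]^\sigma$ of size $\lambda^+$ with $\sigma<\lambda$ and a threshold $i<\theta$, the diamond catches $\mathcal A\cap[\beta]^\sigma$ correctly on a stationary set of $\beta$. At each such $\beta$, I add a bump $h_\beta:\beta\to\theta$ that exceeds $i$ on the union of the guessed $\mathcal A$-members, arranged so that the final coloring $c(\alpha,\beta):=\max\{c_0(\alpha,\beta),h_\beta(\alpha)\}$ remains subadditive and $\Sigma$-closed. A standard diagonal argument then extracts $\mathcal B\in[\mathcal A]^{\lambda^+}$ with $\min(c[a\times b])>i$ for all $(a,b)\in[\mathcal B]^2$.

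\emph{The main obstacle} is maintaining both subadditive inequalities of Definition~\ref{def12} throughout the inductive perturbation. The reason $\sd(\vec\lambda)$, rather than merely a combination of $\square(\lambda^+)$ with $\diamondsuit(\lambda^+)$, is the correct hypothesis is that its guesses are structured along the fibres of $\prod\vec\lambda$; this is what lets the bumps at different $\beta$ be coordinated with the walks so that subadditivity is not destroyed, and so that the closedness established for $c_0$ at good points is preserved. The ``in particular'' clause is then immediate: for each admissible $\theta$, the coloring $c$ organizes into a $C$-sequence on $\lambda^+$ (via the level sets $\{\alpha<\beta:c(\alpha,\beta)\leq i\}$, $i<\theta$) whose characteristic attains $\theta$, placing $\theta\in\cspec(\lambda^+)$.
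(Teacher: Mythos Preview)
Your proposal rests on a misreading of $\sd(\vec\lambda)$. The principle does \emph{not} furnish a coherent $C$-sequence on $\lambda^+$ together with a diamond on $\lambda^+$. Rather, it provides, for each $j<\cf(\lambda)$, a $\square(\lambda_j^+)$-sequence $\vec C^j=\langle C^j_\alpha\mid\alpha<\lambda_j^+\rangle$ on the \emph{smaller} successor $\lambda_j^+$, together with a single guessing sequence $\langle X_\alpha\mid\alpha<\lambda\rangle$ indexed below $\lambda$, and a reflection property phrased in terms of internally approachable elementary submodels. There is simply no $C$-sequence on $\lambda^+$ to walk along, so your construction of $c_0$ never gets started. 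The paper's route is essentially the opposite of yours: one first builds, for each $j$, a closed subadditive coloring $c_j:[\lambda_j^+]^2\to\theta$ (this is where the square content of $\sd(\vec\lambda)$ is spent, via Lemma~\ref{lem32}), and only then lifts to $\lambda^+$ through the scale by setting $c(\beta,\gamma):=\limsup_{j\to\cf(\lambda)}c_j(f_\beta(j),f_\gamma(j))$. Subadditivity and $\Sigma$-closedness are inherited coordinatewise; the diamond content is used once, at the very end, to verify $\U(\lambda^+,2,\theta,2)$ by producing an internally approachable $N$ with $X_{\sup(N\cap\lambda_j^+)}=i+1$ for all $j$.

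Even setting aside the misreading, your perturbation step $c(\alpha,\beta):=\max\{c_0(\alpha,\beta),h_\beta(\alpha)\}$ cannot preserve subadditivity of the second kind: for $\alpha<\beta<\gamma$ you would need $h_\beta(\alpha)\leq\max\{c(\alpha,\gamma),c(\beta,\gamma)\}$, but nothing ties $h_\beta$ to the values of $c$ at $\gamma$. You flag this as ``the main obstacle'' but offer no mechanism to overcome it; the vague appeal to bumps being ``coordinated with the walks'' is not an argument. This is precisely why the paper does \emph{not} perturb a fixed coloring on $\lambda^+$: the functions $h_j$ are absorbed into the construction of each $c_j$ before any lifting takes place, so that subadditivity is established once and never disturbed.
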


For a pair of infinite regular cardinals $\theta<\kappa$ and a coloring
$c:[\kappa]^2\rightarrow\theta$, an interesting facet
of the study of the unboundedness
properties of $c$ is the set $\partial(c)$ of its levels of divergence (see Definition~\ref{levels} below).
Any coloring $c$ for which $\partial(c)$ is stationary is automatically a
somewhere-closed witness to $\U(\kappa,\kappa,\theta,\theta)$.
We prove that the existence of a (fully) closed witness $c$ to $\U(\kappa,\kappa,\theta,\theta)$ for which $\partial(c)$ is stationary is equivalent to the existence of a nonreflecting stationary subset of $E^\kappa_\theta$,
and that the existence of a nonreflecting stationary subset of $E^\kappa_\theta$
does not suffice to yield a subadditive witness to $\U(\kappa,2,\theta,2)$.
We have three main consistency results concerning the characteristic $\partial(c)$:
\begin{thmc}
\begin{enumerate}
\item For any pair of infinite regular cardinals $\theta<\kappa$,
there is a $\kappa$-strategically closed, $\theta^+$-directed closed
forcing notion that adds a subadditive witness $c$ to $\U(\kappa, \kappa,\theta, \theta)$ for which $\partial(c)$ is stationary;
\item For any pair of infinite regular cardinals $\theta<\kappa$,
there is a $\kappa$-strategically closed, $\theta$-directed closed
forcing notion that adds a closed subadditive witness $c$ to $\U(\kappa, \kappa,\theta, \theta)$ for which $\partial(c)$ is stationary;
\item For regular uncountable cardinals $\theta<\lambda<\kappa$ such that $\lambda$ is supercompact and $\kappa$ is weakly compact,
there is a forcing extension in which $\square(\kappa,\theta)$ fails,
yet, there is a closed, subadditive witness $c$ to $\U(\kappa, \kappa, \theta, \theta)$ for which
  $\partial(c)$ is stationary.
\end{enumerate}
\end{thmc}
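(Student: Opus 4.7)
The plan is to treat each part by exhibiting a concrete forcing of approximations to the desired coloring and, for Part~(3), combining it with a large-cardinal preparation. In all three cases a generic object will supply the required $c:[\kappa]^2\to\theta$, with the work split between (a) identifying a notion of condition that preserves subadditivity (and, for (2) and (3), closedness) under upper bounds, (b) density arguments to arrange that $\partial(c)$ meets every club of $\kappa$, and (c) for Part~(3), a reflection argument showing $\square(\kappa,\theta)$ remains false in the final extension.

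For Part~(1), conditions in $\mathbb{P}_1$ are pairs $(c_p, X_p)$ with $c_p:[\alpha_p]^2\to\theta$ a subadditive partial coloring on some $\alpha_p<\kappa$ and $X_p\subseteq\alpha_p$ a ``promise'' set to be absorbed into $\partial(c)$, ordered by end-extension. The crucial structural claim is that any family of at most $\theta$ pairwise compatible conditions admits a common lower bound, built by taking the coherent union of the $c_p$'s and defining the coloring at the new top level in a way dictated by subadditivity along a cofinal witness below. This yields $\theta^+$-directed closure and, combined with a standard bookkeeping strategy at limits of the closure game, gives $\kappa$-strategic closure. Density arguments then arrange $\partial(c)$ to be stationary and the resulting coloring to witness $\U(\kappa,\kappa,\theta,\theta)$.

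For Part~(2), I would augment the definition of $\mathbb{P}_1$ by demanding closedness of each $c_p$: for every $\beta\leq\alpha_p$ and every $i<\theta$, the level set $\{\gamma<\beta:c_p(\gamma,\beta)\leq i\}$ must be closed in $\beta$. The lower-bound construction of Part~(1) still succeeds for families of size strictly less than $\theta$, but for a family of size $\theta$ the level sets at the new sup may fail to be closed, which is precisely why one drops to $\theta$-directed closure. Strategic closure is then secured by playing continuous extensions at each limit of the closure game, so that subadditivity and closedness are maintained in tandem; establishing that such continuous extensions always exist, subject to the running subadditivity constraints, is the main technical point of this part.

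For Part~(3), I would start with $\lambda$ supercompact and $\kappa$ weakly compact and perform a Laver preparation that renders the supercompactness of $\lambda$ indestructible under $\lambda$-directed closed forcing, absorbing into the preparation any additional step needed to arrange that $\square(\kappa,\theta)$ fails in the preparation extension while $\kappa$ remains weakly compact. I would then force with the poset $\mathbb{P}_2$ from Part~(2): since $\theta<\lambda$ and $\mathbb{P}_2$ is $\theta$-directed closed and $\kappa$-strategically closed, it preserves the indestructible supercompactness of $\lambda$ and the weak compactness of $\kappa$, and the generic yields the desired closed subadditive witness with $\partial(c)$ stationary. The main obstacle is to verify that $\square(\kappa,\theta)$ remains false in the extension: for this I would lift a supercompactness embedding $j:V[G]\to M[G^*]$ with critical point $\lambda$ through the $\mathbb{P}_2$-generic and pull a putative $\square(\kappa,\theta)$-sequence back along $j$ to extract a coherent thread, contradicting the definition of $\square(\kappa,\theta)$; setting up this lifting so that it remains compatible with the strategic-closure scaffolding of $\mathbb{P}_2$ is the crux of the argument.
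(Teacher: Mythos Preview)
Your approach to Parts~(1) and~(2) is plausible but differs from the paper's. For Part~(1), the paper (Theorem~\ref{thm310}) forgoes promise sets entirely: conditions are simply subadditive colorings $p:[\gamma_p+1]^2\to\theta$ with a size constraint on level sets, ordered by reverse inclusion; the poset is tree-like, so $\theta^+$-directed closure reduces to $\theta^+$-closure, and stationarity of $\partial(c)$ is secured by a direct density argument rather than via promises. For Part~(2), the paper (Theorem~\ref{add_inds}) does \emph{not} directly force the closed coloring at all: instead it forces a $\square(\kappa,\sq_\theta)$-sequence using a known poset from \cite{paper28}, then derives the closed subadditive witness with stationary $\partial(c)$ via the correspondence of Theorem~\ref{revised_lh_lucke}. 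Your direct approach may work, but you have not actually established that closed subadditive extensions exist at every limit stage of the strategic-closure game, which you yourself flag as the crux.

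Your Part~(3) contains a genuine gap. You assert that since $\mathbb{P}_2$ is $\theta$-directed closed, it preserves the Laver-indestructible supercompactness of $\lambda$. This is false: Laver indestructibility guarantees preservation only under $\lambda$-directed closed forcing, and here $\theta<\lambda$. A $\theta$-directed closed forcing of size $\kappa>\lambda$ will in general destroy the supercompactness of $\lambda$, so your lifting argument for the failure of $\square(\kappa,\theta)$ does not go through. The paper's route (Theorem~\ref{thm48}) is entirely different and does not attempt to preserve the large cardinals: one forces with $\mathrm{Coll}(\lambda,{<}\kappa)$ followed by Magidor forcing to change $\cf(\lambda)$ to $\theta$, so that in the extension $\lambda$ is singular and $\kappa=\lambda^+$. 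The closed subadditive witness with stationary $\partial(c)$ then arises from Theorem~\ref{outer_model_thm} applied to the pair $V^{\mathbb{C}}\subseteq V^{\mathbb{C}*\dot{\mathbb{M}}}$, and the failure of $\square(\kappa,\theta)$ is obtained by reflecting a putative $\square(\kappa,\theta)$-name down to some inaccessible $\delta<\kappa$ using the weak compactness of $\kappa$ in the ground model, then showing the quotient forcing cannot add a thread.
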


On the Ramsey-theoretic side, we prove that in the presence of large cardinals,
for many pairs of infinite regular cardinals $\theta<\kappa$,
$\kappa\rightarrow[\kappa]^2_{\theta,\text{finite}}$ holds restricted to the class of subadditive colorings (in particular,
refuting subadditive instances of $\U(\kappa,2,\theta,2)$),
and that similar results hold at small cardinals in forcing extensions or in the presence of forcing axioms.

On the anti-Ramsey-theoretic side,
we have a result reminiscent of the motivating result of \cite{paper34}
concerning the infinite
productivity of strong forms of the $\kappa$-chain condition.
When combined with Theorem~A, the next theorem shows
that $\square(\kappa)$ yields a gallery of counterexamples to productivity of $\kappa$-stationarily layered posets, answering a question of Cox \cite{cox}.

\begin{thmd}
  Suppose that $\theta \leq \chi < \kappa$ are infinite, regular cardinals,
  $\kappa$ is $({<}\chi)$-inaccessible, and there is a closed and subadditive witness $c$ to $\U(\kappa,2,\theta,2)$.
Then there is a sequence of posets $\langle\mathbb{P}_i \mid  i < \theta\rangle$ such that:
  \begin{enumerate}
    \item for all $i < \theta$, $\mathbb{P}_i$ is well-met and $\chi$-directed
      closed with greatest lower bounds;
    \item for all $j < \theta$, $\prod_{i < j} \mathbb{P}_i$ is $\kappa$-stationarily layered;
    \item $\prod_{i < \theta} \mathbb{P}_i$ is not $\kappa$-cc.
  \end{enumerate}

If, in addition, $\partial(c)\cap E^\kappa_\chi$ is stationary, then the sequence $\langle \mathbb P_i\mid i<\theta\rangle$ can be made constant.
\end{thmd}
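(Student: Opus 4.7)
The plan is to define, for each $i<\theta$, a natural poset $\mathbb{P}_i$ extracted from the coloring $c$, in the spirit of the construction from \cite{paper34}. Concretely, I take $\mathbb{P}_i$ to consist of all $p\in[\kappa]^{<\chi}$ with $c[p]^2\subseteq(i,\theta)$, ordered by reverse inclusion. Clause~(1) is then essentially routine: if $p,q\in\mathbb{P}_i$ are compatible, their union $p\cup q$ is again a condition and serves as a greatest lower bound, so $\mathbb{P}_i$ is well-met; a directed system of fewer than $\chi$ conditions has union of size less than $\chi$, using the regularity of $\chi$ and the $({<}\chi)$-inaccessibility of $\kappa$, and the property $c[\cdot]^2>i$ is preserved, so $\mathbb{P}_i$ is $\chi$-directed closed with greatest lower bounds.

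For clause~(3), I would build the failing antichain in $\prod_{i<\theta}\mathbb{P}_i$ directly: for each $\alpha<\kappa$, set $p^\alpha(i):=\{\alpha\}$ for all $i<\theta$. If $\alpha\neq\beta$ and $i_0:=c(\alpha,\beta)<\theta$, then $\{\alpha,\beta\}\notin\mathbb{P}_{i_0}$, so $p^\alpha$ and $p^\beta$ are incompatible in coordinate $i_0$ of the product. Hence $\{p^\alpha\mid\alpha<\kappa\}$ witnesses the failure of $\kappa$-cc.

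The principal obstacle is clause~(2). For each $j<\theta$, I would show that $\prod_{i<j}\mathbb{P}_i$ is $\kappa$-stationarily layered by exhibiting, for a sufficiently large regular $\lambda$, a stationary set of elementary submodels $M\prec\mathcal{H}_\lambda$ of size less than $\kappa$ containing the relevant parameters, with $\delta_M:=\sup(M\cap\kappa)\in E^\kappa_\chi$ and $M$ suitably internally approachable, such that $M\cap\prod_{i<j}\mathbb{P}_i$ is a regular subposet. Given $q=\langle q_i\mid i<j\rangle$, the natural reduction is $q^{*}:=\langle q_i\cap M\mid i<j\rangle$, which lies in $M$ by the internal-approachability assumption. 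The nontrivial point is to argue that every extension $p\leq q^{*}$ inside $M$ remains compatible with $q$, which reduces to verifying $c(\alpha,\beta)>i$ for each $\alpha\in p_i$ and each $\beta\in q_i\setminus\delta_M$. Here the closedness of $c$ is essential: the obstruction set $\{\alpha<\beta:c(\alpha,\beta)\leq i\}$ is closed in $\beta$, and combining this with the subadditive triangle inequalities applied to triples involving a suitably chosen $\gamma\in q_i\cap\delta_M$ yields the desired cross-compatibility, possibly after a mild pre-processing of $q$ within $M$.

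Finally, if $\partial(c)\cap E^\kappa_\chi$ is stationary, I would make the sequence constant by taking all $\mathbb{P}_i$ equal to a single poset $\mathbb{P}$ (essentially $\mathbb{P}_0$). The $\kappa$-antichain in $\mathbb{P}^\theta$ is then built diagonally along divergence points: for each $\delta\in\partial(c)\cap E^\kappa_\chi$, fix $\langle\beta^\delta_i\mid i<\theta\rangle$ cofinal in $\delta$ with $c(\beta^\delta_i,\delta)>i$, and take $p^\delta(i):=\{\beta^\delta_i,\delta\}$. Distinct conditions $p^\delta,p^{\delta'}$ are incompatible in some coordinate of $\mathbb{P}^\theta$ by the full divergence of $c$ at these points, while the layering of $\mathbb{P}^j$ for $j<\theta$ is inherited from that of $\mathbb{P}$ as in the non-constant case.
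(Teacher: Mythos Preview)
Your definition of $\mathbb{P}_i$ as $\{p\in[\kappa]^{<\chi}\mid c``[p]^2\subseteq(i,\theta)\}$ under reverse inclusion is the natural ``homogeneous set'' poset from \cite{paper34}, and it does make clauses~(1) and~(3) routine. The difficulty is entirely in clause~(2), and your sketch there does not go through.

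The proposed reduction $q^*:=\langle q_i\cap M\mid i<j\rangle$ fails already for $q_i=\{\beta\}$ with $\beta\geq\delta_M$: then $q_i^*=\emptyset$, and any singleton $\{\alpha\}$ with $\alpha\in D^c_{\le i}(\beta)\cap\delta_M$ is a condition in $M$ extending $q_i^*$ but incompatible with $q_i$. The natural fix is to add the ``blocker'' $\gamma_\beta:=\max(D^c_{\le i}(\beta)\cap\delta_M)$ to the reduction, and subadditivity does give $c(\alpha,\beta)>i$ for every $\alpha\neq\gamma_\beta$ in a $p\ni\gamma_\beta$. But $\gamma_\beta$ itself is then in $p$ while $c(\gamma_\beta,\beta)\le i$, so $p\cup q_i\notin\mathbb{P}_i$ and the reduction is again incompatible with $q$. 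The problem is structural: in your poset, the only way to block a bad $\alpha$ from entering an extension is to insert a point whose presence already makes the extension incompatible with $q$. No ``mild pre-processing'' repairs this; one needs a poset in which blockers can be added without creating incompatibility with the condition being reduced.

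This is exactly what the paper's construction achieves. The paper's $\mathbb{P}_i$ consists of pairs $(i,f)$ with $f:\kappa\to2$ a partial function of size ${<}\chi$, ordered so that an extension $g\supseteq f$ must assign value~$0$ to any new point $\alpha$ that lies in $D^c_{\le i}(\gamma)$ for some $\gamma\in\dom(f)\cap\Gamma_i$ (here $\Gamma_i$ is defined via a fixed $\epsilon$ chosen so that $\{\beta\in E^\kappa_\chi\mid c(\epsilon,\beta)>j\}$ is stationary for every $j<\theta$). The reduction of $(i,f)$ to $M$ adds each relevant $\gamma_\beta$ to the domain \emph{with value~$0$}; this does not conflict with $f$ (which did not mention $\gamma_\beta$), yet it forces any extension in $M$ to keep value~$0$ at all points of $D^c_{\le i}(\gamma_\beta)=D^c_{\le i}(\beta)\cap\delta_M$, guaranteeing compatibility with $q$. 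Closedness is used to ensure $\gamma_\beta$ exists, and subadditivity of the second kind (together with the choice of $\epsilon$, or $\delta_M\in\partial(c)$ in the constant case) to ensure $D^c_{\le i}(\beta)\cap\delta_M$ is bounded below $\delta_M$.

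Your treatment of the constant case also breaks: taking $\mathbb{P}=\mathbb{P}_0$, incompatibility in $\mathbb{P}$ means some pair has color exactly~$0$, and your diagonal conditions $p^\delta(i)=\{\beta^\delta_i,\delta\}$ give no control over colors between distinct $\delta,\delta'$. The paper instead takes $\mathbb{P}$ to be (essentially) the lottery sum $\{\emptyset\}\cup\biguplus_{i<\theta}P_i$, so that conditions with different first coordinates are automatically incompatible, and the antichain in $\mathbb{P}^\theta$ is the same one as in $\prod_{i<\theta}\mathbb{P}_i$.
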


As a corollary, we get that Magidor's forcing for changing the cofinality of a measurable cardinal $\lambda$ to a regular cardinal $\theta<\lambda$
adds a poset $\mathbb P$ whose $\theta^{th}$ power is not $\lambda^+$-cc,
but all of whose lower powers are $\lambda^+$-stationarily layered.

\subsection{Organization of this paper}

In Section~\ref{prelim_sec}, we present some useful definitions and facts about
$\U(\kappa, \mu, \theta, \chi)$, largely derived from Part I of
this series. We also present a pseudo-inverse to the fact, observed in Part I,
that Shelah's principle $\pr_1(\kappa,\kappa,\theta,\chi)$ implies $\U(\kappa,2,\theta,\chi)$.

In Section~\ref{subadditive_sec}, we review
the notion of \emph{subadditivity} and some of its variations and prove
that any subadditive witness to $\U(\kappa, 2, \theta, 2)$ is in fact a witness
to $\U(\kappa, \mu, \theta, \chi)$ for all $\mu < \kappa$ and
all $\chi \leq \cf(\theta)$ (and, under certain closure
assumptions, even stronger principles). Subsection~\ref{nsp_sec} contains results
connecting subadditive strongly unbounded colorings to narrow systems and trees
with ascent paths. In Subsection~\ref{locally_small_sec}, we discuss \emph{locally
small} colorings of the form $c:[\lambda^+]^2 \rightarrow \cf(\lambda)$, focusing
in particular on the case in which $\lambda$ is a singular cardinal. Locally
small colorings are necessarily witnesses to $\U(\lambda^+, 2, \cf(\lambda), \cf(\lambda))$,
and retain this property in any outer model with the same cardinals. In
Subsection~\ref{levels_sec}, we introduce a subset $\partial(c) \subseteq \kappa$
associated with a coloring $c:[\kappa]^2 \rightarrow \theta$ that is useful in the
analysis of $\U(\kappa, \mu, \theta, \chi)$, particularly in the context of subadditive
colorings. We then introduce a forcing notion that establishes Clause~(1) of
Theorem~C. Subsection~\ref{consistency_sec} contains a number of results indicating
the extent to which various compactness principles place limits on the existence of
certain subadditive witnesses to $\U(\kappa, \mu, \theta, \chi)$. In particular,
it is shown that simultaneous stationary reflection, the existence of highly
complete or indecomposable ultrafilters, and the P-ideal dichotomy all have
such an effect.

In Section~\ref{indexedsquaresection}, we introduce an indexed square principle
$\inds(\kappa, \theta)$ and prove that it is equivalent to the existence of a
closed, subadditive witness to $\U(\kappa, 2, \theta, 2)$,
thereby establishing the first part of Theorem~A. We also prove a consistency result indicating
that $\inds(\kappa, \theta)$ is a proper weakening of $\square^{\ind}(\kappa, \theta)$
and does not even imply $\square(\kappa, \theta)$, in the process proving Clause~(3)
of Theorem~C. Section~\ref{indexedsquaresection} also contains the proof of
the second part of Theorem~A and the proof of Clause~(2) of Theorem~C.

Section~\ref{singular_sec} is concerned with successors of
singular cardinals. We begin by proving Theorem~B,
showing that a certain square with built-in diamond
sequence on a singular cardinal $\lambda$ entails the existence of a subadditive
witness to $\U(\lambda^+, \lambda^+, \theta, \lambda)$ for all
$\theta \in \reg(\lambda)\setminus (\cf(\lambda)+1)$. We then present an improvement
upon a result from Part I of this series proving the existence of closed witnesses
to $\U(\lambda^+, \lambda^+, \theta, \cf(\lambda))$ for all singular $\lambda$
whose cofinality is not greatly Mahlo and all $\theta \leq \cf(\lambda)$.

Section~\ref{stat_layered_sec} deals with the infinity productivity of
$\kappa$-stationarily layered posets and contains our proof of Theorem~D.

\subsection{Notation and conventions}

Throughout the paper, $\kappa$ denotes a regular uncountable cardinal, and $\chi,\theta,$ and $\mu$
denote cardinals $\le\kappa$.
$\lambda$ will always denote an infinite cardinal.
We say that $\kappa$ is \emph{$\chi$-inaccessible} iff, for all $\nu<\kappa$,
$\nu^\chi<\kappa$, and say that $\kappa$ is \emph{$({<}\chi)$-inaccessible} iff, for all $\nu<\kappa$ and $\mu<\chi$,
$\nu^{\mu}<\kappa$.
We denote by  $H_\Upsilon$
the collection of all sets of hereditary cardinality less $\Upsilon$,
where $\Upsilon$ is a regular cardinal
sufficiently large to ensure that all objects of interest are in $H_\Upsilon$.

$\reg$ denotes the class of infinite regular
cardinals, and $\reg(\kappa)$ denotes $\reg \cap \kappa$. $E^\kappa_\chi$
denotes the set $\{\alpha < \kappa \mid \cf(\alpha) = \chi\}$, and
$E^\kappa_{\geq \chi}$, $E^\kappa_{<\chi}$, $E^\kappa_{\neq\chi}$, etc.\ are defined analogously.

For a set of ordinals $a$, we write $\ssup(a) := \sup\{\alpha + 1 \mid
\alpha \in a\}$, $\acc^+(a) := \{\alpha < \ssup(a) \mid \sup(a \cap \alpha) = \alpha > 0\}$,
$\acc(a) := a \cap \acc^+(a)$, $\nacc(a) := a \setminus \acc(a)$,
and $\cl(a):= a\cup\acc^+(a)$.
For sets of ordinals $a$ and $b$, we write $a < b$ if, for all $\alpha \in a$
and all $\beta \in b$, we have $\alpha < \beta$.
For a set of ordinals $a$ and an ordinal $\beta$, we write
$a < \beta$ instead of $a < \{\beta\}$ and $\beta < a$ instead of $\{\beta\} < a$.

For any set $\mathcal A$, we write
$[\mathcal A]^\chi:=\{ \mathcal B\s\mathcal A\mid |\mathcal B|=\chi\}$ and
$[\mathcal A]^{<\chi}:=\{\mathcal B\s\mathcal A\mid |\mathcal B|<\chi\}$.
In particular, $[\mathcal{A}]^2$ consists of all unordered pairs from $\mathcal{A}$.
In some scenarios, we will also be interested in ordered pairs from $\mathcal{A}$.
In particular, if $\mathcal{A}$ is either a set of ordinals or a collection of sets
of ordinals, then we will abuse notation and write $(a,b) \in [\mathcal{A}]^2$
to mean $\{a,b\} \in [\mathcal{A}]^2$ and $a < b$.

\section{Preliminaries} \label{prelim_sec}

In this brief section, we recall a key definition and present a few useful facts
about $\U(\ldots)$.
We start by recalling the following definition
from \cite{paper34} concerning \emph{closed} colorings.

\begin{defn}\label{def21}
  Suppose that $c:[\kappa]^2 \rightarrow \theta$ is a coloring.
  \begin{enumerate}
    \item For all $\beta < \kappa$ and $i \leq \theta$, we let $D^c_{\leq i}(\beta)$
    denote the set $\{\alpha < \beta \mid c(\alpha, \beta) \leq i\}$.
    \item For all $\Sigma \subseteq \kappa$, $c$ is \emph{$\Sigma$-closed}
    if, for all $\beta < \kappa$ and $i \leq \theta$,
    \[
      \acc^+(D^c_{\leq i}(\beta)) \cap \Sigma \subseteq D^c_{\leq i}(\beta).
    \]
    \item $c$ is \emph{somewhere-closed} if it is $\Sigma$-closed for some stationary $\Sigma\s\kappa$.
    \item $c$ is \emph{closed} if it is $\kappa$-closed.
  \end{enumerate}
\end{defn}

The following fact is a useful tool for proving that certain
colorings satisfy strong instances of $\U(\ldots)$.

\begin{fact}[\cite{paper34}]\label{pumpclosed}
  Suppose that $c:[\kappa]^2\rightarrow\theta$ is a coloring and $\omega\le\chi<\kappa$.
  Then $(1)\implies(2)\implies(3)$:
  \begin{enumerate}
    \item For some stationary $\Sigma\s E^\kappa_{\ge\chi}$, $c$ is a $\Sigma$-closed
      witness to $\U(\kappa,2,\theta,\chi)$.
    \item For every family $\mathcal A\s[\kappa]^{<\chi}$ consisting of $\kappa$-many
      pairwise disjoint sets, for every club $D\s\kappa$, and for every $i<\theta$,
      there exist $\gamma\in D$, $a\in\mathcal A$, and $\epsilon < \gamma$ such that:
      \begin{itemize}
        \item $\gamma < a$;
        \item for all $\alpha \in (\epsilon, \gamma)$ and all $\beta\in a$,
        we have $c(\alpha,\beta)>i$.
      \end{itemize}
      \item $c$ witnesses $\U(\kappa,\kappa,\theta,\chi)$.
  \end{enumerate}
\end{fact}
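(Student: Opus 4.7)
The plan is to verify the two implications separately.

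\emph{For $(1)\Rightarrow(2)$:} Fix $\mathcal A$, $D$, $i$ as in the statement of (2). By thinning, we may assume that the members of $\mathcal A$ have a common cardinality $\sigma<\chi$, and enumerate $\mathcal A=\langle a_\zeta\mid\zeta<\kappa\rangle$ in increasing position. Recursively alternate choices of $\gamma_\xi\in\Sigma\cap D$ and $\zeta(\xi)<\kappa$ for $\xi<\kappa$, arranging $\gamma_\xi<a_{\zeta(\xi)}$, $\gamma_{\xi+1}>\ssup(a_{\zeta(\xi)})$, and the $\zeta(\xi)$ strictly increasing, so that the sets $b_\xi:=\{\gamma_\xi\}\cup a_{\zeta(\xi)}$ for $\xi<\kappa$ form a pairwise disjoint family of common cardinality less than $\chi$. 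Applying $\U(\kappa,2,\theta,\chi)$ to this family yields $\xi<\xi'$ with $\min c[b_\xi\times b_{\xi'}]>i$; in particular, $c(\gamma_\xi,\beta)>i$ for every $\beta\in a_{\zeta(\xi')}$. Since $\gamma_\xi\in\Sigma$ and $c$ is $\Sigma$-closed, for each such $\beta$ there exists $\epsilon_\beta<\gamma_\xi$ with $c(\alpha,\beta)>i$ for all $\alpha\in(\epsilon_\beta,\gamma_\xi)$: were $\sup(D^c_{\le i}(\beta)\cap\gamma_\xi)=\gamma_\xi$, then $\gamma_\xi\in\acc^+(D^c_{\le i}(\beta))\cap\Sigma\s D^c_{\le i}(\beta)$, contradicting $c(\gamma_\xi,\beta)>i$. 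Since $|a_{\zeta(\xi')}|<\chi\le\cf(\gamma_\xi)$, the value $\epsilon:=\sup_\beta\epsilon_\beta$ remains below $\gamma_\xi$, and $(\gamma_\xi, a_{\zeta(\xi')}, \epsilon)$ witnesses (2).

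\emph{For $(2)\Rightarrow(3)$:} Fix $\sigma<\chi$, $\mathcal A\s[\kappa]^\sigma$ pairwise disjoint of size $\kappa$, and $i<\theta$. Recursively build $\mathcal B=\{b_\tau\mid\tau<\kappa\}\s\mathcal A$ satisfying $\min c[b_{\tau'}\times b_\tau]>i$ for every $\tau'<\tau$. At stage $\tau$, apply (2) to the subfamily $\mathcal A_\tau:=\{a\in\mathcal A\mid a>\ssup(\bigcup_{\tau'<\tau}b_{\tau'})\}$ (still of cardinality $\kappa$) with a carefully chosen club, and set $b_\tau$ to be the returned set $a$. The $c$-large rectangle $(\epsilon_\tau,\gamma_\tau)\times b_\tau$ delivers the required property on $b_{\tau'}\times b_\tau$ for $\tau'<\tau$, provided $\bigcup_{\tau'<\tau}b_{\tau'}\s(\epsilon_\tau,\gamma_\tau)$.

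The main obstacle is guaranteeing the inclusion $\bigcup_{\tau'<\tau}b_{\tau'}\s(\epsilon_\tau,\gamma_\tau)$ at each stage. Since $\epsilon_\tau$ is not a direct input to (2), the plan is to handle this by a pigeonhole over the (size $<\kappa$) set $\bigcup_{\tau'<\tau}b_{\tau'}$: were no working $b_\tau$ available, some $\alpha^*$ in this union would have $\kappa$-many $a\in\mathcal A_\tau$ with some $\beta\in a$ realizing $c(\alpha^*,\beta)\le i$, whereupon a further application of (2) to this thinned subfamily with a club forcing $\gamma>\alpha^*$ should yield the desired contradiction. This combinatorial bookkeeping is the delicate part of the plan.
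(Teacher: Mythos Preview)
The paper does not prove this fact here; it is quoted from \cite{paper34}. Your argument for $(1)\Rightarrow(2)$ is correct and is the standard one: augmenting each $a_{\zeta(\xi)}$ by a point $\gamma_\xi\in\Sigma\cap D$ and applying $\U(\kappa,2,\theta,\chi)$ to the resulting family is exactly the right move, and the $\Sigma$-closedness then converts $c(\gamma_\xi,\beta)>i$ into an interval $(\epsilon_\beta,\gamma_\xi)$ as you describe.

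Your plan for $(2)\Rightarrow(3)$ has a genuine gap at precisely the point you flag as ``delicate.'' After pigeonholing to a fixed $\alpha^*\in\bigcup_{\tau'<\tau}b_{\tau'}$ and a $\kappa$-sized subfamily $\mathcal A^*\subseteq\mathcal A_\tau$ in which each $a$ contains some $\beta_a$ with $c(\alpha^*,\beta_a)\le i$, you propose to apply (2) to $\mathcal A^*$ with a club forcing $\gamma>\alpha^*$. This yields $\gamma>\alpha^*$, $a\in\mathcal A^*$, and $\epsilon<\gamma$ --- but to derive a contradiction you need $\alpha^*\in(\epsilon,\gamma)$, and nothing in (2) prevents $\epsilon\ge\alpha^*$. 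Iterating this does not obviously terminate, so the recursion as sketched does not go through.

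The clean repair is to stabilize $\epsilon$ \emph{before} building $\mathcal B$. Let $S$ be the set of $\gamma<\kappa$ for which there exist $a\in\mathcal A$ with $\gamma<a$ and some $\epsilon<\gamma$ such that $c(\alpha,\beta)>i$ for all $\alpha\in(\epsilon,\gamma)$ and $\beta\in a$; by (2), $S$ is stationary. For each $\gamma\in S$ let $\epsilon(\gamma)$ be the least such $\epsilon$, and apply Fodor's lemma to obtain a stationary $S'\subseteq S$ and a fixed $\epsilon^*$ with $\epsilon(\gamma)=\epsilon^*$ for all $\gamma\in S'$; for each $\gamma\in S'$ fix a witnessing $a_\gamma\in\mathcal A$. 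Now recursively choose $\langle\gamma_\xi\mid\xi<\kappa\rangle$ from $S'\setminus(\epsilon^*+1)$ so that $\gamma_{\xi}>\ssup(a_{\gamma_{\xi'}})$ for all $\xi'<\xi$. For $\xi<\xi'$ we then have $a_{\gamma_\xi}\subseteq(\epsilon^*,\gamma_{\xi'})$, so $\min c[a_{\gamma_\xi}\times a_{\gamma_{\xi'}}]>i$, and $\mathcal B:=\{a_{\gamma_\xi}\mid\xi<\kappa\}$ is as required.
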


It is sometimes useful to consider the following unbalanced form of $\U(\cdots)$.

\begin{defn}\label{unbalU}
$\U(\kappa, \mu\circledast\nu, \theta, \chi)$ asserts the existence of a coloring $c:[\kappa]^2
  \rightarrow \theta$ such that for every $\sigma < \chi$, every pairwise disjoint subfamily
  $\mathcal{A} \s [\kappa]^{\sigma}$ of size $\kappa$,
  and every $i < \theta$, there exist $\mathcal A'\in[\mathcal A]^\mu$ and $\mathcal B'\in[\mathcal A]^\nu$
  such that, for every $(a,b)\in\mathcal A'\times\mathcal B'$, $a<b$ and
  $\min(c[a \times b]) > i$.
\end{defn}

    \begin{lemma}\label{lemma24} Suppose that $c:[\kappa]^2\rightarrow\theta$ witnesses $\U(\kappa,2,\theta,2)$,
    with $\theta<\kappa$.
    \begin{enumerate}
    \item For every cofinal $A\s\kappa$,
      there exists  $\epsilon<\kappa$ such that
      $$\sup\{\beta\in A\setminus\epsilon\mid \sup\{c(\alpha, \beta) \mid
      \alpha \in A \cap \epsilon\}=\theta\}=\kappa;$$
    \item For every cofinal $A\s\kappa$,
      there exists  $\beta\in A$ such that $\{c(\alpha, \beta) \mid
      \alpha \in A \cap \beta\}$ is unbounded in $\theta$.
      In particular, $c$ witnesses $\U(\kappa,\cf(\theta)\circledast1,\theta,2)$;
      \item For every stationary $S\s\kappa$, there exists $\epsilon<\kappa$ such that, for every $i<\theta$, $\{\beta\in S\mid \epsilon<\beta, ~ c(\epsilon,\beta)>i\}$ is stationary;
      \item For every stationary $S\s\kappa$ and a family of functions $\mathcal H\in[{}^S\theta]^{<\kappa}$,
      there exists $\epsilon\in S$ such that, for every $h\in\mathcal H$,  the following set is stationary:
      $$\{\beta\in S\mid \epsilon<\beta, ~  c(\epsilon,\beta)>\max\{h(\epsilon),h(\beta)\}\}.$$
      \end{enumerate}
    \end{lemma}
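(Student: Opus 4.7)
The plan is to prove each clause by contradiction, in every case converting a putative failure into a cofinal (or stationary) subset of $\kappa$ on whose pairs $c$ is bounded by some fixed $i^*<\theta$, thereby contradicting $\U(\kappa,2,\theta,2)$ applied to that subset. The primary tool is pigeonholing $\theta$-valued functions on subsets of $\kappa$ of full cardinality, legitimate because $\theta<\kappa=\cf(\kappa)$ and $\ns_\kappa$ is $\kappa$-complete.

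For clause (2), if the first conclusion fails then $i(\beta):=\sup\{c(\alpha,\beta)\mid\alpha\in A\cap\beta\}<\theta$ for every $\beta\in A$; pigeonholing $i\colon A\to\theta$ produces a cofinal $A'\s A$ on which $i\equiv i^*$, so $c\restriction[A']^2\le i^*$ contradicts $\U$. For the ``in particular'' conclusion, once a $\beta$ from the first part is fixed, the unboundedness of $\{c(\alpha,\beta)\mid\alpha\in A\cap\beta\}$ in $\theta$ lets one select, for each $i<\theta$, $\cf(\theta)$-many $\alpha$'s in $A\cap\beta$ realizing pairwise distinct colors above $i$, supplying the $\U(\kappa,\cf(\theta)\circledast 1,\theta,2)$ configuration.

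Clauses (3) and (4) share a Fodor-type template. In (3), I would suppose for contradiction that for each $\epsilon<\kappa$ some $i_\epsilon<\theta$ renders $\{\beta\in S\mid\epsilon<\beta,\,c(\epsilon,\beta)>i_\epsilon\}$ nonstationary, witnessed by a disjoint club $C_\epsilon$; the diagonal intersection $C:=\Delta_{\epsilon<\kappa}C_\epsilon$ is a club, and for every $\beta\in C\cap S$ and $\epsilon<\beta$ one has $c(\epsilon,\beta)\le i_\epsilon$. Pigeonholing $\epsilon\mapsto i_\epsilon$ on the stationary set $C\cap S$ produces a stationary $E\s C\cap S$ with $i_\epsilon\equiv i^*$, so pairs in $E$ are $i^*$-bounded and $\U$ fails on $E$. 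For (4) one first pigeonholes $\epsilon\mapsto h_\epsilon$ on $S$ (using $|\mathcal H|<\kappa$) to reduce to a single $h^*\in\mathcal H$ on a stationary $S'\s S$, reruns the diagonal argument on $C\cap S'$, and finally pigeonholes $h^*$ on a further stationary subset to collapse $\max\{h^*(\alpha),h^*(\beta)\}$ to a common value $i^*<\theta$; the same contradiction with $\U$ then applies.

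The real difficulty is clause (1). Applying clause (2) to each tail $A\cap[\gamma,\kappa)$ shows that $A^+:=\{\beta\in A\mid\sup\{c(\alpha,\beta)\mid\alpha\in A\cap\beta\}=\theta\}$ is cofinal in $\kappa$. I then assume toward contradiction that $B_\epsilon:=\{\beta\in A\setminus\epsilon\mid\sup\{c(\alpha,\beta)\mid\alpha\in A\cap\epsilon\}=\theta\}$ is bounded in $\kappa$ for every $\epsilon<\kappa$, and for $\beta\in A^+$ define $T(\beta)\le\beta$ as the least $\epsilon$ with $\sup\{c(\alpha,\beta)\mid\alpha\in A\cap\epsilon\}=\theta$; the boundedness of each $B_\epsilon$ amounts precisely to $T(\beta)$ tending to $\kappa$ as $\beta$ ranges cofinally through $A^+$. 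I would recursively pick $\beta_\xi\in A^+$, strictly increasing, with $T(\beta_\xi)>(\sup_{\eta<\xi}\beta_\eta)+1$ at every stage $\xi<\kappa$ (feasible by regularity of $\kappa$ and the divergence of $T$); minimality of $T(\beta_\xi)$ then forces $j(\xi):=\sup_{\eta<\xi}c(\beta_\eta,\beta_\xi)<\theta$, and a last pigeonhole on $j$ returns a cofinal $\Xi\s\kappa$ and $i^*<\theta$ with $j\restriction\Xi\equiv i^*$. Then $\{\beta_\xi\mid\xi\in\Xi\}$ is cofinal in $\kappa$ with all pairs $i^*$-bounded, contradicting $\U(\kappa,2,\theta,2)$. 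The delicate point, and the main obstacle, is arranging $T(\beta_\xi)$ strictly past $(\sup_{\eta<\xi}\beta_\eta)+1$, not merely past $\sup_{\eta<\xi}\beta_\eta$, so that minimality of $T(\beta_\xi)$ supplies the strict inequality $j(\xi)<\theta$ rather than just $j(\xi)\le\theta$.
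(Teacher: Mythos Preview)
Your proof is correct and follows essentially the same approach as the paper: for (1) both arguments thin $A$ to a cofinal subset on which each element sees only bounded colors to its predecessors (your $T(\beta)$-divergence formulation is dual to the paper's $\eta_\epsilon$-sparsity formulation), then pigeonhole the bound; for (3) and (4) both use diagonal intersection of the witnessing clubs followed by pigeonholing. The only organizational difference is that you prove (2) directly and then invoke it inside (1), whereas the paper derives (2) from (1) and (3) from (4); this is harmless and your direct proof of (2) is perfectly valid.
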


    \begin{proof} Clause~(2) follows from Clause~(1),
    and Clause~(3) follows from Clause~(4).

    (1) Towards a contradiction, suppose that $A$ is a counterexample.
      Then, for every $\epsilon<\kappa$, there exists a large enough $\eta_\epsilon\in[\epsilon,\kappa)$
      such that, for every $\beta\in A\setminus\eta_\epsilon$,
      for some $i < \theta$, $A \cap \epsilon \subseteq D^c_{\leq i}(\beta)$.
      Fix a sparse enough cofinal subset $A'\s A$ such that,
      for every $\epsilon\in A'$, $\min(A'\setminus(\epsilon+1))\ge\eta_{\epsilon+1}$.
      It follows that for every $\beta\in A'$, for some $i_\beta<\theta$,
      $A \cap \beta \subseteq D^c_{\leq i_\beta}(\beta)$.
      Fix $i^* < \theta$ and an unbounded $A^* \subseteq A'$ such that
      $i_\beta = i^*$ for all $\beta \in A^*$. Since $c$ witnesses
      $\U(\kappa, 2, \theta, 2)$, we can find $(\alpha, \beta) \in [A^*]^2$
      such that $c(\alpha, \beta) > i^*$, contradicting the fact that
      $\alpha\in A \cap \beta \subseteq D^c_{\leq i^*}(\beta)$.

    (4) Towards a contradiction, suppose that $S$ and $\mathcal H$ form a counterexample.
For every $\epsilon\in S$, fix a function $h_\epsilon\in\mathcal H$
and a club $D_\epsilon\s \kappa$ disjoint from $\{\beta\in S\mid \epsilon<\beta, ~ c(\epsilon,\beta)>\max\{h_\epsilon(\epsilon),h_\epsilon(\beta)\}\}$.
Let $D:=\diagonal_{\epsilon<\kappa}D_\epsilon$.
As $|\mathcal H|<\kappa$, find $h\in\mathcal H$ for which $A:=\{\epsilon\in D\cap S\mid h_\epsilon=h\}$ is cofinal in $\kappa$.
As $\theta<\kappa$, find $i<\theta$ for which $B:=\{\beta\in A\mid h(\beta)=i\}$ is cofinal in $\kappa$.
Now, as $c$ witnesses $\U(\kappa,2,\theta,2)$, we may pick $(\epsilon,\beta)\in[B]^2$ such that $c(\epsilon,\beta)>i$.
But $i=\max\{h_\epsilon(\epsilon),h_\epsilon(\beta)\}$, contradicting the fact that $\beta\in D_\epsilon\cap S$.
    \end{proof}

The following is a corollary to a result from \cite{paper34} which we never took the time to derive.
\begin{prop} If $\kappa$ is a Mahlo cardinal admitting a nonreflecting stationary subset of $\reg(\kappa)$,
then $\U(\kappa,\kappa,\theta,\kappa)$ holds for every $\theta \leq \kappa$.
\end{prop}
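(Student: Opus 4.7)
The plan is to derive this proposition from a theorem of Part~I~\cite{paper34} combined with Fact~\ref{pumpclosed}. The relevant theorem of \cite{paper34} asserts that if $\chi<\kappa$ are infinite regular cardinals and $\Xi\s E^\kappa_{\ge\chi}$ is nonreflecting stationary, then a standard minimal-walks construction along any $C$-sequence $\vec C=\langle C_\alpha\mid\alpha<\kappa\rangle$ satisfying $C_\alpha\cap\Xi=\emptyset$ for every $\alpha\in\Xi$ produces a $\Xi$-closed witness to $\U(\kappa,2,\theta,\chi)$; crucially, the coloring depends on $\Xi$ only through $\vec C$, and not on the parameter $\chi$.

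First, I would exploit the Mahloness of $\kappa$ to upgrade the hypothesis: since $S\s\reg(\kappa)$ is nonreflecting and stationary, for every $\chi<\kappa$ the tail $S\setminus\chi$ is again nonreflecting and stationary, and it is contained in $E^\kappa_{\ge\chi}$ because every regular cardinal $\alpha>\chi$ satisfies $\cf(\alpha)=\alpha>\chi$. I would then fix once and for all a $C$-sequence $\vec C$ with $C_\alpha\cap S=\emptyset$ for every $\alpha\in S$; such a $\vec C$ exists by the nonreflection of $S$.

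Next, let $c:[\kappa]^2\rightarrow\theta$ be the coloring obtained by feeding $\vec C$ into the construction of \cite{paper34}. Because $\vec C$ avoids $S$ uniformly in $\chi$, the \emph{same} coloring $c$ is $(S\setminus\chi)$-closed and witnesses $\U(\kappa,2,\theta,\chi)$ for every $\chi<\kappa$. For each such $\chi$, I would apply Fact~\ref{pumpclosed} with $\Sigma:=S\setminus\chi$: clause~(1) holds for $c$, so clause~(3) yields that $c$ witnesses $\U(\kappa,\kappa,\theta,\chi)$. Since the same $c$ works for every $\chi<\kappa$, and since any $\sigma<\kappa$ satisfies $\sigma<\sigma^+<\kappa$, $c$ in fact witnesses $\U(\kappa,\kappa,\theta,\kappa)$, as required.

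The main obstacle is the extraction step: one must identify the theorem of \cite{paper34} being invoked and verify that the coloring its proof produces depends on $S$ only through $\vec C$, so that our single $\chi$-independent $\vec C$ yields a coloring working uniformly across $\chi<\kappa$. A secondary point concerns the case $\theta=\kappa$, where the range of $c$ must be enlarged; this can be handled either by adapting the walks construction to take values in $\kappa$ directly, or by coding through a bijection $\kappa\leftrightarrow\kappa\times\kappa$, exploiting the inaccessibility of $\kappa$ that comes with Mahloness.
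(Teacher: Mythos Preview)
Your approach is essentially correct and mirrors what the paper does: the paper's entire proof is a one-line citation ``By \cite[Theorem~4.11]{paper34}'', so both you and the paper defer to Part~I. Your sketch unpacks how such a theorem is proved (walks along a $C$-sequence avoiding the nonreflecting $S$, then an appeal to Fact~\ref{pumpclosed} uniformly in $\chi$), whereas the paper simply invokes the packaged result; but the substance is the same.

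One minor point: your condition ``$C_\alpha\cap S=\emptyset$ for every $\alpha\in S$'' is slightly weaker than what the walks construction typically requires; the standard hypothesis is that $\acc(C_\alpha)\cap S=\emptyset$ for \emph{all} $\alpha<\kappa$, which nonreflection of $S$ also yields. Your acknowledged ``main obstacle'' (that a single $\vec C$ gives a coloring working uniformly in $\chi$) is exactly the content absorbed into \cite[Theorem~4.11]{paper34}, and the $\theta=\kappa$ case is handled trivially by the coloring $c(\alpha,\beta):=\alpha$, as noted elsewhere in the paper.
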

\begin{proof}  By \cite[Theorem~4.11]{paper34}.
\end{proof}

By \cite[Lemma~2.3]{paper34}, Shelah's principle
$\pr_1(\kappa,\kappa,\theta,\chi)$ implies $\U(\kappa,2,\theta,\chi)$. Here we deal with a pseudo-inverse:

\begin{prop}  Suppose that $\U(\kappa,2,\theta,\chi)$ holds with $\chi\le\cf(\theta)=\theta\le\theta^{<\theta}<\kappa$.
Then $V^{\add(\theta,1)}\models \pr_1(\kappa,\kappa,\theta,\chi)$.
\end{prop}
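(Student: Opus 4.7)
The plan is to take a witness $c\in V$ to $\U(\kappa,2,\theta,\chi)$ and, working in the forcing extension by $\mathbb P:=\add(\theta,1)$, define the new coloring $d(\alpha,\beta):=g(c(\alpha,\beta))$, where $g:\theta\to\theta$ is the generic function coded by the $\mathbb P$-generic.  I will view $\mathbb P$ as the poset of partial functions $p:\theta\to\theta$ of size ${<}\theta$, ordered by reverse inclusion; the hypothesis $\theta^{<\theta}=\theta$ gives $|\mathbb P|=\theta<\kappa$, and $\mathbb P$ is $\theta$-closed, so in particular all cardinals are preserved.  The task is to verify that $d$ witnesses $\pr_1(\kappa,\kappa,\theta,\chi)$ in $V[G]$ via a density argument in $V$.

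Fix $\sigma<\chi$ and a condition $p_0$ forcing that $\dot{\mathcal A}=\{\dot a_\gamma\mid\gamma<\kappa\}$ is a pairwise disjoint subfamily of $[\kappa]^\sigma$ of size $\kappa$ and that $\dot i=i$ for some $i<\theta$.  The first step is to reduce to a ground-model family.  Since $\sigma<\chi\le\theta$ and $\mathbb P$ is $\theta$-closed, for each $\gamma<\kappa$ there exist $q_\gamma\le p_0$ and $b_\gamma\in V$ with $q_\gamma\Vdash \dot a_\gamma=\check b_\gamma$; pigeonholing on $|\mathbb P|=\theta<\kappa$ yields $p_1\le p_0$ and $J\in[\kappa]^\kappa$ with $q_\gamma=p_1$ for all $\gamma\in J$.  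Then $\{b_\gamma\mid\gamma\in J\}\in V$ is a pairwise disjoint subfamily of $[\kappa]^\sigma$ of size $\kappa$.

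With this in hand, set $i_0:=\sup(\dom(p_1))$, which is ${<}\theta$ since $|\dom(p_1)|<\theta$ and $\theta$ is regular.  Apply $\U(\kappa,2,\theta,\chi)$ in $V$ to the family $\{b_\gamma\mid\gamma\in J\}$ and the threshold $i_0$ to obtain $(\gamma,\delta)\in[J]^2$ with $\min c[b_\gamma\times b_\delta]>i_0$.  Extend $p_1$ to $p_2:=p_1\cup\{(j,i)\mid j\in c[b_\gamma\times b_\delta]\}$; this is a legitimate condition because every $j$ adjoined lies strictly above $\sup(\dom(p_1))$ (so there is no conflict with $p_1$) and $|c[b_\gamma\times b_\delta]|<\theta$ (so $|\dom(p_2)|<\theta$).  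Then $p_2$ forces $g(c(\alpha,\beta))=i$ for every $(\alpha,\beta)\in b_\gamma\times b_\delta$, which is exactly $p_2\Vdash d[\dot a_\gamma\times \dot a_\delta]=\{\dot i\}$.

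The single nontrivial step is the reduction of a generic pairwise-disjoint family to a ground-model one, and the balance of assumptions is precisely what powers it: the $\theta$-closure of $\mathbb P$ absorbs each $\dot a_\gamma$ one at a time (using $\sigma<\chi\le\theta$), while $|\mathbb P|<\kappa$ (using $\theta^{<\theta}=\theta$) allows a single condition to absorb $\kappa$-many of them via pigeonhole.  Once in the ground model, the unboundedness of $c$ supplies a pair $(b_\gamma,b_\delta)$ whose $c$-image avoids $\dom(p_1)$, and the generic freedom of $g$ converts the inequality ``$\min>i_0$'' into the exact value $i$ demanded by $\pr_1$.
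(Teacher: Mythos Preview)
Your argument is correct and follows essentially the same approach as the paper: compose the ground-model witness $c$ with the generic function $g$, reduce a given family to the ground model via the small size and closure of $\add(\theta,1)$, and then use the unboundedness of $c$ to find a pair whose $c$-image lies above the domain of the current condition, so that the generic can be extended to send that image to the target color.

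One small slip: you state that the hypothesis gives $\theta^{<\theta}=\theta$, but in fact the assumption is only $\theta\le\theta^{<\theta}<\kappa$, so $|\mathbb P|=\theta^{<\theta}$ may exceed $\theta$. This does not affect your argument, since all you need for the pigeonhole step is $|\mathbb P|<\kappa$, which is exactly what $\theta^{<\theta}<\kappa$ provides.
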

\begin{proof} In $V$, let $c:[\kappa]^2\rightarrow\theta$ be a witness to
$\U(\kappa,2,\theta,\chi)$. Let $G$ be $\add(\theta, 1)$-generic over $V$, and work
for now in $V^{\add(\theta,1)}$. Let $g:\theta\rightarrow\theta$
be the Cohen-generic function, and set $d:=g\circ c$. To see that $d$ witnesses $\pr_1(\kappa,\kappa,\theta,\chi)$,
let $\mathcal A$ be a $\kappa$-sized pairwise disjoint subfamily of $[\kappa]^{<\chi}$,
and let $\tau<\theta$; we need to find $(a,b)\in[\mathcal A]^2$ such that
$d[a\times b]=\{\tau\}$. Since $\chi \leq \theta$, every element of $\mathcal{A}$ is
in $V$. Also, letting $\dot{\mathcal{A}} \in V$ be an $\add(\theta, 1)$-name for
$\mathcal{A}$, we can fix for each $a \in \mathcal{A}$ a condition $p_a \in G$ such that
$p_a \Vdash ``a \in \dot{\mathcal{A}}$.
As $\chi\le\theta^{<\theta}<\kappa$, and hence $|\add(\theta, 1)| < \kappa$, by passing to a subfamily if necessary, we may assume that there is a fixed $p^* \in G$
such that $p_a = p^*$ for all $a \in \mathcal{A}$, and therefore $\mathcal A =
\{a \in [\kappa]^{<\chi} \mid p \Vdash ``a \in \dot{\mathcal{A}}"\}$ is in $V$.
We therefore move back to $V$ and run a density argument. Let $p:i\rightarrow\theta$ be an arbitrary condition in $\add(\theta,1)$ below $p^*$. By the hypothesis on $c$,
pick $(a,b)\in[\mathcal A]^2$ such that $x:=c[a\times b]$ is disjont from $i$.
Let $j:=\ssup(x)$, and let $q:j\rightarrow\theta$ be some condition extending $p$ and satisfying $q(\xi)=\tau$ for all $\xi\in x$.
Clearly, $q(c(\alpha,\beta))=\tau$ for all $(\alpha,\beta)\in a\times b$,
so $q$ forces that $d[a\times b]=\{\tau\}$, as sought.
\end{proof}

We conclude this short section with another simple fact worth recording.
\begin{prop} If $\kappa$ is weakly compact, then, in $V^{\add(\kappa,1)}$:
\begin{enumerate}
\item $\U(\kappa,\kappa,\omega,2)$ holds;
\item $\U(\kappa,2,\omega,\omega)$ holds;
\item $\U(\kappa,2,\theta,2)$ fails for every regular uncountable $\theta<\kappa$.
\end{enumerate}
\end{prop}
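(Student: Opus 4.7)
Let $P := \add(\kappa,1)$. Since $\kappa$ is weakly compact, $2^{<\kappa}=\kappa$, so $|P|=\kappa$ and $P$ is both $\kappa$-closed and $\kappa^+$-cc. Identify $P$ with the forcing of $<\kappa$-sized partial functions from $[\kappa]^2$ to $\omega$; the generic then yields a coloring $c : [\kappa]^2 \to \omega$ in $V[G]$, with canonical name $\dot c$.

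For Clause~(1), I work in $V$. Given a $P$-name $\dot A$ forced cofinal in $\kappa$, a condition $p \in P$, and $i < \omega$, I plan to build by recursion of length $\kappa$ a $\le$-decreasing sequence $\langle p_\xi \mid \xi<\kappa\rangle$ below $p$ together with a strictly increasing sequence of ordinals $\langle \beta_\xi \mid \xi<\kappa\rangle$ such that $p_\xi \forces \beta_\xi \in \dot A$ and $p_\xi(\{\beta_\eta,\beta_\xi\})=i+1$ for all $\eta<\xi$. Limit stages will use $\kappa$-closure. At a successor stage, I would show that the set of $r \le p_\xi$ forcing some $\beta > \sup_\eta \beta_\eta$ into $\dot A$ with $r(\{\beta_\eta, \beta\}) = i+1$ for all $\eta \leq \xi$ is dense below $p_\xi$: given any $r \leq p_\xi$, first use cofinality of $\dot A$ to find $r' \leq r$ forcing some $\beta > \sup_\eta \beta_\eta$ into $\dot A$, then choose $\beta$ large enough to avoid conflicts with $\dom(r')$ (possible as $|\dom(r')|<\kappa$), and finally extend by the finitely-many new commitments $\{\beta_\eta, \beta\} \mapsto i+1$. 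Then $\dot B := \{\beta_\xi \mid \xi < \kappa\}$ is forced to be a cofinal subset of $\dot A$ on whose unordered pairs $\dot c$ exceeds $i$, yielding $\U(\kappa,\kappa,\omega,2)$. Clause~(2) is analogous but much simpler: given a name $\dot{\mathcal A}$ for a $\kappa$-sized pairwise disjoint family of $\sigma$-tuples ($\sigma < \omega$) and $i < \omega$, one needs only two disjoint $a, b \in \dot{\mathcal A}$, which may be found by density at sufficiently large ordinals, and the finitely-many cross-product commitments $\dot c \restriction (a \times b) = i+1$ may then be added without conflict.

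For Clause~(3), suppose toward a contradiction that $\dot c$ is a $P$-name for a witness to $\U(\kappa,2,\theta,2)$ with $\omega < \theta < \kappa$ regular. Since $P$ is $\theta$-closed, every column $c(\cdot, \beta) : \beta \to \theta$ lies in $V$. The plan is to use the weak compactness of $\kappa$ in $V$---via an elementary embedding $j : M \to N$ with critical point $\kappa$, where $M$ is a transitive model of size $\kappa$ containing $\dot c$ and $P$---to produce in $V[G]$ a cofinal $A \s \kappa$ and $i < \theta$ with $A \cap \beta \s D^{c}_{\le i}(\beta)$ for every $\beta \in A$, contradicting Lemma~\ref{lemma24}. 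One then analyzes the ``column at $\kappa$'' of the lifted name $j(\dot c)$ via a master-condition argument to extract such $A$ and $i$ from the ground-model columns. The main obstacle is precisely this reflection step: weak compactness of $\kappa$ is actually destroyed in $V[G]$ (Clause~(1) already implies $\kappa \not\to (\kappa)^2_\omega$ in $V[G]$), so one cannot simply invoke a partition relation in $V[G]$; the failure of $\U(\kappa,2,\theta,2)$ for uncountable $\theta$ must be teased out from the ground-model column structure together with the lift of $j$.
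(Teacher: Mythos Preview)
Your approach to Clauses~(1) and~(2) has a fundamental gap: the Cohen-generic coloring $c:[\kappa]^2\to\omega$ that you propose as the witness does \emph{not} witness even $\U(\kappa,2,\omega,2)$. Consider
\[
  A:=\{\beta<\kappa : (\forall\alpha<\beta)\ c(\alpha,\beta)=0\}.
\]
A density argument shows $|A|=\kappa$ in $V[G]$: given any condition $p$, pick $\beta$ above every ordinal appearing in a pair in $\dom(p)$ and extend $p$ by sending each $\{\alpha,\beta\}$ with $\alpha<\beta$ to $0$; since $|\beta|<\kappa$ this is still a condition, and it forces $\beta\in A$. But for $\alpha<\beta$ both in $A$ we have $c(\alpha,\beta)=0$, so $c``[A]^2=\{0\}$. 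This also exposes the circularity in your successor step: you write ``find $r'\le r$ forcing some $\beta$ into $\dot A$, then choose $\beta$ large enough to avoid conflicts with $\dom(r')$,'' but $\beta$ is fixed once $r'$ is; and the name $\dot A$ can be arranged (as above) so that forcing $\beta\in\dot A$ \emph{requires} committing $\{\beta_\eta,\beta\}$ to a value $\le i$. The paper does not use the raw generic at all: for~(1) it invokes \cite[Lemma~2.7]{paper34}, and for~(2) it notes that the stronger principle $\pr_1(\kappa,\omega_1,\omega,\omega)$ holds in the extension (and then cites \cite[Lemma~2.3]{paper34}).

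For Clause~(3), your lifting-of-embeddings plan is a viable alternative to the paper's route, which is instead a short preservation argument: the square-bracket relation $\kappa\to[\kappa]^2_\theta$ (which holds at a weakly compact $\kappa$) degrades under the forcing only to $\kappa\to[\kappa]^2_{\theta,\omega}$, and for uncountable~$\theta$ a countable image is bounded, refuting $\U(\kappa,2,\theta,2)$. Your approach should succeed once the master-condition step is actually carried out --- building in $V[G]$ an $N$-generic for $j(\add(\kappa,1))$ extending $g=\bigcup G$, and reading off the column $j^+(c)(\cdot,\kappa)$ to obtain a cofinal $A\s\kappa$ and a bound $i<\theta$ --- but you have only flagged this step, not executed it.
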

\begin{proof} (1) By \cite[Lemma~2.7]{paper34}.

(2) It is not hard to see that, in $V^{\add(\kappa,1)}$, $\pr_1(\kappa,\omega_1,\omega,\omega)$ holds.
In particular, by \cite[Lemma~2.3]{paper34}, $\U(\kappa,2,\omega,\omega)$ holds.

(3) Evidently, for every $ccc$ forcing $\mathbb P$ and every infinite cardinal $\theta$,
$\kappa\rightarrow[\kappa]^2_\theta$ implies $V^{\mathbb P}\models \kappa\rightarrow[\kappa]^2_{\theta,\omega}$.
In particular, in $V^{\add(\kappa,1)}$, $\U(\kappa,2,\theta,2)$ fails for every regular uncountable $\theta<\kappa$.
\end{proof}

\section{Subadditive colorings} \label{subadditive_sec}

We now turn to \emph{subadditive} colorings, which form the primary topic of this paper.
We begin by recalling the definition of subadditivity, splitting the definition
into its two constituent parts.

\begin{defn}
A coloring $c:[\kappa]^2\rightarrow \theta$ is
  \emph{subadditive} iff the following two statements hold:
  \begin{enumerate}
  \item $c$ is \emph{subadditive of the first kind},
  that is, for all $\alpha < \beta < \gamma < \kappa$,
  $$c(\alpha, \gamma) \leq \max\{c(\alpha, \beta), c(\beta, \gamma)\};$$
  \item $c$ is \emph{subadditive  of the second kind},
  that is, for all $\alpha < \beta < \gamma < \kappa$,
  $$c(\alpha, \beta) \leq \max\{c(\alpha, \gamma), c(\beta, \gamma)\}.$$
  \end{enumerate}
\end{defn}

We shall write $\US(\kappa, \mu, \theta, \chi)$ to assert that $\U(\kappa, \mu, \theta, \chi)$
holds and that it moreover admits a subadditive witness. Note that the function $c:[\kappa]^2
\rightarrow \kappa$ defined by letting
$c(\alpha, \beta) := \alpha$ for all $(\alpha, \beta) \in [\kappa]^2$ is a closed, subadditive
witness to $\U(\kappa, \kappa, \kappa, \kappa)$, so, in all situations
of interest, we will have $\theta < \kappa$.

Subadditivity allows us to show that witnesses to certain instances of
$\U(\ldots)$ in fact satisfy stronger instances, as in the following lemma.

\begin{lemma}\label{uPLUSsub}
  Suppose that $c:[\kappa]^2\rightarrow\theta$ is a witness
  to
  $\US(\kappa,2,\theta,2)$, with $\theta<\kappa$. Then the following statements all hold.
  \begin{enumerate}
    \item For every $\mu < \kappa$, $c$ witnesses $\U(\kappa,\mu,\theta,2)$.
    If there exist no $\kappa$-Souslin trees, then $c$ moreover witnesses $\U(\kappa,\kappa,\theta,2)$.
    \item For every $\mu\le\kappa$, if $c$ witnesses $\U(\kappa,\mu,\theta,2)$,
      then $c$ moreover witnesses $\U(\kappa,\mu,\theta,\cf(\theta))$.
    \item For every $\chi\in\reg(\kappa)$, if $c$ is $\Sigma$-closed for
      some stationary $\Sigma \s E^\kappa_{\geq \chi}$, then $c$ witnesses
      $\U(\kappa,\kappa,\theta,\chi)$.
  \end{enumerate}
\end{lemma}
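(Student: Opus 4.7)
The plan is to extract from subadditivity a uniform equivalence-relation structure and then handle the three clauses in turn. For each $i<\theta$, declare $\alpha E_i\beta$ iff $\alpha=\beta$ or $c(\alpha,\beta)\le i$; subadditivity of the first kind gives transitivity, subadditivity of the second kind gives that any two elements of $D^c_{\le i}(\gamma)$ are $E_i$-related, and together they force coherence: $D^c_{\le i}(\beta)=D^c_{\le i}(\beta')\cap\beta$ whenever $\beta<\beta'$ and $\beta E_i\beta'$. The key initial observation is that every $E_i$-class has size $<\kappa$, since otherwise its elements would form a $\kappa$-sized pairwise disjoint singleton family on which no pair has $c>i$, contradicting $\U(\kappa,2,\theta,2)$.

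For Clause~(1), given $i<\theta$ and a pairwise disjoint singleton family $\mathcal A\in[[\kappa]^1]^\kappa$, regularity of $\kappa$ together with the $<\kappa$ bound on class sizes forces $\mathcal A$ to meet at least $\kappa$ many distinct $E_i$-classes. For every $\mu<\kappa$, selecting one representative per class from $\mu$ of them produces a subfamily of size $\mu$ whose elements are pairwise $E_i$-inequivalent, hence have $c>i$ on all pairs, giving $\U(\kappa,\mu,\theta,2)$. For the upgrade to $\mu=\kappa$ under the no-$\kappa$-Souslin-tree hypothesis, I would organize the system $T_i:=\{D^c_{\le i}(\beta)\mid\beta<\kappa\}$ under end-extension (well-defined by coherence) into a $\kappa$-tree; since $T_i$'s branches correspond to $E_i$-classes and so are of length $<\kappa$, no $\kappa$-branch exists, and the Souslin-failure hypothesis then supplies a $\kappa$-antichain, which one converts into a $\kappa$-sized transversal.

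For Clause~(2), the approach is a pigeonhole-plus-telescope argument. Fix $\mathcal A$ of size $\kappa$ with each $a\in\mathcal A$ of size $\sigma<\cf(\theta)$, and fix $i<\theta$. As $\sigma<\cf(\theta)$, the quantity $j_a:=\sup c[[a]^2]$ lies below $\theta$; as $\theta<\kappa$, pigeonhole trims $\mathcal A$ to $\mathcal A'\in[\mathcal A]^\kappa$ admitting a common upper bound $j^*<\theta$, which we may take to satisfy $j^*\ge i$. Applying the assumed $\U(\kappa,\mu,\theta,2)$ to the singletons $\{\min(a)\mid a\in\mathcal A'\}$ at threshold $j^*$ yields $\mathcal A^*\in[\mathcal A']^\mu$ with $c(\min(a),\min(b))>j^*$ for distinct $a,b\in\mathcal A^*$. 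For $(a,b)\in[\mathcal A^*]^2$, $\alpha\in a$, $\beta\in b$: if $c(\alpha,\beta)\le i$ held, then chaining first-kind subadditivity along $\min(a)<\alpha<\min(b)$ with second-kind along $\alpha<\min(b)<\beta$ would bound $c(\min(a),\min(b))\le\max\{j_a,i,j_b\}\le j^*$, a contradiction.

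For Clause~(3), the strategy is to verify clause~(2) of Fact~\ref{pumpclosed} for $c$ at the parameter $\chi$ and then invoke the Fact. Given $\mathcal A\s[\kappa]^{<\chi}$ pairwise disjoint of size $\kappa$, a club $D\s\kappa$, and $i<\theta$, pick any $\gamma\in\Sigma\cap D$; since $\mathcal A$ is pairwise disjoint of size $\kappa$, $\kappa$ many $a\in\mathcal A$ satisfy $a>\gamma$. As $|[\gamma]_{E_i}|<\kappa$ and $\mathcal A$ is pairwise disjoint, $\kappa$ many such $a$ avoid $[\gamma]_{E_i}$, meaning $c(\gamma,\beta)>i$ for every $\beta\in a$; fix one such $a$. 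For each $\beta\in a$, the contrapositive of $\Sigma$-closedness at $\gamma\in\Sigma$ (using $\gamma\notin D^c_{\le i}(\beta)$) yields $\epsilon_\beta<\gamma$ with $c(\alpha,\beta)>i$ for all $\alpha\in(\epsilon_\beta,\gamma)$; setting $\epsilon:=\sup_{\beta\in a}\epsilon_\beta$, the inclusion $\Sigma\s E^\kappa_{\ge\chi}$ combined with $|a|<\chi$ forces $\epsilon<\gamma$, completing the verification. The step I expect to be most delicate is the Clause~(1) tree argument: formalizing $T_i$ as a $\kappa$-tree with bounded levels so that the no-$\kappa$-Souslin hypothesis yields an antichain of the required size.
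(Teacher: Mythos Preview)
Your central claim that $E_i$ is an equivalence relation is false, and this gap undermines your arguments for Clauses~(1) and~(3). Transitivity fails precisely when the ``middle'' element is the smallest: given $\beta<\alpha<\gamma$ with $c(\beta,\alpha)\le i$ and $c(\beta,\gamma)\le i$, neither form of subadditivity yields any bound on $c(\alpha,\gamma)$. In tree language, the relation $\alpha<_i\beta$ (meaning $\alpha<\beta$ and $c(\alpha,\beta)\le i$) makes $(\kappa,<_i)$ a tree, but two elements sharing a common $<_i$-predecessor need not be $<_i$-comparable. Consequently your ``$E_i$-class of $\gamma$'' is not well-defined, and the set $\{\beta:c(\gamma,\beta)\le i\}$ --- which for $\beta>\gamma$ is the cone above $\gamma$ in the tree $(\kappa,<_i)$ --- can certainly have size $\kappa$. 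This breaks your cardinality count in both Clause~(1) (you cannot simply select representatives from distinct ``classes'') and Clause~(3) (after fixing a single $\gamma\in\Sigma\cap D$, you cannot guarantee that $\kappa$ many $a\in\mathcal A$ avoid $\{\beta:c(\gamma,\beta)\le i\}$).

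The paper repairs this by working with the tree $(\kappa,<_i)$ directly and invoking Kurepa's theorem. The hypothesis $\U(\kappa,2,\theta,2)$ ensures $(\kappa,<_i)$ has no chain of size $\kappa$. For Clause~(1) with $\mu<\kappa$: if $(A,<_i)$ had no $\mu$-antichain then its levels would have size $<\mu$, and Kurepa would produce a $\kappa$-chain, a contradiction; for $\mu=\kappa$, the absence of a $\kappa$-antichain makes $(A,<_i)$ a $\kappa$-Souslin tree. For Clause~(3), the paper argues by contradiction: assuming no pair $(\gamma,a)$ works, one shows that $(D\cap\Sigma,<_i)$ has no antichain of size $\chi$ (for any $X\in[D\cap\Sigma]^\chi$ pick $a\in\mathcal A$ with $a>\sup(X)$; pigeonhole on the witnesses $\beta\in a$ with $c(\gamma,\beta)\le i$ finds $\gamma<\gamma'$ in $X$ below a common $\beta$, whence $\gamma<_i\gamma'$ by subadditivity of the second kind), and Kurepa again yields the contradictory $\kappa$-chain. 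Your Clause~(2) argument, by contrast, does not rely on the equivalence-relation claim and is essentially correct; it is a valid variant of the paper's approach, using $\min(a)$ and $\sup(c``[a]^2)$ in place of the paper's auxiliary point $\gamma<a_\gamma$ and $f(\gamma)=\sup\{c(\gamma,\alpha):\alpha\in a_\gamma\}$.
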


\begin{proof}
  For each $i<\theta$, we define an ordering $<_i$ on $\kappa$ by letting $\alpha<_i\beta$
  iff $\alpha<\beta$ and $c(\alpha,\beta)\le i$. The fact that $<_i$ is transitive follows
  from the fact that $c$ is subadditive of the first kind.
  \begin{claim} For every $i<\theta$, $(\kappa, <_i)$ is a tree with no branches of size $\kappa$.
  \end{claim}
  \begin{cproof} Let $i<\theta$ and $\gamma<\kappa$. To see that set $\{\alpha<\kappa\mid \alpha<_i\gamma\}$
    is well-ordered by $<_i$, fix $\alpha < \beta$ such that $\alpha, \beta <_i \gamma$. As $c$ is subadditive
    of the second kind,
    $c(\alpha,\beta)\le\max\{c(\alpha,\gamma),c(\beta,\gamma)\}\le i$, so $\alpha<_i\beta$.

  In addition, by the hypothesis on $c$, for every $A\in[\kappa]^\kappa$,
  there is a pair $(\alpha,\beta)\in[A]^2$ such that $c(\alpha,\beta)>i$,
  so $\neg(\alpha<_i\beta)$. That is, $(\kappa,<_i)$ has no chains of size $\kappa$.
  \end{cproof}

  (1) Fix $\mu \le \kappa$, and suppose that we are given some $A\in[\kappa]^\kappa$ and $i<\theta$.
  We would like to find $B\in[A]^\mu$ such that $c(\alpha,\beta)>i$ for all $(\alpha,\beta)\in[B]^2$.
  In particular, if there exists $B\in[\kappa]^\mu$ which
  is an antichain with respect to $<_i$, then we are done.
  Hereafter, suppose this is not the case.

  $\br$ If $\mu<\kappa$, then $(A,<_i)$ is a tree of size $\kappa$ all of whose levels have size $<\mu$.
  Since $\mu<\kappa$ and $\kappa$ is regular, a result of Kurepa \cite{kurepa} implies
  that $(\kappa,<_i)$ has a branch of size $\kappa$, contradicting the preceding claim.

  $\br$ If $\mu=\kappa$,  then $(A,<_i)$ forms a $\kappa$-Souslin tree.

\medskip

  (2) Suppose that $\mu \leq \kappa$ and $c$ witnesses $\U(\kappa, \mu, \theta, 2)$.
  Suppose also that $\mathcal A\s[\kappa]^{<\cf(\theta)}$ consists of $\kappa$-many
  pairwise disjoint sets and $i<\theta$ is a prescribed color. We will find
  $\mathcal{B} \in [\mathcal{A}]^\mu$ such that $\min(c[a \times b]) > i$ for all
  $(a,b) \in [\mathcal{B}]^2$.
  For each $\gamma<\kappa$, pick $a_\gamma\in\mathcal A$ with $\gamma < a_\gamma$.
  Define $f:\kappa\rightarrow\theta$ and $g:\kappa\rightarrow\kappa$ by setting,
  for all $\gamma < \kappa$,
  \begin{itemize}
    \item $f(\gamma):=\sup\{c(\gamma,\alpha)\mid \alpha\in a_\gamma\}$;
    \item $g(\gamma):=\sup(a_\gamma)$.
  \end{itemize}
  Fix $\varepsilon<\theta$ for which $A:=\{ \gamma<\kappa\mid f(\gamma)=\varepsilon\ \&\
  g[\gamma]\s\gamma\}$ is stationary. Since $c$ witnesses $\U(\kappa,\mu,\theta,2)$,
  we can pick $B\in[A]^\mu$ such that $c(\gamma,\delta)>\max\{\varepsilon,i\}$ for all $(\gamma, \delta) \in [B]^2$.
  We claim that $\mathcal B:=\{ a_\gamma\mid \gamma\in B\}$ is a subfamily of $\mathcal A$ as sought.
  To see this, pick a pair $(\gamma,\delta)\in [B]^2$ along with $(\alpha,\beta)\in a_\gamma\times a_\delta$.
  Clearly, $\gamma<\alpha<\delta<\beta$. By the subadditivity of $c$ and the choice of $\varepsilon$, we have:
  $$
    c(\gamma,\delta)\le\max\{c(\gamma,\alpha),c(\alpha,\delta)\}\le
    \max\{c(\gamma,\alpha),c(\alpha,\beta),c(\delta,\beta)\}\le\max\{\varepsilon,c(\alpha,\beta)\}.
  $$
  Recalling that $\max\{\varepsilon,i\}<c(\gamma,\delta)$, we infer that
  $i<c(\gamma,\delta)\le c(\alpha,\beta)$, as sought.

  (3) Suppose that $\chi\in\reg(\kappa)$, $\Sigma \s E^\kappa_{\geq \chi}$ is
  stationary, and $c$ is $\Sigma$-closed.

  \begin{claim}
    Suppose that $\mathcal A\s[\kappa]^{<\chi}$ is a family consisting of $\kappa$-many
    pairwise disjoint sets, $D$ is a club in $\kappa$, and $i<\theta$ is a prescribed color.
    Then there exist $\gamma\in D \cap \Sigma$ and $a\in\mathcal A$ such that
    \begin{enumerate}
      \item[(a)] $\gamma < a$;
      \item[(b)] for all $\beta\in a$, we have $c(\gamma,\beta)>i$.
    \end{enumerate}
  \end{claim}

  \begin{cproof}
    Suppose not. Let $T := D \cap \Sigma$, and note that, for all $\gamma\in T$ and
    all $a\in\mathcal A$ with $a>\gamma$, for some $\beta\in a$, we have
    $c(\gamma,\beta)\le i$. Consider the tree $(T,<_i)$.
    We claim that it has no antichains of size $\chi$. To see this, fix an arbitrary
    $X\s T$ of order type $\chi$, and let $\delta:=\sup(X)$. Fix an arbitrary $a\in\mathcal A$
    with $a > \delta$. For all $\gamma\in X$, since $a>\gamma$, we may
    find some $\beta\in a$ with $c(\gamma,\beta)\le i$. Since $|X|=\chi>|a|$, we may
    then pick $(\gamma, \gamma') \in [X]^2$ and $\beta\in a$ such that
    $c(\gamma,\beta)\le i$ and $c(\gamma',\beta)\le i$. It follows that
    $c(\gamma,\gamma')\le\max\{c(\gamma,\beta),c(\gamma',\beta)\}\le i$. In particular,
    $X$ is not an antichain. Now, since $\chi<\kappa$ are regular cardinals, the
    aforementioned result of Kurepa \cite{kurepa} entails
    the existence of $B\in[T]^\kappa$ which is a chain with respect to $<_i$.
    By the hypothesis on $c$, we can pick $(\alpha, \beta) \in [B]^2$ such that $c(\alpha,\beta)>i$.
    Then $\neg(\alpha<_i\beta)$ which is a contradiction to the fact that $B$ is a chain in $(T,<_i)$.
  \end{cproof}

  As $c$ is $\Sigma$-closed and $\Sigma \s E^\kappa_{\geq \chi}$,
  Clause~(b) of the preceding is equivalent to:
  \begin{enumerate}
    \item[(b')] there is $\epsilon < \gamma$ such that, for all $\alpha \in (\epsilon,
    \gamma)$ and all $\beta\in a$, we have $c(\alpha,\beta)>i$.
  \end{enumerate}
  Thus, by the implication $(2)\implies(3)$ of Fact~\ref{pumpclosed}, we can conclude
  that $c$ witnesses $\U(\kappa, \kappa, \theta, \chi)$, as desired.
\end{proof}

\subsection{Narrow systems and trees with ascent paths} \label{nsp_sec}
Given a binary relation $R$ on a set $X$,
for $a,b \in X$, we say that $a$ and $b$ are \emph{$R$-comparable} iff $a = b$, $a \mathrel{R} b$, or $b \mathrel{R} a$.
$R$ is \emph{tree-like} iff, for all $a,b,c \in X$, if $a \mathrel{R} c$ and $b \mathrel{R} c$, then $a$ and $b$ are $R$-comparable.

\begin{defn}[Magidor-Shelah, \cite{MgSh:324}]\label{system_def}
$S = \langle \bigcup_{\alpha \in I}\{\alpha\} \times \theta_\alpha, \mathcal{R} \rangle$ is a \emph{$\kappa$-system} if all of the following hold:
	\begin{enumerate}
		\item $I \subseteq \kappa$ is unbounded and, for all $\alpha \in I$, $\theta_\alpha$ is a cardinal such that $0 < \theta_\alpha < \kappa$;
		\item $\mathcal{R}$ is a set of binary, transitive, tree-like relations on $\bigcup_{\alpha \in I}\{\alpha\} \times \theta_\alpha$ and $0 < |\mathcal{R}| < \kappa$;
		\item for all $R \in \mathcal{R}$, $\alpha_0, \alpha_1 \in I$, $\beta_0 < \theta_{\alpha_0}$, and $\beta_1 < \theta_{\alpha_1}$, if $(\alpha_0, \beta_0) \mathrel{R} (\alpha_1,\beta_1)$, then $\alpha_0 < \alpha_1$;
		\item for every $(\alpha_0,\alpha_1)\in[I]^2$. there are $(\beta_0,\beta_1)\in \theta_{\alpha_0}\times \theta_{\alpha_1}$ and $R \in \mathcal{R}$ such that $(\alpha_0, \beta_0) \mathrel{R} (\alpha_1, \beta_1)$.
	\end{enumerate}

Define $\mathrm{width}(S) := \sup\{|\mathcal{R}|,\theta_\alpha \mid \alpha \in I\}$.
A $\kappa$-system $S$ is \emph{narrow}  if $\mathrm{width}(S)^+ < \kappa$.
For $R \in \mathcal{R}$, a \emph{branch of $S$ through $R$} is a set $B\s \bigcup_{\alpha \in I}\{\alpha\} \times \theta_\alpha$ such that for all $a,b \in B$, $a$ and $b$ are $R$-comparable.
A branch $B$ is \emph{cofinal} iff $\sup\{ \alpha\in I\mid \exists \tau<\theta_\alpha~(\alpha,\tau)\in B\}=\kappa$.
\end{defn}

\begin{defn}[\cite{narrow_systems}]
 The \emph{$(\theta, \kappa)$-narrow system property}, abbreviated $\nsp(\theta, \kappa)$, asserts that every narrow $\kappa$-system of width $<\theta$ has a cofinal branch.
\end{defn}

\begin{lemma}\label{lmma35} Suppose that $\theta<\theta^+<\kappa$ and $c:[\kappa]^2\rightarrow\theta$ is a subadditive coloring.
If $\nsp(\theta^+,\kappa)$ holds,
then $c$ fails to witness $\U(\kappa,2,\theta,2)$.
\end{lemma}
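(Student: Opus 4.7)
The plan is to argue by contraposition: assuming that $c$ is a subadditive witness to $\U(\kappa,2,\theta,2)$, we will build from $c$ a narrow $\kappa$-system of width $\theta$ to which $\nsp(\theta^+,\kappa)$ applies, and then extract from the resulting cofinal branch a $c$-homogeneous unbounded set, contradicting the unboundedness of $c$.

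Concretely, for each $i<\theta$, define a binary relation $R_i$ on $\kappa\times\{0\}$ by declaring $(\alpha,0)\mathrel{R_i}(\beta,0)$ iff $\alpha<\beta$ and $c(\alpha,\beta)\le i$. Subadditivity of the first kind gives transitivity of $R_i$: if $c(\alpha,\beta)\le i$ and $c(\beta,\gamma)\le i$ then $c(\alpha,\gamma)\le\max\{c(\alpha,\beta),c(\beta,\gamma)\}\le i$. Subadditivity of the second kind gives that $R_i$ is tree-like: if $(\alpha,0),(\beta,0)\mathrel{R_i}(\gamma,0)$ with $\alpha<\beta<\gamma$, then $c(\alpha,\beta)\le\max\{c(\alpha,\gamma),c(\beta,\gamma)\}\le i$, so $(\alpha,0)\mathrel{R_i}(\beta,0)$. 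Setting $I:=\kappa$, $\theta_\alpha:=1$ for all $\alpha\in I$, and $\mathcal R:=\{R_i\mid i<\theta\}$, the only remaining clause of Definition~\ref{system_def} to verify is the last one: given $(\alpha_0,\alpha_1)\in[\kappa]^2$, take $i:=c(\alpha_0,\alpha_1)<\theta$, so that $(\alpha_0,0)\mathrel{R_i}(\alpha_1,0)$.

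Thus $S:=\langle\kappa\times\{0\},\mathcal R\rangle$ is a $\kappa$-system of width $\theta$. Since $\theta^+<\kappa$ by hypothesis, $S$ is narrow, and its width is ${<}\theta^+$, so $\nsp(\theta^+,\kappa)$ yields some $i^*<\theta$ and a cofinal branch $B$ through $R_{i^*}$. Let $A:=\{\alpha<\kappa\mid (\alpha,0)\in B\}$; by cofinality of $B$, the set $A$ is unbounded in $\kappa$, hence $|A|=\kappa$. For any $(\alpha,\beta)\in[A]^2$, the $R_{i^*}$-comparability of $(\alpha,0)$ and $(\beta,0)$ together with $\alpha<\beta$ forces $c(\alpha,\beta)\le i^*$. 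This directly contradicts the assumption that $c$ witnesses $\U(\kappa,2,\theta,2)$, applied to the pairwise disjoint family $\{\{\alpha\}\mid\alpha\in A\}$ and color $i^*$.

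There is no real obstacle here beyond carefully matching Definition~\ref{system_def}; the core of the argument is the well-known observation (already appearing in the proof of Lemma~\ref{uPLUSsub}) that subadditivity of the two kinds turns the fibers $\{\alpha<\beta\mid c(\alpha,\beta)\le i\}$ into a transitive, tree-like family of relations, which is precisely the data of a width-$\theta$ system on $\kappa$.
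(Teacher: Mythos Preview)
Your proof is correct and follows essentially the same approach as the paper's. The only cosmetic difference is the packaging of the narrow system: the paper uses a single relation on $\kappa\times\theta$ (encoding the color $i$ in the second coordinate of each node), whereas you use $\theta$-many relations $R_i$ on $\kappa\times\{0\}$ (encoding the color in the choice of relation); these are equivalent presentations of the same width-$\theta$ system, and the rest of the argument is identical.
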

\begin{proof} Define an binary relation $ R$ on $\kappa\times\theta$ by letting
$(\alpha,i)\mathrel{R}(\beta,j)$ iff $\alpha<\beta$, $i=j$, and $c(\alpha,\beta)\le i$.
\begin{claim} Let $\alpha<\beta<\gamma<\kappa$. Then:
\begin{enumerate}
\item there exists $i<\theta$ such that $(\alpha,i)\mathrel{R}(\beta,i)$;
\item for all $i<\theta$, if $(\alpha,i)\mathrel{R}(\beta,i)$  and $(\beta,i)\mathrel{R}(\gamma,i)$,
then $(\alpha,i)\mathrel{R}(\gamma,i)$;
\item for all $i<\theta$, if $(\alpha,i)\mathrel{R}(\gamma,i)$  and $(\beta,i)\mathrel{R}(\gamma,i)$,
then $(\alpha,i)\mathrel{R}(\beta,i)$.
\end{enumerate}
\end{claim}
\begin{cproof} (1) Just take $i:=c(\alpha,\beta)$.

(2) By subadditivity of the first kind.

(3) By subadditivity of the second kind.
\end{cproof}

It thus follows that $S:=\langle \kappa\times\theta,\{R\}\rangle$
is a narrow $\kappa$-system.
So, assuming that $\nsp(\theta^+,\kappa)$ holds,
we may fix $B\s\kappa\times\theta$ that forms a cofinal branch of $S$ through $R$.
Pick $A\in[\kappa]^\kappa$ and $i<\theta$ such that $B=A\times\{i\}$.
Now, if $c$ were to witness $\U(\kappa,2,\theta,2)$, then we could fix $(\alpha,\beta)\in[A]^2$ such that $c(\alpha,\beta)>i$.
But then $(\alpha,i)\mathrel{R}(\beta,i)$ would fail to hold, contradicting the fact that $\{(\alpha,i),(\beta,i)\}\s B$.
\end{proof}

We thus arrive at the following Ramsey-theoretic result.

\begin{cor}\label{cor35} Suppose that $\theta<\theta^+<\kappa$ and $\nsp(\theta^+,\kappa)$ holds.
For every subadditive coloring $c:[\kappa]^2\rightarrow\theta$,
there exists $A\in[\kappa]^\kappa$ such that $c``[A]^2$ is finite.
\end{cor}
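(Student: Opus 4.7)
The plan is to identify the minimal ordinal
$$j^* := \min\{j < \theta \mid \exists A \in [\kappa]^\kappa,\ c``[A]^2 \subseteq j+1\},$$
which is well-defined and strictly less than $\theta$ by Lemma~\ref{lmma35}; fix a witness $A^* \in [\kappa]^\kappa$; and then produce $A \in [A^*]^\kappa$ on which $c$ is constantly equal to $j^*$. The key device is the relation $\sim$ on $A^*$ defined by $\alpha \sim \beta$ iff $\alpha = \beta$ or $c(\alpha,\beta) < j^*$. A short case split on the $<$-ordering of three points, combined with both subadditivity inequalities, shows that $\sim$ is transitive and hence an equivalence relation. Once I establish that every $\sim$-class in $A^*$ has cardinality strictly less than $\kappa$, regularity of $\kappa$ will yield $\kappa$-many classes, and a transversal $A \in [A^*]^\kappa$ will then satisfy $c(\alpha,\beta) \geq j^*$ for every $(\alpha,\beta) \in [A]^2$; together with the upper bound $c \restriction [A^*]^2 \leq j^*$, this forces $c``[A]^2 = \{j^*\}$, which is finite.

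The heart of the argument will be to bound the sizes of the $\sim$-classes. Suppose towards a contradiction that $C \subseteq A^*$ is a $\sim$-class with $|C| = \kappa$, and identify $C$ with $\kappa$ via its order-isomorphism, so that $c \restriction [C]^2$ becomes a subadditive coloring $\tilde c : [\kappa]^2 \to j^*$. Fix any bijection $\phi : |j^*| \to j^*$ and construct the $\kappa$-system $\langle \kappa \times |j^*|, \{R\}\rangle$ with $(\alpha,\tau) \mathrel{R} (\beta,\tau')$ iff $\alpha < \beta$, $\tau = \tau'$, and $\tilde c(\alpha,\beta) \leq \phi(\tau)$. Subadditivity of the first kind will make $R$ transitive, subadditivity of the second kind will make it tree-like, and surjectivity of $\phi$ combined with $\tilde c``[\kappa]^2 \subseteq j^*$ will verify clause~(4) of Definition~\ref{system_def}. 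This system has width $|j^*|$, which is strictly less than $\theta$, so it is narrow of width $<\theta^+$, and $\nsp(\theta^+,\kappa)$ will yield a cofinal branch. Since $R$-comparability forces identical second coordinates, such a branch must be of the form $B \times \{\tau^*\}$ with $B$ unbounded in $\kappa$ (hence of size $\kappa$), witnessing $\tilde c \restriction [B]^2 \leq \phi(\tau^*) < j^*$; pulling $B$ back to $C$ produces a $\kappa$-sized subset of $\kappa$ on which $c$ is uniformly bounded below $j^*$, contradicting the minimality of $j^*$.

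The main obstacle is the limit case of $j^*$. When $j^* = k^* + 1$ is a successor, a $\sim$-class of size $\kappa$ would already give the uniform bound $c \leq k^* < j^*$ on a $\kappa$-sized set, immediately contradicting minimality with no second appeal to $\nsp$ required. In the limit case, however, a $\sim$-class $C$ of size $\kappa$ only delivers the pointwise strict inequality $c(\alpha,\beta) < j^*$, with no uniform sub-$j^*$ bound readily available; it is precisely to manufacture such a uniform bound that one must invoke $\nsp$ a second time via the refined system above, whose levels have cardinality $|j^*| < \theta$ rather than the full $\theta$.
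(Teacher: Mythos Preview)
Your proof has a genuine gap: the relation $\sim$ is \emph{not} transitive. Subadditivity gives you two inequalities, both of which bound a color at a pair involving the \emph{least} of three ordinals in terms of the other two pairs. Neither inequality lets you bound $c(\alpha,\gamma)$ when the linking element $\beta$ is the \emph{minimum} of the three. Concretely, take $\beta<\alpha<\gamma$ with $c(\beta,\alpha)=c(\beta,\gamma)=0$ and $c(\alpha,\gamma)=j^*$; this assignment is subadditive (both inequalities for the triple $\beta<\alpha<\gamma$ read $0\le\max\{0,j^*\}$), lies inside $j^*+1$, but gives $\beta\sim\alpha$, $\beta\sim\gamma$, $\alpha\not\sim\gamma$. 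What subadditivity actually yields is that $\{(\alpha,\beta):c(\alpha,\beta)<j^*\}$ is a \emph{tree order} on $A^*$ (cf.\ the proof of Lemma~\ref{uPLUSsub}), not an equivalence relation, and tree orders can certainly branch.

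This also undermines your target of getting $c\restriction[A]^2$ to be a \emph{singleton}: for successor $j^*=k^*+1$, a $\kappa$-sized set with $c$ constantly $j^*$ would be a $\kappa$-antichain in the tree $(A^*,<_{k^*})$, while minimality of $j^*$ only rules out $\kappa$-chains; nothing in the hypotheses excludes the tree being $\kappa$-Souslin. The paper does not aim for a singleton. Its argument simply iterates Lemma~\ref{lmma35}: one application gives $A_0\in[\kappa]^\kappa$ with $\theta_0:=|c``[A_0]^2|<\theta$; if $\theta_0$ is still infinite, reindex the range of $c\restriction[A_0]^2$ as $\theta_0$ and apply $\nsp(\theta_0^+,\kappa)$ to obtain $A_1\in[A_0]^\kappa$ with $\theta_1<\theta_0$; the strictly decreasing sequence of cardinals $\theta_0>\theta_1>\cdots$ must reach a finite value in finitely many steps.
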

\begin{proof} Suppose that $c:[\kappa]^2\rightarrow\theta$ is subadditive coloring.
By the preceding lemma, we may fix $A_0\in[\kappa]^\kappa$
such that $\theta_0:=|c``[A_0]|$ is smaller than $\theta$.
If $\theta_0$ is finite, then we are done. Otherwise, we may identity $c\restriction[A_0]^2$
with a subadditive map from $[\kappa]^2$ to $\theta_0$,
and then utilize $\nsp((\theta_0)^+,\kappa)$ to find $A_1\in[A_0]^\kappa$
for which $\theta_1:=|c``[A_1]|$ is smaller than $\theta_0$.
Continuing in this fashion, we produce a
decreasing chain of sets $A_0\supseteq A_1\supseteq\ldots$
and a strictly decreasing sequence of cardinals $\theta_0>\theta_1>\cdots$.
As the cardinals are well-founded, after finitely many steps we will arrive at a set $A_n\in[\kappa]^\kappa$
for which $\theta_n:=|c``[A_n]|$ is finite.
\end{proof}

\begin{cor}\label{cor36} $\PFA$ implies that for every regular cardinal $\kappa \geq \aleph_2$,
for every subadditive coloring $c:[\kappa]^2\rightarrow\omega$,
there exists $A\in[\kappa]^\kappa$ such that $c``[A]^2$ is finite.
\end{cor}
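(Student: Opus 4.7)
The plan is to reduce the statement to a direct application of Corollary~\ref{cor35}. With $\theta := \omega$, the hypothesis $\kappa \geq \aleph_2$ is precisely the requirement $\omega < \omega^+ < \kappa$ of that corollary, so the proof amounts to verifying that $\PFA$ implies $\nsp(\omega_1, \kappa)$ for every regular cardinal $\kappa \geq \aleph_2$. Once that is secured, Corollary~\ref{cor35} immediately produces, for every subadditive $c:[\kappa]^2 \to \omega$, a set $A \in [\kappa]^\kappa$ with $c``[A]^2$ finite, which is exactly the conclusion we seek.

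The implication from $\PFA$ to $\nsp(\omega_1, \kappa)$ for regular $\kappa \geq \aleph_2$ is a well-known consequence of $\PFA$, so in practice the proof reduces to citing the narrow systems literature, e.g., \cite{narrow_systems}. For a self-contained argument, one would attach to a narrow $\kappa$-system $S$ of countable width the natural proper forcing notion $\mathbb{P}_S$ whose conditions are finite $R$-chains (for some fixed $R \in \mathcal{R}$) ordered by end-extension; applying $\PFA$ to the $\omega_1$-many dense sets asserting that the generic chain reaches above each bounded piece of $\kappa$ would then yield a cofinal branch of $S$ through $R$.

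The main obstacle in the self-contained version is the verification of properness of $\mathbb{P}_S$, which is precisely what motivates the width bound in the definition of a narrow system: for any countable elementary submodel $M \prec H_\Upsilon$ containing $S$ and any condition in $M$, the tree-likeness of $R$ together with the narrowness of $S$ lets one extend to an $(M, \mathbb{P}_S)$-generic condition. Since this technical work has already been carried out in \cite{narrow_systems}, the proof of the corollary can be presented as a one-line appeal to that reference combined with Corollary~\ref{cor35}.
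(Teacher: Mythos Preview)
Your proposal is correct and follows essentially the same route as the paper: cite \cite{narrow_systems} for the fact that $\PFA$ implies $\nsp(\omega_1,\kappa)$ for all regular $\kappa\ge\aleph_2$, and then invoke the preceding result. The paper appeals to Lemma~\ref{lmma35} rather than Corollary~\ref{cor35}, but for $\theta=\omega$ these amount to the same thing, since ``$c$ fails to witness $\U(\kappa,2,\omega,2)$'' already yields a $\kappa$-sized set on which the range of $c$ is bounded in $\omega$, hence finite.

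One caveat: your ``self-contained'' sketch is not how the argument in \cite{narrow_systems} actually goes. Forcing with finite $R$-chains ordered by end-extension to add a cofinal branch through a narrow system is not, in general, proper---think of the analogous forcing to shoot a branch through an $\omega_1$-Aronszajn tree. The actual argument is more delicate. Since you ultimately defer to the reference anyway, this does not affect the correctness of your proof, but you should drop or revise that paragraph rather than leave a misleading outline.
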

\begin{proof} By \cite[\S10]{narrow_systems}, $\PFA$ implies $\nsp(\omega_1, \kappa)$ for all regular $\kappa\ge\omega_2$.
Now, appeal to Lemma~\ref{lmma35}.
\end{proof}

Next, we prove a pair of lemmas linking $\US(\ldots)$ to the existence of trees with
ascent paths but no cofinal branches. We first recall the following definition.

\begin{defn}
  Suppose that $\mathcal{T} = (T, <_T)$ is a tree of height $\kappa$.
  \begin{enumerate}
    \item For all $\alpha < \kappa$, $T_\alpha$ denotes level $\alpha$ of $\mathcal{T}$.
    \item A \emph{$\theta$-ascent path} through $\mathcal{T}$ is a sequence of functions
    $\langle f_\alpha \mid \alpha < \kappa \rangle$ such that
    \begin{enumerate}
      \item for all $\alpha < \kappa$, $f_\alpha:\theta \rightarrow T_\alpha$;
      \item for all $\alpha < \beta < \kappa$, there is $i < \theta$ such that,
      for all $j \in [i, \theta)$, we have $f_\alpha(j) <_T f_\beta(j)$.
    \end{enumerate}
  \end{enumerate}
\end{defn}

\begin{lemma}\label{lemma66}
  Suppose that $\theta\in\reg(\kappa)$, and $\mathcal T$ is
  a tree of height $\kappa$ admitting a $\theta$-ascent path but no branch of size $\kappa$.
  Then $\US(\kappa,2,\theta,\theta)$ holds.
\end{lemma}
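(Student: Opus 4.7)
The plan is to cook a subadditive coloring directly from the ascent path, show it witnesses the base instance $\U(\kappa,2,\theta,2)$, and then invoke Lemma~\ref{uPLUSsub}(2) to upgrade to $\U(\kappa,2,\theta,\theta)$ (note $\cf(\theta)=\theta$ since $\theta\in\reg(\kappa)$).

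Writing $\langle f_\alpha \mid \alpha < \kappa\rangle$ for the given $\theta$-ascent path through $\mathcal T = (T,<_T)$, I define $c:[\kappa]^2\rightarrow \theta$ by
\[
  c(\alpha,\beta) := \min\{ i < \theta \mid \forall j \in [i,\theta),\ f_\alpha(j) <_T f_\beta(j)\}
\]
for all $(\alpha,\beta)\in[\kappa]^2$. The ascent path condition ensures the set on the right is nonempty, so $c$ is well-defined.

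Next I would verify subadditivity directly. For the first kind, given $\alpha<\beta<\gamma<\kappa$, setting $k:=\max\{c(\alpha,\beta),c(\beta,\gamma)\}$, for every $j\in[k,\theta)$ transitivity of $<_T$ gives $f_\alpha(j)<_T f_\beta(j)<_T f_\gamma(j)$, whence $c(\alpha,\gamma)\le k$. For the second kind, with $k:=\max\{c(\alpha,\gamma),c(\beta,\gamma)\}$ and any $j\in[k,\theta)$, both $f_\alpha(j)$ and $f_\beta(j)$ are $<_T$-below $f_\gamma(j)$, so they are $<_T$-comparable; since they sit on distinct levels $\alpha<\beta$, this forces $f_\alpha(j)<_T f_\beta(j)$, so $c(\alpha,\beta)\le k$.

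Now I show $c$ witnesses $\U(\kappa,2,\theta,2)$. Suppose for a contradiction that some $A\in[\kappa]^\kappa$ and some $i<\theta$ satisfy $c(\alpha,\beta)\le i$ for all $(\alpha,\beta)\in[A]^2$. Taking $j=i$ in the definition of $c$, we obtain $f_\alpha(i)<_T f_\beta(i)$ for all such pairs, so $\{f_\alpha(i)\mid \alpha\in A\}$ is a $<_T$-chain of size $\kappa$ (distinct $\alpha$'s land on distinct levels). Extending this chain by Zorn to a maximal chain in $\mathcal T$ yields a branch of size at least $\kappa$, contradicting the hypothesis. Hence $c$ is a subadditive witness to $\U(\kappa,2,\theta,2)$, and Lemma~\ref{uPLUSsub}(2) boosts this to $\U(\kappa,2,\theta,\cf(\theta))=\U(\kappa,2,\theta,\theta)$, so $\US(\kappa,2,\theta,\theta)$ holds.

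There is no real obstacle here: the only points deserving care are (i) checking subadditivity of the second kind, where one must exploit the tree-likeness of $<_T$ rather than mere transitivity, and (ii) converting ``chain of size $\kappa$'' into ``branch of size $\kappa$'', which is immediate by Zorn's lemma. Everything else is bookkeeping in the definition of $c$ and a single appeal to an earlier lemma.
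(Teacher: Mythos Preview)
Your proof is correct and follows essentially the same approach as the paper: the same coloring $c$ derived from the ascent path, the same verification of subadditivity via transitivity and tree-likeness of $<_T$, the same contradiction producing a $\kappa$-sized chain from a bounded $A$, and the same appeal to Lemma~\ref{uPLUSsub}. The only cosmetic difference is that the paper takes the $<_T$-downward closure of $\{f_\alpha(i)\mid\alpha\in A\}$ to obtain the branch, whereas you invoke Zorn; both are fine.
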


\begin{proof} Write $\mathcal T$ as $(T,<_T)$.
  Fix a $\theta$-ascent path $\langle f_\alpha\mid\alpha<\kappa\rangle$ through $\mathcal T$,
  and derive a coloring $c:[\kappa]^2\rightarrow\theta$ via
  \[
    c(\alpha,\beta):=\min\{i<\theta\mid \forall j\in[i,\theta)~f_\alpha(j)<_T f_\beta(j)\}.
  \]

  We shall show that $c$ witnesses $\US(\kappa,2,\theta,\theta)$.

  \begin{claim}
    $c$ is subadditive.
  \end{claim}
  \begin{cproof}
    Suppose that $\alpha<\beta<\gamma<\kappa$.

    $\br$ Let $i:=\max\{c(\alpha,\beta),c(\beta,\gamma)\}$.
    Then, for all $j\in[i,\theta)$, we have
    \[
      f_\alpha(j)<_T f_\beta(j)<_T f_\gamma(j),
    \]
    and hence $f_\alpha(j)<_T f_\gamma(j)$. Consequently, $c(\alpha,\gamma)\le i$.

    $\br$ Let $i:=\max\{c(\alpha,\gamma),c(\beta,\gamma)\}$.
    Then, for all $j\in[i,\theta)$, we have $f_\alpha(j)<_T f_\gamma(j)$ and $f_\beta(j)<_T f_\gamma(j)$.
    But $\mathcal T$ is a tree, and hence $f_\alpha(j)<_T f_\beta(j)$. Consequently, $c(\alpha,\beta)\le i$.
  \end{cproof}

  By Lemma~\ref{uPLUSsub}, it remains to verify that $c$ witnesses $\U(\kappa,2,\theta,2)$.
  Suppose this is not the case. Then for some $A\in[\kappa]^\kappa$, $i:=\sup(c``[A]^2)$ is $<\theta$.
  But then the $<_T$-downward closure of $\{f_\alpha(i) \mid \alpha \in A\}$ is a branch
  of size $\kappa$ through $\mathcal T$, contradicting our assumptions.
\end{proof}

The preceding admits a partial converse. Before stating it, we recall the notion of a
\emph{$C$-sequence}, which will be used in its proof.

\begin{defn}
  A \emph{$C$-sequence} over $\kappa$ is a sequence $\langle C_\beta \mid \beta < \kappa
  \rangle$ such that, for all $\beta < \kappa$, $C_\beta$ is a closed subset of
  $\beta$ with $\sup(C_\beta) = \sup(\beta)$.
\end{defn}

\begin{lemma}\label{lemma35}
  Suppose that $\theta\in\reg(\kappa)$.
  If there exists a somewhere-closed witness to $\US(\kappa,2,\theta,\theta)$,
  then there exists a tree $\mathcal T$ of height $\kappa$ admitting a $\theta$-ascent path but no branch of size $\kappa$.
\end{lemma}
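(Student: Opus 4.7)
The plan is to build $\mathcal{T}$ directly from the coloring, encoding truncated color values. Let $c:[\kappa]^2\to\theta$ be a somewhere-closed subadditive witness to $\US(\kappa,2,\theta,\theta)$. For $\beta<\kappa$ and $i<\theta$, define $h^i_\beta:\beta\to\theta+1$ by $h^i_\beta(\gamma):=c(\gamma,\beta)$ if $c(\gamma,\beta)>i$, and $h^i_\beta(\gamma):=\theta$ otherwise, so that the marker value $\theta$ indicates ``color already $\le i$''. Take
\[
  \mathcal{T}:=\{h^i_\beta\restriction\alpha\mid i<\theta,\ \alpha\le\beta<\kappa\},
\]
ordered by end-extension and viewed as a subtree of $\bigcup_{\alpha<\kappa}{}^{\alpha}(\theta+1)$. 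Since $h^0_\alpha$ always lives at level $\alpha$, $\mathcal{T}$ has height $\kappa$.

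The key coherence lemma is $h^i_\alpha=h^i_\beta\restriction\alpha$ whenever $c(\alpha,\beta)\le i$. To see this, fix $\gamma<\alpha$ and split on $c(\gamma,\alpha)$: if $c(\gamma,\alpha)>i$, then subadditivity of the first kind gives $c(\gamma,\beta)\le c(\gamma,\alpha)$ and subadditivity of the second kind gives the reverse, so they agree above $i$; if $c(\gamma,\alpha)\le i$, subadditivity of the first kind also forces $c(\gamma,\beta)\le i$, so both values are $\theta$. Defining $f_\alpha(i):=h^i_\alpha$ then delivers a $\theta$-ascent path: for any pair $\alpha<\beta$, the threshold $c(\alpha,\beta)<\theta$ witnesses the ascent condition.

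The main obstacle is ruling out a branch $b:\kappa\to\theta+1$ of length $\kappa$. Given such $b$, for each $\alpha$ fix $i_\alpha<\theta$ and $\beta_\alpha\ge\alpha$ with $b\restriction\alpha=h^{i_\alpha}_{\beta_\alpha}\restriction\alpha$, and use $\theta<\kappa$ to stabilize $i_\alpha$ to some $i^*$ on a cofinal $E\subseteq\kappa$. Split on whether the marker set $A:=b^{-1}\{\theta\}$ is unbounded. If it is, then for $\gamma_1<\gamma_2$ in $A$ pick $\alpha\in E$ with $\alpha>\gamma_2$: both $\gamma_1,\gamma_2$ lie in $D^c_{\le i^*}(\beta_\alpha)$, so subadditivity of the second kind forces $c(\gamma_1,\gamma_2)\le i^*$. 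Thus $c\restriction[A]^2$ is bounded by $i^*$ on a $\kappa$-sized set, contradicting $\U(\kappa,2,\theta,2)$.

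The more delicate case is $A$ bounded, say $A\subseteq\alpha_0$. The crucial observation is that above $\alpha_0$, the branch faithfully records \emph{actual} color values: for $\alpha\in E$ and $\gamma\in[\alpha_0,\alpha)$ we have $b(\gamma)=c(\gamma,\beta_\alpha)$. Recursively pick a strictly increasing cofinal sequence $\langle\xi_k\mid k<\kappa\rangle$ in $E$ with $\beta_{\xi_k}<\xi_{k+1}$; then, for $k<l$, reading $b(\beta_{\xi_k})$ off the node at level $\xi_l$ gives $c(\beta_{\xi_k},\beta_{\xi_l})=b(\beta_{\xi_k})=:d_k$. A final pigeonhole stabilizes $d_k$ to some $d^*<\theta$ on a cofinal $K\subseteq\kappa$, so $c$ is constantly $d^*$ on pairs from the $\kappa$-sized set $\{\beta_{\xi_k}\mid k\in K\}$, again contradicting $\U(\kappa,2,\theta,2)$ and completing the proof.
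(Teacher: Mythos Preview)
Your proof is correct and takes a genuinely different, more elementary route than the paper's. The paper builds $C$-sequences $\vec C^i$ out of the closed sets $D^c_{\le i}(\beta)$ and then passes through Todorcevic's $\rho_2$ (number-of-steps) function; the somewhere-closed hypothesis is used precisely to make those $C$-sequences well-defined and to run a club argument showing the tree has no $\kappa$-branch. You instead encode truncated color values directly as tree nodes, and your coherence lemma $h^i_\alpha = h^i_\beta\restriction\alpha$ whenever $c(\alpha,\beta)\le i$ (a clean two-case application of both subadditivity inequalities) gives the ascent path immediately. Your no-branch argument is a direct case split on whether the marker set $b^{-1}\{\theta\}$ is unbounded, and in either case you extract a $\kappa$-sized set on which $c$ is bounded, using only subadditivity and $\U(\kappa,2,\theta,2)$. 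Thus you actually establish the stronger statement that \emph{any} subadditive witness to $\U(\kappa,2,\theta,2)$ yields such a tree, with no closedness assumption whatsoever. What the paper's approach buys is an explicit link to the $C$-sequence and walks machinery used elsewhere in the series; what your approach buys is a self-contained argument and a slightly sharper conclusion.
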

\begin{proof} As there is an $\omega_1$-Aronszajn tree, and as any $\omega_1$-Aronszajn tree admits an $\omega$-ascent path,
we may assume that $\kappa\ge\aleph_2$.
Fix a stationary subset $\Sigma\s\kappa$ and a $\Sigma$-closed coloring $c:[\kappa]^2\rightarrow\theta$ witnessing $\US(\kappa,2,\theta,\theta)$.
For every $\beta\in\acc(\kappa)$, let $i(\beta)$ denote the least $i\le\theta$ such that $\sup(D^c_{\le i}(\beta))=\beta$.
Note that if $\cf(\beta)\neq\theta$, then $i(\beta)<\theta$.
Now, for every $i<\theta$, define a $C$-sequence $\vec C^i:=\langle C_\beta^i\mid\beta<\kappa\rangle$ as follows.

$\br$ Let $C^i_0:=\emptyset$.

$\br$ For all $\beta<\kappa$, let $C_{\beta+1}^i:=\{\beta\}$.

$\br$ For all $\beta\in\acc(\kappa)$ such that $i\ge i(\beta)$, let $C_\beta^i:=\cl(D^c_{\le i}(\beta))$.

$\br$ For all $\beta\in\acc(\kappa)$ such that $i<i(\beta)<\theta$, let $C_\beta^i:=\cl(D^c_{\le i(\beta)}(\beta))$.

$\br$ For any other $\beta$, let $C^i_\beta$ be a club in $\beta$ of order-type $\theta$.

Then, let $\rho_2^i:[\kappa]^2\rightarrow\omega$ be the corresponding \emph{number of steps}
function derived from walking along $\vec C^i$, as in \cite[\S6.3]{todorcevic_book}.
Then, for all $i<\theta$ and $\beta<\kappa$, define a function $\rho_{2\beta}^i:\beta\rightarrow\omega$
via $\rho_{2\beta}^i(\alpha):=\rho_2^i(\alpha,\beta)$.
Now, let $$T:=\{ \rho_{2\beta}^i\restriction\alpha\mid i<\theta, ~ \alpha\le\beta<\kappa\}.$$
It is clear that $\mathcal T:=(T,{\s})$ is a tree of height $\kappa$.
\begin{claim} $\mathcal T$ has no branch of size $\kappa$.
\end{claim}
\begin{cproof}
Otherwise, by a standard argument (e.g., the proof of \cite[Corollary~2.6]{paper18}), there exists $i<\theta$ for which
$\rho_2^i$ admits a homogeneous set of size $\kappa$. Fix such an $i$.
By \cite[Theorem~6.3.2]{todorcevic_book}, then, we may fix a club $C\s\kappa$ such that,
for every $\alpha<\kappa$,
there exists $\beta<\kappa$ such that $C\cap\alpha\s C^i_{\beta}$.
By the definition of $\vec C^i$, for every $\alpha<\kappa$ with $\otp(C\cap\alpha)>\theta$,
it follows that there exist $j<\theta$ and $\beta<\kappa$ such that $C\cap\alpha\s \cl(D^c_{\le j}(\beta))$.
By the pigeonhole principle, we may now fix $j<\kappa$ such that
for every $\alpha<\kappa$, there exists $\beta_\alpha\ge\alpha$ such that
$C\cap\alpha\s \cl(D^c_{\le j}(\beta_\alpha))$.

As $c$ in particular witnesses $\U(\kappa,2,\theta,3)$,
we may now find $(\gamma,\alpha)\in[C\cap\Sigma]^2$ such that
$c(\gamma,\beta_\alpha)>j$. As $\gamma\in C\cap\alpha$, we infer that $\gamma\in\cl(D^c_{\le j}(\beta_\alpha))$. As $\gamma\in\Sigma$ and $c$ is $\Sigma$-closed,
it follows that $\gamma\in D^c_{\le j}(\beta_\alpha)$,
contradicting the fact $c(\gamma,\beta_\alpha)>j$.
\end{cproof}

\begin{claim} $\mathcal T$ admits a $\theta$-ascent path.
\end{claim}
\begin{cproof} As $\kappa\ge\aleph_2$, $S:=\acc(\kappa)\setminus E^\kappa_\theta$ is stationary.
For every $\alpha<\kappa$, define $f_\alpha:\theta\rightarrow T_\alpha$ via $f_\alpha(i):=\rho_{2\min(S\setminus\alpha)}^i\restriction\alpha$.
To see that $\langle f_\alpha\mid \alpha<\kappa\rangle$ forms a $\theta$-ascent path through our tree,
fix arbitrary $\alpha<\beta<\kappa$.
Write $\gamma:=\min(S\setminus\alpha)$ and $\delta:=\min(S\setminus\beta)$.
To avoid trivialities, suppose that $\gamma\neq\delta$, so that $\alpha\le\gamma<\beta\le\delta$.
As $(\gamma,\delta)\in[S]^2$,  $i:=\max\{i(\gamma),i(\delta),c(\gamma,\delta)\}$ is $<\theta$,
and, for all $j\in[i,\theta)$, $C_\gamma^j:=\cl(D^c_{\le j}(\gamma))$
and $C_\delta^j:=\cl(D^c_{\le j}(\delta))$.
By subadditivity, for every $\epsilon<\gamma$, $c(\epsilon,\gamma)\le\max\{c(\epsilon,\delta),c(\gamma,\delta)\}$
and $c(\epsilon,\delta)\le\max\{c(\epsilon,\gamma),c(\gamma,\delta)\}$.
So, for every $j\in[i,\theta)$, from $c(\gamma,\delta)\le i\le j$, we infer that $C^j_\gamma=C^j_\delta\cap\gamma$, $\rho_{2\gamma}^j=\rho_{2\delta}^j$ and $f_\alpha(j)\s f_\beta(j)$,
as sought.
\end{cproof}
This completes the proof.
\end{proof}

\subsection{Locally small colorings} \label{locally_small_sec}
\begin{defn} A coloring $c:[\lambda^+]^2\rightarrow\cf(\lambda)$ is \emph{locally small} iff $|D^c_{\le i}(\beta)|<\lambda$ for all $i<\cf(\lambda)$ and $\beta<\lambda^+$.
\end{defn}

By Corollary~\ref{lemma314} below, if $\lambda$ is regular,
then any locally small coloring is a witness to $\U(\lambda^+,\lambda^+,\lambda,\lambda)$.
In the general case, a locally small coloring witnesses an unbalanced strengthening of $\U(\lambda^+,2,\cf(\lambda),\cf(\lambda))$, as follows.

\begin{lemma}\label{locally} Suppose that $c:[\lambda^+]^2\rightarrow\cf(\lambda)$ is a
locally small coloring.
\begin{enumerate}
\item For every $\nu<\cf(\lambda)$, $c$ witnesses $\U(\lambda^+,\lambda\circledast\nu,\cf(\lambda),\cf(\lambda))$;
\item If $c$ is subadditive, then $c$ moreover witnesses $\U(\lambda^+,\lambda^+,\cf(\lambda),\cf(\lambda))$.
\end{enumerate}
\end{lemma}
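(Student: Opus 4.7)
The plan is to prove (1) by a direct bookkeeping argument, and then derive (2) by first establishing $\U(\lambda^+,\lambda^+,\cf(\lambda),2)$ through a tree argument built on subadditivity, and finally upgrading the fourth parameter via Lemma~\ref{uPLUSsub}(2). In both clauses I would begin by replacing $\mathcal A$ with a $\lambda^+$-sized subfamily that is linearly ordered: enumerated as $\{a_\xi \mid \xi<\lambda^+\}$ with $a_\xi<a_{\xi'}$ whenever $\xi<\xi'$. Such an enumeration is extracted by a transfinite recursion of length $\lambda^+$: at stage $\eta$, let $\gamma_\eta:=\sup\{\max(a'_{\eta'})+1 \mid \eta'<\eta\}<\lambda^+$ and use the pairwise disjointness of $\mathcal A$ together with the regularity of $\lambda^+$ to pick some $a'_\eta\in\mathcal A$ with $\min(a'_\eta)\ge\gamma_\eta$.

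For (1), fix $\sigma,\nu<\cf(\lambda)$ and $i<\cf(\lambda)$, set $\mathcal B' := \{a_{\lambda+\xi} \mid \xi<\nu\}$, and let $X := \bigcup\{D^c_{\le i}(\beta) \mid b\in\mathcal B',\ \beta\in b\}$. Since $X$ is a union of $\nu\cdot\sigma<\cf(\lambda)$ many sets of size $<\lambda$, local smallness together with the cofinality of $\lambda$ forces $|X|<\lambda$. Because $\mathcal A$ is pairwise disjoint, at most $|X|<\lambda$ of the first $\lambda$ many $a_\xi$'s meet $X$, so $\mathcal A' := \{a_\xi \mid \xi<\lambda,\ a_\xi\cap X=\emptyset\}$ has size $\lambda$ and sits below $\mathcal B'$. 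For every $a\in\mathcal A'$, $b\in\mathcal B'$, $\alpha\in a$, and $\beta\in b$, the exclusion $\alpha\notin D^c_{\le i}(\beta)$ yields $c(\alpha,\beta)>i$, as required.

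For (2), invoking subadditivity, I would define for each $i<\cf(\lambda)$ the tree order $\alpha<_i\beta$ iff $\alpha<\beta$ and $c(\alpha,\beta)\le i$, exactly as in the proof of Lemma~\ref{uPLUSsub}. Given any $A\in[\lambda^+]^{\lambda^+}$ and any $i<\cf(\lambda)$, the tree $(A,<_i)$ has the property that the set of $<_i$-predecessors of each $\beta\in A$ sits inside $D^c_{\le i}(\beta)$ and therefore has size $<\lambda$; this bounds the tree's height by $\lambda$. Because $\lambda^+$ is regular, some level of this tree must have size $\lambda^+$, producing a $<_i$-antichain $B\in[A]^{\lambda^+}$ along which $c(\alpha,\beta)>i$ for every $(\alpha,\beta)\in[B]^2$. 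This witnesses $\U(\lambda^+,\lambda^+,\cf(\lambda),2)$, and Lemma~\ref{uPLUSsub}(2) then promotes it to $\U(\lambda^+,\lambda^+,\cf(\lambda),\cf(\lambda))$.

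The only delicate point is the cardinal-arithmetic estimate in (1) when $\lambda$ is singular: the bound $|X|<\lambda$ uses crucially that both $\sigma$ and $\nu$ are strictly below $\cf(\lambda)$, so that we are amalgamating fewer than $\cf(\lambda)$ many sets of size $<\lambda$. The tree argument in (2) is essentially already packaged inside the proof of Lemma~\ref{uPLUSsub}; all that local smallness contributes is the uniform upper bound on the size of $<_i$-initial segments, which is precisely what converts that tree into one of height $\le\lambda$.
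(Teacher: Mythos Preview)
Your proof of (1) is correct and follows essentially the same approach as the paper: isolate $\lambda$-many blocks below, pick $\nu$-many blocks above, and use that the union of the relevant $D^c_{\le i}(\beta)$'s has size $<\lambda$ to find $\lambda$-many blocks below that miss it. The linear enumeration you set up plays the same role as the paper's choice of $\epsilon$.

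Your proof of (2) is also correct, but it takes a genuinely different and more elementary route than the paper. Both arguments reduce to $\U(\lambda^+,\lambda^+,\cf(\lambda),2)$ via Lemma~\ref{uPLUSsub}(2) and both consider the tree $(A,<_i)$. The paper then argues indirectly: if no $\lambda^+$-antichain exists, then $(A,<_i)$ is $\lambda^+$-Souslin, and forcing with it adds a $\lambda^+$-chain in a $\lambda^+$-distributive extension; since local smallness is absolute to such extensions, $c$ still witnesses $\U(\lambda^+,2,\cf(\lambda),2)$ there, contradicting the existence of the chain. You instead observe directly that local smallness bounds the set of $<_i$-predecessors of each node by $\lambda$, hence bounds the height of $(A,<_i)$ by $\lambda$, so pigeonhole on $\lambda^+$ nodes over $\le\lambda$ levels produces a level of size $\lambda^+$, which is the desired antichain. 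Your argument avoids forcing entirely and is shorter; the paper's argument, on the other hand, illustrates a robustness phenomenon (local smallness is preserved under $\lambda^+$-distributive forcing) that is of independent interest.
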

\begin{proof}Suppose that we are given $\sigma < \cf(\lambda)$, a pairwise disjoint subfamily
  $\mathcal{A} \s [\lambda^+]^{\sigma}$ of size $\lambda^+$,
  and some $i < \cf(\lambda)$. Find a large enough $\epsilon<\lambda^+$ such that $\mathcal A\cap\mathcal P(\epsilon)$ has size $\lambda$.
  Now, given $\nu<\cf(\lambda)$, fix any $\mathcal B'\in[\mathcal A]^\nu$ with $\min(\bigcup\mathcal B')>\epsilon$.
  Since $c$ is locally small and $|\bigcup\mathcal B'|<\cf(\lambda)$, $X:=\bigcup_{b\in\mathcal B'}\bigcup_{\beta\in b} D^c_{\le i}(\beta)$ has size $<\lambda$.
  As $\mathcal A\cap\mathcal P(\epsilon)$ consists of $\lambda$-many pairwise disjoint sets, it follows that
  $\mathcal A':=\{ a\in\mathcal A\cap\mathcal P(\epsilon)\mid a\cap X=\emptyset\}$ has size $\lambda$.
  Evidently, for every $(a,b)\in\mathcal A'\times\mathcal B'$, we have $a<b$ and
  $\min(c[a \times b]) > i$.

  (2) Suppose that $c$ is subadditive. By Lemma~\ref{uPLUSsub}(2), it suffices to prove that $c$ witnesses $\U(\lambda^+,\lambda^+,\cf(\lambda),2)$.
  So, let $A\in[\lambda^+]^{\lambda^+}$ and $i<\cf(\lambda)$ be given; we need to find $B\in[A]^{\lambda^+}$
  such that $c(\alpha,\beta)>i$ for all $(\alpha,\beta)\in[B]^{\lambda^+}$.

  As in the proof of Lemma~\ref{uPLUSsub},
  we define a tree ordering $<_i$ on $\lambda^+$ by letting $\alpha<_i\beta$
  iff $\alpha<\beta$ and $c(\alpha,\beta)\le i$.
  By Clause~(1), $c$ in particular witnesses $\U(\lambda^+,2,\lambda^+,2)$,
  and hence $(\lambda^+,<_i)$ admits no chains of size $\lambda^+$.
  Assuming that the sought-after $B\in[A]^{\lambda^+}$ does not exist, it follows that
  $(A,<_i)$ has no antichains of size $\lambda^+$, so $(A,<_i)$ is a $\lambda^+$-Souslin tree.
  Forcing with this tree, we arrive at a $\lambda^+$-distributive forcing extension $V[G]$
  in which $(A,<_i)$ does admit a chain of size $\lambda^+$.
  But $V[G]$ is a $\lambda^+$-distributive forcing extension of $V$,
  and hence $c$ remains locally small in $V[G]$, and in particular, it witnesses $\U(\lambda^+,2,\cf(\lambda),2)$.
  This is a contradiction.
\end{proof}

\begin{lemma}\label{locallysmall} For every infinite cardinal $\lambda$, there exists a locally small witness to $\U(\lambda^+,\lambda^+,\cf(\lambda),\lambda)$
which is subadditive of the first kind.
\end{lemma}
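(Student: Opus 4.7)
The plan is to construct $c$ from the characteristic function of Todorcevic's minimal walks. I would first fix in $\zfc$ a $C$-sequence $\vec C = \langle C_\beta \mid \beta < \lambda^+ \rangle$ with $C_0 = \emptyset$, $C_{\beta+1} = \{\beta\}$, and $C_\beta$ a club in $\beta$ of order-type at most $\lambda$ for each $\beta \in \acc(\lambda^+)$. Walking along $\vec C$ yields a function $\rho:[\lambda^+]^2 \to \lambda$---defined by the recursion $\rho(\alpha,\beta) = \sup\{\otp(C_\beta \cap \alpha),\ \rho(\alpha, \min(C_\beta \setminus \alpha))\}$ with $\rho(\alpha,\alpha):=0$---that is subadditive of the first kind and satisfies the local-smallness bound $|\{\alpha < \beta \mid \rho(\alpha,\beta) \le \nu\}| \le |\nu| + \aleph_0$ for each $\beta < \lambda^+$ and $\nu < \lambda$. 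Both facts are classical, the first by induction on $\gamma$ exploiting that the walk from $\beta$ to $\alpha$ can always be continued to $\alpha$ from a common intermediate drop-down, and the second by observing that the recursion can only decrease $\rho(\alpha,\beta)$ to values $\le \nu$ for $\alpha$ in a bounded initial segment of $C_\beta \cap \alpha$ together with a smaller set of "lower traces''.

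To land in $\cf(\lambda)$, fix a strictly increasing sequence $\langle \lambda_i \mid i < \cf(\lambda)\rangle$ cofinal in $\lambda$, and define
\[
c(\alpha,\beta) := \min\{i < \cf(\lambda) \mid \rho(\alpha,\beta) < \lambda_i\}.
\]
Since $x \mapsto \min\{i \mid x < \lambda_i\}$ is monotone nondecreasing, $c$ inherits subadditivity of the first kind from $\rho$. And $D^c_{\le i}(\beta) = \{\alpha < \beta \mid \rho(\alpha,\beta) < \lambda_i\}$ has size at most $|\lambda_i| + \aleph_0 < \lambda$ by the uniform bound above, so $c$ is locally small.

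To verify that $c$ witnesses $\U(\lambda^+,\lambda^+,\cf(\lambda),\lambda)$, let $\sigma < \lambda$, let $\mathcal A$ be a pairwise disjoint subfamily of $[\lambda^+]^\sigma$ of size $\lambda^+$, and fix $i < \cf(\lambda)$. For each $b \in \mathcal A$, set $B(b) := \bigcup_{\beta \in b} D^c_{\le i}(\beta)$; the uniform bound $|D^c_{\le i}(\beta)| \le |\lambda_i| + \aleph_0$ together with $|b|=\sigma<\lambda$ forces $|B(b)| \le \max\{\sigma,\lambda_i,\aleph_0\} < \lambda$. As $\mathcal A$ is pairwise disjoint, at most $|B(b)| < \lambda$ members of $\mathcal A$ meet $B(b)$. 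A greedy recursion of length $\lambda^+$---at stage $\xi < \lambda^+$ the set of forbidden $b$'s has cardinality at most $|\xi|\cdot\lambda \le \lambda < \lambda^+$---produces $\mathcal B \in [\mathcal A]^{\lambda^+}$ with $a \cap B(b) = \emptyset$ for all distinct $a,b \in \mathcal B$; unpacking the definition, this yields $\min(c[a \times b]) > i$ for every such pair. The main point that I would pause over is verifying that the walks construction and its subadditivity and local-smallness analyses go through for singular $\lambda$ with only $\otp(C_\beta) \le \lambda$ (rather than $<\lambda$) assumed; the inductions are formally the same as in the regular case, but this is the one place that merits a careful sanity check.
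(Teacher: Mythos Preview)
The principal gap is the subadditivity claim. For an arbitrary $C$-sequence with $\otp(C_\beta)\le\lambda$, the function $\rho$ you wrote down is \emph{not} subadditive of the first kind. Here is a counterexample already on $\omega_2$ (so $\lambda=\omega_1$, regular): take $\alpha=\omega$, $\beta=\omega_1$, $\gamma=\omega_1+\omega$, and set $C_\gamma=\{\omega\cdot 2\}\cup\{\omega_1+n:n<\omega\}$, $C_{\omega\cdot 2}=\omega\cdot 2$, and let $C_{\omega_1}$ be any club in $\omega_1$ with $\min(C_{\omega_1})=\omega$. The walk from $\gamma$ to $\alpha$ drops at the first step to $\omega\cdot 2$ and then to $\omega$, giving $\rho(\alpha,\gamma)=\otp(C_{\omega\cdot 2}\cap\omega)=\omega$; meanwhile $\rho(\alpha,\beta)=0$ and $\rho(\beta,\gamma)=1$. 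Your ``common intermediate drop-down'' heuristic fails precisely because the first step of the walk from $\gamma$ toward $\alpha$ can land below $\beta$, after which the walk has no reason to interact with $\beta$ or with any walk ending at $\beta$. The subadditivity proofs in \cite{todorcevic_book} rely on more than you have assumed---coherence of $\vec C$ in Chapter~7, and in \S9.1 a more elaborate recursion (with extra terms of the form $\rho(\xi,\alpha)$) than the two-term one you wrote. For singular $\lambda$ the paper does not attempt a walks construction at all but instead quotes Shelah's coloring from \cite[Lemma~4.1]{Sh:351}, where subadditivity of the first kind is established by a separate argument.

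A secondary issue: your greedy recursion for the $\U$-property controls the wrong direction. At stage $\xi$ you can ensure $b_\xi\cap B(b_\eta)=\emptyset$ for $\eta<\xi$, and this is what your cardinality bound on the forbidden set supports. But for $(a,b)\in[\mathcal B]^2$ (meaning $a<b$) one needs $a\cap B(b)=\emptyset$, i.e., the \emph{earlier} set must avoid $B(\cdot)$ of the \emph{later} one. Local smallness bounds $D^c_{\le i}(\beta)$ from below but says nothing about $\{\beta>\alpha:c(\alpha,\beta)\le i\}$, which may have size $\lambda^+$; so the forbidden set in the direction you actually need is not small. The paper sidesteps this via Fact~\ref{pumpclosed}: for $\gamma\in E^{\lambda^+}_{>\lambda_j}$ and $a\in\mathcal A$ with $|a|\le\lambda_j$ and $\gamma<a$, the strong local smallness $|D^c_{\le j}(\beta)|\le\lambda_j<\cf(\gamma)$ forces $\bigcup_{\beta\in a}(D^c_{\le j}(\beta)\cap\gamma)$ to be bounded below $\gamma$, which is exactly Clause~(2) of that Fact.
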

\begin{proof} We focus on the case in which $\lambda$ is singular,
since, if $\lambda$ is regular, a better result is given by Lemma~\ref{fact219}(1) below.
Let $\langle \lambda_i\mid i<\cf(\lambda)\rangle$ be a strictly increasing sequence of regular uncountable cardinals converging to $\lambda$.
By \cite[Lemma~4.1]{Sh:351}, there exists a coloring $c:[\lambda^+]^2\rightarrow\cf(\lambda)$ which is subadditive of the first kind
and locally small in the following strong sense:
for all $i<\cf(\lambda)$ and $\beta<\lambda^+$,
$|D^c_{\le i}(\beta)|\le\lambda_i$.

\begin{claim}
For every pairwise disjoint subfamily $\mathcal A\s[\lambda^+]^{<\lambda}$ of size $\lambda^+$,
for every club $D\s\lambda^+$, and for every $i<\cf(\lambda)$,
      there exist $\gamma\in D$, $a\in\mathcal A$, and $\epsilon < \gamma$ such that:
      \begin{itemize}
        \item $\gamma < a$;
        \item for all $\alpha \in (\epsilon, \gamma)$ and $\beta\in a$,
        we have $c(\alpha,\beta)>i$.
      \end{itemize}
      \end{claim}
\begin{cproof} Given $\mathcal A$ and $i$ as above, fix a large enough $j<\cf(\lambda)$ such that $j\ge i$
and $\mathcal A_j:=\{ a\in\mathcal A\mid |a|\le\lambda_j\}$ has size $\lambda^+$.
Now, given a club $D\s\lambda^+$, fix $\gamma\in D\cap E^{\lambda^+}_{>\lambda_j}$,
and pick any $a\in\mathcal A_j$ with $\gamma<a$.

For all $\beta\in a$, we have $|D^c_{\le j}(\beta)|\le\lambda_j<\cf(\gamma)$, so $\epsilon:=\sup(\bigcup\{D^c_{\le j}(\beta)\cap\gamma\mid \beta\in a\})$ is $<\gamma$.
Then for all $\alpha\in(\epsilon,\gamma)$ and $\beta\in a$, we have $c(\alpha,\beta)>j\ge i$.
\end{cproof}
It now follows from Fact~\ref{pumpclosed} that $c$ witnesses $\U(\lambda^+,\lambda^+,\cf(\lambda),\lambda)$.
\end{proof}

\begin{lemma}[Todorcevic, \cite{todorcevic_book}]\label{fact219}
\begin{enumerate}
\item If $\lambda$ is regular, then
  there exists a locally small and subadditive witness to $\U(\lambda^+,\lambda^+,\allowbreak\lambda,\lambda)$.
\item If $\square_\lambda$ holds, then   there exists a locally small and subadditive witness to $\U(\lambda^+,\lambda^+,\cf(\lambda),\lambda)$
which is moreover closed.
\end{enumerate}
\end{lemma}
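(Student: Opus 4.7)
The plan for both clauses is to invoke Todorcevic's theory of walks on ordinals, developed in \cite{todorcevic_book}, specifically the $\rho$-function derived from a well-chosen $C$-sequence. For Clause~(1), since $\lambda$ is regular, I would fix any $C$-sequence $\vec C = \langle C_\beta \mid \beta < \lambda^+\rangle$ with $\otp(C_\beta) \le \lambda$ for all $\beta$, and consider the corresponding $\rho_{\vec C}: [\lambda^+]^2 \to \lambda$ defined by recursion via walks along $\vec C$. Two standard theorems from \cite{todorcevic_book} are that $\rho_{\vec C}$ is subadditive and that $|D^{\rho_{\vec C}}_{\le i}(\beta)| \le |i| + \aleph_0 < \lambda$ for all $i < \lambda$ and $\beta < \lambda^+$, which is exactly local smallness (using $\cf(\lambda) = \lambda$). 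Given these two properties, Lemma~\ref{locally}(2) immediately delivers that $\rho_{\vec C}$ witnesses $\U(\lambda^+, \lambda^+, \lambda, \lambda)$.

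For Clause~(2), I would start instead with a $\square_\lambda$-sequence $\vec C$, whose coherence (namely, $C_\alpha = C_\beta \cap \alpha$ whenever $\alpha \in \acc(C_\beta)$) upgrades the associated $\rho$-function to be closed in the sense of Definition~\ref{def21}: if $\alpha \in \acc^+(D^{\rho_{\vec C}}_{\le i}(\beta))$, then, by coherence along walks from $\beta$ through ordinals converging to $\alpha$, such a walk must itself pass through $\alpha$ with weight at most $i$, giving $\rho_{\vec C}(\alpha, \beta) \le i$. When $\lambda$ is singular I would then project $\rho_{\vec C}$ down to $\cf(\lambda)$ via the monotone map $\sigma: \lambda \to \cf(\lambda)$ induced by a sequence $\langle \lambda_j \mid j < \cf(\lambda)\rangle$ of regular cardinals cofinal in $\lambda$, setting $c := \sigma \circ \rho_{\vec C}$; subadditivity and local smallness transfer immediately from $\rho_{\vec C}$. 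To then upgrade the $\chi$-parameter from $\cf(\lambda)$ (as given by Lemma~\ref{locally}(2)) up to $\lambda$, I would rerun the argument from Lemma~\ref{locallysmall}: local smallness of $c$ lets one verify Clause~(2) of Fact~\ref{pumpclosed} with $\theta = \cf(\lambda)$ and $\chi = \lambda$, by picking $\gamma \in D \cap E^{\lambda^+}_\lambda$ and bounding $\bigcup_{\beta \in a} (D^c_{\le i}(\beta) \cap \gamma)$ below $\gamma$, and Clause~(3) of that fact then yields $\U(\lambda^+, \lambda^+, \cf(\lambda), \lambda)$.

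The main obstacle is ensuring that the projected coloring $c$ retains closedness in the singular-$\lambda$ case: since $D^c_{\le j}(\beta) = \bigcup_{i < \lambda_j} D^{\rho_{\vec C}}_{\le i}(\beta)$ is a union of closed sets and such a union need not itself be closed, one has to choose the sequence $\langle \lambda_j\rangle$ carefully and argue, for each $\alpha \in \acc^+(D^c_{\le j}(\beta))$, that some single level $D^{\rho_{\vec C}}_{\le i}(\beta)$ with $i < \lambda_j$ is cofinal in $\alpha$, exploiting coherence of $\vec C$ at that specific $\alpha$. This delicate bookkeeping is exactly what is handled in the walks chapter of \cite{todorcevic_book}, which is why the lemma is attributed there.
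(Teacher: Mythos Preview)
Your approach is essentially the same as the paper's: both proofs defer to Todorcevic's $\rho$-function constructions in \cite{todorcevic_book} for subadditivity, local smallness, and (in Clause~(2)) closedness. The paper's proof is in fact just a list of citations: for (1) it invokes Lemmas 9.1.1--9.1.2 and Theorem~6.2.7 of \cite{todorcevic_book} directly, and for (2) it cites Lemmas 7.3.7, 7.3.8, 7.3.11, 7.3.12 of \cite{todorcevic_book} together with Lemma~\ref{uPLUSsub}(3).

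A few small points. First, for (1) you finish via Lemma~\ref{locally}(2), which is a perfectly good alternative to citing \cite[Theorem~6.2.7]{todorcevic_book} directly. Second, in your Clause~(2) argument you write ``$\gamma \in D \cap E^{\lambda^+}_\lambda$'', which is meaningless when $\lambda$ is singular; the argument of Lemma~\ref{locallysmall} you are mimicking actually picks $\gamma \in D \cap E^{\lambda^+}_{>\lambda_j}$. Third, once closedness is secured, the paper's route through Lemma~\ref{uPLUSsub}(3) is more direct than going through Fact~\ref{pumpclosed}: a closed subadditive witness to $\U(\lambda^+,2,\cf(\lambda),2)$ automatically witnesses $\U(\lambda^+,\lambda^+,\cf(\lambda),\chi)$ for every $\chi \in \reg(\lambda^+)$, and hence for $\chi = \lambda$. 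Finally, your worry about closedness of the projection $c = \sigma \circ \rho_{\vec C}$ is resolvable: if the $\lambda_j$ are chosen regular, then for any $\alpha \in \acc^+(D^c_{\le j}(\beta))$ a cofinality-versus-pigeonhole dichotomy produces a single $i < \lambda_{j+1}$ with $D^{\rho_{\vec C}}_{\le i}(\beta)$ cofinal in $\alpha$, and closedness of $\rho_{\vec C}$ finishes --- no extra coherence argument at $\alpha$ is needed.
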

\begin{proof}
(1)  Consider the function $\rho:[\lambda^+]^2\rightarrow\lambda$ defined in \cite[\S9.1]{todorcevic_book}.
  By \cite[Lemma~9.1.1]{todorcevic_book}, $\rho$ is locally small.
  By \cite[Lemma~9.1.2]{todorcevic_book}, $\rho$ is subadditive.
  By \cite[Theorem~6.2.7]{todorcevic_book}, $\rho$ is a witness to $\U(\lambda^+,\lambda^+,\lambda,\lambda)$.

(2) This follows from Lemmas 7.3.7, 7.3.8, 7.3.11 and 7.3.12 of \cite{todorcevic_book}, together with Lemma~\ref{uPLUSsub}(3).
For $\lambda$ singular, a slightly better result is proved in \cite[Theorem~5.8]{lh_trees_squares_reflection}.
\end{proof}

The following result, due independently to Shani and Lambie-Hanson, shows that
the hypothesis of $\square_\lambda$ cannot be weakened to $\square_{\lambda, 2}$
in Lemma~\ref{fact219}(2). (We note that $\gch$ is not explicitly mentioned
in the quoted results, but it is evident from their proofs that, if $\gch$ holds
in the relevant ground models, then it continues to hold in the forcing extensions
witnessing the conclusion of the result.)

\begin{fact}[Shani, {\cite[Theorem 1]{shani}}, Lambie-Hanson, {\cite[Corollaries 5.13 and 5.14]{lh_trees_squares_reflection}}]\label{fact321}
  Relative to the existence of large cardinals, it is consistent with $\gch$ that
  there is an uncountable cardinal $\lambda$ such that
  $\square_{\lambda, 2}$ holds, and, for every $\theta < \lambda$,
  $\US(\lambda^+, 2, \theta, 2)$ fails. $\lambda$ can be either regular or singular here, though
  attaining the result for singular $\lambda$ requires significantly larger cardinals than
  attaining it for regular $\lambda$.
\end{fact}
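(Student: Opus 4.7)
The plan is to reduce the statement to a principle about narrow systems. By Lemma~\ref{lmma35}, if $\nsp(\theta^+, \lambda^+)$ holds, then no subadditive coloring $c:[\lambda^+]^2\rightarrow\theta$ can witness $\U(\lambda^+, 2, \theta, 2)$; hence it suffices to produce, in each case, a forcing extension satisfying $\gch$ in which $\square_{\lambda,2}$ holds while $\nsp(\theta^+, \lambda^+)$ continues to hold for every $\theta<\lambda$. Lemma~\ref{lmma35} then yields the failure of $\US(\lambda^+, 2, \theta, 2)$ for all such $\theta$ in that model.

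For regular $\lambda$, I would start from a model carrying a supercompact cardinal $\kappa$, Laver-prepared so that its supercompactness is indestructible under sufficiently directed-closed forcing, fix a regular $\lambda<\kappa$, and perform a Mitchell-style collapse making $\kappa$ into $\lambda^+$; in the resulting intermediate model, $\gch$ holds and enough compactness survives to yield $\nsp(\theta^+, \lambda^+)$ for every $\theta<\lambda$. Over this intermediate model I would then force with the natural $\lambda^+$-strategically closed poset whose conditions are bounded partial $\square_{\lambda,2}$-sequences (each node carrying at most two coherent clubs). Strategic closure preserves cardinals and $\gch$, and a standard branch-preservation argument shows that this forcing adds no cofinal branch to any ground-model narrow system of width $<\lambda$, so $\nsp(\theta^+, \lambda^+)$ persists into the extension. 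Generically, $\square_{\lambda,2}$ holds, and we are done by Lemma~\ref{lmma35}.

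For singular $\lambda$, the outline is the same, but the need to singularize $\lambda$ before adding $\square_{\lambda,2}$ is the principal source of additional difficulty, and this is where substantially stronger large-cardinal hypotheses must enter: singularizing a cardinal typically demolishes compactness at its successor, so one must start from a cardinal carrying enough simultaneous compactness and subcompact-style reflection that, after a Prikry-type cofinality change, the reflection principle yielding $\nsp(\theta^+, \lambda^+)$ is preserved at the new singular $\lambda$. Once this is arranged, the $\square_{\lambda,2}$-adding forcing from the regular case applies essentially verbatim. The main technical obstacle is precisely the preservation analysis for the Prikry-type forcing: showing that narrow-system reflection at $\lambda^+$ survives the cofinality change is the delicate point, and it is this which accounts for the disparity in large-cardinal strength between the regular and singular cases noted in the statement.
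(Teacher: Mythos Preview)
The paper does not prove Fact~\ref{fact321}; it is quoted as a result from \cite{shani} and \cite{lh_trees_squares_reflection}, with only a parenthetical remark about $\gch$. Your broad outline---obtain a compactness principle at $\lambda^+$ from large cardinals, then add $\square_{\lambda,2}$ by a $\lambda^+$-strategically closed poset while preserving that principle---does match the shape of those constructions.

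Your preservation step, however, contains a genuine gap. You assert that ``a standard branch-preservation argument shows that this forcing adds no cofinal branch to any ground-model narrow system of width $<\lambda$, so $\nsp(\theta^+,\lambda^+)$ persists.'' This inference points in the wrong direction. $\nsp$ is the assertion that every narrow system \emph{has} a cofinal branch; preserving it means showing that every narrow $\lambda^+$-system in the \emph{extension}---including new ones---has a branch there. Showing that ground-model systems acquire no new branches is irrelevant to this (and would, if anything, threaten $\nsp$ rather than support it). Since the $\square_{\lambda,2}$-forcing is only $({<}\lambda^+)$-distributive and narrow $\lambda^+$-systems are $\lambda^+$-sized objects, new systems can certainly appear, and handling them is the real work. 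Even the correctly oriented argument---use the winning strategy to decide a name for a system level by level, apply ground-model $\nsp$ to the resulting ground-model system, and pull the branch forward---is delicate, since no single condition in the descending $\lambda^+$-sequence decides the entire identification.

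The route actually taken in \cite{lh_trees_squares_reflection} avoids $\nsp$ altogether: Lemma~5.11 there shows that Viale's covering property $\cp(\lambda^+,\theta)$ already refutes $\US(\lambda^+,2,\theta,2)$, and Corollaries~5.13 and~5.14 arrange $\square_{\lambda,2}$ together with $\cp(\lambda^+,\theta)$ for the relevant $\theta$.
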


The principle $\sap^*_\lambda$ was introduced in \cite[Definition~2.12]{paper07} as a weakening of Jensen's weak square principle $\square^*_\lambda$.
By \cite[Theorem~2.6]{paper07}, assuming $2^\lambda=\lambda^+$, $\sap^*_\lambda$ implies that $\diamondsuit(S)$ holds for every stationary
subset $S\s\lambda^+$ that reflects stationarily often.
By \cite[Corollary~2.16]{paper07}, if $\lambda$ is a singular cardinal such that $2^{<\lambda}<2^\lambda=\lambda^+$
and every stationary subset of $E^{\lambda^+}_{\cf(\lambda)}$ reflects, then $\sap^*_\lambda$ moreover implies $\diamondsuit^*(\lambda^+)$.

\begin{prop}\label{prop317} Suppose that $\lambda$ is a singular cardinal and there exists
a locally small and subadditive coloring $c:[\lambda^+]^2\rightarrow\cf(\lambda)$.
Then $\sap^*_\lambda$ holds and $E^{\lambda^+}_{>\cf(\lambda)}\in I[\lambda^+;\lambda]$.
\end{prop}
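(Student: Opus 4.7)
The plan is to extract from $c$ a strongly coherent system of canonical bounded $<\lambda$-sized subsets of each $\beta<\lambda^+$ and to derive both conclusions from this coherence. The key structural identity I will establish first is this: whenever $i<\cf(\lambda)$ and $\alpha\in D^c_{\le i}(\beta)$, one has $D^c_{\le i}(\beta)\cap\alpha=D^c_{\le i}(\alpha)$. The $\subseteq$ direction uses subadditivity of the second kind (if $x<\alpha$ and $c(x,\beta),c(\alpha,\beta)\le i$, then $c(x,\alpha)\le\max\{c(x,\beta),c(\alpha,\beta)\}\le i$), while the $\supseteq$ direction uses subadditivity of the first kind (if $c(x,\alpha),c(\alpha,\beta)\le i$, then $c(x,\beta)\le\max\{c(x,\alpha),c(\alpha,\beta)\}\le i$). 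Together with local smallness, this identity makes $\langle D^c_{\le i}(\beta)\mid i<\cf(\lambda)\rangle$ a $\subseteq$-increasing exhausting sequence of bounded $<\lambda$-sized subsets of $\beta$ that is strongly coherent across different $\beta$.

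To obtain $E^{\lambda^+}_{>\cf(\lambda)}\in I[\lambda^+;\lambda]$: for each $\beta\in E^{\lambda^+}_{>\cf(\lambda)}$, since $\bigcup_{i<\cf(\lambda)}D^c_{\le i}(\beta)=\beta$ and $\cf(\beta)>\cf(\lambda)$, there is a least $i(\beta)<\cf(\lambda)$ with $b_\beta:=D^c_{\le i(\beta)}(\beta)$ cofinal in $\beta$. Place $D^c_{\le i}(\alpha)$ into slot $\lambda\cdot\alpha+i$ of a sequence $\langle a_\xi\mid\xi<\lambda^+\rangle$, with other slots trivial. On the club $C:=\{\gamma<\lambda^+\mid\gamma=\lambda\cdot\gamma\}$, every $\alpha\in b_\beta$ witnesses $b_\beta\cap\alpha=a_{\lambda\cdot\alpha+i(\beta)}$ via the coherence identity, and the slot-index is $<\beta$. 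Thus $b_\beta$ is a cofinal subset of $\beta$ of size $<\lambda$ whose proper initial segments at elements are enumerated below $\beta$, and a routine bookkeeping refinement (passing to a cofinal subset of $b_\beta$ of order-type $\cf(\beta)$ and adding its $\cf(\beta)<\lambda$ additional initial segments to the enumeration) meets the order-type requirement in the definition of $I[\lambda^+;\lambda]$.

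For $\sap^*_\lambda$, whose witnesses per \cite[Definition~2.12]{paper07} are suitably coherent systems of small families of clubs, I will take $\mathcal C_\beta:=\{\cl(D^c_{\le i}(\beta))\mid i<\cf(\lambda),\ \sup(D^c_{\le i}(\beta))=\beta\}$, so that $|\mathcal C_\beta|\le\cf(\lambda)<\lambda$ and each member is a club in $\beta$ of order-type $<\lambda$. The coherence identity immediately yields coherence at points $\alpha\in D^c_{\le i}(\beta)$. For $\alpha\in\acc(\cl(D^c_{\le i}(\beta)))\setminus D^c_{\le i}(\beta)$, setting $j:=c(\alpha,\beta)>i$, subadditivity of the second kind gives $D^c_{\le i}(\beta)\cap\alpha\s D^c_{\le j}(\alpha)$, so $\cl(D^c_{\le j}(\alpha))\in\mathcal C_\alpha$ is a club of $\alpha$ cohering with $\cl(D^c_{\le i}(\beta))\cap\alpha$ on a tail.

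The main obstacle is the mismatch between coherence at $\alpha\in D^c_{\le i}(\beta)$, where the identity holds verbatim, and the ``slip'' in color-threshold to $j=c(\alpha,\beta)$ at $\alpha\in\acc(\cl(D^c_{\le i}(\beta)))\setminus D^c_{\le i}(\beta)$; matching this behavior precisely against the coherence required by $\sap^*_\lambda$ in \cite[Definition~2.12]{paper07}, together with the order-type bookkeeping for $I[\lambda^+;\lambda]$, is where the technical work lies. Everything else is a direct consequence of the coherence identity and local smallness.
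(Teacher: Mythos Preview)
Your coherence identity $D^c_{\le i}(\beta)\cap\alpha=D^c_{\le i}(\alpha)$ for $\alpha\in D^c_{\le i}(\beta)$ is correct and your outline for $I[\lambda^+;\lambda]$ is workable, but you are taking a substantially longer route than the paper and, for $\sap^*_\lambda$, you have misidentified the target.

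The paper's proof is essentially two lines, because the definition of $\sap^*_\lambda$ in \cite[Definition~2.12]{paper07} (together with \cite[Definition~2.4]{paper07}) is \emph{not} in terms of coherent families of clubs: it asserts precisely the existence of a locally small (``normal''), subadditive-of-the-first-kind coloring $d:[\lambda^+]^2\to\cf(\lambda)$ with the additional property that for every stationary $S\subseteq E^{\lambda^+}_{\cf(\lambda)}$ and every $\gamma\in\Tr(S)$ there is a stationary $S_\gamma\subseteq S\cap\gamma$ with $\sup(d``[S_\gamma]^2)<\cf(\lambda)$. The given coloring $c$ already has the first two properties by hypothesis; for the third, pick $i<\cf(\lambda)$ large enough that $S_\gamma:=S\cap D^c_{\le i}(\gamma)$ is stationary in $\gamma$ (possible since $\cf(\gamma)>\cf(\lambda)$), and then subadditivity of the \emph{second} kind gives $c(\alpha,\beta)\le\max\{c(\alpha,\gamma),c(\beta,\gamma)\}\le i$ for all $(\alpha,\beta)\in[S_\gamma]^2$. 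The same one-line argument, read against \cite[Definition~2.3]{paper07}, yields $E^{\lambda^+}_{>\cf(\lambda)}\in I[\lambda^+;\lambda]$.

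So the ``main obstacle'' you flag --- matching your club system against the coherence demanded by $\sap^*_\lambda$ --- is an obstacle of your own making: no club system is required. Your approach might be salvageable if $\sap^*_\lambda$ admitted an equivalent formulation of the shape you describe, but even then it would be considerably more work (you would still need to handle the slip from $i$ to $j=c(\alpha,\beta)$ at accumulation points outside $D^c_{\le i}(\beta)$), whereas the paper's argument sidesteps all of this by feeding $c$ itself directly into the definition.
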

\begin{proof} Let $c$ be as above.
By \cite[Definitions 2.4 and 2.12]{paper07},
as $c$ is locally small and subadditive of the first kind,\footnote{The terminology in \cite{paper07} is slightly different; there
\emph{locally small} is dubbed \emph{normal},
and \emph{subadditive of the first kind} is dubbed \emph{subadditive}.}
to verify $\sap^*_\lambda$, it suffices to verify that for every
stationary $S\s E^{\lambda^+}_{\cf(\lambda)}$  and every $\gamma\in\Tr(S)$,
there exists a stationary $S_\gamma\s S\cap \gamma$  such that $\sup(c``[S_\gamma]^2)<\cf(\lambda)$.

To this end, fix arbitrary $\gamma\in E^{\lambda^+}_{>\cf(\lambda)}$ and a stationary $s\s\gamma$.
As $\cf(\gamma)>\cf(\lambda)$, there exists a large enough $i<\cf(\lambda)$ such that $S_\gamma:=D^c_{\le i}(\gamma)\cap s$ is stationary in $\gamma$.
Since $c$ is subadditive of the second kind, for any pair $(\alpha,\beta)\in[S_\gamma]^2$, we have that $c(\alpha,\beta)\le\max\{c(\alpha,\gamma),c(\beta,\gamma)\}\le i$.
Therefore, $\sup(c``[S_\gamma]^2)\le i$.
Recalling \cite[Definitions 2.3]{paper07},
the very same argument shows that $E^{\lambda^+}_{>\cf(\lambda)}\in I[\lambda^+;\lambda]$.
\end{proof}

\begin{cor} If $\lambda$ is a singular strong limit and there exists
a locally small and subadditive coloring $c:[\lambda^+]^2\rightarrow\cf(\lambda)$,
then $\ap_\lambda$ holds.
\end{cor}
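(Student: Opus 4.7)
The plan is to apply Proposition~\ref{prop317} and combine its two conclusions with the strong limit hypothesis so as to show that for every regular $\chi<\lambda$ the set $E^{\lambda^+}_\chi$ lies in the approachability ideal $I[\lambda^+]$; this is exactly $\ap_\lambda$. Proposition~\ref{prop317}, applied to $c$, tells us that $\sap^*_\lambda$ holds and that $E^{\lambda^+}_{>\cf(\lambda)}\in I[\lambda^+;\lambda]$, so two of the three cofinality ranges are already handled in a form close to what is needed.

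First, I would deal with the cofinalities $\chi<\cf(\lambda)$. Since $\lambda$ is a strong limit, $2^{<\lambda}=\lambda$, which by a classical result of Shelah yields $E^{\lambda^+}_{<\cf(\lambda)}\in I[\lambda^+]$ in $\zfc$. Next, for the cofinalities $\chi$ with $\cf(\lambda)<\chi<\lambda$, I would upgrade the conclusion $E^{\lambda^+}_{>\cf(\lambda)}\in I[\lambda^+;\lambda]$ of Proposition~\ref{prop317} to $E^{\lambda^+}_{>\cf(\lambda)}\in I[\lambda^+]$. Here the equality $2^{<\lambda}=\lambda$ is again what is needed: it provides a single $\lambda^+$-enumeration of all bounded subsets of $\lambda^+$ of cardinality less than $\lambda$, against which the witnessing sequences built into the definition of $I[\lambda^+;\lambda]$ (see \cite{paper07}) can be matched to yield genuine approachability sequences.

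The remaining step, which I expect to be the main obstacle, is handling the middle piece $E^{\lambda^+}_{\cf(\lambda)}$. For this I would unfold the definition of $\sap^*_\lambda$ from \cite{paper07}, which on any stationary subset of $E^{\lambda^+}_{\cf(\lambda)}$ delivers a subadditive, approachability-flavored structure on a further stationary subset; then, using the strong limit hypothesis once more to fix a $\lambda^+$-enumeration of $[\lambda^+]^{<\lambda}$, I would convert this structure into approachability sequences in the usual sense, obtaining $E^{\lambda^+}_{\cf(\lambda)}\in I[\lambda^+]$. Putting the three cofinality intervals together, their union contains a club in $\lambda^+$, so $\lambda^+\in I[\lambda^+]$ and $\ap_\lambda$ holds.
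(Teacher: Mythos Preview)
Your decomposition into three cofinality ranges is reasonable in spirit, but the handling of the case $\chi=\cf(\lambda)$ has a genuine gap: $\sap^*_\lambda$ does not yield approachability at points of cofinality $\cf(\lambda)$. That principle concerns the structure of stationary subsets of $E^{\lambda^+}_{\cf(\lambda)}$ at their \emph{reflection points} (which lie in $E^{\lambda^+}_{>\cf(\lambda)}$), not approachability witnesses at the points of cofinality $\cf(\lambda)$ themselves; the vague ``approachability-flavored structure on a further stationary subset'' you sketch does not correspond to any usable implication from $\sap^*_\lambda$ to $E^{\lambda^+}_{\cf(\lambda)}\in I[\lambda^+]$.

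In fact this case is no harder than the case $\chi<\cf(\lambda)$ and is handled by the same elementary counting. Since $\lambda$ is a singular strong limit, $\lambda^{<\cf(\lambda)}=\lambda$, so the bounded subsets of $\lambda^+$ of size $<\cf(\lambda)$ can be enumerated in order type $\lambda^+$; every $\delta\in E^{\lambda^+}_{\cf(\lambda)}$ has a club of order type $\cf(\lambda)$ whose proper initial segments all have size $<\cf(\lambda)$, giving $E^{\lambda^+}_{\le\cf(\lambda)}\in I[\lambda^+]$ directly. The $\sap^*_\lambda$ conclusion of Proposition~\ref{prop317} is a red herring here.

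The paper sidesteps the entire case split by staying inside the ideal $I[\lambda^+;\lambda]$ of \cite{paper07}: the strong-limit hypothesis yields both $\lambda^+\in I[\lambda^+;\lambda]$ (once $E^{\lambda^+}_{>\cf(\lambda)}\in I[\lambda^+;\lambda]$ is supplied by Proposition~\ref{prop317}) and the identification $I[\lambda^+;\lambda]=I[\lambda^+]$, so the whole argument is one line.
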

\begin{proof} By the preceding proposition, the hypothesis imply that $E^{\lambda^+}_{>\cf(\lambda)}\in I[\lambda^+;\lambda]$.
Now, given that  $\lambda$ is a strong limit, we moreover get that $\lambda^+\in I[\lambda^+;\lambda]=I[\lambda^+]$, meaning that $\ap_\lambda$ holds.
\end{proof}

\subsection{Forms of coherence and levels of divergence} \label{levels_sec}
We will  also be interested in variants of subadditivity, as captured by the next definitions.

\begin{defn}
A coloring $c:[\kappa]^2\rightarrow \theta$ is
  \emph{weakly subadditive} iff the following two statements hold:
  \begin{enumerate}
  \item $c$ is \emph{weakly subadditive of the first kind},
  that is, for all $\beta < \gamma< \kappa$ and $i<\theta$,
  there is $j<\theta$ such that $D^c_{\le i}(\beta)\s D^c_{\le j}(\gamma)$;
  \item $c$ is \emph{weakly subadditive of the second kind},
  that is, for all $\beta < \gamma < \kappa$ and $i<\theta$,
  there is $j<\theta$ such that $D^c_{\le i}(\gamma)\cap\beta\s D^c_{\le j}(\beta)$.
  \end{enumerate}
\end{defn}

\begin{defn}[Forms of coherence] Let $c:[\kappa]^2\rightarrow\theta$ be a coloring.
\begin{enumerate}
\item $c$ is \emph{$\ell_\infty$-coherent} iff for all $\gamma<\delta<\kappa$,
there is $j<\theta$ such that, for all $i<\theta$, $D^c_{\le i}(\gamma)\s D^c_{\le i+j}(\delta)$ and $D^c_{\le i}(\delta)\cap\gamma\s D^c_{\le i+j}(\gamma)$;
\item For a cardinal $\lambda<\kappa$, $c$ is \emph{$\lambda$-coherent} iff for every $(\gamma,\delta)\in[\kappa]^2$,
$$|\{\alpha<\gamma\mid c(\alpha,\gamma)\neq c(\alpha,\delta)\}|<\lambda;$$
\item For $S\s\acc(\kappa)$,\footnote{Strictly speaking, to avoid ambiguity with Clause~(2), we need to assume that $|S|\ge2$, but in all cases of interest $S$ will in fact be stationary in $\kappa$.} $c$ is \emph{$S$-coherent} iff
for all $\beta\le\gamma<\delta<\kappa$ with $\beta\in S$,
$$\sup\{\alpha<\beta\mid c(\alpha,\gamma)\neq c(\alpha,\delta)\}<\beta.$$
\end{enumerate}
\end{defn}
\begin{remark} For every $\lambda\in\reg(\kappa)$, $c$ is $\lambda$-coherent iff it is $E^\kappa_\lambda$-coherent
iff it is $E^\kappa_{\ge\lambda}$-coherent.
\end{remark}

Motivated by the proof of Lemma~\ref{lemma35}, we introduce the following definition.
\begin{defn}[Levels of divergence]\label{levels}
For a coloring $c:[\kappa]^2 \rightarrow \theta$,
let $$\partial(c):=\{\beta\in\acc(\kappa)\mid \forall\gamma<\kappa\forall i<\theta\, \sup(D^c_{\le i}(\gamma)\cap\beta)<\beta\}.$$
\end{defn}

Note that $\partial(c)\s E^\kappa_{\cf(\theta)}$ and that $c$ is vacuously $\partial(c)$-closed.

\begin{lemma}\label{lemma38} Suppose that $c:[\kappa]^2\rightarrow\theta$ is a coloring.
If $c$ is weakly subadditive of the second kind,
then $$\partial(c)=\{\beta\in\acc(\kappa)\mid\forall i<\theta\, \sup(D^c_{\le i}(\beta))<\beta\}.$$
\end{lemma}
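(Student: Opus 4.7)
The plan is to verify the two inclusions directly, exploiting the cases $\gamma \le \beta$ (which are essentially trivial), the case $\gamma = \beta$ (which gives the only nontrivial content in one direction), and the case $\gamma > \beta$ (where weak subadditivity of the second kind is needed).

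For the inclusion $\partial(c) \subseteq \{\beta \in \acc(\kappa) \mid \forall i < \theta\,\sup(D^c_{\le i}(\beta)) < \beta\}$, I would simply instantiate the defining property of $\partial(c)$ at $\gamma := \beta$. Since $D^c_{\le i}(\beta) \s \beta$ by definition of $D^c_{\le i}(\cdot)$, we have $D^c_{\le i}(\beta) \cap \beta = D^c_{\le i}(\beta)$, and membership in $\partial(c)$ immediately yields $\sup(D^c_{\le i}(\beta)) < \beta$ for each $i < \theta$. No use of weak subadditivity is needed here.

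For the reverse inclusion, fix $\beta \in \acc(\kappa)$ satisfying $\sup(D^c_{\le i}(\beta)) < \beta$ for every $i < \theta$, and fix arbitrary $\gamma < \kappa$ and $i < \theta$; I must show $\sup(D^c_{\le i}(\gamma) \cap \beta) < \beta$. I would split into three cases. If $\gamma < \beta$, then $D^c_{\le i}(\gamma) \cap \beta \s \gamma$, so its supremum is at most $\gamma < \beta$. If $\gamma = \beta$, the hypothesis gives the conclusion directly. If $\gamma > \beta$, weak subadditivity of the second kind applied to the pair $\beta < \gamma$ and the color $i$ furnishes some $j < \theta$ such that $D^c_{\le i}(\gamma) \cap \beta \s D^c_{\le j}(\beta)$; combining this with the hypothesis applied at $j$, we get $\sup(D^c_{\le i}(\gamma) \cap \beta) \le \sup(D^c_{\le j}(\beta)) < \beta$, as required.

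There is no real obstacle: the entire argument is a case split, and weak subadditivity of the second kind is only used to reduce the case $\gamma > \beta$ to a statement about $D^c_{\le j}(\beta)$. The only thing to be careful about is the trivial observation that for $\gamma \le \beta$ one does not need any subadditivity assumption at all, which explains why only one direction of subadditivity enters the statement.
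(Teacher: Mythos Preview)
Your proof is correct and follows essentially the same approach as the paper: the paper's proof handles only the nontrivial case $\beta<\gamma$ of the nontrivial (reverse) inclusion, argued contrapositively, while you spell out both inclusions and all three cases directly. The key step in both is identical---applying weak subadditivity of the second kind to the pair $\beta<\gamma$ to obtain $j<\theta$ with $D^c_{\le i}(\gamma)\cap\beta\subseteq D^c_{\le j}(\beta)$.
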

\begin{proof}
If $c$ is weakly subadditive of the second kind,
then for all $\beta<\gamma<\kappa$ and $i<\theta$ such that
$\sup(D^c_{\le i}(\gamma)\cap\beta)=\beta$,
there exists $j<\theta$ such that
$D^c_{\le i}(\gamma)\cap\beta \s D^c_{\le j}(\beta)$,
and hence $\sup(D^c_{\le j}(\beta))=\beta$.
\end{proof}

We will be particularly interested in situations in which $\partial(c)$ is stationary
in $\kappa$; one reason for this is the following lemma, indicating that colorings $c$
for which $\partial(c)$ is stationary automatically witness an instance of $\U(\ldots)$.

\begin{lemma}\label{lemma39}
  Suppose that $c:[\kappa]^2 \rightarrow \theta$ is a coloring for which $\partial(c)$ is stationary. Then $c$ witnesses
  $\U(\kappa, \kappa, \theta, \cf(\theta))$.
\end{lemma}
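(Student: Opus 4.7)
My plan is to derive the conclusion directly from the implication $(2)\implies(3)$ of Fact~\ref{pumpclosed} taken with $\chi:=\cf(\theta)$. The stationarity of $\partial(c)\s E^\kappa_{\cf(\theta)}$ already forces $\omega\le\cf(\theta)<\kappa$, so the fact is applicable.

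Given a pairwise disjoint family $\mathcal A\s[\kappa]^{<\cf(\theta)}$ of size $\kappa$, a club $D\s\kappa$, and a color $i<\theta$, I would first pick any $\gamma\in D\cap\partial(c)$, which is nonempty as $\partial(c)\cap D$ is stationary, and then pick any $a\in\mathcal A$ with $\gamma<a$ (such an $a$ exists since fewer than $\kappa$ members of the pairwise disjoint family $\mathcal A$ can meet the initial segment $\gamma+1$). By the very definition of $\partial(c)$, each $\beta\in a$ satisfies $\sup(D^c_{\le i}(\beta)\cap\gamma)<\gamma$, and since $|a|<\cf(\theta)=\cf(\gamma)$, the ordinal
\[
\epsilon:=\sup\bigl\{\sup(D^c_{\le i}(\beta)\cap\gamma)\mid\beta\in a\bigr\}
\]
is itself strictly below $\gamma$. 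For every $\alpha\in(\epsilon,\gamma)$ and every $\beta\in a$, we then have $\alpha\notin D^c_{\le i}(\beta)$, i.e., $c(\alpha,\beta)>i$, which is precisely clause~(2) of Fact~\ref{pumpclosed}. Invoking $(2)\implies(3)$ of that fact gives $\U(\kappa,\kappa,\theta,\cf(\theta))$, as required.

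I do not anticipate any genuine obstacle: the argument is essentially a direct unpacking of the definition of $\partial(c)$. The only point meriting a moment's care is the use of $\cf(\gamma)=\cf(\theta)$ to collapse the fewer-than-$\cf(\theta)$-many pointwise bounds coming from $a$ into a single bound $\epsilon<\gamma$, and this is exactly where the size restriction $|a|<\cf(\theta)$ gets used.
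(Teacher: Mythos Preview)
Your proposal is correct and follows essentially the same route as the paper's own proof: verify clause~(2) of Fact~\ref{pumpclosed} with $\chi=\cf(\theta)$ by picking $\gamma\in D\cap\partial(c)$, any $a\in\mathcal A$ with $\gamma<a$, and then using $\cf(\gamma)=\cf(\theta)>|a|$ to consolidate the bounds $\sup(D^c_{\le i}(\beta)\cap\gamma)<\gamma$ into a single $\epsilon<\gamma$. Your explicit remark that the stationarity of $\partial(c)\subseteq E^\kappa_{\cf(\theta)}$ forces $\omega\le\cf(\theta)<\kappa$ is a useful addition, since the hypothesis $\omega\le\chi<\kappa$ of Fact~\ref{pumpclosed} should be checked.
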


\begin{proof}
  We prove that $c$ satisfies Clause (2) of Fact~\ref{pumpclosed} with $\chi :=
  \cf(\theta)$, which will yield our desired conclusion.
  Fix a family $\mathcal{A} \subseteq [\kappa]^{<\cf(\theta)}$
  consisting of $\kappa$-many pairwise disjoint sets, a club $D \subseteq \kappa$,
  and an $i < \theta$. Since $\partial(c)$ is stationary, we can fix
  $\gamma \in \partial(c) \cap D$. Also fix an $a \in \mathcal{A}$ such that
  $\gamma < a$. As $\gamma\in\partial(c)$, for all $\beta \in a$,
  we have $\sup(D^c_{\leq i}(\beta) \cap \gamma) < \gamma$. Since
  $|a| < \cf(\theta) = \cf(\gamma)$, we can find $\epsilon < \gamma$ such that
  $\sup(D^c_{\leq i}(\beta) \cap \gamma) < \epsilon$ for all $\beta \in a$.
  Now, for all $\alpha \in (\epsilon, \gamma)$ and all $\beta \in a$, we have
  $\alpha \notin D^c_{\leq i}(\beta)$, so $c(\alpha, \beta) > i$, as desired.
\end{proof}

\begin{cor}  Suppose that $\theta\in\reg(\kappa)$.
  If there exists a subadditive coloring $c:[\kappa]^2\rightarrow\theta$
  for which $\partial(c)$ is stationary,
  then there exists a tree $\mathcal T$ of height $\kappa$ admitting a $\theta$-ascent path but no branch of size $\kappa$.
\end{cor}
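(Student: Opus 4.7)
The plan is to observe that $c$ itself already satisfies the hypothesis of Lemma~\ref{lemma35}, and then simply invoke that lemma. Concretely, I would chain together three facts already established in this section.

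First, since $\partial(c)$ is stationary, Lemma~\ref{lemma39} gives that $c$ witnesses $\U(\kappa,\kappa,\theta,\cf(\theta))$. Because $\theta\in\reg(\kappa)$, we have $\cf(\theta)=\theta$, so $c$ in particular witnesses $\U(\kappa,2,\theta,\theta)$. Combined with the subadditivity hypothesis, this means $c$ is a subadditive witness to $\U(\kappa,2,\theta,\theta)$, i.e., $\US(\kappa,2,\theta,\theta)$ holds and is witnessed by $c$.

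Second, recall the remark immediately following Definition~\ref{levels}: every coloring is vacuously $\partial(c)$-closed. Since $\partial(c)$ is assumed stationary, $c$ is a somewhere-closed coloring. Combining with the previous paragraph, $c$ is a somewhere-closed witness to $\US(\kappa,2,\theta,\theta)$.

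Finally, applying Lemma~\ref{lemma35} to $c$ yields the desired tree $\mathcal{T}$ of height $\kappa$ admitting a $\theta$-ascent path but no branch of size $\kappa$. There is no real obstacle here; the content of the corollary is just that ``stationary $\partial(c)$'' is strong enough to supply simultaneously the strongly unbounded hypothesis (via Lemma~\ref{lemma39}) and the somewhere-closed hypothesis (vacuously) required by Lemma~\ref{lemma35}.
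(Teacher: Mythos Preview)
Your proposal is correct and follows essentially the same route as the paper's proof: the paper simply remarks that such a coloring is somewhere-closed (via the vacuous $\partial(c)$-closure) and witnesses $\U(\kappa,\kappa,\theta,\theta)$ (via Lemma~\ref{lemma39}), and then invokes Lemma~\ref{lemma35}. You have merely spelled out the intermediate observation that $\U(\kappa,\kappa,\theta,\theta)$ together with subadditivity yields $\US(\kappa,2,\theta,\theta)$, which is exactly what Lemma~\ref{lemma35} requires.
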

\begin{proof} This follows from Lemma~\ref{lemma35}, since any coloring as above is somewhere-closed,
and witnesses $\U(\kappa,\kappa,\theta,\theta)$.
\end{proof}

\begin{lemma}\label{prop323} Let $c:[\kappa]^2\rightarrow\theta$ be a coloring, with $\theta\in\reg(\kappa)$.
\begin{enumerate}
\item If $c$ is $\theta$-coherent, then it is $\ell_\infty$-coherent;
\item If $c$ is subadditive, then it is $\partial(c)$-coherent and $\ell_\infty$-coherent;
\item If $c$ is $\ell_\infty$-coherent, then it is weakly subadditive.
\end{enumerate}
\end{lemma}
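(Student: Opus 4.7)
The proof splits into three clauses, handled in the stated order. Clause~(3) is essentially definitional, while Clause~(2) is where the only real work lies.

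For Clause~(1), the plan is a counting argument using the regularity of $\theta$. Given $\gamma<\delta<\kappa$, $\theta$-coherence supplies a set $X:=\{\alpha<\gamma\mid c(\alpha,\gamma)\neq c(\alpha,\delta)\}$ of size $<\theta$, so the ordinal
$$j:=\sup\{c(\alpha,\gamma)+1,\,c(\alpha,\delta)+1\mid \alpha\in X\}$$
is $<\theta$ by regularity. I would then verify directly that this $j$ witnesses $\ell_\infty$-coherence at the pair $(\gamma,\delta)$: for $\alpha\in D^c_{\le i}(\gamma)$ either $\alpha\notin X$, in which case $c(\alpha,\delta)=c(\alpha,\gamma)\le i\le i+j$, or $\alpha\in X$, in which case $c(\alpha,\delta)<j\le i+j$. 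The symmetric computation handles $D^c_{\le i}(\delta)\cap\gamma\s D^c_{\le i+j}(\gamma)$.

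For Clause~(2), the $\ell_\infty$-coherence part is immediate by taking $j:=c(\gamma,\delta)+1$ and applying the two forms of subadditivity. The harder part is $\partial(c)$-coherence. Fix $\beta\in\partial(c)$ and $\beta\le\gamma<\delta<\kappa$; set $j:=\max\{c(\beta,\gamma),c(\beta,\delta)\}$ (interpreting the value $c(\beta,\beta)$ as $0$ when $\beta=\gamma$), which is $<\theta$. The key observation is that if $\alpha<\beta$ satisfies $c(\alpha,\beta)>j$ (or more precisely, in the case $\beta=\gamma$, $c(\alpha,\beta)>c(\beta,\delta)$), then subadditivity of the first kind gives $c(\alpha,\gamma)\le\max\{c(\alpha,\beta),c(\beta,\gamma)\}=c(\alpha,\beta)$, and subadditivity of the second kind gives $c(\alpha,\beta)\le\max\{c(\alpha,\gamma),c(\beta,\gamma)\}$, which forces $c(\alpha,\beta)\le c(\alpha,\gamma)$ and hence equality; the same argument with $\delta$ in place of $\gamma$ yields $c(\alpha,\delta)=c(\alpha,\beta)$. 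Consequently
$$\{\alpha<\beta\mid c(\alpha,\gamma)\neq c(\alpha,\delta)\}\s D^c_{\le j}(\beta),$$
and since $\beta\in\partial(c)$, the right-hand side has supremum strictly below $\beta$ (applying the defining clause of $\partial(c)$ to the ordinal $\beta$ itself).

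For Clause~(3), given $\beta<\gamma<\kappa$ and $i<\theta$, apply $\ell_\infty$-coherence to the pair $(\beta,\gamma)$ to obtain $j_0<\theta$ with $D^c_{\le i}(\beta)\s D^c_{\le i+j_0}(\gamma)$ and $D^c_{\le i}(\gamma)\cap\beta\s D^c_{\le i+j_0}(\beta)$; then $j:=i+j_0<\theta$ works in both inequalities. The only step demanding care is the $\partial(c)$-coherence argument in Clause~(2), where I need both forms of subadditivity simultaneously to conclude that disagreement between $c(\cdot,\gamma)$ and $c(\cdot,\delta)$ forces $c(\cdot,\beta)$ to stay small, and then invoke $\beta\in\partial(c)$ with the distinguished ordinal taken to be $\beta$ itself.
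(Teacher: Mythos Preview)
Your proof is correct. Clauses~(1) and~(3) match the paper's argument essentially verbatim. For the $\partial(c)$-coherence part of Clause~(2), you take a slightly different route: you set $j:=\max\{c(\beta,\gamma),c(\beta,\delta)\}$ and show directly, via both forms of subadditivity, that the disagreement set is contained in $D^c_{\le j}(\beta)$, then invoke the defining property of $\partial(c)$ at the ordinal $\beta$. The paper instead sets $i:=c(\gamma,\delta)$, reuses the $\ell_\infty$-coherence just established to conclude that the disagreement set lies in $D^c_{\le i}(\gamma)\cap\beta$, and then invokes the defining property of $\partial(c)$ at the ordinal $\gamma$. The paper's version is marginally more economical since it recycles the previous calculation rather than performing a fresh one, but your argument is equally valid and arguably more self-contained.
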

\begin{proof}
(1) If $c$ is $\theta$-coherent,
then for all $\gamma<\delta<\kappa$,
$$j:=\sup\{ c(\alpha,\gamma), c(\alpha,\delta)\mid \alpha<\gamma, ~ c(\alpha,\gamma)\neq c(\alpha,\delta)\}$$
is $<\theta$, and, for every $i\in[j,\theta)$,
$D^c_{\le i}(\gamma)=D^c_{\le i}(\delta)\cap\gamma$.

(2) Suppose that $c$ is subadditive.
To see that $c$ is $\ell_\infty$-coherent, let $\gamma<\delta<\kappa$ be arbitrary.
Set $j:=c(\gamma,\delta)$.
For all $i<\theta$ and $\alpha\in D^c_{\le i}(\gamma)$,
$c(\alpha,\delta)\le\max\{c(\alpha,\gamma),c(\gamma,\delta)\}\le\max\{i,j\}$,
and, for all $i<\theta$ and $\alpha\in D^c_{\le i}(\delta)\cap\gamma$,
$c(\alpha,\gamma)\le\max\{c(\alpha,\delta),c(\gamma,\delta)\}\le\max\{i,j\}$.
Altogether, $D^c_{\le i}(\gamma)=D^c_{\le i}(\delta)\cap\gamma$ for every $i\in[j,\theta)$.

Next, we show that $c$ is $\partial(c)$-coherent. To this end, fix $\beta \leq \gamma
< \delta < \kappa$ with $\beta \in \partial(S)$. Set $i := c(\gamma, \delta)$. By
the subadditivity of $c$, it follows that $D^c_{\leq j}(\gamma) = D^c_{\leq j}(\delta)
\cap \gamma$ for all $j \in [i, \theta)$. In particular,
$\{\alpha < \beta \mid c(\alpha, \gamma) \neq c(\alpha, \delta)\}
  \subseteq D^c_{\le i}(\gamma)\cap\beta$.
Since $\beta \in \partial(c)$, $\sup(D^c_{\le i}(\gamma)\cap\beta)<\beta$,
so we are done.

(3) This is clear.
\end{proof}

\begin{cor}\label{lemma314} If $\lambda$ is regular and $c:[\lambda^+]^2\rightarrow\lambda$ is locally small,
then $\partial(c)=E^{\lambda^+}_\lambda$, and hence:
\begin{enumerate}
\item $c$ is a witness to $\U(\lambda^+,\lambda^+,\lambda,\lambda)$;
\item If $c$ is subadditive, then $c$ is $\lambda$-coherent.
\end{enumerate}
\end{cor}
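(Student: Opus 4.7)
The plan is to first pin down $\partial(c)$ exactly, and then read off Clauses (1) and (2) as immediate corollaries of the earlier machinery.

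For the equality $\partial(c)=E^{\lambda^+}_\lambda$, the inclusion $\subseteq$ is free: as noted right after Definition~\ref{levels}, $\partial(c)\s E^{\lambda^+}_{\cf(\lambda)}$, and $\cf(\lambda)=\lambda$ because $\lambda$ is regular. For the reverse inclusion, fix $\beta\in E^{\lambda^+}_\lambda$ and consider arbitrary $\gamma<\lambda^+$ and $i<\lambda$. There are two trivial cases. If $\gamma\le\beta$, then $D^c_{\le i}(\gamma)\cap\beta\s\gamma\le\beta$, with strict boundedness when $\gamma<\beta$; the boundary case $\gamma=\beta$ is absorbed into the main case by the next observation. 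If $\gamma\ge\beta$, local smallness gives $|D^c_{\le i}(\gamma)\cap\beta|\le |D^c_{\le i}(\gamma)|<\lambda=\cf(\beta)$, so $D^c_{\le i}(\gamma)\cap\beta$ is bounded in $\beta$. Thus $\beta\in\partial(c)$.

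Once the equality is established, Clause~(1) is immediate: since $E^{\lambda^+}_\lambda$ is stationary in $\lambda^+$, $\partial(c)$ is stationary, and Lemma~\ref{lemma39} gives that $c$ witnesses $\U(\lambda^+,\lambda^+,\lambda,\cf(\lambda))=\U(\lambda^+,\lambda^+,\lambda,\lambda)$. Clause~(2) is equally immediate: if $c$ is subadditive, Lemma~\ref{prop323}(2) tells us that $c$ is $\partial(c)$-coherent, i.e., $E^{\lambda^+}_\lambda$-coherent, which by the Remark following the coherence definition is the same as $\lambda$-coherent.

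There is no real obstacle here; the only thing to keep an eye on is making sure the trivial case $\gamma<\beta$ and the substantive case $\gamma\ge\beta$ of the $\supseteq$ inclusion are both handled, and that the regularity of $\lambda$ is invoked in the correct two places (to identify $\cf(\lambda)$ with $\lambda$ and to use $\cf(\beta)=\lambda$ as the threshold beyond which a set of size $<\lambda$ is bounded).
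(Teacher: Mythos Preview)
Your proof is correct and follows essentially the same approach as the paper's. The paper is even terser: it simply declares $\partial(c)=E^{\lambda^+}_\lambda$ to be immediate and then cites Lemma~\ref{lemma39} for Clause~(1) and Lemma~\ref{prop323}(2) for Clause~(2), exactly as you do; your additional unpacking of the ``immediate'' equality is accurate and fills in the details the paper omits.
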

\begin{proof} The fact that $\partial(c) = E^{\lambda^+}_\lambda$ is immediate.
Now, Clause~(1) follows from Lemma~\ref{lemma39},
and Clause~(2) follows from Lemma~\ref{prop323}(2).
\end{proof}

\begin{lemma}\label{lemma325} Let $\theta\in\reg(\kappa)$.
If there exists a $\theta$-coherent witness to $\U(\kappa,2,\theta,2)$,
then $\US(\kappa,2,\theta,2)$ holds.
\end{lemma}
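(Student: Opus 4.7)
The plan is to construct, given a $\theta$-coherent witness $c:[\kappa]^2\rightarrow\theta$ to $\U(\kappa,2,\theta,2)$, a subadditive coloring $d:[\kappa]^2\rightarrow\theta$ satisfying $c(\alpha,\beta)\le d(\alpha,\beta)$ for every pair $(\alpha,\beta)\in[\kappa]^2$. Such a domination will automatically transfer the unboundedness property of $c$ to $d$, so that $d$ witnesses $\US(\kappa,2,\theta,2)$. Explicitly, for $\alpha<\beta<\kappa$, I would set
$$
d(\alpha,\beta):=\max\bigl(\{c(\alpha,\beta)\}\cup\{c(\gamma,\alpha),c(\gamma,\beta)\mid \gamma<\alpha,\ c(\gamma,\alpha)\ne c(\gamma,\beta)\}\bigr).
$$
Because $c$ is $\theta$-coherent, the indexing set has fewer than $\theta$ members, and since $\theta$ is regular, the maximum lies below $\theta$, so $d$ is well defined.

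The key property that makes $d$ easy to work with is an indexed-agreement reformulation: $d(\alpha,\beta)\le i$ iff $D^c_{\le j}(\alpha)\cup\{\alpha\}=D^c_{\le j}(\beta)\cap(\alpha+1)$ for every $j\in[i,\theta)$. With this characterization, both clauses of subadditivity reduce to routine manipulations of the level-sets $D^c_{\le j}(\cdot)$. For subadditivity of the first kind, given $\alpha<\beta<\gamma<\kappa$ and $i:=\max\{d(\alpha,\beta),d(\beta,\gamma)\}$, I would compute, for each $j\in[i,\theta)$,
$$
D^c_{\le j}(\gamma)\cap(\alpha+1) = \bigl(D^c_{\le j}(\gamma)\cap(\beta+1)\bigr)\cap(\alpha+1) = \bigl(D^c_{\le j}(\beta)\cup\{\beta\}\bigr)\cap(\alpha+1) = D^c_{\le j}(\alpha)\cup\{\alpha\},
$$
which yields $d(\alpha,\gamma)\le i$. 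For subadditivity of the second kind, with $i:=\max\{d(\alpha,\gamma),d(\beta,\gamma)\}$, the indexed-agreement property applied to the pair $(\beta,\gamma)$ yields $D^c_{\le j}(\beta)=D^c_{\le j}(\gamma)\cap\beta$ and $\alpha\in D^c_{\le j}(\gamma)\cap\beta = D^c_{\le j}(\beta)$ for every $j\in[i,\theta)$; intersecting with $\alpha+1$ then produces $D^c_{\le j}(\beta)\cap(\alpha+1) = D^c_{\le j}(\gamma)\cap(\alpha+1) = D^c_{\le j}(\alpha)\cup\{\alpha\}$, i.e., $d(\alpha,\beta)\le i$.

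The unboundedness clause is immediate from $d\ge c$: given $A\in[\kappa]^\kappa$ and $i<\theta$, a pair $(\alpha,\beta)\in[A]^2$ with $c(\alpha,\beta)>i$ furnished by $c$ witnessing $\U(\kappa,2,\theta,2)$ also satisfies $d(\alpha,\beta)>i$. The principal subtlety --- and what I expect to take the most care --- is verifying the indexed-agreement characterization of $d(\alpha,\beta)\le i$ for \emph{all} $j\in[i,\theta)$ rather than only $j=i$: this is why the definition of $d$ must include both $c(\gamma,\alpha)$ and $c(\gamma,\beta)$ (rather than just their maximum applied once) for each exceptional $\gamma$, so that at every intermediate threshold $j$ lying between the two values, the membership of $\gamma$ in $D^c_{\le j}(\alpha)$ and $D^c_{\le j}(\beta)$ is forced to agree. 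Once that point is secured, both subadditivity computations proceed as displayed.
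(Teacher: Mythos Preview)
Your proof is correct and follows essentially the same approach as the paper: both define a new coloring $d$ by recording, for each pair $(\alpha,\beta)$, the least threshold above which the level-sets $D^c_{\le j}(\alpha)$ and $D^c_{\le j}(\beta)\cap\alpha$ agree. The paper's $d(\alpha,\beta)$ is exactly the least $j$ with $D^c_{\le i}(\alpha)=D^c_{\le i}(\beta)\cap\alpha$ for all $i\ge j$, whereas your $d$ additionally folds in $c(\alpha,\beta)$ (equivalently, you require $D^c_{\le j}(\alpha)\cup\{\alpha\}=D^c_{\le j}(\beta)\cap(\alpha+1)$), so your coloring equals $\max\{c(\alpha,\beta),\ d_{\text{paper}}(\alpha,\beta)\}$.

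This small tweak buys you a cleaner endgame: since your $d\ge c$ pointwise, the unboundedness of $d$ is immediate. The paper, lacking this domination, must argue indirectly: assuming $d$ is bounded by $j$ on some $A\in[\kappa]^\kappa$, it passes to a further $A'\subseteq A$ on which the values $c(\gamma,\gamma')$ (with $\gamma'$ the successor of $\gamma$ in $A$) are constant, and then uses the level-set agreement to bound $c$ on $[A']^2$. Both arguments are short, but yours avoids this extra pigeonhole/refinement step at no real cost to the subadditivity verification.
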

\begin{proof}
Suppose that $c:[\kappa]^2\rightarrow\theta$ is a $\theta$-coherent witness to $\U(\kappa,2,\theta,2)$.
Define a coloring $d:[\kappa]^2\rightarrow\theta$ by letting,
for all $\gamma<\delta<\kappa$, $d(\gamma,\delta)$ be the least $j<\theta$
such that
$D^c_{\le i}(\gamma)=D^c_{\le i}(\delta)\cap\gamma$
for every $i\in[j,\theta)$.
It is clear that $d$ is subadditive. Now, if $d$ fails to witness $\U(\kappa,2,\theta,2)$,
then we may fix $A\in[\kappa]^\kappa$ and $j<\theta$ such that $d(\gamma,\delta)\le j$ for all $(\gamma,\delta)\in[A]^2$. For every $\gamma\in A$, let $\gamma':=\min(A\setminus(\gamma+1))$ and $i_\gamma:=c(\gamma,\gamma')$. Fix $A'\in[A]^\kappa$ and $i<\theta$ such that $i=\max\{j,i_\gamma\}$ for all $\gamma\in A'$.

Then, for every $(\gamma,\delta)\in[A']^2$,
$\gamma<\gamma'\le\delta$ and $\gamma\in D^c_{\le i_\gamma}(\gamma')\s D^c_{\le i}(\gamma')=D^c_{\le i}(\delta)\cap\gamma'$, so $c(\gamma,\delta)\le i$.
It follows that $c``[A']^2$ is bounded in $\theta$, contradicting the fact that $c$ witnesses $\U(\kappa,2,\theta,2)$.
\end{proof}
\begin{cor} If there exists a uniformly coherent $\kappa$-Souslin tree,
then, for every $\theta\in\reg(\kappa)$,
$\US(\kappa,2,\theta,\theta)$ holds.
\end{cor}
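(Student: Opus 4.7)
The plan is a two-step reduction. First, since $\theta\in\reg(\kappa)$ gives $\cf(\theta)=\theta$, Lemma~\ref{uPLUSsub}(2) upgrades any subadditive witness to $\U(\kappa,2,\theta,2)$ into a witness to $\U(\kappa,2,\theta,\theta)$ (and subadditivity is preserved). Thus $\US(\kappa,2,\theta,2)$ already entails $\US(\kappa,2,\theta,\theta)$, and it remains to establish the former. Second, by Lemma~\ref{lemma325}, to obtain $\US(\kappa,2,\theta,2)$ it is enough to exhibit a $\theta$-coherent coloring $c:[\kappa]^2\rightarrow\theta$ that witnesses $\U(\kappa,2,\theta,2)$.

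The substantive step is therefore to extract such a coloring from a uniformly coherent $\kappa$-Souslin tree $T$. The uniform coherence hypothesis supplies a coherent sequence of level representatives $\langle b_\alpha\mid \alpha<\kappa\rangle$, with $b_\alpha$ sitting on level $\alpha$ of $T$, such that for every $\alpha<\beta<\kappa$ the divergence of $b_\beta\restriction\alpha$ from $b_\alpha$ is controlled by a single ordinal below $\theta$ in the distinguished parametrization. I would let $c(\alpha,\beta)$ be precisely that controlling ordinal, i.e.\ the least $j<\theta$ past which $b_\beta\restriction\alpha$ and $b_\alpha$ agree. The $\theta$-coherence of $c$ is then a direct translation of the coherence of $\langle b_\alpha\mid\alpha<\kappa\rangle$: for any $\gamma<\delta<\kappa$, the set $\{\alpha<\gamma\mid c(\alpha,\gamma)\neq c(\alpha,\delta)\}$ is bounded in $\gamma$ and, in particular, has size $<\theta$.

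To verify that $c$ witnesses $\U(\kappa,2,\theta,2)$, suppose for contradiction that some $A\in[\kappa]^\kappa$ and $i<\theta$ satisfied $c(\alpha,\beta)\le i$ for all $(\alpha,\beta)\in[A]^2$. By the design of $c$, this forces the representatives $\{b_\alpha\mid \alpha\in A\}$ to be pairwise $<_T$-comparable, producing a chain in $T$ of size $\kappa$ and contradicting the Souslin property of $T$. With $c$ in hand, Lemma~\ref{lemma325} delivers $\US(\kappa,2,\theta,2)$, and the first paragraph then completes the proof of the corollary.

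The main obstacle is ensuring that the ``uniform'' in ``uniformly coherent $\kappa$-Souslin tree'' really does deliver, for every prescribed $\theta\in\reg(\kappa)$, a choice of representatives whose pairwise disagreements can be encoded below $\theta$. This calibration is precisely the point of the hypothesis, and is what allows a single tree to serve every regular $\theta<\kappa$ simultaneously. Once the representatives and the encoding into $\theta$ are fixed, the remaining content---the $\theta$-coherence of $c$ and the chain/antichain dichotomy extracting $\U(\kappa,2,\theta,2)$ from Souslin-ness---is routine.
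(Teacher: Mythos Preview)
Your reductions via Lemma~\ref{lemma325} and Lemma~\ref{uPLUSsub}(2) are exactly right, and this is how the paper begins as well. The gap is in the construction and analysis of the coloring $c$.

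First, the definition of $c$ is not well-posed. A uniformly coherent $\kappa$-Souslin tree, as used here, is a downward-closed $T\subseteq{}^{<\kappa}2$ in which any two nodes disagree at only \emph{finitely} many coordinates. For representatives $b_\alpha\in T\cap{}^\alpha2$, the set $\{\epsilon<\alpha\mid b_\alpha(\epsilon)\neq b_\beta(\epsilon)\}$ is a finite subset of $\alpha$; ``the least $j<\theta$ past which $b_\beta\restriction\alpha$ and $b_\alpha$ agree'' is an ordinal below $\alpha$, not below $\theta$. There is no ``distinguished parametrization'' built into the hypothesis that would push this ordinal into $\theta$.

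Second, even granting some coloring of this flavor, your verification of $\U(\kappa,2,\theta,2)$ fails: if $c(\alpha,\beta)\le i$ merely says that $b_\alpha$ and $b_\beta\restriction\alpha$ agree past some level $i$, they may still disagree below $i$, so $b_\alpha\neq b_\beta\restriction\alpha$ and the nodes are not $<_T$-comparable. No chain is produced.

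The paper resolves both issues by first re-embedding $T$ as a tree $\hat T\subseteq{}^{<\kappa}\kappa$ that retains uniform coherence but is additionally \emph{widely splitting}: every $t\in\hat T\cap{}^\alpha\kappa$ has $t{}^\smallfrown\langle i\rangle\in\hat T$ for all $i<\alpha$. One then picks representatives $t_\beta\in\hat T\cap{}^\beta\kappa$ and sets $c(\alpha,\beta):=t_\beta(\alpha)$ when $t_\beta(\alpha)<\theta$ (and $0$ otherwise). This is $\omega$-coherent, hence $\theta$-coherent. The $\U(\kappa,2,\theta,2)$ argument is not a chain argument but an antichain argument: for $A\in[\kappa]^\kappa$ and $i<\theta$, the family $\{t_\alpha{}^\smallfrown\langle i\rangle\mid\alpha\in A\setminus\theta\}$ has size $\kappa$ and so, by Souslin-ness, contains comparable elements $t_\alpha{}^\smallfrown\langle i\rangle\subsetneq t_\beta{}^\smallfrown\langle i\rangle$, whence $t_\beta(\alpha)=i$ and $c(\alpha,\beta)=i$. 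The wide-splitting re-embedding is the missing idea; without it, neither the range of $c$ nor the unboundedness argument can be made to work.
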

\begin{proof} Suppose that there exists a uniformly coherent $\kappa$-Souslin tree. This means that there exists a downward closed subfamily $T\s{}^{<\kappa}2$ such that:
\begin{itemize}
\item[(a)] $(T,{\s})$ is a $\kappa$-Souslin tree;
\item[(b)] for all $s,t\in T$,
$\{ \epsilon\in\dom(s)\cap\dom(t)\mid s(\epsilon)\neq t(\epsilon)\}$ is finite;
\item[(c)] for all $s,t\in T$, if $\dom(s)<\dom(t)$,
then $s*t:=s\cup(t\restriction(\dom(t)\setminus\dom(s)))$ is in $T$.
\end{itemize}

\begin{claim} There exists a downward closed subfamily $\hat T\s{}^{<\kappa}\kappa$ satisfying (a)--(c), in addition to the following:
\begin{itemize}
\item[(d)] For all $t\in\hat T\cap{}^\alpha\kappa$ and $i<\alpha$, $t{}^\smallfrown\langle i\rangle\in\hat T$.
\end{itemize}
\end{claim}
\begin{cproof} The proof is similar to that of \cite[Theorem~3.6]{MR2013395}.
Denote $T_\alpha:=T\cap{}^\alpha\kappa$ for all $\alpha< \kappa$.
By a standard fact, we may fix a club $E\s\kappa$ such that, for every
$(\alpha,\beta)\in[E]^2$, every node in $T_\alpha$ admits at least $|\alpha|$ many extensions in $T_\beta$.
We may also assume that $0\in E$.
Let $\pi:\kappa\leftrightarrow E$ denote the order-preserving bijection,
and denote
$T':=\bigcup_{\alpha<\kappa}T_{\pi(\alpha)}$.
Our next goal is to define a map $\Pi:T'\rightarrow{}^{<\kappa}\kappa$ such that all of the following hold:
\begin{enumerate}
\item for all $\alpha<\kappa$ and $t\in T_{\pi(\alpha)}$, $\Pi(t)\in{}^\alpha\kappa$;
\item for all $s\subsetneq t$ from $T'$, $\Pi(s)\subsetneq\Pi(t)$;
\item for all $s,t\in T'$, if $\dom(s)<\dom(t)$, then $\Pi(s*t)=\Pi(s)*\Pi(t)$.
\end{enumerate}

We shall define $\Pi\restriction T_{\pi(\alpha)}$ by recursion on $\alpha<\kappa$:
\begin{itemize}
\item[$\br$] For $\alpha=0$, we have $T_{\pi(\alpha)}=T_0=\{\emptyset\}$,
so we set $\Pi(\emptyset):=\emptyset$.

\item[$\br$]  For $\alpha=\bar\alpha+1$  such that $\Pi\restriction T_{\pi(\bar\alpha)}$ has been successfully defined,  we first fix $\bar t\in T_{\pi(\bar\alpha)}$.
Find a cardinal $\mu\ge|\bar\alpha|$ and an injective enumeration $\langle t^i\mid i<\mu\rangle$
of all the extensions of
$\bar t$ in $T_{\pi(\alpha)}$.
Finally, for every $t\in T_{\pi(\alpha)}$, find the unique $i<\mu$ such that $\bar t * t=t^i$,
and then let  $\Pi(t):=\Pi(t\restriction\bar\alpha){}^\smallfrown\langle i\rangle$.
It is clear that Properties (1)--(3) are preserved.

\item[$\br$]  For $\alpha\in\acc(\kappa)$ such that $\Pi\restriction T_{\pi(\bar\alpha)}$ has been successfully defined for all $\bar\alpha<\alpha$, we just let $\Pi(t):=\bigcup\{\Pi(t\restriction \pi(\bar\alpha))\mid \bar\alpha<\alpha\}$ for every $t\in T_{\pi(\alpha)}$.
It is clear that Properties (1)--(3) are preserved.
\end{itemize}

Set $\hat T:=\im(\Pi)$.
By Property~(2), $(\hat T,{\s})$ is order-isomorphic to $(T',{\s})$ which is a $\kappa$-sized subtree of $(T,{\s})$, so $(\hat T,{\s})$ is indeed a $\kappa$-Souslin tree.
By Properties (3) and (b), for all $s,t\in\hat T$,
$\{ \epsilon\in\dom(s)\cap\dom(t)\mid s(\epsilon)\neq t(\epsilon)\}$ is finite.
By Properties (3) and (c),
for all $s,t\in\hat T$, if $\dom(s)<\dom(t)$,
then $s*t:=s\cup(t\restriction(\dom(t)\setminus\dom(s)))$ is in $T$.
Finally, by the definition of $\Pi\restriction T_{\pi(\alpha)}$ for successor ordinals $\alpha<\kappa$,
and by Property~(c), it is indeed the case that,
for all $t\in\hat T\cap{}^\alpha\kappa$ and $i<\alpha$, $t{}^\smallfrown\langle i\rangle\in\hat T$.
\end{cproof}

Let $\theta\in\reg(\kappa)$.
By Lemma~\ref{lemma325} and Lemma~\ref{uPLUSsub}(2),
to show that $\US(\kappa,2,\allowbreak\theta,\theta)$ holds,
it suffices to find a $\theta$-coherent witness to $\U(\kappa,2,\theta,2)$.
To this end, fix $\hat T$ as in the preceding claim,
and then fix some sequence $\langle t_\beta\mid\beta<\kappa\rangle$
such that $t_\beta \in\hat T\cap{}^\beta\kappa$ for all $\beta < \kappa$.
Define a coloring $c:[\kappa]^2\rightarrow\theta$ via:
$$c(\alpha,\beta):=\begin{cases}t_\beta(\alpha),&\text{if }t_\beta(\alpha)<\theta;\\
0,&\text{otherwise}.
\end{cases}$$

Evidently, $c$ is $\omega$-coherent.
Now, given $A\in[\kappa]^\kappa$, we claim that $c``[A]^2=\theta$.
To see this, let $i<\theta$,
and note that $S:=\{ t_\alpha{}^\smallfrown\langle i\rangle\mid \alpha\in A\setminus\theta\}$
forms a subset of $\hat T$ of size $\kappa$, and hence it cannot be an antichain. Pick $s,t\in S$ such that $s\subsetneq t$.
Let $(\alpha,\beta)\in[A]^2$ be such that $s=t_\alpha{}^\smallfrown\langle i\rangle$ and $t=t_\beta{}^\smallfrown\langle i\rangle$.
As $t_\alpha{}^\smallfrown\langle i \rangle$ and $t_\beta$ are both initial segments of $t$, we infer that $t_\beta(\alpha)=i$, and hence $c(\alpha,\beta)=i$, as sought.
\end{proof}
\begin{remark}
By \cite[Theorem~C]{paper29}, for every singular cardinal $\lambda$, ${\square(\lambda^+)}+{\gch}$ entails the existence of a uniformly coherent $\lambda^+$-Souslin tree.
By Fact~\ref{fact321} and the preceding Corollary, the same conclusion does not follow from ${\square(\lambda^+,2)}+{\gch}$.
\end{remark}

We now show that the existence of a coloring $c$ for which $\partial(c)$ is stationary
is in fact equivalent to the existence of a nonreflecting stationary subset of
$E^\kappa_\theta$.

\begin{lemma}\label{partial} For a subset $S\s E^\kappa_\theta$, the following are equivalent:
\begin{enumerate}
\item for every $\gamma\in E^\kappa_{>\omega}$, $S\cap\gamma$ is nonstationary in $\gamma$;
\item there exists a coloring $c:[\kappa]^2\rightarrow\theta$
for which $\partial(c)\supseteq S$;
\item there exists an $S$-coherent, closed coloring $c:[\kappa]^2\rightarrow\theta$
for which $\partial(c)\supseteq S$.
\end{enumerate}
\end{lemma}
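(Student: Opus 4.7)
The implication (3)$\Rightarrow$(2) is immediate. For (2)$\Rightarrow$(1), the plan is to assume $\partial(c)\supseteq S$ and derive a contradiction from the assumption that $S\cap\gamma$ is stationary in some $\gamma\in E^\kappa_{>\omega}$. Since $S\subseteq E^\kappa_\theta$ and $E^\gamma_\theta$ is stationary in $\gamma$ only when $\cf(\gamma)>\theta$, we will have $\cf(\gamma)>\theta$; writing $\gamma=\bigcup_{i<\theta}D^c_{\le i}(\gamma)$ and applying pigeonhole, some $i^*<\theta$ makes $T:=D^c_{\le i^*}(\gamma)\cap S\cap\gamma$ stationary in $\gamma$. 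A standard Fodor argument will then show $T\cap\acc^+(T)$ is stationary, and picking $\beta$ there yields $\beta\in S$ with $\sup(D^c_{\le i^*}(\gamma)\cap\beta)=\beta$, violating $\beta\in\partial(c)$.

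For the main implication (1)$\Rightarrow$(3), I would start by fixing a $C$-sequence $\vec C=\langle C_\beta\mid\beta<\kappa\rangle$ with $C_{\beta+1}=\{\beta\}$, $\otp(C_\beta)=\theta$ whenever $\beta\in S$, and $\cl(C_\beta)\cap S\subseteq\{\beta\}$ for every limit $\beta<\kappa$. This is feasible using (1), since for $\beta\in\acc(\kappa)\setminus S$ of uncountable cofinality, nonreflection of $S$ at $\beta$ yields a club in $\beta$ disjoint from $S$; for $\beta$ of countable cofinality, the successor ordinals cofinal in $\beta$ lie outside $E^\kappa_\theta$; and for $\beta\in S$, the same devices give $C_\beta$ of order type $\theta$ disjoint from $S$, while $\acc(C_\beta)\subseteq E^\kappa_{<\theta}$ is automatically disjoint from $S$. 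Then, for $\alpha<\beta$ with walk $\beta=\beta_0>\beta_1>\cdots>\beta_k=\alpha$ along $\vec C$, I would define
\[
c(\alpha,\beta):=\max\bigl(\{\otp(C_{\beta_j}\cap\alpha)\mid j<k,\ \beta_j\in S\}\cup\{0\}\bigr).
\]
Each step with $\beta_j\in S$ contributes a value below $\theta$, so $c$ will take values in $\theta$.

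To verify $S\subseteq\partial(c)$: given $\beta\in S$, $\gamma>\beta$, and $i<\theta$, let $\gamma=\gamma_0>\cdots>\gamma_\ell=\beta$ be the walk $\gamma\to\beta$ and set $M:=\max_{j<\ell}\sup(C_{\gamma_j}\cap\beta)$. For each $j<\ell$, $\gamma_j\ne\beta$ forces $\beta\notin\cl(C_{\gamma_j})$, hence $\sup(C_{\gamma_j}\cap\beta)<\beta$ and $M<\beta$. For $\alpha\in(M,\beta)$, none of the $C_{\gamma_j}$ meets $[\alpha,\beta)$, and so the walk $\gamma\to\alpha$ prolongs the walk $\gamma\to\beta$ by continuing through $\beta$; this yields $c(\alpha,\gamma)\ge\otp(C_\beta\cap\alpha)$, which diverges to $\theta$ as $\alpha\to\beta$. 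Consequently, $D^c_{\le i}(\gamma)\cap\beta$ is bounded by $\max(M,e_\beta(i+1))<\beta$, where $e_\beta$ enumerates $C_\beta$. Closedness and subadditivity of $c$ will follow from the standard walks arguments (cf.\ \cite[\S6.3]{todorcevic_book}), exploiting $\otp(C_{\gamma_j}\cap\alpha)\le\otp(C_{\gamma_j}\cap\beta)$ whenever $\alpha\le\beta<\gamma_j$; $S$-coherence will then be free from Lemma~\ref{prop323}(2), as $c$ is subadditive and $S\subseteq\partial(c)$. The main obstacles will be (i) ensuring the walks from $\gamma$ reach $S$-targets—which can be arranged by augmenting $\vec C$ to include enough successor ordinals of $S$-points in its clubs so that walks to $\beta\in S$ pass through $\beta+1$—and (ii) verifying subadditivity of the second kind when the walk $\gamma\to\alpha$ skips over elements of $S$, which I expect to handle via the usual concatenation identity for walks.
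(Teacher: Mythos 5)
Your arguments for $(3)\Rightarrow(2)$ and $(2)\Rightarrow(1)$ are correct and essentially identical to the paper's, and your general strategy for $(1)\Rightarrow(3)$ — walking along a $C$-sequence whose limit points avoid $S$ and reading off order types of the $C$'s along the trace — is the right one. Your verification that $\partial(c)\supseteq S$ (forcing the walk from $\gamma$ to any $\alpha\in(M,\beta)$ to pass through $\beta$) is also sound.

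The genuine gap lies in your plan for $S$-coherence. You propose to prove that $c$ is subadditive and then invoke Lemma~\ref{prop323}(2). This cannot be salvaged: the remark immediately following Lemma~\ref{partial}, citing Fact~\ref{fact321} and Theorem~\ref{pid_cor}, shows that a nonreflecting stationary $S\subseteq E^\kappa_\theta$ does not in general yield a coloring $c:[\kappa]^2\to\theta$ with $\partial(c)$ stationary that is even weakly subadditive of the first kind. Concretely, under $\PFA$ there can be a nonreflecting stationary $S\subseteq E^{\omega_3}_\omega$, while by Theorem~\ref{pid_cor} no witness to $\U(\omega_3,2,\omega,2)$ is weakly subadditive of the first kind; and, by Lemma~\ref{lemma39}, any coloring with $\partial(c)\supseteq S$ stationary would be such a witness. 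So the walks-derived $c$ you build is not subadditive in general, and the ``usual concatenation identity'' cannot repair this — the statement is simply false. The paper instead proves $S$-coherence directly: for $\beta<\gamma$ with $\beta\in S$, once $\alpha\in(\epsilon,\beta)$ for $\epsilon$ chosen above $\lambda(\beta,\gamma)$ and above the point of $C_\beta$ at position $c(\beta,\gamma)$, the concatenation $\tr(\alpha,\gamma)=\tr(\beta,\gamma){}^\smallfrown\tr(\alpha,\beta)$ makes the contribution of the $\tr(\beta,\gamma)$ part dominated by $\otp(C_\beta\cap\alpha)$, yielding $c(\alpha,\gamma)=c(\alpha,\beta)\ge\otp(C_\beta\cap\alpha)$. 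This single claim gives both $S$-coherence and $\partial(c)\supseteq S$ without any appeal to subadditivity. You should replace the subadditivity detour with a direct argument of this kind.

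A secondary point: the paper's $c$ aggregates $\otp(C_\eta\cap\alpha)$ over the entire trace, keeping only values below $\theta$, which lets it quote \cite[Lemma~4.7]{paper34} for closedness. Your $c$ aggregates only over trace vertices lying in $S$, so you cannot cite that lemma and would need to reprove closedness for your variant; it is not evident that restricting to $S$-vertices preserves the closedness argument.
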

\begin{proof} $(3)\implies(2)$: This is trivial.

$(2)\implies(1)$: Suppose that $c:[\kappa]^2\rightarrow\theta$ is a coloring
for which $\partial(c)\supseteq S$.
Towards a contradiction, suppose that we are given $\gamma\in E^\kappa_{>\omega}$ such that $S\cap\gamma$ is stationary in $\gamma$.
As $S\s E^\kappa_\theta$, it follows that $\cf(\gamma)>\theta$, and hence $\gamma\notin\partial(c)$.
As $\cf(\gamma)\neq\theta$, we may pick $i<\theta$ such that $\sup(D^c_{\le i}(\gamma))=\gamma$.
Now, pick $\beta\in\acc^+(D^c_{\le i}(\gamma))\cap S$.
It follows that $\sup(D^c_{\le i}(\gamma)\cap\beta)=\beta$,
contradicting the fact that $\beta\in S\s\partial(c)$.

$(1)\implies(3)$: Pick a $C$-sequence $\vec C=\langle C_\alpha\mid\alpha<\kappa\rangle$ such that,
for all $\alpha<\kappa$, $\otp(C_\alpha)=\cf(\alpha)$ and $\acc(C_\alpha)\cap S=\emptyset$.
Let $\tr$ be the function derived from $\vec C$ as in \cite[Definition~4.4]{paper34}.
Define a coloring $c:[\kappa]^2\rightarrow\theta$ via
$$c(\alpha,\gamma):=\sup(\{ \otp(C_{\eta}\cap\alpha)\mid \eta\in\im(\tr(\alpha,\gamma))\}\cap\theta).$$
By \cite[Lemma~4.7]{paper34}, $c$ is closed.
\begin{claim} Let $\beta<\gamma<\kappa$ with $\beta\in S$.
Then there exists $\epsilon<\beta$ such that, for every $\alpha\in(\epsilon,\beta)$,
$c(\alpha,\gamma)=c(\alpha,\beta)\ge \otp(C_\beta\cap\alpha)$.
\end{claim}
\begin{cproof} For every $\alpha < \beta$, since $\beta\in\im(\tr(\alpha,\beta))$ and $\otp(C_\beta\cap\alpha)<\otp(C_\beta)=\theta$, it is immediate that $c(\alpha,\beta)\ge\otp(C_\beta\cap\alpha)$.
Set $j:=c(\beta,\gamma)$,
and then fix a large enough $\epsilon<\beta$ such that $j\le\otp(C_\beta\cap\epsilon)$ and $\lambda(\beta,\gamma)\le\epsilon$.\footnote{The function $\lambda(\cdot,\cdot)$ is defined on \cite[p.~258]{todorcevic_book}.}

Let $\alpha\in(\epsilon,\beta)$. Then $\tr(\alpha,\gamma)=\tr(\beta,\gamma){}^\smallfrown\tr(\alpha,\beta)$,
and hence $$c(\alpha,\gamma)=\sup(\{ \otp(C_{\eta}\cap\alpha)\mid \eta\in\im(\tr(\beta,\gamma))\cup \im(\tr(\alpha,\beta))\}\cap\theta).$$
In particular, $c(\alpha,\gamma)\ge c(\alpha,\beta)\ge\otp(C_\beta\cap\alpha)\ge j$. Now, as
    \begin{align*}
j=c(\beta,\gamma)=&\sup(\{ \otp(C_{\eta}\cap\beta)\mid \eta\in\im(\tr(\beta,\gamma))\}\cap\theta)\\
\ge&\sup(\{ \otp(C_{\eta}\cap\alpha)\mid \eta\in\im(\tr(\beta,\gamma))\}\cap\theta),
    \end{align*}
it follows that
$$c(\alpha,\gamma)=\sup(\{ \otp(C_{\eta}\cap\alpha)\mid \eta\in\im(\tr(\alpha,\beta))\}\cap\theta)=c(\alpha,\beta),$$
as sought.
\end{cproof}
It now immediately follows that $\partial(c)\supseteq S$ and that $c$ is $S$-coherent.
\end{proof}
\begin{remark}
By Fact~\ref{fact321}, the preceding lemma cannot be strengthened to assert that
the existence of a nonreflecting stationary subset of $E^\kappa_\theta$
gives rise to a \emph{subadditive} coloring $c:[\kappa]^2\rightarrow\theta$ for which $\partial(c)$ is stationary.
In fact, a nonreflecting stationary subset of $E^\kappa_\theta$ is not even enough to
imply the existence of a coloring $c:[\kappa]^2\rightarrow\theta$ such that
$\partial(c)$ is stationary and $c$ is weakly subadditive of the first kind. This is
because, by Theorem~\ref{pid_cor} below, $\PFA$ implies that, for example,
any witness to $\U(\omega_3,2,\omega,2)$ is not weakly subadditive of the first kind,
whereas, by a result of Beaudoin  (see the remark at the end of \cite[\S 2]{MR1099782}), $\PFA$ is consistent with
the existence of a nonreflecting stationary subset of $E^{\omega_3}_\omega$.
\end{remark}

By Lemma~\ref{fact219}(1) and Lemma~\ref{lemma314}(2),
for every infinite regular cardinal $\lambda$,
there exists a locally small coloring $c:[\lambda^+]^2\rightarrow\lambda$ that is $\lambda$-coherent.
We shall now prove that for every singular cardinal $\lambda$,
a locally small coloring $c:[\lambda^+]^2\rightarrow\cf(\lambda)$ is never $\cf(\lambda)$-coherent.
Assuming that $c$ is subadditive of the first kind (which is indeed possible, by Lemma~\ref{locallysmall}),
even weaker forms of coherence are not feasible.

\begin{lemma}\label{lemma331} Suppose that $c:[\lambda^+]^2\rightarrow\cf(\lambda)$ is a locally small coloring.
\begin{enumerate}
\item If $\lambda$ is regular
or if $c$ is subadditive of the first kind, then for every cardinal $\theta<\lambda$, $c$ is not $\theta$-coherent.
\item If $\lambda$ is singular, then $c$ is not $\cf(\lambda)$-coherent.
\end{enumerate}
\end{lemma}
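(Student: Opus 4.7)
The plan is to prove both clauses by contradiction, in each case combining the assumed coherence of $c$ with local smallness to produce some $i^* < \cf(\lambda)$ and some $\delta^* < \lambda^+$ for which $|D^c_{\le i^*}(\delta^*)| \ge \lambda$, violating local smallness.

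For Clause~(2), write $\mu := \cf(\lambda)$ and suppose for contradiction that $c$ is $\mu$-coherent. The crux is the following observation: for each $\gamma < \lambda^+$ with $\mu < \cf(\gamma) < \lambda$, the decomposition $\gamma = \bigcup_{i<\mu} D^c_{\le i}(\gamma)$ writes $\gamma$ as a union of $\mu < \cf(\gamma)$ many sets, so some $D^c_{\le i}(\gamma)$ must be cofinal in $\gamma$. Calling the least such $i$ the value $i^*(\gamma) < \mu$, we obtain $|D^c_{\le i^*(\gamma)}(\gamma)| \ge \cf(\gamma)$. Since $\lambda$ is singular, $\mu^+ < \lambda$ and regular cardinals $\theta^*$ abound in $(\mu, \lambda)$, and for each such $\theta^*$ the set $E^{\lambda^+}_{\theta^*}$ is stationary in $\lambda^+$. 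Now $\mu$-coherence transfers the bound: for any $\delta^* > \gamma$, we have $D^c_{\le i^*(\gamma)}(\gamma) \s D^c_{\le i^*(\gamma)}(\delta^*) \cup Z_\gamma$ with $|Z_\gamma| < \mu$, whence $|D^c_{\le i^*(\gamma)}(\delta^*)| \ge \cf(\gamma) - \mu = \cf(\gamma)$. The task reduces to selecting a family of such $\gamma$'s whose cofinalities are cofinal in $\lambda$ but which share a common value $i^*(\gamma) = i^*$, after which taking $\delta^*$ above all of them gives $|D^c_{\le i^*}(\delta^*)| \ge \sup\cf(\gamma) = \lambda$. I would arrange this by applying Fodor's lemma to the regressive map $\gamma \mapsto i^*(\gamma)$ on each stationary $E^{\lambda^+}_{\theta^*}$ separately, obtaining a stationary $S_{\theta^*} \s E^{\lambda^+}_{\theta^*}$ on which $i^*$ is constantly some $i^*_{\theta^*} < \mu$, and then finding a value $i^*$ that serves as $i^*_{\theta^*}$ for cofinally many $\theta^* \in (\mu, \lambda)$.

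Clause~(1) is proved by the same scheme with $\theta$ playing the role of $\mu$. In the regular case ($\cf(\lambda) = \lambda$), one picks $\delta^*$ of cofinality $\lambda$; local smallness then forces each $D^c_{\le i}(\delta^*)$ to be bounded in $\delta^*$ (since $|D^c_{\le i}(\delta^*)| < \lambda = \cf(\delta^*)$), and $\theta$-coherence with $\theta < \lambda = \cf(\delta^*)$ renders the coherence errors irrelevant under passage to larger $\delta'$, so a counting argument on the $\lambda$-many $<\lambda$-sized level sets of $c(\cdot, \delta^*)$ delivers the contradiction. In the singular case under subadditivity of the first kind, subadditivity yields the key inclusion $D^c_{\le i}(\beta) \s D^c_{\le i}(\gamma)$ whenever $\beta \in D^c_{\le i}(\gamma)$, which plays the role that $\mu$-coherence played in Clause~(2) by propagating bounds along $\s$-chains; combined with $\theta$-coherence for $\theta < \lambda$, the Fodor-plus-coherence argument goes through with $\theta$ in place of $\mu$. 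The main obstacle throughout is the delicate pigeonhole/Fodor step that picks a single $i^*$ applying to cofinally many $\cf(\gamma)$: the natural Fodor fibers live inside a single $\delta^*$ of bounded cofinality $<\lambda$, so one must patch Fodor applications across many $\delta$'s (via the coherence) to reach cofinalities arbitrarily close to~$\lambda$.
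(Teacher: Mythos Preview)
Your plan has a genuine gap at exactly the point you flag as ``the main obstacle,'' and you do not actually close it. In Clause~(2), after applying Fodor on each $E^{\lambda^+}_{\theta^*}$ you obtain values $i^*_{\theta^*}<\mu=\cf(\lambda)$, and you then need a single $i^*<\mu$ with $\{\theta^*: i^*_{\theta^*}\le i^*\}$ cofinal in $\lambda$. But the assignment $\theta^*\mapsto i^*_{\theta^*}$ is just some map from a set of order type $\mu$ into $\mu$; when $\mu$ is uncountable regular, it can be injective and cofinal, leaving no bounded preimage cofinal. Nothing in local smallness or $\mu$-coherence prevents the level index $i^*(\gamma)$ from genuinely growing with $\cf(\gamma)$: coherence only controls \emph{horizontal} differences between $D^c_{\le i}(\gamma)$ and $D^c_{\le i}(\delta)$, not the \emph{vertical} rate at which $|D^c_{\le i}(\delta)|$ grows as $i\to\mu$. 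So even with a common $\delta^*$, you may only get $|D^c_{\le i}(\delta^*)|$ cofinal in $\lambda$ as $i\to\mu$, which is perfectly compatible with local smallness. The same issue infects the subadditive case of Clause~(1), and your ``counting argument'' in the regular case is not an argument at all: $\delta^*=\bigcup_{i<\lambda}D^c_{\le i}(\delta^*)$ as a $\lambda$-union of sets of size $<\lambda$ can certainly have size $\lambda$.

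The paper sidesteps this obstacle by tracking \emph{order types} rather than cardinalities, and by comparing $\gamma$ with $\gamma+\lambda$ rather than trying to reach cofinalities near $\lambda$ in one shot. One shows that for every $\gamma$ there is $i_\gamma<\cf(\lambda)$ with $\otp(D^c_{\le i_\gamma}(\gamma))+\theta<\otp(D^c_{\le i_\gamma}(\gamma+\lambda))$: here the inclusion $D^c_{\le i}(\gamma)\subseteq D^c_{\le i}(\gamma+\lambda)$ comes either from $\theta$-coherence (when $\theta\le\cf(\lambda)$, so the $<\theta$ exceptional values can be absorbed by raising $i$) or from subadditivity of the first kind, while the extra $\theta$ comes from $D^c_{\le i}(\gamma+\lambda)\cap[\gamma,\gamma+\lambda)$. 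Now a single pigeonhole on $\lambda^+$ into $\cf(\lambda)$ (no interplay with cofinalities below $\lambda$ is needed) gives a stationary $S$ on which $i_\gamma=i$ is constant; thinning so that $\beta+\lambda<\gamma$ for $\beta<\gamma$ in $S$, $\theta$-coherence yields $\otp(D^c_{\le i}(\beta+\lambda))<\otp(D^c_{\le i}(\gamma))+\theta<\otp(D^c_{\le i}(\gamma+\lambda))$, so $\beta\mapsto\otp(D^c_{\le i}(\beta+\lambda))$ is a strictly increasing map from a set of size $\lambda^+$ into $\lambda$. The key move you are missing is this ``$+\theta$ per $\lambda$-step'' growth, which converts the problem into building an injection $\lambda^+\hookrightarrow\lambda$ and eliminates the need to pigeonhole across different cofinalities.
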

\begin{proof} The proof is similar to that of \cite[Theorem~3.7]{MR2013395}.
Suppose for sake of contradiction that $c$ is $\theta$-coherent for some fixed cardinal $\theta<\lambda$ and that either $c$ is subadditive of the first kind or $\theta \leq
\cf(\lambda)$.

\begin{claim}
For every $\gamma<\lambda^+$,
there exists $i<\cf(\lambda)$ such that
$$\otp(D^c_{\le i}(\gamma))+\theta<\otp(D^c_{\le i}(\gamma+\lambda)).$$
\end{claim}
\begin{cproof} Let $\gamma<\lambda^+$. Denote $\delta:=\gamma+\lambda$.
First, since $\otp([\gamma,\delta))=\lambda>\theta$, we may let
$$i_0:=\min\{i<\cf(\lambda)\mid \otp(D^c_{\le i}(\delta)\setminus\gamma)>\theta\}.$$
Second, if $\theta\le\cf(\lambda)$, then, since $c$ is $\theta$-coherent,
$$i_1:=\ssup\{ c(\alpha,\gamma),c(\alpha,\delta)\mid \alpha<\gamma, c(\alpha,\gamma)\neq c(\alpha,\delta)\}$$
is an ordinal $<\cf(\lambda)$. If $\theta>\cf(\lambda)$, then we instead let $i_1:=c(\gamma,\delta)$.

We claim that $i:=\max\{i_0,i_1\}$ is as sought.

$\br$ If $\theta\le\cf(\lambda)$, then for every $\alpha\in D^c_{\le i}(\gamma)$,
either $c(\alpha,\delta)\le i_1\le i$ or $c(\alpha,\delta)=c(\alpha,\gamma)\le i$.
Therefore, $D^c_{\le i}(\gamma)\s D^c_{\le i}(\delta)$.

$\br$ Otherwise, $c$ is subadditive of the first kind and $i_i = c(\gamma, \delta)$.
Then, for every $\alpha\in D^c_{\le i}(\gamma)$, we have
$c(\alpha,\delta)\le\max\{c(\alpha,\gamma),c(\gamma,\delta)\}\le\max\{c(\alpha,\gamma),i_1\}\le i$,
so that $\alpha\in D^c_{\le i}(\delta)$.
Thus, again, $D^c_{\le i}(\gamma)\s D^c_{\le i}(\delta)$.

\smallskip

Now, set $x:=D^c_{\le i}(\delta)\setminus\gamma$,
and notice that $\otp(x)\ge\otp(D^c_{\le i_0}(\delta)\setminus\gamma)\ge\theta+1$.
Altogether  $D^c_{\le i}(\gamma)\uplus x\s D^c_{\le i}(\delta)$ with $D^c_{\le i}(\gamma)\s\min(x)$,
and hence $\otp(D^c_{\le i}(\gamma))+\theta+1\le\otp(D^c_{\le i}(\delta))$.
\end{cproof}

By the claim, for each $\gamma<\lambda^+$, we may fix $i_\gamma<\cf(\lambda)$ such that
$\otp(D^c_{\le i_\gamma}(\gamma))+\theta<\otp(D^c_{\le i_\gamma}(\gamma+\lambda))$.
Fix a sparse enough stationary subset $S\s\lambda^+$ along with $i<\cf(\lambda)$ such that
$i_\gamma=i$ for all $\gamma\in S$, and such that $\beta+\lambda<\gamma$ for all $(\beta,\gamma)\in[S]^2$.
Define a map $f:S\rightarrow\lambda$ via
$$f(\beta):=\otp(D^c_{\le i}(\beta+\lambda)).$$
Let $(\beta,\gamma)\in[S]^2$. As $\beta+\lambda<\gamma$ and $c$ is $\theta$-coherent,
$$\otp(D^c_{\le i}(\beta+\lambda))<\otp(D^c_{\le i}(\gamma))+\theta.$$
Altogether,
$$f(\beta)=\otp(D^c_{\le i}(\beta+\lambda))<\otp(D^c_{\le i}(\gamma))+\theta<\otp(D^c_{\le i}(\gamma+\lambda))=f(\gamma).$$
Therefore, $f$ is an injection from a set of size $\lambda^+$ to $\lambda$, which is a contradiction.
\end{proof}

We conclude this subsection by introducing a notion of forcing that adds
a subadditive coloring $c:[\kappa]^2\rightarrow\theta$ whose $\partial(c)$ is stationary.
This will prove Clause~(1) of Theorem~C.
\begin{thm}\label{thm310}
  Suppose that $\theta \in\reg(\kappa)$. Then:
  \begin{enumerate}
  \item There exists a
  $\theta^+$-directed closed,  $\kappa$-strategically closed forcing notion $\mathbb P$
  that adds a $E^\kappa_{\geq \theta}$-closed, subadditive coloring
  $c:[\kappa]^2\rightarrow\theta$
  for which $\partial(c)$ is stationary.
  In particular, $\forces_{\mathbb P}``\US(\kappa, \kappa,\theta, \theta)"$.
  \item  If $\kappa = \lambda^+$ and $\cf(\lambda) = \theta < \lambda$,
  then there is also a
  $\theta^+$-directed closed,  $(\lambda+1)$-strategically closed (hence $({<}\kappa)$-distributive)
  forcing notion $\mathbb P$
  that adds a coloring as above which is moreover locally small.
  \end{enumerate}
\end{thm}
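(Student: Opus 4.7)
The plan is to force with approximations to the desired coloring. A condition in $\mathbb{P}$ is a pair $p=(c_p,d_p)$, where $c_p\colon[\alpha_p]^2\to\theta$ is a subadditive and $E^{\alpha_p}_{\geq\theta}$-closed coloring on some ordinal $\alpha_p<\kappa$, and $d_p\subseteq\alpha_p\cap E^{\kappa}_\theta$ is a set of markers satisfying $\sup(D^{c_p}_{\leq i}(\beta))<\beta$ for every $\beta\in d_p$ and every $i<\theta$. The order is end-extension: $q\leq p$ iff $\alpha_q\geq\alpha_p$, $c_q\supseteq c_p$, and $d_q\cap\alpha_p=d_p$. For Clause~(2), conditions are additionally required to be locally small, i.e., $|D^{c_p}_{\leq i}(\beta)|<\lambda$ for all $\beta<\alpha_p$ and $i<\theta$.

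To verify $\theta^+$-directed closure, observe that the union of any directed family of size ${\leq}\theta$ is itself a condition, since the domain of the union is $[\alpha^*]^2$ with $\alpha^*:=\sup_\xi\alpha_{p_\xi}$, and the permitted forms of condition do not require the top ordinal to be a successor. The bulk of the technical work arises when Player~II must extend a subadditive coloring $c$ on $[\alpha^*]^2$ to $[\alpha^*+1]^2$ during the strategic-closure game. The plan is to fix a continuous cofinal sequence $\langle\beta_j:j<\cf(\alpha^*)\rangle$ in $\alpha^*$ with $\cf(\alpha^*)\leq\theta$, set $c(\beta_j,\alpha^*):=j$, and for arbitrary $\beta\in[\beta_j,\beta_{j+1})$ define $c(\beta,\alpha^*):=\max\{c(\beta,\beta_{j+1}),j+1\}$. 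Checking that the extended coloring is subadditive of both kinds, using subadditivity of $c$ on the pre-existing coloring together with the identity $c(\beta_j,\alpha^*)=j$, is the principal technical step, and is the most bookkeeping-prone portion of the argument. With this extension in hand, $D^c_{\leq i}(\alpha^*)\subseteq\beta_{i+1}$ is bounded in $\alpha^*$ for every $i<\theta$, so $\alpha^*$ may be added to $d$ whenever $\cf(\alpha^*)=\theta$.

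For strategic closure, I would exhibit a winning strategy for Player~II in which she maintains a cofinal scaffold into the supremum of Player~I's increasing plays and, at each limit round of her plays, outputs the canonical extension above. For Clause~(1), the resulting condition is a lower bound of the preceding plays with no threat to $E^\kappa_{\geq\theta}$-closure, and so the strategy succeeds at every limit of length ${<}\kappa$; this yields $\kappa$-strategic closure. For Clause~(2), the canonical extension preserves local smallness only when each $|\beta_{j+1}|<\lambda$, which is guaranteed so long as the cardinalities of the played conditions stay below $\lambda$. This requirement fails in general for games of length exceeding $\lambda$, and is what restricts Clause~(2) to $(\lambda+1)$-strategic closure (and hence to $({<}\kappa)$-distributivity).

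Finally, to show $\partial(c_G)$ is stationary, I would fix a $\mathbb{P}$-name $\dot D$ for a club and a condition $p_0$, and use Player~II's strategy to build a decreasing $\theta$-sequence of conditions below $p_0$ in which each successor play is chosen, by density, to force a fresh element of $\dot D$ into the domain so that $\sup_\xi\alpha_{p_\xi}$ is forced into $\dot D$. At stage $\theta$, the canonical extension produces a condition $p_\theta$ whose top $\alpha_{p_\theta}$ has cofinality $\theta$, lies in $\dot D$, and may be added to $d_{p_\theta}$. Hence $p_0\not\Vdash``\partial(c_G)\cap\dot D=\emptyset"$, proving $\Vdash_{\mathbb P}``\partial(c_G)$ is stationary$"$. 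The concluding assertion $\Vdash_{\mathbb P}``\US(\kappa,\kappa,\theta,\theta)"$ then follows from Lemma~\ref{lemma39} together with subadditivity of the generic $c_G$.
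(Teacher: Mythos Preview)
Your overall architecture---forcing with end-extending approximations to the coloring---is correct and matches the paper's. Your choice to let conditions have domain $[\alpha_p]^2$ rather than $[\gamma_p+1]^2$ is actually a simplification: with limit $\alpha_p$ allowed, the union of any decreasing $({<}\kappa)$-sequence is already a condition (subadditivity, $E^{\alpha^*}_{\geq\theta}$-closure, and local smallness are all checked fibre-wise below the sup), so your poset is outright $\kappa$-closed in both cases. This means your discussion of strategic closure is misplaced---no ``canonical extension'' is needed during the game. The marker set $d_p$ is harmless but redundant, since once $\sup(D^{c}_{\le i}(\beta))<\beta$ holds in some condition, it persists under end-extension.

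The genuine gap is in your canonical extension at the end of the stationarity argument, where you must extend the union $c^*$ on $[\alpha^*]^2$ to $[\alpha^*+1]^2$ so as to place $\alpha^*$ in $\partial(c)$. Your formula $c(\beta_j,\alpha^*):=j$ forces, by subadditivity of the second kind applied to $\beta_j<\beta_k<\alpha^*$, the inequality
\[
  c^*(\beta_j,\beta_k)\le\max\{c(\beta_j,\alpha^*),c(\beta_k,\alpha^*)\}=k.
\]
But $c^*(\beta_j,\beta_k)$ is already determined by the conditions you built, and you have done nothing to ensure it is $\le k$. (There is also a minor clash: your two clauses assign $\beta_j$ both the value $j$ and the value $\max\{c(\beta_j,\beta_{j+1}),j+1\}\ge j+1$.) The paper avoids this by not insisting on $c(\beta_j,\alpha^*)=j$; instead, after building the $\theta$-sequence with tops $\gamma_\eta$, it chooses a strictly increasing sequence $\langle i_\eta\mid\eta<\theta\rangle$ in $\theta$ satisfying $c^*(\gamma_\eta,\gamma_{\eta'})\le i_{\eta'}$ for all $\eta<\eta'$ (possible since each $\eta'<\theta$ and $\theta$ is regular), and then sets $c(\beta,\alpha^*):=\max\{i_{\eta_\beta},c^*(\beta,\gamma_{\eta_\beta})\}$ where $\eta_\beta$ is least with $\beta<\gamma_{\eta_\beta}$. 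With this correction your argument goes through, and in fact yields the slightly stronger conclusion that the poset is $\kappa$-closed even in Case~(2).
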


\begin{proof}
  In Case~(2), let $\langle \lambda_i \mid i < \theta \rangle$ be an increasing sequence of
  regular cardinals converging to $\lambda$, with $\lambda_i > \theta$.
  In Case~(1), simply let $\lambda_i := \kappa$ for all $i < \theta$.

  We will define a forcing notion $\mathbb{P}$ whose generic object will generate
  a coloring $c$ as above. Our poset $\mathbb{P}$ consists of
  all subadditive colorings of the form $p : [\gamma_p + 1]^2 \rightarrow \theta$
  such that
  \begin{itemize}
    \item $\gamma_p < \kappa$;
    \item $p$ is $E^{\gamma_p+1}_{\geq \theta}$-closed;
    \item for all $\beta \leq \gamma_p$ and all $i < \theta$, we have
    $|D^p_{\leq i}(\beta)| < \lambda_i$.
  \end{itemize}

  $\mathbb{P}$ is ordered by reverse inclusion. We also include the
  empty set as the unique maximal condition in $\mathbb{P}$.

  \begin{claim} \label{directed_closed_claim}
    $\mathbb{P}$ is $\theta^+$-directed closed.
  \end{claim}

  \begin{cproof}
    Note that $\mathbb{P}$ is \emph{tree-like}, i.e., if $p,q,r \in \mathbb{P}$
    and $r$ extends both $p$ and $q$, then $p$ and $q$ are $\leq_{\mathbb{P}}$-comparable.
    It therefore suffices to prove that $\mathbb{P}$ is $\theta^+$-closed.
    To this end, suppose that $\xi < \theta^+$ and $\vec{p} = \langle p_\eta \mid
    \eta < \xi \rangle$ is a decreasing sequence of conditions in $\mathbb{P}$.
    We may assume without loss of generality that $\xi$ is an infinite regular
    cardinal and $\vec{p}$ is strictly decreasing, i.e., $\langle \gamma_\eta
    \mid \eta < \xi \rangle$ is strictly increasing, where $\gamma_\eta$ denotes
    $\gamma_{p_\eta}$. For all $\eta < \xi$, by possibly extending $p_\eta$
    to copy some information from $p_{\eta+1}$,
    we may also assume that $\gamma_\eta$ is a successor ordinal.
    Let $\gamma^* := \sup\{\gamma_\eta \mid \eta < \xi\}$,
    and let $q^* := \bigcup_{\eta < \xi} p_\eta$. Note that $q^*$ is not a condition
    in $\mathbb{P}$, since its domain is not the square of a successor ordinal. We will extend it
    to a condition $q: [\gamma^* + 1]^2 \rightarrow \theta$, which will then be a lower
    bound for $\vec{p}$. To do so, it suffices to
    specify $q(\alpha, \gamma^*)$ for all $\alpha < \gamma^*$.
    There are two cases to consider:

    $\br$ Assume that $\xi < \theta$. We can then fix an $i^* < \theta$ such that
    $q^*(\gamma_\eta, \gamma_{\eta'}) \leq i^*$ for all $\eta < \eta' < \xi$.
    Now, given $\alpha < \gamma^*$, let
    $\eta_\alpha < \xi$ be the least $\eta$ for which $\alpha < \gamma_\eta$ and then set
    $q(\alpha, \gamma^*) := \max\{i^*, q^*(\alpha, \gamma_{\eta_\alpha})\}$. It is
    straightforward to prove, using our choice of $i^*$ and the fact that each
    $p_\eta$ is subadditive, that the coloring $q$ thus defined is also
    subadditive.

    Since each $p_\eta$ is $E^{\gamma_\eta + 1}_{\geq \theta}$-closed, in order to
    show that $q$ is $E^{\gamma^*+1}_{\geq \theta}$-closed, it suffices to prove
    that for all $A \subseteq \gamma^*$ and all $i < \theta$ such that
    $A \subseteq D^{q}_{\leq i}(\gamma^*)$ and $\sup(A) \in E^{\gamma^*}_{\geq \theta}$,
    we have $\sup(A) \in D^{q}_{\leq i}(\gamma^*)$. To this end, fix such an $A$
    and $i$. Let $\beta := \sup(A)$. By our choice of $i^*$, we know that $i \geq i^*$ and
    $q(\alpha, \gamma^*) = \max\{i^*, p_{\eta_\beta}(\alpha, \gamma_{\eta_\beta})\}$
    for all $\alpha \in A \cup \{\beta\}$. By the fact that $p_{\eta_\beta}$ is
    $E^{\gamma_{\eta_\beta+1}}_{\geq \theta}$-closed, we know that
    $p_{\eta_\beta}(\beta) \leq i$, so $\beta \in D^{q}_{\leq i}(\gamma^*)$.

    To show that $q$ is a condition, it remains only to verify that
    $|D^{q}_{\leq i}(\gamma^*)| < \lambda_i$ for all $i < \theta$. To this
    end, fix $i < \theta$. By our construction, we have
    $D^{q}_{\leq i}(\gamma^*) \subseteq \bigcup_{\eta < \xi}
    D^{p_\eta}_{\leq i}(\gamma_\eta)$. Since $\xi < \lambda_i$ and $\lambda_i$
    is regular, the fact that each $p_\eta$ is a condition in $\mathbb{P}$ then implies
    that $|D^{q}_{\leq i}(\gamma^*)| < \lambda_i$.

    $\br$ Assume that $\xi = \theta$. Fix a strictly increasing sequence
    $\langle i_\eta \mid \eta < \theta \rangle$ of ordinals below $\theta$
    such that, for all $\eta < \eta' < \theta$, we have $q^*(\gamma_\eta,
    \gamma_{\eta'}) \leq i_{\eta'}$. Now, given $\alpha < \gamma^*$,
    let $\eta_\alpha < \theta$ be the least $\eta$ for which
    $\alpha < \gamma_\eta$ and then set $q(\alpha, \gamma^*) :=
    \max\{i_{\eta_\alpha}, q^*(\alpha, \gamma_{\eta_\alpha})\}$. It is again straightforward to prove
    that the coloring $q$ thus defined is subadditive. The verification involves
    a case analysis; to illustrate the type of argument involved, we go through
    the proof of one of the required inequalities in one of the cases, leaving the
    other similar arguments to the reader.

    Suppose that $\alpha < \beta < \gamma^*$ and we have $\gamma_{\eta_\alpha}
    < \beta$. We will prove that $q(\alpha, \gamma^*) \leq \max\{q(\alpha, \beta),
    q(\beta, \gamma^*)\}$. If $q(\alpha, \gamma^*) = i_{\eta_\alpha}$, then
    this is trivial, so assume that $q(\alpha, \gamma^*) = q^*(\alpha, \gamma_{\eta_\alpha})$.
    Now, since each $p_\eta$ is subadditive (and hence $q^*$ is subadditive), we have
    \begin{align*}
      q^*(\alpha, \gamma_{\eta_\alpha}) &\leq \max\{q^*(\alpha, \beta), ~ q^*(\gamma_{\eta_\alpha},
      \beta)\} \\
      &\leq \max\{q^*(\alpha, \beta), ~ q^*(\gamma_{\eta_\alpha}, \gamma_{\eta_\beta}),
      ~ q^*(\beta, \gamma_{\eta_\beta})\} \\
      &\leq \max\{q^*(\alpha, \beta), ~ i_{\eta_\beta}, ~ q^*(\beta, \gamma_{\eta_\beta})\} \\
      &= \max\{q(\alpha, \beta), ~ q(\beta, \gamma^*)\}.
    \end{align*}
    Note that the final equality above holds because $q(\alpha, \beta) = q^*(\alpha, \beta)$
    and $q(\beta, \gamma^*) = \max\{i_{\eta_\beta}, q^*(\beta, \gamma_{\eta_\beta})\}$.

    We now verify that $q$ is $E^{\gamma^*+1}_{\geq \theta}$-closed. As in the
    previous case, we fix an $A \subseteq \gamma^*$ and an $i < \theta$ such that
    $A \subseteq D^{q}_{\leq i}(\gamma^*)$ and $\beta := \sup(A)$ is in $E^{\gamma^*}_{\geq
    \theta}$. It will suffice to show that $\beta \in D^{q}_{\leq i}(\gamma^*)$.
    To avoid triviality, assume that $\beta \notin A$. Since $\beta$ is a limit ordinal
    we know that $\beta$ is not equal to $\gamma_\eta$ for any $\eta < \theta$.
    It follows that, by passing to a tail of $A$ if necessary, we may assume
    that $\eta_\alpha = \eta_\beta$ for all $\alpha \in A$. Then, for
    all $\alpha \in A \cup \{\beta\}$, we have $q(\alpha, \gamma^*) = \max\{i_{\eta_\beta},
    p_{\gamma_\beta}(\alpha, \gamma_{\eta_\beta})$. It follows that $i \geq i_{\eta_\beta}$
    and, since $p_{\gamma_\beta}$ is $E^{\gamma_\beta + 1}_{\geq \theta}$-closed,
    that $p_{\eta_\beta}(\beta) \leq i$, so $\beta \in D^{q}_{\leq i}(\gamma^*)$.

    The fact that $|D^q_{\leq i}(\gamma^*)| < \lambda_i$ for all $i < \theta$
    follows by exactly the same reasoning as in the previous case.
  \end{cproof}

  Let $\dot{c}$ be the canonical $\mathbb{P}$-name for the union of the generic filter.
  Then $\dot{c}$ is forced to be a subadditive function from an initial segment of
  $[\kappa]^2$ to $\theta$ (we will see shortly that its domain is forced to be all of
  $[\kappa]^2$).

  Note that in the $\xi = \theta$ case of the above claim, we actually proved something
  stronger that will be useful later: if $\vec{p} = \langle p_\eta \mid \eta < \theta \rangle$
  is a strictly decreasing sequence of conditions in $\mathbb{P}$ and
  $\gamma:= \sup\{\gamma_{p_\eta} \mid \eta < \theta\}$, then there is a lower bound
  $q$ for $\vec{p}$ such that $q \Vdash_{\mathbb{P}} ``\gamma \in \partial(\dot{c})"$.

  The next claim will show that $\mathbb{P}$ is $({<}\kappa)$-distributive.

  \begin{claim}
    In Case~(1), $\mathbb{P}$ is $\kappa$-strategically closed.
    In Case~(2), $\mathbb{P}$ is $(\lambda + 1)$-strategically closed.
  \end{claim}

  \begin{cproof}
    In Case~(1), denote $\chi:=\kappa$.
    In Case~(2), denote $\chi:=\lambda$.
    We describe a winning strategy for Player II in $\Game_\chi(\mathbb{P})$.
    (Note that, if $\chi = \lambda$, then it appears that we are just showing
    that $\mathbb{P}$ is $\lambda$-strategically closed, but the fact that $\mathbb{P}$ is
    $\theta^+$-closed will then show that $\mathbb{P}$ is in fact
    $\lambda+1$-strategically closed.)
    We will arrange so that, if $\langle p_\eta \mid \eta < \chi \rangle$ is a
    play of the game in which Player II plays according to their prescribed strategy,
    then, letting $\gamma_\eta := \gamma_{p_\eta}$ for all $\eta < \chi$,
    \begin{enumerate}
      \item[(a)] $\langle \gamma_\eta \mid \eta < \chi \text{ is a nonzero even ordinal}
      \rangle$ is a continuous, strictly increasing sequence;
      \item[(b)] for all even ordinals $\eta < \xi < \chi$, we have
      $p_\xi(\gamma_\eta, \gamma_\xi) = \min\{i < \theta \mid \xi < \lambda_i\}$.
    \end{enumerate}
    Now suppose that $\xi < \chi$ is an even ordinal and $\langle p_\eta \mid
    \eta < \xi \rangle$ is a partial run of $\Game_\chi(\mathbb{P})$ that thus
    far satisfies requirements (a) and (b) above. We will describe a strategy for
    Player II to choose the next play, $p_\xi$, while maintaining (a) and (b).

	$\br$ If $\xi = 0$, then we are required to set $p_\xi = 1_{\mathbb{P}} = \emptyset$.

	$\br$ If $\xi = \xi' + 1$ is a successor ordinal, then, since
    $\xi$ is even, there is another even ordinal $\xi''$ such that
    $\xi = \xi'' + 2$. Let $\gamma^* := \gamma_{\xi'} + 1$. We will define $p_\xi$
    so that $\gamma_\xi = \gamma^*$. To do so, we must define $p_\xi(\alpha, \gamma^*)$
    for all $\alpha < \gamma^*$. We assume that $\gamma_{\xi''} < \gamma_\xi$
    (if they are equal, the construction is similar but easier). Let $i^* :=
    \min\{i < \theta \mid \xi < \lambda_i\}$. First, to satisfy (b), we must let
    $p_\xi(\gamma_{\xi''}, \gamma^*) := i^*$. Next, for all $\alpha < \gamma_{\xi''}$,
    let $p_\xi(\alpha, \gamma^*) := \max\{p_{\xi''}(\alpha, \gamma_{\xi''}), i^*\}$.
    Let $i^{**} := \max\{i^*, p_{\xi'}(\gamma_{\xi''}, \gamma_{\xi'})\}$, and set
    $p(\gamma_{\xi'}, \gamma^*) := i^{**}$. Finally, for all $\alpha \in (\gamma_{\xi''},
    \gamma_{\xi'})$, let $p_\xi(\alpha, \gamma^*) := \max\{i^{**}, p_{\xi'}(\alpha, \gamma_{\xi'})\}$.
    It is easily verified that $p_\xi$ thus defined is a condition in $\mathbb{P}$
    and that we have continued to satisfy requirements (a) and (b).

    $\br$ If $\xi$ is a nonzero limit ordinal, then let $p^* := \bigcup_{\eta < \xi}
    p_\eta$, and let $\gamma^* := \sup\{\gamma_\eta \mid \eta < \xi\}$.
    We will define a lower bound $p_\xi$ for the run of the game so far with
    $\gamma_\xi = \gamma^*$. To do so, it suffices to define $p_\xi(\alpha, \gamma^*)$
    for all $\alpha < \gamma^*$. Let $i^* := \min\{i < \theta \mid \xi < \lambda_i\}$.
    For all $\alpha < \gamma^*$, let $\eta_\alpha < \xi$
    be the least even ordinal $\eta$ such that $\alpha < \gamma_\eta$, and then
    set $p_\xi(\alpha, \gamma^*) := \max\{i^*, p^*(\alpha, \gamma_{\eta_\alpha})\}$.
    By the fact that the play of the game thus far satisfied (b), we know that
    $p^*(\gamma_{\eta}, \gamma_{\eta'}) \leq i^*$ for all even ordinals
    $\eta < \eta' < \xi$, so this definition does in fact ensure that
    $p_\xi(\gamma_\eta, \gamma_\xi) = i^*$, so we have satisfied (b).
    The fact that the play of the game thus far satisfied (a) and (b) also implies
    that $p_\xi$ is subadditive and $E^{\gamma_\xi+1}_{\geq \theta}$-closed and that
    we have continued to satisfy requirement (a). Finally, to show that
    $|D^{p_\xi}_{\leq i}(\gamma_\xi)| < \lambda_i$ for all $i < \theta$, note firstly
    that $D^{p_\xi}_{\leq i}(\gamma_\xi) = \emptyset$ for all $i < i^*$ and, secondly, that
    for all $i \in [i^*, \theta)$, we have $D^{p_\xi}_{\leq i}(\gamma_\xi)
    \subseteq \bigcup_{\eta < \xi} D^{p_\eta}_{\leq i}(\gamma_\eta)$. For each
    $i \in [i^*, \theta)$, we know that $\xi < \lambda_i$ and $\lambda_i$ is
    regular, so the fact that $p_\eta$ is a condition for all $\eta < \xi$ implies
    that $|D^{p_\xi}_{\leq i}(\gamma_\xi)| < \lambda_i$.

    This completes the description of Player II's winning
    strategy and hence the proof of the claim.
  \end{cproof}

  By the argument of the proof of the above claim, it follows that, for every
  $\alpha < \kappa$, the set $E_\alpha$ of $p \in \mathbb{P}$ for which $\gamma_p \geq \alpha$
  is dense in $\mathbb{P}$. Therefore, the domain of $\dot{c}$ is forced to be $[\kappa]^2$.
  The definition of $\mathbb{P}$ also immediately implies that $\dot{c}$ is forced to be
  $E^\kappa_{\geq \theta}$-closed and, in Case~(2),
  $\dot{c}$ is also forced to be locally small.
  We now finish the proof of the theorem by showing that $\partial(\dot{c})$ is
  forced to be stationary. (By Lemma~\ref{lemma39}, this will imply that $\dot{c}$
  witnesses $\US(\kappa, \kappa, \theta, \theta)$.) To this
  end, fix a condition $p$ and a $\mathbb{P}$-name $\dot{D}$ forced
  to be a club in $\kappa$. We will find $q \leq p$ and $\gamma < \kappa$ such that
  $q \Vdash_{\mathbb{P}}``\gamma\in \dot{D} \cap \partial(\dot{c})"$.

  Using the fact that $\mathbb{P}$ is $\theta^+$-closed
  and the fact that each $E_\alpha$ as defined above is dense,
  fix a decreasing sequence $\langle p_\eta \mid \eta < \theta \rangle$ of
  conditions in $\mathbb{P}$ together with an increasing sequence
  $\langle \alpha_\eta \mid \eta < \theta \rangle$ of ordinals below $\kappa$ such that
  \begin{enumerate}
    \item $p_0 = p$;
    \item for all $\eta < \theta$, $p_{\eta + 1} \Vdash_{\mathbb{P}}``\alpha_\eta
    \in \dot{D}"$;
    \item for all $\eta < \theta$, $\gamma_\eta < \alpha_\eta < \gamma_{\eta + 1}$.
  \end{enumerate}
  Now let $\gamma := \sup\{\gamma_\eta \mid \eta < \theta\}$, so that $\gamma = \sup\{\alpha_\eta
  \mid \eta < \theta\}$. By the proof of Claim~\ref{directed_closed_claim}, we can
  find a lower bound $q$ for $\langle p_\eta \mid \eta < \theta \rangle$
  such that $q \Vdash_{\mathbb{P}}``\gamma \in \partial(\dot{c})"$.
  For all $\eta < \theta$, since $q$ extends $p_{\eta + 1}$, $q$ also forces
  $\alpha_\eta$ to be in $\dot{D}$. Since $\gamma = \sup\{\alpha_\eta \mid \eta < \theta\}$
  and $\dot{D}$ is forced to be a club, it follows that $q$ forces $\gamma$ to be in $\dot{D}$.
  We then have $q \Vdash_{\mathbb{P}}``\gamma \in \dot{D} \cap \partial(\dot{c})"$, as desired.
\end{proof}

\subsection{Large cardinals and consistency results} \label{consistency_sec}
In this subsection, we investigate the effect on $\US(\ldots)$ of
certain compactness principles, including stationary reflection, the existence
of highly complete or indecomposable ultrafilters, and the P-ideal dichotomy

Recall that, for stationary subsets $S,T$ of $\kappa$, $\refl(\theta,S,T)$ asserts that,
for every $\theta$-sized collection $\mathcal S$ of stationary subsets of $S$,
there exists $\beta\in T\cap E^\kappa_{>\omega}$ such that $S\cap\beta$ is stationary in $\beta$ for every $S\in\mathcal S$.
We write $\refl(\theta,S)$ for $\refl(\theta,S,\kappa)$.
\begin{thm} Suppose that $\Sigma$ is some stationary subset of $\kappa$,
$c:[\kappa]^2\rightarrow\theta$ is a $\Sigma$-closed witness to $\U(\kappa,2,\theta,2)$,
and $\refl(\theta,\Sigma,\kappa\setminus \partial(c))$  holds. Then:
\begin{enumerate}
\item If $\kappa$ is $\theta$-inaccessible,  then $c$ is not subadditive of the first kind.
\item If $\partial(c)$ is stationary,  then $c$ is not subadditive.
\item If $c$ is locally small, then $c$ is not subadditive.
\end{enumerate}
\end{thm}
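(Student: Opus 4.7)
The plan is to derive a contradiction in each clause from a single common reflection-based setup, with the three clauses differing only in the endgame. Suppose $c$ is subadditive of the relevant kind (first kind in~(1), both kinds in~(2) and~(3)). Apply Lemma~\ref{lemma24}(3) to $\Sigma$ to pin down $\epsilon_0<\kappa$ for which each $S_i:=\{\beta\in\Sigma\mid \epsilon_0<\beta,\ c(\epsilon_0,\beta)>i\}$ is stationary, $i<\theta$. Regarding $\{S_i\mid i<\theta\}$ as a $\theta$-family of stationary subsets of $\Sigma$, the reflection hypothesis yields $\gamma\in(\kappa\setminus\partial(c))\cap E^\kappa_{>\omega}$, $\gamma>\epsilon_0$, at which every $S_i\cap\gamma$ is stationary in $\gamma$. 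Since $\gamma\in\acc(\kappa)\setminus\partial(c)$, fix $\gamma'\ge\gamma$ and $i^*<\theta$ with $\sup(D^c_{\le i^*}(\gamma')\cap\gamma)=\gamma$. The $\Sigma$-closedness of $c$, together with the stationarity of $\Sigma\cap\gamma$ in $\gamma$ and the unboundedness of $D^c_{\le i^*}(\gamma')\cap\gamma$ in $\gamma$, then shows that $D^c_{\le i^*}(\gamma')\cap S_j\cap\gamma$ is stationary in $\gamma$ for every $j<\theta$.

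For clauses~(2) and~(3), the endgame is via second-kind subadditivity. Let $k:=c(\epsilon_0,\gamma')$ and $j:=\max\{i^*,k\}$, and pick $\beta\in D^c_{\le i^*}(\gamma')\cap S_j\cap\gamma$. Then $\epsilon_0<\beta<\gamma'$, $c(\beta,\gamma')\le i^*$, and $c(\epsilon_0,\beta)>j$, so second-kind subadditivity applied to $\epsilon_0<\beta<\gamma'$ gives $c(\epsilon_0,\beta)\le\max\{c(\epsilon_0,\gamma'),c(\beta,\gamma')\}\le j$, contradicting $c(\epsilon_0,\beta)>j$. The distinguishing hypotheses of these two clauses (stationary $\partial(c)$ in~(2), local smallness of $c$ in~(3)) are natural contexts in which the reflection hypothesis is nontrivial but do not appear in the final inequality.

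Clause~(1) is the main obstacle, as without second-kind subadditivity the same substitution produces only the vacuous $c(\epsilon_0,\gamma')\le\max\{c(\epsilon_0,\beta),c(\beta,\gamma')\}$, which is consistent with $c(\epsilon_0,\beta)$ being arbitrarily large. The plan is to augment the common setup by invoking Lemma~\ref{lemma24}(4) in place of Lemma~\ref{lemma24}(3), absorbing into its family $\mathcal H$ the functions $h_\delta(\beta):=c(\delta,\beta)$ for $\delta$ ranging over a small index set; the $\theta$-inaccessibility of $\kappa$ is what keeps $|\mathcal H|<\kappa$ while also supplying, via $\nu^\theta<\kappa$ for $\nu<\kappa$, a secondary pigeonhole on the fixed profile $\langle c(\delta,\gamma')\mid \delta\in\dom(\mathcal H)\rangle$. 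The expectation is that the enriched choice of $\epsilon$ produces, at stationarily many $\beta$, joint inequalities $c(\epsilon,\beta)>c(\delta,\epsilon)$ and $c(\epsilon,\beta)>c(\delta,\beta)$ across enough $\delta$'s that first-kind subadditivity applied to the triples $\delta<\beta<\gamma'$ forces incompatible bounds on $c(\delta,\gamma')$. The main difficulty is arranging $\mathcal H$ and performing the extraction so that the contradiction genuinely arises from first-kind subadditivity alone, and $\theta$-inaccessibility is indispensable for keeping the relevant counting under control.
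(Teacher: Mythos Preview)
Your argument for clauses~(2) and~(3) is correct and is in fact both simpler and stronger than the paper's: you show, using only second-kind subadditivity applied to the single triple $\epsilon_0<\beta<\gamma'$, that the reflection hypothesis alone already rules out subadditivity of the second kind, and the side hypotheses of~(2) and~(3) play no role. The paper instead first proves, using subadditivity of the \emph{first} kind, a key claim (for every $A\in[H]^\theta$ drawn from a suitable cofinal $H\s\kappa$ there exists $\beta_A>\sup(A)$ with $\sup_{\alpha\in A}c(\alpha,\beta_A)<\theta$), and only then uses the extra hypotheses together with weak second-kind subadditivity to locate a bad $\beta$ and $A$. Your direct route bypasses this machinery; the paper's route has the virtue that its central claim drives all three clauses uniformly.

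For clause~(1), however, you do not have a proof. Your sketch of invoking Lemma~\ref{lemma24}(4) with an unspecified family $\mathcal H$ and an unspecified pigeonhole does not crystallize, and you concede as much. The obstruction is structural: no enrichment of the choice of $\epsilon_0$ will make first-kind subadditivity on $\epsilon_0<\beta<\gamma'$ productive, since that inequality reads $c(\epsilon_0,\gamma')\le\max\{c(\epsilon_0,\beta),c(\beta,\gamma')\}$ and is perfectly consistent with $c(\epsilon_0,\beta)$ being large. The paper's approach is organized differently. It fixes not a single $\epsilon_0$ but a cofinal $H\in[\kappa]^\kappa$ on which the color $i$ witnessing stationarity of $\{\sigma\in\Sigma\mid c(\alpha,\sigma)\le i\}$ is constant in $\alpha$, and then applies reflection once \emph{per} $A\in[H]^\theta$ to obtain the $\beta_A$ of the claim above; here first-kind subadditivity is applied to each triple $\alpha<\sigma_\alpha<\beta_A$ with $\sigma_\alpha$ chosen inside the reflection point. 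The $\theta$-inaccessibility then enters via Lemma~\ref{lemma24}(1): it bounds $|[H\cap\epsilon]^\theta|<\kappa$ for suitable $\epsilon$, so a pigeonhole yields a single $A$ for which $\{c(\alpha,\beta)\mid\alpha\in A\}$ is unbounded in $\theta$ for cofinally many $\beta$; picking such $\beta>\beta_A$ and applying first-kind subadditivity to $\alpha<\beta_A<\beta$ gives $c(\alpha,\beta)\le\max\{c(\alpha,\beta_A),c(\beta_A,\beta)\}$, bounded uniformly in $\alpha\in A$, a contradiction. The idea your sketch is missing is that reflection must be applied across a whole $\theta$-sized $A$ of sources, producing a \emph{single} upper witness $\beta_A$ against which first-kind subadditivity can then be leveraged from above.
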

\begin{proof}
  Suppose that $c$ is subadditive of the first kind.
  For each $\alpha<\kappa$, pick $i_\alpha<\theta$, for which the following set is stationary:
  $$
    S^{i_\alpha}_\alpha:=\{\sigma\in\Sigma\mid \alpha<\sigma\ \&\ c(\alpha,\sigma)\le i_\alpha\}.
  $$
  Next, using the pigeonhole principle, fix $H\in[\kappa]^\kappa$
  and $i<\theta$ such that $i_\alpha=i$ for all $\alpha\in H$.
  \begin{claim} For every $A\in[H]^\theta$, there is $\beta_A<\kappa$ above $\sup(A)$ such that
  $\sup_{\alpha\in A}c(\alpha,\beta_A)<\theta$.
  \end{claim}
  \begin{cproof} Given $A\in[H]^\theta$, as $\refl(\theta,\Sigma,\kappa\setminus \partial(c))$ holds,
  we may pick $\beta\in \kappa\setminus \partial(c)$ such that $S_\alpha^i\cap\beta$ is stationary in $\beta$ for all $\alpha\in A$.
  Now, as $\beta\notin\partial(c)$, we may find some $j<\theta$ and
  $\gamma \in [\beta, \kappa)$ such that $\sup(D^c_{\le j}(\gamma)) \cap \beta=\beta$.
  As $c$ is $\Sigma$-closed, it follows that there exists a club $E$ in $\beta$ such that $E\cap\Sigma\s D^c_{\le j}(\gamma)$.
  For every $\alpha\in A$, fix $\sigma_\alpha\in S^{i}_\alpha\cap E$.
  Since $c$ is subadditive of the first kind,
  we have $c(\alpha,\gamma)\le\max\{c(\alpha,\sigma_\alpha),c(\sigma_\alpha,\gamma)\}\le\max\{i,j\}$.
  So, setting $\beta_A = \gamma$, we have $\sup_{\alpha\in A}c(\alpha,\beta_A)<\theta$.
  \end{cproof}

  (1) Using Lemma~\ref{lemma24}(1), fix $\epsilon<\kappa$ such that, for cofinally many $\beta<\kappa$,
  $\{c(\alpha, \beta) \mid  \alpha \in H \cap \epsilon\}$ is unbounded in $\theta$.
  Assuming that $\kappa$ is $\theta$-inaccessible,
  we may find some $A\in[H\cap\epsilon]^\theta$ such that, for cofinally many $\beta<\kappa$,
  $\{c(\alpha, \beta) \mid  \alpha \in A\}$ is unbounded in $\theta$.
  In particular, we may find such a $\beta<\kappa$ above $\beta_A$.
  Set $j:=\max\{\sup_{\alpha\in A}c(\alpha,\beta_A),c(\beta_A,\beta)\}$. Pick $\alpha\in A$ such that $c(\alpha,\beta)>j$.
  As $c$ is subadditive of the first kind,
  $c(\alpha,\beta)\le\max\{c(\alpha,\beta_A),c(\beta_A,\beta)\}\le j$.
  This  is a contradiction.

  (2) Suppose that $\partial(c)$ is stationary, and pick $\beta\in\acc^+(H)\cap\partial(c)$.
  Fix a cofinal subset $A$ of $H\cap\beta$ of size $\theta$.
  As $\beta\in\partial(c)$, $\{c(\alpha, \beta) \mid \alpha \in A\}$ is unbounded in $\theta$.
  Note that $\beta_A>\sup(A)=\beta$.
  Set $i:=\sup_{\alpha\in A}c(\alpha,\beta_A)$.
  If $c$ were weakly subadditive of the second kind,
  then we could find $j<\theta$ such that $D^c_{\le i}(\beta_A)\cap\beta\s D^c_{\le j}(\beta)$,
  contradicting the fact that for every $j<\theta$,
  there exists $\alpha\in A\s D^c_{\le i}(\beta_A)\cap\beta$ with $c(\alpha,\beta)>j$.

  (3) Suppose that $\kappa=\lambda^+$, $\theta=\cf(\lambda)$ and $c$
  is locally small. Fix the least $\beta<\kappa$ such that $\otp(H\cap\beta)=\lambda$.
  For all $\epsilon<\beta$ and $i<\cf(\lambda)$, $|H\cap[\epsilon,\beta)|=\lambda>|D^c_{\le i}(\beta)|$,
  and hence there exists a cofinal subset $A$ of $H\cap\beta$ of size $\theta$
  such that $\{c(\alpha, \beta) \mid \alpha \in A\}$ is unbounded in $\theta$.
  Now, as in the proof of Clause~(3), $c$ cannot be weakly subadditive of the second kind.
\end{proof}

\begin{cor} For every $\theta\in\reg(\kappa)$ and stationary $\Sigma\s E^\kappa_{\ge\theta}$, if $\refl(\theta,\Sigma)$ holds, then there exists no $\Sigma$-closed witness to $\US(\kappa,2,\theta,2)$.\qed
\end{cor}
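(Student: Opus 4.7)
The plan is to assume for contradiction that $c$ is a $\Sigma$-closed subadditive witness to $\U(\kappa,2,\theta,2)$ and to verify the hypotheses of the preceding theorem, so that one of its clauses delivers the contradiction. The central reduction is from $\refl(\theta,\Sigma)$ to $\refl(\theta,\Sigma,\kappa\setminus\partial(c))$, using only $\Sigma\s E^\kappa_{\ge\theta}$ together with the containment $\partial(c)\s E^\kappa_{\cf(\theta)}=E^\kappa_\theta$ noted after Definition~\ref{levels}.

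Concretely, I would verify that every $\beta\in E^\kappa_{>\omega}$ reflecting a $\theta$-sized family of stationary subsets of $\Sigma$ has $\cf(\beta)>\theta$. If not, set $\mu:=\cf(\beta)\le\theta$ and fix a club $C=\{c_\xi\mid\xi<\mu\}\s\beta$ in increasing order. For each nonzero limit $\xi<\mu$ one has $\cf(c_\xi)=\cf(\xi)<\mu\le\theta$, so $C':=\{c_\xi\mid\xi<\mu\text{ a nonzero limit}\}$ is a club in $\beta$ disjoint from $E^\beta_{\ge\theta}$; this makes $E^\beta_{\ge\theta}$ nonstationary in $\beta$, contradicting that any reflecting $S\cap\beta$ sits inside $E^\beta_{\ge\theta}$ and is stationary in $\beta$. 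Hence $\cf(\beta)>\theta$, putting $\beta$ outside $E^\kappa_\theta\supseteq\partial(c)$, and so $\refl(\theta,\Sigma,\kappa\setminus\partial(c))$ holds.

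With the theorem's hypothesis in hand, I would invoke the theorem to conclude. The anticipated main obstacle is that each of its three clauses carries an extra assumption --- $\kappa$ being $\theta$-inaccessible, $\partial(c)$ being stationary, or $c$ being locally small --- none of which is guaranteed by the corollary. I plan to address this by a case split: if $\partial(c)$ is stationary, Clause~(2) directly contradicts full subadditivity of $c$; otherwise, I would return to the proof of the theorem and extract its intermediate ``Claim'' (for every $A\in[H]^\theta$ there is $\beta_A>\sup(A)$ with $\sup_{\alpha\in A}c(\alpha,\beta_A)<\theta$), which was proved from subadditivity of the first kind and $\refl(\theta,\Sigma,\kappa\setminus\partial(c))$ alone, and then combine it with subadditivity of the second kind and the strong unboundedness of $c$ (upgraded through Lemma~\ref{uPLUSsub}) to produce the needed contradiction without invoking the additional hypotheses of Clauses~(1) or~(3).
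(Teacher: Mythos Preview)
Your approach is correct and in fact more careful than the paper, which records no argument beyond the \qed. The reduction from $\refl(\theta,\Sigma)$ to $\refl(\theta,\Sigma,\kappa\setminus\partial(c))$ via the observation that any reflection point of a stationary subset of $E^\kappa_{\ge\theta}$ must have cofinality strictly above $\theta$ (and hence lies outside $\partial(c)\subseteq E^\kappa_\theta$) is exactly the intended step, and your identification of the residual difficulty --- that none of the three clauses of the preceding theorem applies outright --- is accurate.

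Two remarks on the execution. First, the case split on whether $\partial(c)$ is stationary is unnecessary: the argument you sketch for the nonstationary case works uniformly. Second, the finishing step is cleaner than you suggest and does not need Lemma~\ref{uPLUSsub}. Once the theorem's intermediate Claim is in hand, subadditivity of the second kind gives $\sup(c``[A]^2)<\theta$ for \emph{every} $A\in[H]^\theta$. On the other hand, Lemma~\ref{lemma24}(2) applied to $H$ yields $\beta\in H$ with $\{c(\alpha,\beta)\mid \alpha\in H\cap\beta\}$ unbounded in $\theta$; choosing $A_0\in[H\cap\beta]^\theta$ realizing this and setting $A:=A_0\cup\{\beta\}\in[H]^\theta$ produces $\sup(c``[A]^2)=\theta$, the desired contradiction. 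This is presumably what the \qed abbreviates.
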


We next show that the existence of subadditive witnesses to $\U(\ldots)$ is ruled
out by the existence of certain ultrafilters.

\begin{defn}
  An ultrafilter $U$ over $\kappa$ is \emph{$\theta$-indecomposable} if it is  uniform and,
  for every sequence of sets $\langle A_i \mid i < \theta \rangle$ satisfying
  $\bigcup_{i < \theta} A_i \in U$, there is $B \in [\theta]^{<\theta}$ such that
  $\bigcup_{i \in B} A_i \in U$.
\end{defn}

\begin{lemma}\label{lemma64}
  Suppose that $c:[\kappa]^2\rightarrow\theta$ is a witness to $\U(\kappa,2,\theta,2)$.
  \begin{enumerate}
  \item   If there exists a $\theta^+$-complete uniform ultrafilter over $\kappa$,
  then $c$ is not weakly subadditive;
  \item   If there exists a $\theta^+$-complete uniform ultrafilter over $\kappa$
  and $\kappa$ is $\theta$-inaccessible,
  then $c$ is not weakly subadditive of the first kind;
  \item   If there exists a $\theta$-indecomposable ultrafilter over $\kappa$,
  then $c$ is not subadditive of the second kind.
  \end{enumerate}
\end{lemma}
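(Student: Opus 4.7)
The plan is to prove all three clauses by using the given ultrafilter $U$ to pigeonhole the coloring $c$ down to a ``typical'' color $i^*<\theta$ on a set $B\in U$ of size $\kappa$, and then to exploit the relevant (weak) subadditivity assumption to bound $c\restriction[B']^2$ on some refinement $B'$ of $B$, contradicting that $c$ witnesses $\U(\kappa,2,\theta,2)$.

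Clause (3) is the cleanest. For each $\alpha<\kappa$, apply $\theta$-indecomposability to the partition $\{\gamma>\alpha\mid c(\alpha,\gamma)=i\}_{i<\theta}$ to obtain $i_\alpha<\theta$ with $W_\alpha:=\{\gamma>\alpha\mid c(\alpha,\gamma)\leq i_\alpha\}\in U$. A second application of $\theta$-indecomposability to the partition of $\kappa$ induced by $\alpha\mapsto i_\alpha$ yields $i^*<\theta$ and $B\in U$ with $i_\alpha\leq i^*$ for every $\alpha\in B$. For any $(\beta_0,\beta_1)\in[B]^2$, the intersection $W_{\beta_0}\cap W_{\beta_1}\in U$ is nonempty, so picking $\gamma$ therein and invoking subadditivity of the second kind gives $c(\beta_0,\beta_1)\leq\max\{c(\beta_0,\gamma),c(\beta_1,\gamma)\}\leq i^*$. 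Thus $c``[B]^2$ is bounded in $\theta$, the desired contradiction.

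Clause (1) follows the same skeleton, but weak subadditivity only supplies, for each pair $(\beta_1,\gamma)$, some $j(\beta_1,\gamma)<\theta$ with $D^c_{\leq i^*}(\gamma)\cap\beta_1\subseteq D^c_{\leq j(\beta_1,\gamma)}(\beta_1)$. Here $\theta^+$-completeness lets us stabilize this second parameter by two extra rounds of pigeonhole: for each $\beta_1$, the map $\gamma\mapsto j(\beta_1,\gamma)$ is $U$-almost constantly equal to some $h(\beta_1)<\theta$, and then $h\restriction B$ is $U$-almost constantly equal to some $j^*<\theta$ on a set $B'\in U$. For $(\beta_0,\beta_1)\in[B']^2$, any $\gamma$ in the triple intersection of the relevant $U$-large sets yields $\beta_0\in D^c_{\leq i^*}(\gamma)\cap\beta_1\subseteq D^c_{\leq j^*}(\beta_1)$, so $c(\beta_0,\beta_1)\leq j^*$, and $c``[B']^2$ is bounded in $\theta$, contradicting $\U(\kappa,2,\theta,2)$.

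Clause (2) is where the main obstacle lies, since weak subadditivity of the first kind alone cannot bound $c(\beta_0,\beta_1)$ from bounds on $c(\beta_0,\gamma)$ and $c(\beta_1,\gamma)$. Instead, I would combine the $U$-pigeonhole with Lemma~\ref{lemma24}(1) and the $\theta$-inaccessibility of $\kappa$, mimicking the reflection argument in the theorem just preceding this lemma. Again obtain $i^*$ and $H\in U$ with $W_\alpha:=\{\gamma>\alpha\mid c(\alpha,\gamma)=i^*\}\in U$ for every $\alpha\in H$. Apply Lemma~\ref{lemma24}(1) to $H$ to fix $\epsilon<\kappa$ such that $\{c(\alpha,\beta)\mid\alpha\in H\cap\epsilon\}$ is unbounded in $\theta$ for cofinally many $\beta\in H$. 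Since $|\epsilon|^\theta<\kappa$ by $\theta$-inaccessibility, a further pigeonhole fixes a single $A\in[H\cap\epsilon]^\theta$ for which the same unboundedness holds for cofinally many $\beta\in H$. By $\theta^+$-completeness, $\bigcap_{\alpha\in A}W_\alpha\in U$, so we may pick $\beta_A$ in this intersection and then $\beta\in H$ above $\beta_A$ with $\{c(\alpha,\beta)\mid\alpha\in A\}$ unbounded in $\theta$. Finally, applying weak subadditivity of the first kind to the pair $(\beta_A,\beta)$ with input $i^*$ produces $j<\theta$ with $D^c_{\leq i^*}(\beta_A)\subseteq D^c_{\leq j}(\beta)$; since $A\subseteq D^c_{\leq i^*}(\beta_A)$, we conclude $c(\alpha,\beta)\leq j$ for all $\alpha\in A$, contradicting the unboundedness. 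The delicate step is fusing Lemma~\ref{lemma24}(1) and $\theta$-inaccessibility to funnel the unboundedness through the weak first-kind subadditivity, rather than through the more directly-applicable second-kind version used in clauses (1) and (3).
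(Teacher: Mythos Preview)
Your arguments for (2) and (3) are essentially identical to the paper's. Your argument for (1) is correct but proceeds differently: the paper invokes Lemma~\ref{lemma24}(2) to find a single $\beta\in H$ below which the colors from $H$ are unbounded, picks $A\in[H\cap\beta]^\theta$ witnessing this, sets $\beta_A\in\bigcap_{\alpha\in A}A^i_\alpha$, and then applies weak subadditivity to the pair $\{\beta,\beta_A\}$ (using whichever of the two halves is relevant, according to whether $\beta_A<\beta$ or $\beta_A>\beta$) to force $A\subseteq D^c_{\le j}(\beta)$ for some $j<\theta$, contradicting the choice of $A$. Your approach instead runs two further $\theta^+$-complete pigeonholes to stabilize the second-kind bound $j(\beta_1,\gamma)$ globally, producing a $U$-large set $B'$ on which $c$ is outright bounded by $j^*$. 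Notably, your route uses only weak subadditivity of the \emph{second} kind, so it in fact establishes the formally stronger conclusion that a $\theta^+$-complete uniform ultrafilter over $\kappa$ already rules out weak subadditivity of the second kind alone; the paper's argument, as written, invokes both halves of weak subadditivity. A minor quibble: in your step for (1) you only need the intersection $W_{\beta_0}\cap V_{\beta_1}$, not a triple intersection, and in (2) your ``$c(\alpha,\gamma)=i^*$'' should read ``$\le i^*$'' to match the skeleton from (3), though with $\theta^+$-completeness the exact value is of course available.
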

\begin{proof}
(1)
  Suppose that $U$ is a $\theta^+$-complete ultrafilter over $\kappa$.
  For all $\alpha<\kappa$ and $i<\theta$, let
  $$
    A^i_\alpha:=\{\beta<\kappa\mid \alpha<\beta\ \&\ c(\alpha,\beta)\le i\},
  $$
  so that $\langle A^i_\alpha\mid i<\theta\rangle$ is a $\s$-increasing sequence,
  converging to $\kappa\setminus(\alpha+1)$.
  Since $U$ is, in particular, a $\theta$-indecomposable ultrafilter over $\kappa$,
  we may find some $i_\alpha<\theta$ such that $A^{i_\alpha}_\alpha\in U$.

  Next, using the pigeonhole principle, let us fix $H\in[\kappa]^\kappa$
  and $i<\theta$ such that $i_\alpha=i$ for all $\alpha\in H$.
  As $U$ is closed under intersections of length $\theta$,
  for every $A\in[H]^\theta$, we may let $\beta_A:=\min(\bigcap_{\alpha\in A}A^i_\alpha)$, so
  that $\sup_{\alpha\in A}c(\alpha,\beta_A)\le i$.

  By Lemma~\ref{lemma24}(2), fix $\beta\in H$ such that
  $\{c(\alpha, \beta) \mid \alpha \in H \cap \beta\}$ is unbounded in $\theta$.
  Now, pick $A\in[H\cap\beta]^\theta$ such that   $\{c(\alpha, \beta) \mid \alpha \in A\}$ is unbounded in $\theta$.
  As $c$ is weakly subadditive, we may now pick $j<\theta$ such that
  $D^c_{\le i+1}(\beta_A)\cap\beta\s D^c_{\le j}(\beta)$.
  Then $A\s D^c_{\le j}(\beta)$, contradicting the choice of $A$.

(2)
  Let $H$, $i$ and the notation $\beta_A$ be as in the proof of Clause~(1).
  Assuming that $\kappa$ is $\theta$-inaccessible,
  and using Lemma~\ref{lemma24}(1),
  we may find some $A\in[H]^\theta$ such that, for cofinally many $\beta<\kappa$,
  $\{c(\alpha, \beta) \mid  \alpha \in A\}$ is unbounded in $\theta$.
  In particular, we may find such a $\beta<\kappa$ above $\beta_A$.
  If $c$ were weakly subadditive of the first kind, then we could find $j<\theta$ such that
  $D_{\le i}^c(\beta_A)\s D_{\le j}^c(\beta)$.
  However, for every $j<\theta$, there exists $\alpha\in A$ such that $c(\alpha,\beta)>j$,
  so that $\alpha\in D_{\le i}^c(\beta_A)\setminus D_{\le j}^c(\beta)$.
  This is a contradiction.

(3)
  Suppose that $U$ is a $\theta$-indecomposable ultrafilter over $\kappa$.
  As in the proof of Clause~(1), we may fix $H\in[\kappa]^\kappa$
  and $i<\theta$ such that, for all $\alpha\in H$,
  $A^i_\alpha:=\{\beta<\kappa\mid \alpha<\beta\ \&\ c(\alpha,\beta)\le i\}$
  is in $U$.
  Towards a contradiction, suppose that $c$ is subadditive of the second kind.
  Then, for all $(\alpha,\beta)\in[H]^2$, we may pick some $\gamma\in A^{i}_\alpha\cap A^i_\beta$,
  and infer that  $c(\alpha,\beta)\le\max\{c(\alpha,\gamma), c(\beta,\gamma)\}\le i$.
  So, $\sup(c``[H]^2)\le i$, contradicting the fact that $c$ witnesses
  $\U(\kappa,2,\theta,2)$.
\end{proof}

\begin{cor} Under a suitable large cardinal hypothesis, each of the following propositions is consistent:
\begin{enumerate}
\item  For every $n<\omega$ and
every coloring $c:[\aleph_{\omega+1}]^2\rightarrow\aleph_n$ that is weakly subadditive,
there exists $A\in[\aleph_{\omega+1}]^{\aleph_{\omega+1}}$ such that $c``[A]^2$ is finite;
\item  For every $n<\omega$ and
every coloring $c:[\aleph_{\omega+1}]^2\rightarrow\aleph_n$ that is subadditive of the second kind,
there exists $A\in[\aleph_{\omega+1}]^{\aleph_{\omega+1}}$ such that $c``[A]^2$ is countable.
\end{enumerate}
\end{cor}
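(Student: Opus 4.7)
The plan is to execute a finite descent entirely analogous to that of Corollary~\ref{cor35}, substituting each appeal to the narrow-system property by the corresponding clause of Lemma~\ref{lemma64}. The nontrivial ingredient is the large-cardinal input: I would pass to a model of $\zfc$ in which, simultaneously for every $n<\omega$, there exists over $\aleph_{\omega+1}$ both an $\aleph_{n+1}$-complete uniform ultrafilter (needed for Clause~(1)) and, for $n\ge 1$, an $\aleph_n$-indecomposable uniform ultrafilter (needed for Clause~(2)). The consistency of such a simultaneous configuration from appropriate large cardinals is classical, being derivable for example from Prikry-style forcings over a measurable cardinal, and I would invoke it by citation rather than reproduce the construction.

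For Clause~(1): given $n<\omega$ and a weakly subadditive coloring $c:[\aleph_{\omega+1}]^2\rightarrow\aleph_n$, apply the contrapositive of Lemma~\ref{lemma64}(1) with $\kappa=\aleph_{\omega+1}$, $\theta=\aleph_n$, and the prescribed $\aleph_{n+1}$-complete uniform ultrafilter to produce $A_0\in[\aleph_{\omega+1}]^{\aleph_{\omega+1}}$ and $i_0<\aleph_n$ with $c``[A_0]^2\subseteq i_0+1$. Setting $\theta_1:=|c``[A_0]^2|<\aleph_n$, I would then, exactly as in the proof of Corollary~\ref{cor35}, identify $c\restriction[A_0]^2$ with a weakly subadditive map from $[\aleph_{\omega+1}]^2$ into $\theta_1$, reapply Lemma~\ref{lemma64}(1) using the $\theta_1^+$-complete uniform ultrafilter, and iterate. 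The resulting chain of infinite cardinals $\theta_1>\theta_2>\cdots$ is a strictly decreasing sequence below $\aleph_n$, hence stabilizes after at most $n$ steps, at which point the final $A$ satisfies $|c``[A]^2|<\aleph_0$, i.e., $c``[A]^2$ is finite.

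Clause~(2) is obtained by the same descent with Lemma~\ref{lemma64}(3) in place of Lemma~\ref{lemma64}(1), and with $\aleph_m$-indecomposable ultrafilters in place of $\aleph_m$-complete ones. Subadditivity of the second kind is preserved under the codomain-shrinking identification, being a pointwise $\leq$-inequality; the sole difference in outcome is that the hypothesis furnishes no $\aleph_0$-indecomposable uniform ultrafilter on $\aleph_{\omega+1}$, so the descent must halt the moment the image cardinality reaches $\aleph_0$, and we conclude with $c``[A]^2$ at most countable rather than finite.

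The main obstacle, in my estimation, is not the descent itself, which follows the blueprint of Corollary~\ref{cor35} once the input is in hand, but rather the arrangement of the large-cardinal/forcing hypothesis, specifically the simultaneous existence over $\aleph_{\omega+1}$ of the required infinite family of ultrafilters of the prescribed completeness, respectively indecomposability degrees; a secondary but minor obstacle is the careful verification that weak subadditivity (respectively, subadditivity of the second kind) is preserved under restriction to $A_k$ and the subsequent order-preserving identification of the codomain with $\theta_{k+1}$, which proceeds by direct inspection of the definitions.
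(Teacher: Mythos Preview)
Your treatment of Clause~(2) is essentially the paper's: the Ben--David--Magidor model supplies a single uniform ultrafilter over $\aleph_{\omega+1}$ that is $\aleph_n$-indecomposable for every positive $n<\omega$, and the descent via Lemma~\ref{lemma64}(3) proceeds exactly as in Corollary~\ref{cor35}, halting once the image is countable.

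Your approach to Clause~(1), however, cannot work. You require, for each $n<\omega$, an $\aleph_{n+1}$-complete uniform ultrafilter over $\aleph_{\omega+1}$; but already the case $n=0$ is impossible in any model of $\zfc$. A nonprincipal $\aleph_1$-complete ultrafilter on a set $X$ forces the existence of a measurable cardinal $\leq|X|$ (take the least $\kappa$ such that $X$ admits a partition into $\kappa$ pieces none of which lies in the ultrafilter, and push the ultrafilter forward along the partition map). Since no $\aleph_n$, nor $\aleph_\omega$, nor $\aleph_{\omega+1}$ can be measurable, no such ultrafilter exists over $\aleph_{\omega+1}$. This is precisely why indecomposability, rather than completeness, is the relevant notion at small cardinals, and why Lemma~\ref{lemma64}(1) is of no help here.

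The paper therefore takes a genuinely different route for Clause~(1): it invokes \cite[Corollary~5.13]{lh_trees_squares_reflection}, which, starting from infinitely many supercompact cardinals, produces a model in which no witness to $\U(\aleph_{\omega+1},2,\aleph_n,2)$ is subadditive for any $n<\omega$; an inspection of the underlying \cite[Lemma~5.11]{lh_trees_squares_reflection} shows that ``subadditive'' can be relaxed to ``weakly subadditive''. The mechanism there is not an ultrafilter argument but rather a covering-type property obtained from the interaction of L\'evy collapses with supercompactness. Once that input is in hand, the descent of Corollary~\ref{cor35} applies as you describe.
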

\begin{proof} (1) By the proof approach of Corollary~\ref{cor35},
it suffices to get a model in which, for every $n<\omega$,
any witness to $\U(\aleph_{\omega+1},2,\allowbreak\aleph_n,2)$ is not weakly subadditive.
By \cite[Corollary~5.13]{lh_trees_squares_reflection},
relative to the consistency of the existence of infinitely many supercompact cardinals,
it is consistent that, for every $n<\omega$, any witness to $\U(\aleph_{\omega+1},2,\allowbreak\aleph_n,2)$  is not subadditive.
An inspection of \cite[Lemma~5.11]{lh_trees_squares_reflection} on which \cite[Corollary~5.13]{lh_trees_squares_reflection} relies
makes it clear that, furthermore,
for every $n<\omega$, any witness to $\U(\aleph_{\omega+1},2,\allowbreak\aleph_n,2)$  is not weakly subadditive.

(2) Starting from the  consistency of the existence
of a cardinal $\kappa$ that is $\kappa^+$-supercompact,
Ben-David and Magidor produced in \cite{ben-david_magidor} a model of $\zfc$ in which there exists
an ultrafilter over $\aleph_{\omega + 1}$ that is $\aleph_{n}$-indecomposable for
all positive $n < \omega$. Now appeal to Lemma~\ref{lemma64}
and the proof approach of Corollary~\ref{cor35}.
\end{proof}

\begin{lemma}\label{lemma338} Suppose that $\theta\le\mu<\kappa$,
and $c:[\kappa]^2\rightarrow\theta$ witnesses $\U(\kappa,\mu\circledast 1,\theta,2)$.
If there exists a $\mu^+$-complete uniform ultrafilter over $\kappa$,
  then $c$ is not $\mu$-coherent.
\end{lemma}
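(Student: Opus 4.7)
The plan is to mimic the ultrafilter argument already used in Lemma~\ref{lemma64}, but to exploit the coherence of $c$ rather than its (weak) subadditivity. Suppose for contradiction that $c$ is $\mu$-coherent, and fix a $\mu^+$-complete uniform ultrafilter $U$ on $\kappa$. Since $\theta\le\mu$, $U$ is in particular $\theta^+$-complete, so for each $\alpha<\kappa$ the partition $\{\beta\in\kappa\setminus(\alpha+1)\mid c(\alpha,\beta)=j\}$, $j<\theta$, has exactly one class in $U$; call this class $X_\alpha$ and the corresponding color $i_\alpha<\theta$. Because $\theta<\kappa$ and $\kappa$ is regular, the pigeonhole principle yields $H\in[\kappa]^\kappa$ and a single $i<\theta$ such that $i_\alpha=i$ for every $\alpha\in H$.

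Next, I apply $\U(\kappa,\mu\circledast1,\theta,2)$ to the family $\{\{\alpha\}\mid\alpha\in H\}$ and the color $i$. This produces $\beta^*\in H$ together with $A^*\in[H\cap\beta^*]^{\mu}$ such that $c(\alpha,\beta^*)>i$ for every $\alpha\in A^*$. In particular, $\beta^*\notin X_\alpha$ for every $\alpha\in A^*$.

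The final step is to produce a single $\delta$ that simultaneously lies in $X_\alpha$ for every $\alpha\in A^*$ and sits above $\beta^*$. Since $|A^*|=\mu$, the $\mu^+$-completeness of $U$ gives $\bigcap_{\alpha\in A^*}X_\alpha\in U$; intersecting with the cobounded (hence $U$-large) set $\kappa\setminus(\beta^*+1)$ and picking any element $\delta$ of the result works. Then $c(\alpha,\delta)=i$ for every $\alpha\in A^*$, whereas $c(\alpha,\beta^*)>i$ for every $\alpha\in A^*$, so
\[
A^*\s\{\alpha<\beta^*\mid c(\alpha,\beta^*)\neq c(\alpha,\delta)\},
\]
and the latter set has cardinality $\ge\mu$, contradicting $\mu$-coherence applied to the pair $(\beta^*,\delta)\in[\kappa]^2$.

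There is no real obstacle here: the only thing to be careful about is to arrange the parameters so that both the pigeonhole (which needs $\theta<\kappa$, already in hand since $\theta\le\mu<\kappa$) and the ultrafilter intersection (which needs $|A^*|=\mu<\mu^+$) go through, and to observe that $c(\alpha,\beta^*)>i=c(\alpha,\delta)$ forces $A^*$ into the ``incoherence set'' of $(\beta^*,\delta)$.
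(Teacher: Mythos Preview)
Your proof is correct and follows essentially the same route as the paper's: use the ultrafilter to find $H\in[\kappa]^\kappa$ and a color $i$ on which the fibers are $U$-large, apply $\U(\kappa,\mu\circledast1,\theta,2)$ to obtain $A^*\in[H]^\mu$ and $\beta^*$ with high colors, then use $\mu^+$-completeness to find a single $\delta$ witnessing the small colors for all of $A^*$, contradicting $\mu$-coherence. The only cosmetic differences are that you pick an exact color class (via $\theta^+$-completeness) rather than a ${\le}i$ set, and that you arrange $\delta>\beta^*$ so as to avoid the paper's $\min\{\beta,\beta_A\}$.
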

\begin{proof} As in the proof of Lemma~\ref{lemma64}(1),
a $\mu^+$-complete uniform ultrafilter over $\kappa$
gives rise to $H\in[\kappa]^{\kappa}$ and $i<\theta$
such that, for every $A\in[H]^\mu$, there exists $\beta_A<\kappa$ above $\sup(A)$
such that $\sup_{\alpha\in A}c(\alpha,\beta_A)\le i$.
Now, as $c$ witnesses $\U(\kappa,\mu\circledast 1,\theta,2)$, we may find $A\in[H]^\mu$ and $\beta\in H$ above $\sup(A)$
such that $c(\alpha,\beta)>i$ for all $\alpha\in A$.
So
$$\{\alpha<\min\{\beta,\beta_A\}\mid c(\alpha,\beta)\neq c(\alpha,\beta_A)\}$$
covers $A$, which is a set of size $\mu$. Thereby, $c$ is not $\mu$-coherent.
\end{proof}
\begin{cor}[{Todorcevic, \cite[Remark~6.2.3]{todorcevic_book}}]\label{cor339}
Suppose that $\lambda$ is a singular limit of strongly compact cardinals.
Then there exists no locally small coloring $c:[\lambda^+]^2\rightarrow\cf(\lambda)$ that is $\lambda$-coherent.
\end{cor}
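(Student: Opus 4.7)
The plan is to proceed by contradiction, applying Lemma~\ref{lemma338} to a critical cardinal $\mu<\lambda$ extracted from the supremum of the disagreement sizes. Suppose, toward contradiction, that $c:[\lambda^+]^2\rightarrow\cf(\lambda)$ is locally small and $\lambda$-coherent. Invoking Lemma~\ref{locally}(1) with $\nu:=1$, I first obtain that $c$ witnesses $\U(\lambda^+,\lambda\circledast 1,\cf(\lambda),\cf(\lambda))$, and hence also $\U(\lambda^+,\mu\circledast 1,\cf(\lambda),2)$ for every cardinal $\mu\in[\cf(\lambda),\lambda]$. This prepares the ground for invoking Lemma~\ref{lemma338}.

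Let $\mu^*$ denote the supremum, over all pairs $(\gamma,\delta)\in[\lambda^+]^2$, of $|\{\alpha<\gamma\mid c(\alpha,\gamma)\neq c(\alpha,\delta)\}|$; by $\lambda$-coherence, $\mu^*\le\lambda$. If $\mu^*<\cf(\lambda)$, then $c$ would be $\cf(\lambda)$-coherent, but this is already ruled out by Lemma~\ref{lemma331}(2), so $\mu^*\ge\cf(\lambda)$. If $\mu^*<\lambda$, set $\mu:=\mu^{*+}<\lambda$; since $\lambda$ is a limit of strongly compact cardinals, I pick a strongly compact $\kappa^*\in(\mu,\lambda)$, and use its strong compactness to extend the $\lambda^+$-complete cobounded filter on $\lambda^+$ to a $\kappa^*$-complete (hence $\mu^+$-complete) uniform ultrafilter on $\lambda^+$. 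Lemma~\ref{lemma338}, applied with parameters $\theta:=\cf(\lambda)$, this $\mu$, and $\kappa:=\lambda^+$, then yields that $c$ is not $\mu$-coherent; i.e., some pair exhibits a disagreement set of cardinality $\ge\mu>\mu^*$, contradicting the definition of $\mu^*$.

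The main obstacle that remains is the edge case $\mu^*=\lambda$, in which the disagreement sizes approach but never reach $\lambda$, a scenario consistent with pointwise $\lambda$-coherence that the argument above does not refute. Closing this gap is the real content of the proof and requires a more careful execution of Lemma~\ref{lemma338}'s construction: one should fix a single sufficiently complete uniform ultrafilter, extract from it a stable set $H\in[\lambda^+]^{\lambda^+}$ and color $i^*<\cf(\lambda)$, fix a single $\beta$ from the $\U$-property applied to $H$, and use $\lambda$-coherence to consolidate the various ordinals $\beta_A$ produced for a cofinal family of subsets $A\s H$ of size $<\lambda$ into a single ordinal $\beta^*$ for which the pair $(\beta,\beta^*)$ has disagreement of cardinality $\lambda$, yielding the sought contradiction.
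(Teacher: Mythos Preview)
Your handling of the case $\mu^*<\lambda$ is correct, but as you yourself acknowledge, the case $\mu^*=\lambda$ is the real content and you have not closed it. Your sketch proposes to ``consolidate the various ordinals $\beta_A$ \dots\ into a single ordinal $\beta^*$'' via $\lambda$-coherence, but there is no mechanism for this: any fixed $\kappa^*$-complete ultrafilter only lets you intersect fewer than $\kappa^*<\lambda$ of the sets $A_\alpha^{i^*}$, so you never produce a single $\beta^*$ witnessing $c(\alpha,\beta^*)\le i^*$ for $\lambda$-many $\alpha\in A'$ simultaneously. Trying to patch this with a triangle-inequality argument between the various $\beta_A$'s and $\beta$ does not work either, since the disagreement sets between $\beta_A$ and $\beta_{A'}$ can be as large as the sets $A,A'$ themselves.

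The paper avoids this case split entirely. Instead of analyzing $\mu^*$, it defines an auxiliary coloring $d:[\lambda^+]^2\rightarrow\cf(\lambda)$ recording, for each pair $(\gamma,\delta)$, the least $i$ with $|\{\alpha<\gamma\mid c(\alpha,\gamma)\neq c(\alpha,\delta)\}|\le\lambda_i$ (where $\langle\lambda_i\rangle$ enumerates strongly compacts below $\lambda$). One checks that $d$ is subadditive, so Lemma~\ref{lemma64}(3) (applied with any $\lambda_i$-indecomposable ultrafilter on $\lambda^+$) yields $H\in[\lambda^+]^{\lambda^+}$ and $i<\cf(\lambda)$ with $d``[H]^2\subseteq i$. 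The derived coloring $e(\alpha,\beta):=c(\alpha,\min(H\setminus\beta))$ is then locally small and genuinely $\lambda_i$-coherent, and one finishes exactly as in your $\mu^*<\lambda$ case by applying Lemma~\ref{lemma338} to $e$ with $\mu:=\lambda_i$. The passage through the subadditive $d$ is precisely the device that collapses the problematic case $\mu^*=\lambda$ back into the tractable one; this is the idea your proposal is missing.
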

\begin{proof} Fix a strictly increasing sequence of strongly compact cardinals $\langle \lambda_i\mid i<\cf(\lambda)\rangle$
converging to $\lambda$.
Let $c:[\lambda^+]^2\rightarrow\cf(\lambda)$ be a locally small coloring.
Towards a contradiction, suppose that $c$ is $\lambda$-coherent, and then define $d:[\lambda^+]^2\rightarrow\cf(\lambda)$ via
$$d(\gamma,\delta):=\min\{ i<\cf(\lambda)\mid |\{\alpha<\gamma\mid c(\alpha,\gamma)\neq c(\alpha,\delta)\}|\le\lambda_i\}.$$
It is not hard to see that $d$ is subadditive,
and so, by Lemma~\ref{lemma64}(3), $d$ fails to witness $\U(\lambda^+,2,\cf(\lambda),2)$.
This means that there exist $H\in[\lambda^+]^{\lambda^+}$ and $i<\cf(\lambda)$ such that $d``[H]^2\s i$.
Define $e:[\lambda^+]^2\rightarrow\cf(\lambda)$ via $e(\alpha,\beta):=c(\alpha,\min(H\setminus\beta))$.
Then $e$ is a locally small coloring which is moreover $\lambda_i$-coherent.
By Lemma~\ref{locally}(1), $e$ in particular witnesses $\U(\lambda^+,\lambda_{i}\circledast 1,\cf(\lambda),2)$,
and then Lemma~\ref{lemma338}  implies that there exists no $(\lambda_{i})^+$-complete uniform ultrafilter over $\lambda^+$,
contradicting the facts that $\lambda_i<\lambda_{i+1}<\lambda^+$ and $\lambda_{i+1}$ is strongly compact.
\end{proof}
Complementary to Lemma~\ref{locallysmall}, we obtain the following.

\begin{cor}\label{cor64} Suppose that $c:[\kappa]^2\rightarrow\theta$ is a witness to $\U(\kappa,2,\theta,2)$
and there exists a strongly compact cardinal in the interval $(\theta,\kappa]$.
  \begin{enumerate}
  \item $c$ is not weakly subadditive;
  \item If $\kappa$ is not the successor a singular cardinal of cofinality $\theta$,
  then $c$ is not weakly subadditive of the first kind;
  \item $c$ is not subadditive of the second kind.
  \end{enumerate}
\end{cor}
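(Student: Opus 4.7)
The plan is to reduce each of the three clauses to the corresponding part of Lemma~\ref{lemma64} by extracting an appropriate ultrafilter from the assumed strongly compact cardinal. First, I would fix a strongly compact cardinal $\lambda$ with $\theta<\lambda\le\kappa$. By the standard characterization of strong compactness, the cobounded (Fr\'echet) filter on $\kappa$, being $\lambda$-complete, extends to a $\lambda$-complete ultrafilter $U$ over $\kappa$. This $U$ is uniform, and since $\theta<\lambda$, it is in particular $\theta^+$-complete.

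I would then record the elementary observation that every $\theta^+$-complete uniform ultrafilter over $\kappa$ is automatically $\theta$-indecomposable: given any sequence $\langle A_i\mid i<\theta\rangle$ with $\bigcup_{i<\theta}A_i\in U$, either some single $A_i$ already belongs to $U$, in which case $B:=\{i\}\in[\theta]^{<\theta}$ witnesses indecomposability, or else $\kappa\setminus A_i\in U$ for every $i<\theta$ and $\theta^+$-completeness yields $\bigcap_{i<\theta}(\kappa\setminus A_i)\in U$, contradicting $\bigcup_{i<\theta}A_i\in U$.

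With $U$ in hand, Clause~(1) is an immediate consequence of Lemma~\ref{lemma64}(1), and Clause~(3) follows from Lemma~\ref{lemma64}(3) combined with the indecomposability observation. For Clause~(2), via Lemma~\ref{lemma64}(2), it suffices to verify that $\kappa$ is $\theta$-inaccessible under the additional hypothesis on $\kappa$. This step is where I anticipate the main work and the main subtlety. My plan is to invoke Solovay's theorem that $\sch$ holds above a strongly compact cardinal and then split into cases. Given $\nu<\kappa$, we want $\nu^\theta<\kappa$. If $\kappa=\lambda$, inaccessibility of $\lambda$ makes this trivial. If $\kappa>\lambda$ is a limit cardinal, pick $\mu$ with $\max(\nu,\lambda)\le\mu<\kappa$; $\sch$ above $\lambda$ then gives $\mu^\theta\le\mu^+<\kappa$. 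If $\kappa=\mu^+$ with $\mu\ge\lambda$, the hypothesis that $\kappa$ is not the successor of a singular cardinal of cofinality $\theta$, together with $\sch$, forces $\mu^\theta=\mu$, whence $\nu^\theta\le\mu<\kappa$. The cardinal-arithmetic computation in this last case is the only delicate point of the argument; everything else is a direct plug-in.
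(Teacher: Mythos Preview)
Your approach is exactly the paper's: extract a $\theta^+$-complete (hence $\theta$-indecomposable) uniform ultrafilter on $\kappa$ from the strongly compact cardinal, invoke the three parts of Lemma~\ref{lemma64}, and for Clause~(2) use Solovay's $\sch$ theorem to obtain $\theta$-inaccessibility. The paper's proof is simply a terser version of yours.

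There is one subtle gap in your case analysis that the paper's brevity conceals. In the successor case $\kappa=\mu^+$ with $\mu\ge\lambda$, you assert that the hypothesis ``$\kappa$ is not the successor of a singular cardinal of cofinality $\theta$'' together with $\sch$ forces $\mu^\theta=\mu$. But under $\sch$ (and $\mu>2^\theta$, which holds since $\lambda$ is inaccessible and $\theta<\lambda\le\mu$) one has $\mu^\theta=\mu$ only when $\cf(\mu)>\theta$; if $\mu$ is singular with $\cf(\mu)<\theta$ then $\mu^\theta=\mu^+=\kappa$, so $\kappa$ is \emph{not} $\theta$-inaccessible. The stated hypothesis excludes only $\cf(\mu)=\theta$, not $\cf(\mu)<\theta$, so your deduction of $\mu^\theta=\mu$ does not go through in that subcase. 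This appears to be an imprecision already present in the paper's formulation of the corollary (the intended hypothesis is presumably ``cofinality at most $\theta$''), but since you spell out the arithmetic explicitly, the issue becomes visible in your write-up.
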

\begin{proof} The hypothesis entails the existence of a $\theta^+$-complete uniform ultrafilter over $\kappa$,
and in particular, the existence of a $\theta$-indecomposable uniform ultrafilter over $\kappa$.
In addition, if $\kappa$ is not the successor a singular cardinal of cofinality $\theta$,
then by Solovay's theorem that $\sch$ holds above a strongly compact cardinal \cite{MR0379200},
$\kappa$ is $\theta$-inaccessible.
Now appeal to Lemma~\ref{lemma64}.
\end{proof}
\begin{remark} A similar statement can consistently hold at small cardinals.
By \cite[Lemma~3.2(v)]{MR3667758} and Lemma~\ref{cor35},
if $\kappa$ is weakly compact and $\lambda<\kappa$ is any regular uncountable cardinal,
then in the forcing extension by $\mathrm{Coll}(\lambda,{<}\kappa)$,
for every $\theta<\lambda$, $\US(\lambda^+, 2, \theta, 2)$ fails.
\end{remark}

Lemma~\ref{fact219}(1) implies that the restriction ``$\theta<\lambda$'' in the preceding
remark cannot be waived.

\begin{cor} It is consistent that all of the following hold simultaneously:
\begin{itemize}
\item $\gch$;
\item $\US(\omega_2,\omega_2,\omega_1,\omega_1)$ holds;
\item $\US(\omega_2,2,\omega,2)$ fails.
\end{itemize}
\end{cor}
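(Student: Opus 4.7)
The plan is to take a Lévy collapse of a weakly compact cardinal to $\omega_2$ and verify the three bullets in turn.

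Start from a ground model $V$ in which $\gch$ holds and $\kappa$ is a weakly compact cardinal. Let $\mathbb{P}:=\mathrm{Coll}(\omega_1,{<}\kappa)$, let $G$ be $\mathbb{P}$-generic over $V$, and work in $V[G]$. Since $\mathbb{P}$ is $\omega_1$-closed and $\kappa$-cc with $\kappa$ inaccessible, cardinals $\le\omega_1$ and $\ge\kappa$ are preserved, every cardinal in the interval $(\omega_1,\kappa)$ is collapsed to $\omega_1$, and $\kappa=\omega_2^{V[G]}$.

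For the first bullet, one checks that $\gch$ is preserved: no new reals are added, so $2^{\omega}=\omega_1$ in $V[G]$; a standard nice-name count, using $|\mathbb{P}|=\kappa$ and the inaccessibility of $\kappa$, gives $(2^{\omega_1})^{V[G]}\le (\kappa^{\omega_1})^V=\kappa=\omega_2^{V[G]}$; and for any $V[G]$-cardinal $\mu\ge\kappa$, $\gch$ in $V$ together with $|\mathbb{P}|=\kappa$ yields $(2^{\mu})^{V[G]}\le(\mu^{\kappa})^V=\mu^+$. For the second bullet, apply Lemma~\ref{fact219}(1) in $V[G]$ to the regular cardinal $\lambda=\omega_1$: this is an outright $\zfc$ theorem, so it provides, inside $V[G]$, a locally small, subadditive witness to $\U(\omega_2,\omega_2,\omega_1,\omega_1)$, and in particular witnesses $\US(\omega_2,\omega_2,\omega_1,\omega_1)$. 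For the third bullet, apply the remark preceding this corollary (which combines \cite[Lemma~3.2(v)]{MR3667758} with Corollary~\ref{cor35}) with $\lambda=\omega_1$ and $\theta=\omega<\lambda$: since $\kappa$ is weakly compact in $V$, $\US(\omega_1^+,2,\omega,2)$ fails in $V^{\mathrm{Coll}(\omega_1,{<}\kappa)}=V[G]$, and $\omega_1^+=\omega_2$ in $V[G]$.

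No step is technically delicate, so there is no real obstacle; the main thing to verify carefully is the preservation of $\gch$ above $\omega_1$, which follows from the standard nice-name estimates for $\mathbb{P}$. Everything else is a direct citation: Lemma~\ref{fact219}(1) gives the positive statement in any model of $\zfc$, while the cited remark supplies the negative one in this particular extension.
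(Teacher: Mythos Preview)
Your proof is correct. The paper's own proof is a two-line citation: ``By Lemma~\ref{fact219}(1) and Fact~\ref{fact321}.'' Both arguments use Lemma~\ref{fact219}(1) for the second bullet, but differ in how they obtain a model satisfying the first and third bullets: the paper invokes Fact~\ref{fact321} (the Shani/Lambie-Hanson models in which $\square_{\omega_1,2}$ holds yet $\US(\omega_2,2,\omega,2)$ fails, with $\gch$), while you instead use the L\'evy-collapse model described in the remark immediately preceding the corollary. Your route is equally valid and arguably more self-contained within the paper, since the remark spells out exactly the model you use; the paper's route buys the extra information that $\square_{\omega_1,2}$ holds in the model, which is not needed here but is relevant elsewhere.
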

\begin{proof} By Lemma~\ref{fact219}(1) and Fact~\ref{fact321}.
\end{proof}

In \cite[Definition~8]{MR2385636}, Viale defined the covering property $\cp(\kappa,\theta)$.
In \cite[Lemma~5.11]{lh_trees_squares_reflection}, the first author proved that for infinite regular cardinals $\theta<\kappa$,
$\cp(\kappa,\theta)$ implies that $\US(\kappa,2,\theta,2)$ fails.
By \cite[\S6]{MR2385636},  the P-ideal dichotomy ($\pid$), which is a
consequence of the proper forcing axiom ($\PFA$), implies that
$\cp(\kappa,\omega)$ holds for every regular $\kappa\geq \aleph_2$
(the relevant result in \cite{MR2385636} is only stated for $\kappa > \mathfrak{c}$,
but its proof works without any modifications for any regular $\kappa \geq \aleph_2$).
Putting this all together shows that the conclusion of Corollary~\ref{cor36} already follows from $\pid$.

Here, by combining the arguments of \cite[\S7]{todorcevic_book},\cite[\S6]{MR2385636} and \cite[\S5]{lh_trees_squares_reflection},
we provide a self-contained proof of a slightly more informative result:

\begin{thm}[Todorcevic, Viale]\label{pid_cor} Suppose that $\pid$ holds
and $c:[\kappa]^2\rightarrow\omega$ witnesses $\U(\kappa, 2, \omega, 2)$.
\begin{enumerate}
\item  If $\kappa\ge\aleph_2$, then $c$ is not weakly subadditive;
\item If $\kappa>2^{\aleph_0}$ is not the successor of a singular cardinal of countable cofinality, then $c$ is not weakly subadditive of the first kind.
\end{enumerate}
\end{thm}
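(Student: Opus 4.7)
The proof proceeds by contradiction via an application of $\pid$, combining the strategies of Viale's proof that $\pid$ implies $\cp(\kappa,\omega)$ (\cite[\S7]{todorcevic_book},\cite[\S6]{MR2385636}) with the derivation of $\neg\US(\kappa,2,\omega,2)$ from $\cp(\kappa,\omega)$ in \cite[Lemma~5.11]{lh_trees_squares_reflection}, adapted to extract the stronger conclusion that $c$ is not even weakly subadditive. Assume for contradiction that $c:[\kappa]^2\rightarrow\omega$ is a witness to $\U(\kappa,2,\omega,2)$ that is weakly subadditive of the first kind, and, in Clause~(1), also of the second kind. Consider the ideal
\[
\mathcal{I} := \bigl\{\, x\in[\kappa]^{\aleph_0} \;\bigm|\; \exists\beta>\sup(x),\ \exists i<\omega,\ x\subseteq D^c_{\le i}(\beta)\,\bigr\}.
\]
Weak subadditivity of the first kind makes $\mathcal{I}$ an ideal closed under finite unions, with membership essentially independent of the choice of (large enough) $\beta$.

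The first main technical step is to verify that $\mathcal{I}$ is a P-ideal. Given $\{x_n\}_{n<\omega}\subseteq\mathcal{I}$, fix $\beta^{*}>\sup_n\sup(x_n)$ and, by weak subadditivity of the first kind, levels $i_n$ with $x_n\subseteq D^c_{\le i_n}(\beta^{*})$. A diagonalization---trimming finite portions of each $x_n$ and translating through weak subadditivity of the first kind to a suitable larger $\beta^{**}$---produces a pseudo-union $y\in\mathcal{I}$ with $y\subseteq D^c_{\le i^{*}}(\beta^{**})$ for some $i^{*}<\omega$ and $x_n\subseteq^{*}y$ for every $n$. This diagonal is the most delicate portion of the proof.

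Now apply $\pid$ to $\mathcal{I}$. The second alternative---that $\kappa=\bigcup_{n<\omega}B_n$ with each $B_n$ orthogonal to $\mathcal{I}$---is dispatched immediately: since $\kappa\ge\aleph_2$ is regular uncountable, some $B_n$ is cofinal in $\kappa$; pick $\beta\in B_n$ with $\cf(\beta)\ge\aleph_1$. Then $B_n\cap\beta$ is uncountable and decomposes as $\bigcup_{i<\omega}(B_n\cap D^c_{\le i}(\beta))$, so some $B_n\cap D^c_{\le i}(\beta)$ is infinite. Any countably infinite subset thereof belongs to $\mathcal{I}\cap[B_n]^{\aleph_0}$, contradicting the orthogonality of $B_n$.

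It follows that we are in the first alternative: there is an uncountable $X\subseteq\kappa$ with $[X]^{\aleph_0}\subseteq\mathcal{I}$. For every $\beta<\kappa$ such that $X\cap\beta$ is infinite, one shows $X\cap\beta\subseteq D^c_{\le j_\beta}(\beta)$ for some $j_\beta<\omega$ (otherwise, recursively extracting a countable subsequence in $X\cap\beta$ along which $c(\cdot,\beta)\to\infty$ would contradict $\mathcal{I}$-membership of that countable set, via weak subadditivity of the first kind). A pigeonhole on the $j_\beta$'s together with Lemma~\ref{lemma24}(1)--(2) then extracts $A\in[\kappa]^\kappa$ with $c``[A]^2$ bounded, directly contradicting $\U(\kappa,2,\omega,2)$. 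The upgrade from ``uncountable'' to cardinality $\kappa$ uses weak subadditivity of the second kind for Clause~(1) (to descend bounds on $c(\cdot,\beta^{*})$ to pairwise bounds and then propagate by Fodor), and rests on the $\omega$-inaccessibility of $\kappa$ in Clause~(2) (a consequence of $\kappa>2^{\aleph_0}$ and $\kappa$ not being the successor of a singular of countable cofinality, via standard cardinal arithmetic). The main obstacles are the P-ideal verification and this final promotion to size $\kappa$, especially in Clause~(2) where only weak subadditivity of the first kind is available.
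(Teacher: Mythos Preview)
Your ideal is the wrong one, and the P-ideal verification fails. You take $\mathcal{I}$ to consist of those countable $x$ for which $c(\cdot,\beta)$ is \emph{bounded} on $x$ for some $\beta>\sup(x)$; the paper uses the dual ideal, consisting of those $X$ for which $\alpha\mapsto c(\alpha,\beta)$ is \emph{finite-to-one} on $X\cap\beta$ for every $\beta$. To see that your ideal need not be a P-ideal, take any $\beta\in\partial(c)$ and any $\gamma>\beta$. Since $\beta\in\partial(c)$, the sets $D^c_{\le n}(\gamma)\cap\beta$ are bounded in $\beta$ but their union is all of $\beta$; choose countable $x_n\subseteq D^c_{\le n}(\gamma)\cap\beta$ with $\sup(x_n)\to\beta$. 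Each $x_n$ is in your ideal (witnessed by $\gamma$), yet any pseudo-union $y$ is cofinal in $\beta$, and the very definition of $\partial(c)$ then forces $c(\cdot,\delta)$ to be unbounded on $y$ for \emph{every} $\delta\ge\beta$, so $y$ is not in your ideal. The ``trimming finite portions and passing to a larger $\beta^{**}$'' move cannot repair this: weak subadditivity of the first kind only pushes bounds \emph{up}, never down, so it cannot collapse an unbounded family of levels to a single one.

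With the paper's ideal the diagonalization runs in the opposite direction: given $X_n$ with all fiber-maps finite-to-one, one fixes a single $\gamma$ and sets $Y:=\bigcup_n(X_n\setminus D^c_{\le n}(\gamma))$; then $Y\cap D^c_{\le i}(\gamma)\subseteq\bigcup_{n<i}(X_n\cap D^c_{\le i}(\gamma))$ is finite, so $f^\gamma_Y$ is finite-to-one, and a separate localization claim (using weak subadditivity of the second kind for Clause~(1), or a pigeonhole on the $2^{\aleph_0}$-many possible fiber functions for Clause~(2)) shows that checking a single $\gamma$ suffices. This localization is also what makes the endgame work in Clause~(2): your promotion ``uncountable $\to\kappa$'' via pigeonhole on the $j_\beta$'s does not go through with only first-kind weak subadditivity, because when you try to derive $X\cap\beta\subseteq D^c_{\le j_\beta}(\beta)$ you cannot handle witnesses $\beta'>\beta$ without the second kind. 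The paper instead refutes the orthogonal $\pid$-alternative directly via Lemma~\ref{lemma24}(2), using the localization claim to certify membership in the (correct) ideal from a single finite-to-one fiber.
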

\begin{proof} Suppose not.
For all $X\in[\kappa]^{\le\aleph_0}$ and $\beta<\kappa$, define a function $f^\beta_X:X\cap\beta\rightarrow\omega$
by letting $f^\beta_X(\alpha):=c(\alpha,\beta)$.
Note that if $\kappa>2^{\aleph_0}$, then for every $X\in[\kappa]^{\le\aleph_0}$, there exists some $\Gamma\in[\kappa]^\kappa$ such that $f^\gamma_X=f^\delta_X$ for all $(\gamma,\delta)\in[\Gamma]^2$.
In this case, we shall denote such a set $\Gamma$ by $\Gamma_X$ and $\min(\Gamma_X)$ by $\gamma_X$.

Now, let $\mathcal I$ be the collection of all $X\in[\kappa]^{\le\aleph_0}$ such that,
for every $\beta<\kappa$, $f^\beta_X$ is finite-to-one.
It is clear that $\mathcal I$ is an ideal.

\begin{claim} Let $Z\in[\kappa]^{<\kappa}$.
Then there exists an ordinal $\epsilon_Z\in[\ssup(Z),\kappa)$ such that, for every $X\in[Z]^{\le\aleph_0}$,
$X\in\mathcal I$ iff there exists $\gamma\in[\epsilon_Z,\kappa)$ for which $f^\gamma_X$ is finite-to-one.
\end{claim}
\begin{cproof} $\br$ If $c$ is weakly subadditive, then set $\epsilon_Z:=\ssup(Z)$.
Towards a contradiction, suppose that there exist $X\in[Z]^{\le\aleph_0}$ and $\gamma\in[\epsilon_Z,\kappa)$ such that
$f^\gamma_X$ is finite-to-one, yet $X\not\in\mathcal I$. Fix $\beta<\kappa$ such that $f^\beta_X$ is not finite-to-one,
and then fix $i<\omega$ for which  $Y:=X\cap D^c_{\le i}(\beta)$ is infinite.
Find $j<\theta$ such that $D^c_{\le i}(\beta)\cap\gamma\s D^c_{\le j}(\gamma)$.
Then $X\cap D^c_{\le j}(\gamma)$ covers the infinite set $Y$, contradicting the fact that $f_X^\gamma$ is finite-to-one.

$\br$ If $\kappa>2^{\aleph_0}$ is not the successor of a singular cardinal of countable cofinality,
then by Viale's theorem \cite{MR2385636} that $\pid$ implies $\sch$,
the fact that $|Z|<\kappa$ implies that $|Z|^{\aleph_0}<\kappa$.
It follows that $\epsilon_Z:=\sup\{\gamma_X\mid X\in[Z]^{\le\aleph_0}\}+1$ is $<\kappa$.
Suppose that $c$ is weakly subadditive of the first kind, yet,
there exist $X\in[Z]^{\le\aleph_0}\setminus\mathcal I$ and $\gamma\in[\epsilon_Z,\kappa)$ such that
$f^\gamma_X$ is finite-to-one. Fix $\beta<\kappa$ and $i<\omega$ such that $Y:=X\cap D^c_{\le i}(\beta)$ is infinite.
Pick $\delta\in\Gamma_X$ above $\beta$, and use weak subadditivity of the first kind to find $j<\theta$
such that $D^c_{\le i}(\beta)\s D^c_{\le j}(\delta)$.
As $\delta\in\Gamma_X$, $X\cap D^c_{\le j}(\delta)=X\cap D^c_{\le j}(\gamma_X)$,
so that $Y\s D^c_{\le j}(\gamma_X)$.
As $\gamma_X<\epsilon_Z\le\gamma$, we use weak subadditivity of the first kind to find $k<\theta$
such that $D^c_{\le j}(\gamma_X)\s D^c_{\le k}(\gamma)$.
Altogether, $Y\s D^c_{\le k}(\gamma)$,
so that $f_X^\gamma[Y]\s k+1$, contradicting the fact that $f^\gamma_X$ is finite-to-one.
\end{cproof}

To see that $\mathcal I$ is a $P$-ideal,
suppose that $\vec X=\langle X_n\mid n<\omega\rangle$ is a sequence of sets in $\mathcal I$.
Set $\gamma:=\epsilon_Z$, for $Z:=\bigcup_{n<\omega}X_n$.
Evidently $Y:=\bigcup_{n<\omega}(X_n\setminus D^c_{\le n}(\gamma))$ is a pseudo-union for $\vec X$.
In addition, for every $i<\omega$, $Y\cap D^c_{\le i}(\gamma)$ is covered by the finite set $\bigcup_{n<i}(X_n\cap D^c_{\le i}(\gamma))$,
so, by the preceding claim, $Y\in\mathcal I$.

Finally, by $\pid$, one of the following alternatives must hold:
\begin{enumerate}
\item There exists $A\in[\kappa]^\kappa$ such that $[A]^{\aleph_0}\cap\mathcal I=\emptyset$, or
\item There exists $B\in[\kappa]^{\aleph_1}$ such that $[B]^{\aleph_0}\s\mathcal I$.
\end{enumerate}

In Case~(1), given $A\in[\kappa]^\kappa$, pick some strictly increasing function $g:\kappa\rightarrow A$
such that $g(\alpha)>\epsilon_{A\cap\ssup(g[\alpha])}$ for all $\alpha<\kappa$.
In effect, $A':=\im(g)$ is a cofinal subset of $A$ such that $\epsilon_{A'\cap\gamma}<\gamma$ for every $\gamma\in A'$.
Next, by Lemma~\ref{lemma24}(2),
we may fix $\gamma \in A'$ such that $\{c(\alpha, \gamma) \mid \alpha \in A' \cap \gamma\}$ is infinite.
So, we may find $X\in[A'\cap\gamma]^{\aleph_0}$ such that $f^\gamma_X$ is one-to-one.
As $\epsilon_{A'\cap\gamma}<\gamma$, the preceding claim implies that $X\in\mathcal I$.
In particular, $[A]^{\aleph_0}\cap\mathcal I\neq \emptyset$.

In Case~(2), given $B\in[\kappa]^{\aleph_1}$, let $\beta:=\ssup(B)$.
As $B\cap\bigcup_{i<\omega}D^c_{\le i}(\beta)$ is uncountable,
we may find some $i<\omega$ such that $B\cap D^c_{\le i}(\beta)$ is uncountable.
In particular, there exist $X\in[B]^{\aleph_0}$ such that $X\cap D^c_{\le i}(\beta)$ is infinite.
So, $[B]^{\aleph_0}\nsubseteq\mathcal I$.
\end{proof}

On the other hand, since $\MM$ is
preserved by $\omega_2$-directed closed set forcings,
Theorem~\ref{thm310} implies that $\MM$ does not refute
$\US(\kappa, \kappa, \theta, \theta)$ for regular uncountable cardinals $\theta<\kappa$.

\begin{cor}\label{cor328} In the model of \cite{MR1809418}, for every regular uncountable cardinal $\kappa$, the following are equivalent:
\begin{itemize}
\item There is a witness to $\U(\kappa,\kappa,\omega,\omega)$ that is subadditive of the first kind;
\item There is a witness to $\U(\kappa,2,\omega,2)$ that is weakly subadditive of the first kind;
\item $\kappa$ is the successor of a cardinal of countable cofinality.
\end{itemize}
\end{cor}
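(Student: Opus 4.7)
The plan is to prove the three implications in the cycle $(1)\Rightarrow(2)\Rightarrow(3)\Rightarrow(1)$. The implication $(1)\Rightarrow(2)$ is immediate from the definitions: any witness to $\U(\kappa,\kappa,\omega,\omega)$ is \emph{a fortiori} a witness to $\U(\kappa,2,\omega,2)$, and any coloring subadditive of the first kind is trivially weakly subadditive of the first kind.

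For $(3)\Rightarrow(1)$, I write $\kappa = \lambda^+$ with $\cf(\lambda) = \omega$ and split on whether $\lambda$ is regular. If $\lambda$ is singular, I apply Lemma~\ref{locallysmall} to obtain a locally small witness to $\U(\lambda^+,\lambda^+,\cf(\lambda),\lambda)$ that is subadditive of the first kind; since $\cf(\lambda) = \omega \le \lambda$, this in particular witnesses $\U(\kappa,\kappa,\omega,\omega)$. If instead $\lambda = \omega$, so that $\kappa = \omega_1$, I invoke Lemma~\ref{fact219}(1) to produce a locally small and subadditive (hence subadditive of the first kind) witness to $\U(\omega_1,\omega_1,\omega,\omega)$.

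The main content lies in $(2)\Rightarrow(3)$, which I prove in its contrapositive form. Assuming that $\kappa$ is \emph{not} the successor of a cardinal of countable cofinality, I rule out the existence of a weakly subadditive of the first kind witness to $\U(\kappa,2,\omega,2)$. Observe first that $\kappa = \omega_1$ cannot arise under this assumption, since $\omega_1 = \omega^+$ and $\cf(\omega) = \omega$; thus $\kappa \ge \aleph_2$. Since the model of \cite{MR1809418} satisfies $\pid$ together with $2^{\aleph_0} = \aleph_1$, every regular uncountable $\kappa \ge \aleph_2$ automatically satisfies $\kappa > 2^{\aleph_0}$, and our hypothesis on $\kappa$ tells us that $\kappa$ is in particular not the successor of a singular cardinal of countable cofinality. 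Thus the hypotheses of Theorem~\ref{pid_cor}(2) are met, and the desired conclusion follows at once.

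The principal subtlety is the application of Theorem~\ref{pid_cor}(2) at the delicate level $\kappa = \aleph_2$: this is precisely what compels the choice of a $\pid$-model with the continuum \emph{small}, rather than one of the more familiar models (such as those of $\MM$) in which $2^{\aleph_0} = \aleph_2$ and Theorem~\ref{pid_cor}(2) would yield no information at $\aleph_2$. At higher regular cardinals no special cardinal arithmetic is needed, and the argument is uniform.
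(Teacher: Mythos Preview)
Your proof is correct and follows essentially the same route as the paper: the trivial implication $(1)\Rightarrow(2)$, Lemma~\ref{locallysmall} for $(3)\Rightarrow(1)$, and Theorem~\ref{pid_cor}(2) together with $\ch$ in the model of \cite{MR1809418} for the contrapositive of $(2)\Rightarrow(3)$. The only minor redundancy is your case split in $(3)\Rightarrow(1)$: Lemma~\ref{locallysmall} already covers every infinite cardinal $\lambda$ (including $\lambda=\omega$), so the separate appeal to Lemma~\ref{fact219}(1) is unnecessary, though certainly not wrong.
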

\begin{proof}  By Lemma~\ref{locallysmall}, if $\kappa=\lambda^+$ for an infinite cardinal $\lambda$ of countable cofinality,
then there exists a witness to $\U(\kappa,\kappa,\omega,\omega)$ that is subadditive of the first kind.
For any other $\kappa$,
since in the model of \cite{MR1809418}, $\ch$ and $\pid$ both hold,
Theorem~\ref{pid_cor}(2) implies that no witness to $\U(\kappa,2,\omega,2)$
is weakly subadditive of the first kind.
\end{proof}

We conclude this section by pointing out a corollary to Theorem~\ref{pid_cor} and the arguments from the proofs of Lemma~\ref{lemma331} and Corollary~\ref{cor339}.

\begin{cor} Assuming $\pid$, for every singular cardinal $\lambda$ of countable cofinality,
every locally small coloring $c:[\lambda^+]^2\rightarrow\cf(\lambda)$ that is subadditive of the first kind
is not $\lambda$-coherent.\qed
\end{cor}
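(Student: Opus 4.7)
The plan is to argue by contradiction, combining the reduction strategy from Corollary~\ref{cor339} with the arithmetic of Lemma~\ref{lemma331}(1). Suppose, for contradiction, that $c\colon[\lambda^+]^2\to\omega$ is a locally small, subadditive-of-the-first-kind, $\lambda$-coherent coloring with $\lambda$ a singular cardinal of countable cofinality, and fix an increasing sequence $\langle\lambda_n\mid n<\omega\rangle$ of regular uncountable cardinals with supremum $\lambda$.

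Mimicking the proof of Corollary~\ref{cor339}, define $d\colon[\lambda^+]^2\to\omega$ by
\[
  d(\gamma,\delta):=\min\{n<\omega\mid |\{\alpha<\gamma\mid c(\alpha,\gamma)\ne c(\alpha,\delta)\}|\le\lambda_n\},
\]
which is well-defined since $c$ is $\lambda$-coherent and $\lambda=\sup_n\lambda_n$. For $\alpha<\beta<\gamma$, any $\xi<\alpha$ with $c(\xi,\alpha)\ne c(\xi,\gamma)$ must satisfy either $c(\xi,\alpha)\ne c(\xi,\beta)$ or $c(\xi,\beta)\ne c(\xi,\gamma)$, and similarly any $\xi<\alpha$ with $c(\xi,\alpha)\ne c(\xi,\beta)$ must satisfy either $c(\xi,\alpha)\ne c(\xi,\gamma)$ or $c(\xi,\beta)\ne c(\xi,\gamma)$; the resulting cardinality bounds show that $d$ is subadditive. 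In particular $d$ is weakly subadditive, so Theorem~\ref{pid_cor}(1), applicable because $\lambda^+\ge\aleph_{\omega+1}$, forces $d$ to fail to witness $\U(\lambda^+,2,\omega,2)$. We therefore fix $H\in[\lambda^+]^{\lambda^+}$ and $n<\omega$ such that $d(\alpha,\beta)\le n$ for every $(\alpha,\beta)\in[H]^2$.

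Let $\pi\colon\lambda^+\leftrightarrow H$ be the order-preserving bijection and define $c^*\colon[\lambda^+]^2\to\omega$ by $c^*(\alpha,\beta):=c(\pi(\alpha),\pi(\beta))$. From the construction, $c^*$ inherits from $c$ the properties of being locally small and subadditive of the first kind, while the $d$-bound on pairs from $H$ ensures that, for every $(\alpha,\beta)\in[\lambda^+]^2$,
\[
  |\{\xi<\alpha\mid c^*(\xi,\alpha)\ne c^*(\xi,\beta)\}|\le|\{\eta<\pi(\alpha)\mid c(\eta,\pi(\alpha))\ne c(\eta,\pi(\beta))\}|\le\lambda_n,
\]
so $c^*$ is $\lambda_n^+$-coherent.

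As $\lambda$ is a limit cardinal we have $\lambda_n^+<\lambda$, so Lemma~\ref{lemma331}(1) applied to $c^*$ with $\theta:=\lambda_n^+$ (using precisely the subadditive-of-the-first-kind clause) concludes that $c^*$ is not $\lambda_n^+$-coherent, the desired contradiction. The main obstacle in this argument is the verification of the subadditivity of $d$, which is exactly what triggers Theorem~\ref{pid_cor}(1) and extracts the set $H$; once $H$ is in hand, the residual $\lambda_n^+$-coherence on $c^*$ falls below the threshold of Lemma~\ref{lemma331}(1) and closes the proof.
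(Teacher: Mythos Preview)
Your proof is correct and follows precisely the approach the paper indicates: you replicate the construction of $d$ from the proof of Corollary~\ref{cor339}, invoke Theorem~\ref{pid_cor}(1) (in place of the ultrafilter argument) to extract $H$, and then finish with Lemma~\ref{lemma331}(1) applied to the re-indexed coloring $c^*$. The only cosmetic difference is that you re-index via the order-isomorphism $\pi$ rather than defining $e(\alpha,\beta):=c(\alpha,\min(H\setminus\beta))$ as in Corollary~\ref{cor339}; your choice is in fact cleaner here since it transparently preserves subadditivity of the first kind, allowing a direct appeal to Lemma~\ref{lemma331}(1).
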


\section{Indexed square sequences}\label{indexedsquaresection}
In this section, we identify a weakening of $\square^{\ind}(\kappa,\theta)$, denoted by
  $\inds(\kappa, \theta)$, that is tightly related
  to the existence of closed, subadditive witnesses to $\U(\kappa,\ldots)$.

\begin{defn}\label{inds}
  $\inds(\kappa, \theta)$ asserts the existence of a matrix
  \[
    \langle C_{\alpha,i}\mid \alpha\in\Gamma, ~ i(\alpha) \leq i<\theta\rangle
  \]
  satisfying the following statements.
  \begin{enumerate}
    \item $(E^\kappa_{\neq \theta} \cap \acc(\kappa)) \s \Gamma \s \acc(\kappa)$;
    \item For all $\alpha\in\Gamma$, we have $i(\alpha) < \theta$, and $\langle C_{\alpha,i}\mid
      i(\alpha) \leq i<\theta\rangle$ is a $\s$-increasing sequence of clubs in $\alpha$, with
      $\Gamma \cap \alpha = \bigcup_{i<\theta}\acc(C_{\alpha,i})$;
    \item For all $\alpha\in\Gamma$, $i(\alpha) \leq i<\theta$, and $\bar\alpha \in
      \acc(C_{\alpha,i})$, we have $i(\bar\alpha) \leq i$ and $C_{\bar\alpha,i}=C_{\alpha,i} \cap\bar\alpha$;
    \item For every club $D$ in $\kappa$, there exists $\alpha\in\acc(D) \cap \Gamma$ such that,
      for all $i<\theta$, $D\cap\alpha\neq C_{\alpha,i}$.
  \end{enumerate}
\end{defn}

\begin{remark}\label{remark42} The principle $\square^{\ind}(\kappa, \theta)$ from \cite{narrow_systems} is the strengthening of
  $\inds(\kappa, \theta)$ obtained by requiring that $\Gamma = \acc(\kappa)$.
\end{remark}

We now turn to prove Theorem~A.

\begin{thm}\label{thm67}
  Suppose that $\theta \in \reg(\kappa)$. Then the following are equivalent.
  \begin{enumerate}
    \item $\inds(\kappa,\theta)$ holds;
    \item There exists a closed, subadditive witness to $\U(\kappa,2,\theta,2)$;
    \item There exists a closed, subadditive witness to $\U(\kappa,\kappa,\theta,\sup(\reg(\kappa))$;
    \item For every stationary $S\s\kappa$, there exists a  $\inds(\kappa, \theta)$-sequence $\langle C_{\alpha, i} \mid
  \alpha \in \Gamma,\allowbreak  i(\alpha) \leq i < \theta \rangle$ such that,
  either $S\cap\Gamma$ is nonstationary, or, for all
  $i < \theta$, $\{\alpha \in S\cap\Gamma\mid i(\alpha) > i\}$ is stationary.
  \end{enumerate}
\end{thm}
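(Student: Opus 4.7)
The plan is to establish the equivalence by proving $(2)\Rightarrow(1)$, $(2)\Rightarrow(3)$, $(2)\Rightarrow(4)$, and $(1)\Rightarrow(2)$, while observing that $(3)\Rightarrow(2)$ and $(4)\Rightarrow(1)$ are trivial weakenings. The implication $(2)\Rightarrow(3)$ falls out of Lemma~\ref{uPLUSsub}(3) alone: any $\kappa$-closed subadditive witness $c$ to $\U(\kappa,2,\theta,2)$ is in particular $E^\kappa_\chi$-closed for each $\chi\in\reg(\kappa)$, so the lemma delivers $\U(\kappa,\kappa,\theta,\chi)$ for each such $\chi$ and hence $\U(\kappa,\kappa,\theta,\sup(\reg(\kappa)))$.

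For $(2)\Rightarrow(1)$, I will read the $\inds$-matrix off a closed subadditive witness $c$ by setting
\[
  \Gamma := \{\alpha\in\acc(\kappa) \mid \exists i<\theta,\ \sup(D^c_{\leq i}(\alpha)) = \alpha\},
\]
taking $i(\alpha)$ to be the least such $i$ for $\alpha\in\Gamma$, and defining $C_{\alpha,i} := D^c_{\leq i}(\alpha)$ for $i\in[i(\alpha),\theta)$. A short cofinality/pigeonhole argument yields $E^\kappa_{\neq\theta}\cap\acc(\kappa)\s\Gamma$; closure of $c$ makes each $C_{\alpha,i}$ a club in $\alpha$; and subadditivity of both kinds gives the coherence identity $D^c_{\leq i}(\alpha)\cap\bar\alpha = D^c_{\leq i}(\bar\alpha)$ whenever $\bar\alpha\in\acc(D^c_{\leq i}(\alpha))$, yielding Clauses~(2) and (3). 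Non-threadability (Clause~(4)) follows by contradiction: if some club $D$ threaded the matrix, pigeonhole on the witnessing indices would yield a stationary $S\s\acc(D)\cap\Gamma$ and $i^*<\theta$ with $D\cap\alpha\s D^c_{\leq i^*}(\alpha)$ on $S$, forcing $c\restriction[S]^2$ to take values $\leq i^*$ and contradicting $\U(\kappa,2,\theta,2)$.

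For $(2)\Rightarrow(4)$, given a stationary $S\s\kappa$, if $S\cap\Gamma$ is nonstationary the matrix from the previous step already suffices; otherwise I apply Lemma~\ref{lemma24}(4) to $S\cap\Gamma$ with $\mathcal{H} := \{h_i\equiv i : i<\theta\}$ to secure some $\epsilon\in S\cap\Gamma$ for which $\{\beta\in S\cap\Gamma \mid \epsilon<\beta,\ c(\epsilon,\beta)>i\}$ is stationary for every $i<\theta$. Then I shift indices by setting $i'(\alpha) := \max\{i(\alpha), c(\epsilon,\alpha)\}$ for $\alpha>\epsilon$ (and $i'(\alpha) := i(\alpha)$ otherwise), keeping $\Gamma$ and the $C_{\alpha,i}$'s fixed but truncating each chain to indices $\geq i'(\alpha)$. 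Coherence is preserved because, for $\bar\alpha\in\acc(C_{\alpha,i})$ with $i\geq i'(\alpha)$, closure of $c$ and subadditivity of the second kind give $c(\epsilon,\bar\alpha)\leq\max\{c(\epsilon,\alpha),c(\bar\alpha,\alpha)\}\leq i$, so $i'(\bar\alpha)\leq i$. The stationarity guarantee delivered by Lemma~\ref{lemma24}(4) is exactly the strong property demanded by (4).

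The main obstacle is $(1)\Rightarrow(2)$: building a closed subadditive witness out of an arbitrary $\inds(\kappa,\theta)$-sequence. My plan is to extend the matrix to a full $C$-sequence $\langle C_\beta\mid\beta<\kappa\rangle$ by taking $C_\beta := C_{\beta,i(\beta)}$ for $\beta\in\Gamma$ and inserting canonical clubs at $\beta\in\acc(\kappa)\cap E^\kappa_\theta\setminus\Gamma$, and then to define
\[
  c(\alpha,\beta) := \min\{i\in[i(\beta),\theta) \mid \alpha\in C_{\beta,i}\}
\]
when $\beta\in\Gamma$ and $\alpha\in\bigcup_i C_{\beta,i}$, extending to the remaining pairs via a short walk along $C_\beta$ to the nearest anchor at or above $\alpha$ and recursing. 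Subadditivity of both kinds should reduce, via the coherence $C_{\bar\alpha,i} = C_{\alpha,i}\cap\bar\alpha$ from Clause~(3), to tracing the walk through coherent sections of the matrix; closure follows because each $C_{\beta,i}$ is closed in $\beta$; and the $\U(\kappa,2,\theta,2)$ property is forced by Clause~(4), since any putative homogeneous set $A$ of color $\leq i^*$ would make $\cl(A)$ thread the matrix at index $i^*$. The delicate part will be handling $\alpha\notin\bigcup_i C_{\beta,i}$ and ensuring that the walk-based extension stays within $\theta$ while preserving both subadditivity inequalities.
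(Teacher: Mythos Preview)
Your treatment of $(2)\Rightarrow(1)$, $(2)\Rightarrow(3)$, and $(2)\Rightarrow(4)$ is correct and matches the paper's approach closely; the only cosmetic difference is that the paper invokes Lemma~\ref{lemma24}(3) rather than (4) to obtain $\epsilon$, and it folds the $(2)\Rightarrow(1)$ construction into its proof of $(2)\Rightarrow(4)$ rather than separating them.

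The gap is in $(1)\Rightarrow(2)$. Your walk-based extension is underspecified in precisely the places you flag as delicate: when $\beta\notin\Gamma$ you have no matrix column to read indices from, and when $\alpha\notin\bigcup_i C_{\beta,i}$ your recursion must pass through points outside $\Gamma$ where the ``canonical clubs'' you inserted carry no coherence with the matrix. Preserving \emph{both} subadditivity inequalities through such steps is not obviously possible, and you give no mechanism for it.

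The paper sidesteps all of this with a single move: for each $\alpha<\kappa$ set $\tilde\alpha := \min(\Gamma\setminus\alpha)$, and define
\[
  c(\alpha,\beta) := \begin{cases}
    \min\{i\in[i(\tilde\beta),\theta) \mid \tilde\alpha\in\acc(C_{\tilde\beta,i})\} & \text{if } \tilde\alpha<\tilde\beta,\\
    0 & \text{otherwise.}
  \end{cases}
\]
This is well-defined because Clause~(2) of Definition~\ref{inds} gives $\Gamma\cap\tilde\beta = \bigcup_i\acc(C_{\tilde\beta,i})$, so $\tilde\alpha$ always lies in some $\acc(C_{\tilde\beta,i})$. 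Closure and subadditivity then reduce to short case analyses on whether $\tilde\alpha,\tilde\beta,\tilde\gamma$ are distinct, with no walks required. The $\U(\kappa,2,\theta,2)$ verification is exactly your threading argument. The point is that rounding up to $\Gamma$ collapses all the problematic pairs to a setting where the matrix already answers every question.
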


\begin{proof} $(2)\iff(3)$ By Lemma~\ref{uPLUSsub}(3).

  $(4)\implies(1)$ This is trivial.

  $(1)\implies(2)$:
  Fix a $\inds(\kappa,\theta)$-sequence, $\vec{C} =
  \langle C_{\alpha,i}\mid \alpha \in \Gamma, ~ i(\alpha) \leq i<\theta\rangle$. For each
  $\alpha \in \kappa$, let $\tilde{\alpha} := \min(\Gamma \setminus \alpha)$. Define
  $c:[\kappa]^2 \rightarrow \theta$ by setting, for all $(\alpha, \beta) \in [\kappa]^2$,
  $$
    c(\alpha, \beta) := \begin{cases}
      \min\{i \in [i(\tilde{\beta}), \theta) \mid \tilde{\alpha} \in \acc(C_{\tilde{\beta}, i})\} & \text{if }
      \tilde{\alpha} < \tilde{\beta}; \\
      0 & \text{otherwise}.
    \end{cases}
  $$

  \begin{claim}
    $c$ is closed.
  \end{claim}

  \begin{cproof}
    Fix $\beta < \kappa$, $i < \theta$, and $A \s D^c_{\leq i}(\beta)$ with
    $\gamma : = \sup(A)$ in $\beta \setminus A$. To show that $\gamma \in
    D^c_{\leq i}(\beta)$, there are two cases to consider.

    $\br$ If $\tilde{\alpha} < \gamma$ for all $\alpha \in A$, then, by our
    definition of $c$, it follows that $\gamma \in \acc(C_{\tilde{\beta}, i})$, and
    hence $\gamma \in \Gamma$, $\gamma = \tilde{\gamma}$, and $c(\gamma, \beta)
    \leq i$.

    $\br$ Otherwise, there is $\alpha \in A$ such that $\tilde{\alpha} \geq \gamma$.
    But then $\tilde{\gamma} = \tilde{\alpha}$, and hence $c(\gamma, \beta) =
    c(\alpha, \beta) \leq i$.
  \end{cproof}

  \begin{claim}
    $c$ is subadditive.
  \end{claim}

  \begin{cproof}
    Suppose that $\alpha < \beta < \gamma < \kappa$. To prove subadditivity, there
    are three cases to consider.

    $\br$ If $\tilde\alpha=\tilde\beta$, then $c(\alpha,\beta)=0$ and $c(\alpha,\gamma)=c(\beta,\gamma)$.

    $\br$ If $\tilde\beta=\tilde\gamma$, then $c(\alpha,\beta)=c(\alpha,\gamma)$.

    $\br$ Otherwise, we have $|\{\tilde\alpha,\tilde\beta,\tilde\gamma\}|=3$ in which
    case it is easy to verify that $c(\alpha,\beta)\le\max\{c(\alpha,\gamma),c(\beta,\gamma)\}$ and
    $c(\alpha,\gamma)\le\max\{c(\alpha,\beta),c(\beta,\gamma)\}$.
  \end{cproof}

  To finish the proof, suppose towards a contradiction that $c$ fails to witness
  $\U(\kappa, 2, \theta, 2)$. Then there exist $A\in[\kappa]^\kappa$ and $i<\theta$
  such that $\sup(c``[A]^2)\le i$. Set $S:=\acc^+(A)\cap \Gamma$,
  note that $S$ is stationary, and let $D:=\bigcup\{ C_{\alpha,i}\mid \alpha\in S\}$.
  Then $D$ is a club and $D \cap \alpha = C_{\alpha, i}$ for all $\alpha \in \acc(D)$,
  contradicting the hypothesis that $\vec{C}$ is a $\inds(\kappa,\theta)$-sequence.

  $(2)\implies(4)$: Fix a closed, subadditive coloring $c$ witnessing $\U(\kappa, 2, \theta, 2).$
  Set $\Gamma:=\acc(\kappa)\setminus\partial(c)$,
  so that $\acc(\kappa) \cap E^\kappa_{\neq \theta} \s \Gamma \s \acc(\kappa)$.
  By Lemma~\ref{lemma38},
  $$
    \Gamma = \{\alpha \in \acc(\kappa) \mid \text{for some } i < \theta, ~ \sup(D^c_{\leq i}(\alpha)) = \alpha\}.
  $$

  Now, let $S$ be a given stationary subset of $\kappa$.
  If $S\cap\Gamma$ is stationary, then set $S':=S\cap\Gamma$; otherwise, set $S':=\kappa$.
  Using Lemma~\ref{lemma24}(3), let us fix $\epsilon<\kappa$ such that, for every $i<\theta$, $\{\beta\in S'\mid \epsilon<\beta, ~ c(\epsilon,\beta)>i\}$ is stationary.

  Let $\alpha \in \Gamma$ be arbitrary.
  Let $i'(\alpha)$ be the least $i < \theta$ for which
  $\sup(D^c_{\leq i}(\alpha)) = \alpha$.
  Next, if $\alpha\le\epsilon$, then let $i(\alpha):=i'(\alpha)$,
  and otherwise, let $i(\alpha):=\max\{i'(\alpha),c(\epsilon,\alpha)\}$.
  For all $i \in [i(\alpha), \theta)$,
  let $C_{\alpha, i} := D^c_{\leq i}(\alpha)$.
  Clearly, $\langle C_{\alpha, i} \mid i(\alpha) \leq
  i < \theta \rangle$ is a $\s$-increasing sequence of clubs in $\alpha$.

  We claim that $\langle C_{\alpha, i}
  \mid \alpha \in \Gamma, ~ i(\alpha) \leq i < \theta \rangle$ is a
  $\inds(\kappa, \theta)$-sequence.

  \begin{claim}\label{claim593}
    Let $\beta \in \Gamma$. Then $\Gamma \cap \beta = \bigcup_{i(\beta) \leq i < \theta}
    \acc(C_{\beta, i})$, 
    and, for all $i\in[i(\beta),\theta)$ and $\alpha \in \acc(C_{\beta, i})$,
    we have $i(\alpha) \leq i$ and $C_{\alpha, i} = C_{\beta, i} \cap \alpha$.
  \end{claim}

  \begin{cproof}
    To show the forward inclusion, let $\alpha \in \Gamma \cap \beta$
    be arbitrary. Put $i:=\max\{i(\alpha), i(\beta),c(\alpha, \beta) \}$.
    For all $\gamma \in C_{\alpha, i}$, we have $c(\gamma, \alpha) \leq i$ and
    $c(\alpha, \beta) \leq i$, so, by subadditivity, $c(\gamma, \beta) \leq i$.
    It follows that $\alpha \in \acc(C_{\beta, i})$.

    To show the reverse inclusion and the second statement, let $i\in[i(\beta),\theta)$
    and $\alpha \in \acc(C_{\beta, i})$ be arbitrary. For all $\gamma \in C_{\beta, i} \cap \alpha$, we have
    $c(\gamma, \beta) \leq i$ and $c(\alpha, \beta) \leq i$, so, by subadditivity,
    $c(\gamma, \alpha) \leq i$. It follows that $C_{\beta, i} \cap \alpha
    \s D^c_{\leq i}(\alpha)$, so $\sup(D^c_{\leq i}(\alpha)) = \alpha$,
    and hence $\alpha \in \Gamma$, $i'(\alpha) \leq i$, and $C_{\alpha, i} \supseteq
    C_{\beta, i} \cap \alpha$.
    To see that $i(\alpha)\le i$, suppose that $\alpha>\epsilon$,
    and we shall show that $c(\epsilon,\alpha)\le i$.
    We have already observed that $c(\alpha,\beta)\le i$.
    From $i\ge i(\beta)$, we infer $c(\epsilon,\beta)\le i$.
    So, by subadditivity, $c(\epsilon,\alpha)\le i$.

    It remains to show that $C_{\alpha, i} \s C_{\beta, i}
    \cap \alpha$. But, if $\gamma \in C_{\alpha, i}$, then we have $c(\gamma, \alpha)
    \leq i$ and $c(\alpha, \beta) \leq i$, so, again by subadditivity, we have
    $c(\gamma, \beta) \leq i$, and we are done.

  \end{cproof}

  \begin{claim}
    Suppose that $D$ is a club in $\kappa$. Then there is $\alpha\in
    \acc(D)\cap \Gamma$ such that, for all $i \in [i(\alpha), \theta)$, $D\cap\alpha\neq C_{\alpha,i}$.
  \end{claim}

  \begin{cproof}
    Suppose not. For all $\alpha \in \acc(D)$, fix $j_\alpha \in [i(\alpha), \theta)$
    such that $D \cap \alpha = C_{\alpha, j_\alpha}$, and find $j < \theta$ and
    $A \in [\acc(D)]^\kappa$ such that $j_\alpha = j$ for all $\alpha \in A$.
    As $c$ witnesses $\U(\kappa, 2, \theta, 2)$, we can find $(\alpha, \beta)
    \in [A]^2$ such that $c(\alpha, \beta) > j$. But this contradicts the fact
    that $\alpha \in D \cap \beta = C_{\beta, j}$, and hence $c(\alpha, \beta) \leq j$.
\end{cproof}
Finally, by the choice of $\epsilon$, it is clear that
if $S\cap\Gamma$ is stationary, then for all
  $i < \theta$, $\{\alpha\in S\cap\Gamma\mid i(\alpha) > i\}$ is stationary.
\end{proof}

The proof of the preceding theorem together with Remark~\ref{remark42} makes it clear that the following holds as well.
\begin{thm} \label{thm_44}
  For every $\theta \in \reg(\kappa)$, the following are equivalent:
  \begin{enumerate}
    \item $\square^{\ind}(\kappa,\theta)$ holds;
    \item There exists a closed witness $c$ to $\US(\kappa,2,\theta,2)$ for which $\partial(c)$ is nonstationary.\qed
    \end{enumerate}
\end{thm}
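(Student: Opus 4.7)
The plan is to closely parallel the proof of Theorem~\ref{thm67}, taking $\Gamma=\acc(\kappa)$ throughout, in accordance with Remark~\ref{remark42}.

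For $(1)\Rightarrow(2)$, the plan is to reuse the coloring construction from the proof of $(1)\Rightarrow(2)$ of Theorem~\ref{thm67}. Since $\Gamma=\acc(\kappa)$ will force $\tilde\alpha=\alpha$ for every $\alpha\in\acc(\kappa)$, the resulting $c$ will be a closed, subadditive witness to $\U(\kappa,2,\theta,2)$. To bound $\partial(c)$, I will note that for any $\beta\in\acc(\kappa)\setminus\acc(\acc(\kappa))$, the cofinal interval $[\sup(\acc(\beta)),\beta)$ consists of points $\alpha$ with $\tilde\alpha=\beta$, so $c(\alpha,\beta)=0$ and $\beta\notin\partial(c)$. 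Combined with $\partial(c)\subseteq E^\kappa_\theta$, this will confine $\partial(c)$ to $E^\kappa_\theta\cap\acc(\acc(\kappa))$. For $\theta>\omega$, every $\beta$ in the latter set will have $\cf(\beta)=\theta>\omega$, so $\acc(C_{\beta,i(\beta)})$ will be cofinal in $\beta$, witnessing $\beta\notin\partial(c)$; hence $\partial(c)=\emptyset$.

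For $(2)\Rightarrow(1)$, the plan is to apply the construction of $(2)\Rightarrow(4)$ of Theorem~\ref{thm67} to get an $\inds(\kappa,\theta)$-sequence with $\Gamma=\acc(\kappa)\setminus\partial(c)$. The key observation is that for every $\alpha\in\partial(c)$, every $\beta\in\Gamma$ with $\beta>\alpha$, and every $i\in[i(\beta),\theta)$, $\alpha\notin\acc(C_{\beta,i})$: otherwise, by closedness, $c(\alpha,\beta)\leq i$, and then subadditivity would yield $D^c_{\leq i}(\beta)\cap\alpha\subseteq D^c_{\leq i}(\alpha)$, giving a cofinal subset of $\alpha$ in the latter, contradicting $\alpha\in\partial(c)$. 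This will mean that enlarging $\Gamma$ to $\acc(\kappa)$ does not disturb the coherence among existing entries. Fixing a club $E\subseteq\acc(\kappa)\setminus\partial(c)$ and recursing on $\alpha\in\partial(c)$, I will prescribe new sequences $\langle C_{\alpha,i}\mid i(\alpha)\leq i<\theta\rangle$ such that each $C_{\alpha,i}$ is a club in $\alpha$, $\bigcup_i\acc(C_{\alpha,i})=\acc(\alpha)$, and coherence is respected with both the pre-existing entries and the newly-defined ones. Clause~(4) of $\square^{\ind}(\kappa,\theta)$ will follow from clause~(4) of the $\inds$-sequence, since any witnessing $\alpha$ it produces automatically lies in $\Gamma\subseteq\acc(\kappa)$.

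The hard part will be the joint recursive construction of $C_{\alpha,i}$ for $\alpha\in\partial(c)$ in the $(2)\Rightarrow(1)$ direction, which requires a careful simultaneous choice to cover $\acc(\alpha)$ via accumulation points while respecting coherence. The case $\theta=\omega$ in $(1)\Rightarrow(2)$ also warrants separate treatment, since the above argument only shows $\partial(c)\subseteq E^\kappa_\omega\cap\acc(\acc(\kappa))$, leaving nonstationarity to be established by further argument (e.g., via clause~(4) of $\square^{\ind}$ applied to a hypothetical stationary $\partial(c)$).
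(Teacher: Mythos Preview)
Your approach to $(2)\Rightarrow(1)$ contains a real gap. You correctly observe that for $\alpha\in\partial(c)$ and $\beta\in\Gamma=\acc(\kappa)\setminus\partial(c)$ with $\alpha<\beta$, one never has $\alpha\in\acc(C_{\beta,i})$. But this very fact defeats your plan: once $\Gamma$ is enlarged to $\acc(\kappa)$, Clause~(2) of Definition~\ref{inds} demands $\acc(\kappa)\cap\beta=\bigcup_i\acc(C_{\beta,i})$ for \emph{every} $\beta\in\acc(\kappa)$, whereas for the pre-existing $\beta\in\acc(\kappa)\setminus\partial(c)$ your own observation shows the right-hand side equals only $(\acc(\kappa)\setminus\partial(c))\cap\beta$, omitting every point of $\partial(c)\cap\beta$. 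So adding new rows at $\alpha\in\partial(c)$ is not enough; you would also have to enlarge all the old clubs $C_{\beta,i}$, and doing that coherently is essentially the content of Theorem~\ref{thm45}.

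The route the paper has in mind is shorter: first reduce to $\partial(c)=\emptyset$. Fix a club $E\subseteq\kappa$ disjoint from $\partial(c)$, let $\pi:\kappa\to E$ be its increasing enumeration, and set $c'(\alpha,\beta):=c(\pi(\alpha),\pi(\beta))$. Continuity of $\pi$ transfers closedness, and subadditivity and $\U(\kappa,2,\theta,2)$ transfer trivially. For any $\beta\in\acc(\kappa)$ one has $\pi(\beta)\in\acc(E)\subseteq\acc(\kappa)\setminus\partial(c)$, so some $D^{c}_{\le i}(\pi(\beta))$ is a club in $\pi(\beta)$; intersecting with $E$ and pulling back through $\pi$ shows $D^{c'}_{\le i}(\beta)$ is cofinal in $\beta$, whence $\partial(c')=\emptyset$. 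Now the $(2)\Rightarrow(4)$ construction of Theorem~\ref{thm67}, applied verbatim to $c'$, produces an $\inds$-sequence with $\Gamma=\acc(\kappa)\setminus\partial(c')=\acc(\kappa)$, i.e., a $\square^{\ind}(\kappa,\theta)$-sequence.

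For $(1)\Rightarrow(2)$, your argument that $\partial(c)=\emptyset$ when $\theta>\omega$ is correct. For $\theta=\omega$, however, invoking Clause~(4) against a hypothetical stationary $\partial(c)$ does not yield a contradiction: nothing in the definition of $\square^{\ind}(\kappa,\omega)$ prevents the set $\{\beta\in\acc(\acc(\kappa)):\forall i\,[\sup(\acc(C_{\beta,i}))<\beta]\}$ from being stationary (it is always nonreflecting by Lemma~\ref{partial}, but that alone is no obstruction; cf.\ the proof of Corollary~\ref{coro64}(6), which exploits exactly such a situation for a generic sequence). Here a genuine modification of the matrix is required, in the spirit of the proof of Theorem~\ref{thm45}, to arrange that some $\acc(C_{\beta,i})$ is cofinal in $\beta$ at every $\beta\in\acc(\acc(\kappa))$.
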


\begin{thm}\label{thm45}
  If $\inds(\kappa,\omega)$ holds, then so does $\square^{\ind}(\kappa,\omega)$.
\end{thm}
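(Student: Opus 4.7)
The plan is to take a $\inds(\kappa,\omega)$-sequence $\vec C = \langle C_{\alpha,i} \mid \alpha \in \Gamma, ~ i(\alpha) \leq i < \omega \rangle$ and extend it to a matrix indexed by $\Gamma' := \acc(\kappa)$ in the laziest possible fashion. By clause~(1) of Definition~\ref{inds}, any $\alpha \in \acc(\kappa) \setminus \Gamma$ lies in $E^\kappa_\omega$, so every club in such an $\alpha$ has order type $\omega$ and, in particular, empty set of accumulation points. This single observation will make the extended matrix cohere for free.

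Concretely, for each $\alpha \in \acc(\kappa)\setminus\Gamma$, I would fix a strictly increasing sequence $\langle\beta^\alpha_n\mid n<\omega\rangle$ of successor ordinals cofinal in $\alpha$ and declare $i'(\alpha):=0$ together with $C'_{\alpha,i}:=\{\beta^\alpha_n\mid n<\omega\}$ for every $i<\omega$, and for each $\alpha\in\Gamma$ I would set $i'(\alpha):=i(\alpha)$ and $C'_{\alpha,i}:=C_{\alpha,i}$. Verification of clauses~(1)--(3) of Definition~\ref{inds} with $\Gamma'$ in place of $\Gamma$ is then routine: on the old points the properties are inherited from $\vec C$ using that clause~(3) of $\inds$ already ensures $\acc(C_{\alpha,i})\s\Gamma\s\Gamma'$, while on the new points the sequence $\langle C'_{\alpha,i}\mid i<\omega\rangle$ is constant with empty accumulation, so all coherence demands are vacuous.

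The nontrivial step is clause~(4), but it writes itself: given any club $D\s\kappa$, clause~(4) of $\vec C$ supplies an $\alpha\in\acc(D)\cap\Gamma$ such that $D\cap\alpha\ne C_{\alpha,i}$ for every $i\in[i(\alpha),\omega)$, and since $C'_{\alpha,i}=C_{\alpha,i}$ and $i'(\alpha)=i(\alpha)$ for this particular $\alpha$, the very same ordinal witnesses clause~(4) for the extended matrix (noting that $\acc(D)\cap\Gamma\s\acc(D)\cap\Gamma'$).

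The only real obstacle is confirming that $\inds(\kappa,\omega)$ is not vacuous; in the degenerate case $\kappa=\aleph_1$, the empty-accumulation observation forces $|\Gamma|\le 1$ via clause~(2), and then clause~(4) fails against any club $D$ that avoids accumulating at the unique element of $\Gamma$, so $\inds(\aleph_1,\omega)$ is in fact false and the implication is vacuous there. For $\kappa\ge\aleph_2$, the extension above supplies the desired $\square^{\ind}(\kappa,\omega)$-sequence.
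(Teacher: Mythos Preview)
Your proposal has a genuine gap: clause~(2) of Definition~\ref{inds} is not merely a coherence requirement, it also demands that $\Gamma'\cap\alpha=\bigcup_{i<\omega}\acc(C'_{\alpha,i})$ for every $\alpha\in\Gamma'$. Once you enlarge $\Gamma$ to $\Gamma'=\acc(\kappa)$, this becomes the assertion that \emph{every} limit ordinal below $\alpha$ is an accumulation point of some $C'_{\alpha,i}$. Your lazy extension violates this in two ways. First, for $\alpha\in\acc(\kappa)\setminus\Gamma$ your clubs $C'_{\alpha,i}$ consist of successor ordinals and hence have $\acc(C'_{\alpha,i})=\emptyset$, whereas $\acc(\alpha)$ is typically nonempty. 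Second, and more seriously, for $\alpha\in\Gamma$ you keep $C'_{\alpha,i}=C_{\alpha,i}$, so $\bigcup_i\acc(C'_{\alpha,i})=\Gamma\cap\alpha$, which misses every point of $(\acc(\kappa)\setminus\Gamma)\cap\alpha$.

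The paper's argument addresses exactly this difficulty. Working through the closed subadditive coloring $c$ equivalent to $\inds(\kappa,\omega)$, it assigns to each $\delta\in\acc(\kappa)\setminus\Gamma$ a canonical cofinal $\omega$-set $a_\delta$ built from the maxima of the sets $D^c_{\le i}(\delta)$, and then defines $C_{\alpha,i}$ for every $\alpha\in\acc(\kappa)$ by augmenting $D^c_{\le i}(\alpha)$ with the sets $a_\delta$ for those $\delta\in D^c_{\le i}(\alpha)\cup\{\alpha\}$ that lie outside $\Gamma$. The point is that the $a_\delta$'s are chosen uniformly and coherently with the ambient structure, so that inserting them into each $C_{\alpha,i}$ simultaneously makes the new points of $\acc(\kappa)\setminus\Gamma$ accumulation points while preserving clause~(3) coherence. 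This weaving step is the substance of the proof; it cannot be replaced by an arbitrary choice of cofinal sequences at the new points.
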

\begin{proof}
 Suppose that $\inds(\kappa,\omega)$ holds.
 By Theorem~\ref{thm67}, we may fix a closed, subadditive coloring $c:[\kappa]^2\rightarrow\omega$ witnessing $\U(\kappa,2,\omega,2)$.
Let
  $
    \Gamma := \{\alpha \in \acc(\kappa) \mid \exists i < \omega[\sup(D^c_{\leq i}(\alpha)) = \alpha]\},
  $
  so that $\Gamma\supseteq \acc(\kappa) \cap E^\kappa_{\neq \omega}$.
For all $\delta\in E^\kappa_\omega\setminus\Gamma$, let
$a_\delta:=\{\max(D^c_{\le i}(\delta))\mid i<\omega\ \&\ D^c_{\le i}(\delta)\neq\emptyset\}$.
Clearly, $a_\delta$ is a cofinal subset of $\delta$ of order-type $\omega$.

For all $\alpha\in E^\kappa_\omega\setminus\Gamma$, let $i(\alpha):=0$.
For all $\alpha \in \Gamma$, let $i(\alpha)$ be the least $i < \omega$ for which
  $\sup(D^c_{\leq i}(\alpha)) = \alpha$.
  Then, for all $\alpha\in\acc(\kappa)$ and $i\in [i(\alpha), \omega)$,
  let $$C_{\alpha, i} := D^c_{\leq i}(\alpha)\cup\bigcup\{ a_\delta\mid \delta\in (D^c_{\leq i}(\alpha)\cup\{\alpha\})\setminus\Gamma\}.$$

Note that  $C_{\alpha,i}\s C_{\alpha,j}$ for all $j\ge i$.

  \begin{claim}
    Let $\beta \in \acc(\kappa)$. Then:
    \begin{enumerate}
    \item  $\acc(\beta) = \bigcup_{i(\beta) \leq i < \omega}    \acc(C_{\beta, i})$;
    \item  for all $i\in[i(\beta),\omega)$, $C_{\beta, i}$ is a club in $\beta$, and $\acc(C_{\beta,i})\s D^c_{\le i}(\beta)$;
    \item for all $i\in[i(\beta),\omega)$  and  $\alpha \in \acc(C_{\beta, i})$,
    we have $i(\alpha) \leq i$ and $C_{\alpha, i} = C_{\beta, i} \cap \alpha$.
    \end{enumerate}
  \end{claim}

  \begin{cproof}
    (1) Let $\alpha\in\acc(\beta)$ be arbitrary.
    Fix a large enough $i \geq \max\{i(\alpha), i(\beta)\}$ such that $c(\alpha, \beta) \leq i$.
    If $\alpha\not\in\Gamma$, then $a_\alpha\s C_{\beta,i}$.
    If $\alpha\in \Gamma$,
    then for all $\gamma \in C_{\alpha, i}$, we have $c(\gamma, \alpha) \leq i$ and
    $c(\alpha, \beta) \leq i$, so, by subadditivity, $c(\gamma, \beta) \leq i$.
    Thus, in both cases, $\alpha \in \acc(C_{\beta, i})$.

    (2)  Let $i\in[i(\beta),\omega)$ be arbitrary.
    Suppose first that $\beta\in\Gamma$.
  Since $D^c_{\le i}(\beta)$ is a club in $\beta$, to show that $C_{\beta,i}$ is a club
  and $\acc(C_{\beta, i}) \subseteq D^c_{\leq i}(\beta)$, it suffices to show that for any pair $\gamma<\delta$ of successive elements of $D^c_{\le i}(\beta)$,
  if $(\gamma, \delta) \cap C_{\beta, i} \neq \emptyset$, then $\delta \notin \Gamma$
  and $(\gamma, \delta) \cap C_{\beta, i} \subseteq a_\delta$.

  Fix $\gamma,\delta$ as above along with $\alpha\in(\gamma,\delta)\cap C_{\beta,i}$.
  Using the definition of $C_{\beta,i}$, let us fix some $\delta'\in D^c_{\leq i}(\beta)\setminus\Gamma$ such that $\alpha\in a_{\delta'}$.
  We have $c(\delta',\beta)\le i$ and $c(\alpha,\beta)>i$. So, by subadditivity, $j:=c(\alpha,\delta')$ is greater than $i$.
  As $\alpha\in a_{\delta'}$, it follows that $\alpha=\max(D^c_{\le j}(\delta'))$.

    Notice that if $\delta<\delta'$, then from  $\delta,\delta'\in D^c_{\le i}$ and subadditivity, we have $c(\delta,\delta')\le i$,
    and so $\delta\le \max(D^c_{\le i}(\delta'))\le \max(D^c_{\le j}(\delta'))=\alpha$, which is a contradiction.
    So $\delta=\delta'$ and  $\alpha\in a_\delta$, as desired.

    Next, suppose that $\beta\in \acc(\kappa)\setminus\Gamma$. Then the very same argument as before shows that for any pair $\gamma<\delta$ of successive elements of $D^c_{\le i}(\beta)$,
    if the interval $(\gamma,\delta)\cap C_{\beta,i}$ is nonempty, then it is covered by $a_\delta$.
    Moreover, by the definition of $C_{\beta, i}$, we have $C_{\beta, i} \setminus
    \max(D^c_{\leq i}) \subseteq a_\beta$. It follows that $C_{\beta, i}$ is a club in $\beta$
    and $\acc(C_{\beta, i}) \subseteq D^c_{\leq i}(\beta)$.

    (3) Fix $i\in[i(\beta),\omega)$ and $\alpha\in\acc(C_{\beta, i})$. In particular, $\alpha\in D^c_{\le i}(\beta)$,
    so, by subadditivity, $D^c_{\le i}(\alpha)=D^c_{\le i}(\beta)\cap\alpha$.

        $\br$ If $\alpha\in\Gamma$, then $\alpha\in\acc(D^c_{\le i}(\beta))$,
        so that  $i(\alpha)\le i$, and it is clear from the definition that $C_{\alpha,i}=C_{\beta,i}\cap\alpha$.

        $\br$ If $\alpha\notin\Gamma$, then $i(\alpha)=0\le i$, and it is clear from the definition that $C_{\alpha,i}=C_{\beta,i}\cap\alpha$.
  \end{cproof}

  The following claim will now finish our proof.

  \begin{claim}
    Suppose that $D$ is a club in $\kappa$. Then there is $\alpha\in
    \acc(D)$ such that, for all $i \in [i(\alpha), \omega)$, $D\cap\alpha\neq C_{\alpha,i}$.
  \end{claim}

  \begin{proof} \renewcommand{\qedsymbol}{\ensuremath{\boxtimes \ \square}}
    Suppose not. Then, for all $\alpha\in\acc(D)$,
    for some $j_\alpha\in[i(\alpha),\omega)$, $D\cap\alpha=C_{\alpha,j_\alpha}$.
    Find $j < \omega$ and
    $A \in [\acc(D)]^\kappa$ such that $j_\alpha = j$ for all $\alpha \in A$.
    As $c$ witnesses $\U(\kappa, 2, \omega, 2)$, we may pick $(\alpha, \beta)\in [A]^2$ such that $c(\alpha, \beta) > j$.
    However $\alpha\in\acc(D\cap\beta)=\acc(C_{\beta,j})\s D^c_{\le j}(\beta)$, meaning that $c(\alpha,\beta)\le j$. This is a contradiction.
\end{proof}
\let\qed\relax
\end{proof}

An analogue of the preceding result holds for uncountable $\theta$ under the
additional assumption of stationary reflection.

\begin{cor}    Suppose that $\theta\in\reg(\kappa)$.
  If every stationary subset of $E^\kappa_\theta$ reflects,
  then $\inds(\kappa,\theta)$ is equivalent to $\square^{\ind}(\kappa,\theta)$.
\end{cor}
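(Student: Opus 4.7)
The plan is as follows. The forward direction $\square^{\ind}(\kappa,\theta)\implies\inds(\kappa,\theta)$ is immediate from Remark~\ref{remark42}, since $\square^{\ind}(\kappa,\theta)$ is simply the strengthening of $\inds(\kappa,\theta)$ obtained by insisting that $\Gamma=\acc(\kappa)$. For the converse, assume $\inds(\kappa,\theta)$ holds and that every stationary subset of $E^\kappa_\theta$ reflects. I would first invoke Theorem~\ref{thm67} to secure a closed, subadditive witness $c:[\kappa]^2\rightarrow\theta$ to $\U(\kappa,2,\theta,2)$. In view of Theorem~\ref{thm_44}, it then suffices to verify that $\partial(c)$ is nonstationary in $\kappa$, since this will furnish a closed witness to $\US(\kappa,2,\theta,2)$ with $\partial(c)$ nonstationary and hence yield $\square^{\ind}(\kappa,\theta)$.

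The crux is the observation that $\partial(c)\s E^\kappa_{\cf(\theta)}=E^\kappa_\theta$, so if $\partial(c)$ were stationary, the set $S:=\partial(c)$ would be a stationary subset of $E^\kappa_\theta$. The coloring $c$ itself realizes clause~(2) of Lemma~\ref{partial} for this $S$, and applying the implication $(2)\implies(1)$ of that lemma tells us that $S\cap\gamma$ is nonstationary in $\gamma$ for every $\gamma\in E^\kappa_{>\omega}$; that is, $S$ fails to reflect. This contradicts the reflection hypothesis, so $\partial(c)$ must be nonstationary, completing the proof.

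No significant obstacle is anticipated, as the argument is simply a three-step composition of previously established results: Theorem~\ref{thm67} converts $\inds(\kappa,\theta)$ into a closed subadditive coloring, Lemma~\ref{partial} identifies $\partial(c)$ as a nonreflecting subset of $E^\kappa_\theta$, and Theorem~\ref{thm_44} converts the resulting nonstationarity of $\partial(c)$ back into $\square^{\ind}(\kappa,\theta)$. The only conceptual content is the recognition that stationary reflection on $E^\kappa_\theta$ is precisely the hypothesis needed to rule out $\partial(c)$ being stationary.
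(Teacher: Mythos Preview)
Your proposal is correct and follows essentially the same approach as the paper: invoke Theorem~\ref{thm67} to obtain a closed subadditive witness $c$, use Lemma~\ref{partial} together with the reflection hypothesis to conclude that $\partial(c)$ is nonstationary, and then apply Theorem~\ref{thm_44} to obtain $\square^{\ind}(\kappa,\theta)$. The paper's proof is more terse but uses exactly the same three ingredients in the same order.
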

\begin{proof} Suppose that $\inds(\kappa,\theta)$ holds.
By Theorem~\ref{thm67}, we may then fix a closed, subadditive witness $c$ to $\U(\kappa,2,\theta,2)$.
As $\partial(c)$ is a subset of $E^\kappa_\theta$ and every stationary subset of $E^\kappa_\theta$ reflects,
it follows from Lemma~\ref{partial} that $\partial(c)$ is nonstationary. So, by
Theorem~\ref{thm_44}, $\square^{\ind}(\kappa,\theta)$ holds.
\end{proof}

We shall now turn to prove Clause~(3) of Theorem~C, in particular,
establishing that, in general, for uncountable $\theta$, $\inds(\kappa, \theta)$ is not
equivalent to $\square^{\ind}(\kappa, \theta)$.
This will follow from the following two
theorems; these are fairly straightforward modifications of results of Cummings
and Schimmerling \cite{cummings_schimmerling} and Levine and Sinapova
\cite{MR4205976}, respectively, but we provide some details for completeness.

\begin{thm} \label{outer_model_thm}
  Suppose that $V \subseteq W$ are models of $\zfc$ and $\theta < \lambda$ are
  regular cardinals in $V$ such that
  \begin{enumerate}
    \item $\lambda$ is inaccessible in $V$;
    \item $\lambda$ is singular and $\cf(\lambda) = \theta$ in $W$;
    \item $(\lambda^+)^V = (\lambda^+)^W$.
  \end{enumerate}
  Then $\inds(\lambda^+, \theta)$ holds in $W$.
  Moreover, in $W$ there is a
  closed, locally small witness $c$ to $\US(\lambda^+, 2, \theta, 2)$ such that
  $\partial(c) \supseteq (E^{\lambda^+}_\lambda)^V$.
\end{thm}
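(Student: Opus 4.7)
The plan is to reduce everything to constructing, in $W$, a closed, subadditive, locally small coloring $c:[\lambda^+]^2\rightarrow\theta$ witnessing $\US(\lambda^+,2,\theta,2)$ and satisfying $\partial(c)\supseteq (E^{\lambda^+}_\lambda)^V$; the $\inds(\lambda^+,\theta)$ assertion in $W$ then follows immediately from Theorem~\ref{thm67}. The starting input will be a closed, subadditive, locally small coloring $c^V:[\lambda^+]^2\rightarrow\lambda$ existing already in $V$. Since $\lambda$ is regular in $V$ and every ordinal $\beta<\lambda^+$ has $V$-cardinality $\leq\lambda$, one may fix in $V$ a $C$-sequence $\langle C_\beta\mid \beta<\lambda^+\rangle$ with each $C_\beta$ a club in $\beta$ of order-type $\leq\lambda$ and obtain such $c^V$ via the standard walks-on-ordinals construction. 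In $W$, I fix a strictly increasing sequence $\vec\lambda=\langle\lambda_i\mid i<\theta\rangle$ of $V$-regular cardinals cofinal in $\lambda$, and define
$$
c(\alpha,\beta):=\min\{i<\theta\mid c^V(\alpha,\beta)\leq\lambda_i\}.
$$
The governing identity $D^c_{\leq i}(\beta)=D^{c^V}_{\leq\lambda_i}(\beta)$ drives everything else.

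Subadditivity of $c$ follows at once from subadditivity of $c^V$ together with the monotonicity of $\vec\lambda$: if $c(\alpha,\beta)=i_1$ and $c(\beta,\gamma)=i_2$, then $c^V(\alpha,\gamma)\leq\max\{c^V(\alpha,\beta),c^V(\beta,\gamma)\}\leq\lambda_{\max\{i_1,i_2\}}$, yielding $c(\alpha,\gamma)\leq\max\{i_1,i_2\}$, and the second kind is symmetric. Closedness of $c$ holds because $D^{c^V}_{\leq\lambda_i}(\beta)$ is closed in $V$, and closedness (equivalently, $\acc^+$) is absolute between $V$ and $W$. For local smallness, the inaccessibility of $\lambda$ in $V$ is essential: each $D^{c^V}_{\leq j}(\beta)$ has $V$-cardinality $<\lambda$ for $j<\lambda$, and since $\lambda$ is $V$-inaccessible, the union $D^{c^V}_{\leq\lambda_i}(\beta)=\bigcup_{j\leq\lambda_i}D^{c^V}_{\leq j}(\beta)$ also has $V$-cardinality $<\lambda$, whence $|D^c_{\leq i}(\beta)|^W\leq|D^{c^V}_{\leq\lambda_i}(\beta)|^V<\lambda$. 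Being a subadditive, locally small coloring $[\lambda^+]^2\rightarrow\cf^W(\lambda)$, $c$ witnesses $\U(\lambda^+,\lambda^+,\theta,\theta)$ via Lemma~\ref{locally}(2), and in particular $\US(\lambda^+,2,\theta,2)$. For the divergence clause, any $\alpha\in(E^{\lambda^+}_\lambda)^V$ has $\cf^V(\alpha)=\lambda$, so $D^c_{\leq i}(\alpha)=D^{c^V}_{\leq\lambda_i}(\alpha)$ has $V$-cardinality strictly less than $\lambda=\cf^V(\alpha)$ and is therefore $V$-bounded (hence $W$-bounded) below $\alpha$; Lemma~\ref{lemma38} then places $\alpha$ in $\partial(c)$.

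The main technical obstacle is exhibiting a closed, subadditive, locally small $c^V$ in $V$, since Lemma~\ref{fact219}(1) as stated provides only the latter two of these three properties. The expected remedy is a direct walks-on-ordinals construction with a $C$-sequence whose members are closed clubs of order-type $\leq\lambda$; the resulting maximal-weight coloring is closed by essentially the same argument that proves closedness of the coloring built in the proof of Lemma~\ref{partial}. No additional hypothesis on $\lambda$ in $V$ beyond regularity (let alone inaccessibility) is anticipated to be required for this step, as inaccessibility is used only in the local-smallness calculation carried out in $W$.
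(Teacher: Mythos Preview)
Your approach is correct and takes a genuinely different route from the paper's. The paper first builds the $\inds(\lambda^+,\theta)$-sequence in $W$ directly, by invoking the Cummings--Schimmerling matrix on $\Gamma = (E^{\lambda^+}_{<\lambda})^V$ (which is where the inaccessibility of $\lambda$ enters) and then a pseudo-Prikry result guaranteeing that some cofinal $\theta$-sequence in $\lambda$ eventually threads all the $({>}\omega)$-club index sets $X_\alpha$; the coloring $c$ is then read off via the $(1)\Rightarrow(2)$ direction of Theorem~\ref{thm67}. You instead start with Todorcevic's $\rho$-function in $V$, compose with an arbitrary cofinal $\theta$-sequence $\langle\lambda_i\rangle$ in $W$, and derive $\inds$ via the $(2)\Rightarrow(1)$ direction. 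This is more elementary: it avoids both external ingredients and, as you hint at the end, shows that the inaccessibility of $\lambda$ is not actually needed---regularity in $V$ suffices. (Your local-smallness argument also overcomplicates matters: writing $D^{c^V}_{\le\lambda_i}(\beta)$ as a union and invoking inaccessibility is unnecessary, since $\lambda_i<\lambda$ and local smallness of $c^V$ already gives $|D^{c^V}_{\le\lambda_i}(\beta)|^V<\lambda$ directly; as $\lambda$ remains a cardinal in $W$, the bound transfers.)

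The one point you flag as an obstacle---that Lemma~\ref{fact219}(1) does not assert closedness of $\rho$---is indeed the only gap to fill, and your suggested remedy is right. The maximal-weight function $\rho$ built from a $C$-sequence with $\otp(C_\beta)\le\lambda$ is closed: if $A\subseteq D^\rho_{\le i}(\gamma)$ has supremum $\beta<\gamma$, an induction on the walk length (splitting on whether $\beta\in\acc(C_\gamma)$, $\beta\in\nacc(C_\gamma)$, or $\beta\notin C_\gamma$) shows $\rho(\beta,\gamma)\le i$; this is exactly the content of the closedness lemma from \cite{paper34} that is cited in the proof of Lemma~\ref{partial}, specialized to $\theta=\lambda$. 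With that in hand, your argument goes through cleanly.
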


\begin{proof}
  Work first in $V$, and let $\Gamma := E^{\lambda^+}_{<\lambda}$.
  Recall that a subset $X \subseteq \lambda$ is \emph{$({>}\omega)$-club}
  if there is a club $C \subseteq \lambda$ such that $C \cap E^\lambda_{>\omega}
  \subseteq X$. By \cite[Lemma 4.4]{cummings_schimmerling}, we can fix a matrix
  $\vec{D} = \langle D_{\alpha, \eta} \mid \alpha \in \Gamma, ~ \eta \in X_\alpha \rangle$
  such that, for all $\alpha \in \Gamma$,
  \begin{enumerate}
    \item $X_\alpha$ is a $({>}\omega)$-club subset of $\lambda$;
    \item for all $\eta \in X_\alpha$, $D_{\alpha, \eta}$ is a club in $\alpha$ and $|D_{\alpha, \eta}| < \lambda$;
    \item $\langle D_{\alpha, \eta} \mid \eta \in X_\alpha \rangle$ is $\subseteq$-increasing
    and $\Gamma \cap \alpha = \bigcup_{\eta \in X_\alpha} \acc(D_{\alpha, \eta})$;
    \item for all $\eta \in X_\alpha$ and all $\bar{\alpha} \in \acc(D_{\alpha, \eta})$,
    we have $\eta \in X_{\bar{\alpha}}$ and $D_{\bar{\alpha}, \eta} = D_{\alpha, \eta}
    \cap \bar{\alpha}$.
  \end{enumerate}

  Now move to $W$ and note that, since $\cf(\lambda) = \theta$, we have
  $(E^{\lambda^+}_{\neq \theta} \cap \acc(\lambda^+)) \subseteq \Gamma
  \subseteq \acc(\lambda^+)$. By assumptions (1)--(3) in the statement of the
  theorem, \cite[Corollary 4.2]{pseudo_prikry} implies that we can find an
  increasing sequence of ordinals $\langle \eta_i \mid i < \theta \rangle$ that
  is cofinal in $\lambda$ and such that, for all $\alpha \in \Gamma$, for all
  sufficiently large $i < \theta$, we have $\delta_i \in X_\alpha$. (This fact
  is also implicit in the earlier \cite[Theorem 2.0]{dzamonja_shelah})

  For each $\alpha \in \Gamma$, let $i(\alpha)$ be the least $j < \theta$ such that
  $\delta_i \in X_\alpha$ for all $i \in [j, \theta)$. Now define a matrix
  $\vec{C} = \langle C_{\alpha, i} \mid \alpha \in \Gamma, i(\alpha) \leq i < \theta \rangle$
  by setting $C_{\alpha, i} := D_{\alpha, \delta_i}$ for all $\alpha \in \Gamma$ and
  all $i \in [i(\alpha), \theta)$. Notice that, for all $\alpha \in \Gamma$,
  $i \in [i(\alpha), \theta)$, and $\bar{\alpha} \in \acc(C_{\alpha, i})$,
  the properties of $\vec{D}$ imply that $X_\alpha \setminus i \subseteq X_{\bar{\alpha}}$,
  and therefore, since $i \geq i(\alpha)$ we also have $i \geq i(\bar{\alpha})$
  and $C_{\bar{\alpha}, i} = C_{\alpha, i} \cap \bar{\alpha}$. It is then
  straightforward to verify that $\vec{C}$ satisfies Clauses (1)--(3) of
  Definition~\ref{inds}. To verify that $\vec{C}$ satisfies Clause~(4) of
  Definition~\ref{inds}, fix a club $D \subseteq \lambda^+$, and let
  $\alpha \in \acc(D) \cap \Gamma$ be such that $|D \cap \alpha| = \lambda$.
  Then, for all $i \in [i(\alpha), \theta)$, the fact that $|C_{\alpha, i}| < \lambda$
  implies that $D \cap \alpha \neq C_{\alpha, i}$. It follows that $\vec{C}$ is an
  $\inds(\lambda^+, \theta)$-sequence in $W$.

  To prove the ``moreover" statement, let $c:[\lambda^+]^2 \rightarrow \theta$
  be the closed witness to
  $\US(\lambda^+, 2, \theta, 2)$ derived from $\vec{C}$ as in the proof of
  $(1)\implies(2)$ of Theorem~\ref{thm67}. The fact that $\partial(c) \supseteq
  (E^{\lambda^+}_\lambda)^V$ follows immediately from our definition of $\Gamma$,
  and the fact that $c$ is locally small follows immediately from the fact
  that $|C_{\alpha, i}| < \lambda$ for all $\alpha \in \Gamma$ and
  $i \in [i(\alpha), \theta)$.
\end{proof}

\begin{thm}\label{thm48} Suppose that $\theta<\lambda<\kappa$ are regular uncountable cardinals such that $\lambda$ is supercompact and $\kappa$ is weakly compact.
Then there is a forcing extension in which $\lambda$ is a singular strong limit of cofinality $\theta$,
$\kappa=\lambda^+$, $\inds(\kappa,\theta)$ holds, and $\square(\kappa,\tau)$ fails for all $\tau<\lambda$.
\end{thm}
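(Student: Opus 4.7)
The plan is to combine a Laver preparation, a Lévy collapse, and a Magidor cofinality-changing forcing, and then invoke Theorem~\ref{outer_model_thm}. The argument follows the template of Levine--Sinapova \cite{MR4205976}.

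First, perform a Laver preparation below $\lambda$ so that the supercompactness of $\lambda$ becomes indestructible under $\lambda$-directed closed forcing. We may therefore assume, without loss of generality, that in the ground model $V$, $\lambda$ is already indestructibly supercompact, $\theta<\lambda$ is regular uncountable, and $\kappa>\lambda$ is weakly compact. Next, let $\mathbb{L}:=\mathrm{Coll}(\lambda,{<}\kappa)$ be the Lévy collapse, and pass to $V_1:=V^{\mathbb{L}}$. Since $\mathbb{L}$ is $\lambda$-directed closed, $\kappa$-cc, and of size $\kappa$, in $V_1$ we have $\kappa=\lambda^+$, $\lambda$ remains supercompact by indestructibility, and $\kappa$ remains weakly compact by a classical preservation result. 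In particular, in $V_1$, $\square(\kappa,\tau)$ fails for every $\tau<\kappa$.

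Working now in $V_1$, let $\mathbb{M}$ be a Magidor-style forcing built from a coherent sequence of normal measures on $\lambda$, designed so that in $V_2:=V_1^{\mathbb{M}}$ the cofinality of $\lambda$ becomes $\theta$, $\lambda$ remains a strong limit, and every cardinal $\geq\lambda$ is preserved. In particular, $(\lambda^+)^{V_1}=(\lambda^+)^{V_2}=\kappa$. Applying Theorem~\ref{outer_model_thm} with $V_1$ and $V_2$ in place of $V$ and $W$, we immediately obtain $\inds(\kappa,\theta)$ in $V_2$.

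The main obstacle is to verify that, in $V_2$, $\square(\kappa,\tau)$ still fails for every $\tau<\lambda$. Fix such a $\tau$ and a candidate $\square(\kappa,\tau)$-sequence $\vec{\mathcal C}\in V_2$. We exploit the Prikry-type factorization of $\mathbb{M}$ as a $\theta$-closed forcing followed by a $\lambda^+$-cc quotient: any $\mathbb{M}$-name for the $\alpha^{\text{th}}$ entry $\mathcal C_\alpha$ is forced by the $\theta$-closed part to be identified with a set of at most $\lambda$-many clubs living in $V_1$. This produces, in $V_1$, a matrix of coherent approximations to $\vec{\mathcal C}$, each level of which has size $\le\lambda$. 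The weak compactness of $\kappa$ in $V_1$, applied to a $\kappa$-sized transitive model coding this matrix (together with $\mathbb{L}$ and the $\theta$-closed part of $\mathbb{M}$), yields via a standard $\Pi^1_1$-reflection / elementary-embedding argument a thread through the matrix in $V_1$, which then threads $\vec{\mathcal C}$ in $V_2$, contradicting the assumption. This preservation step is the heart of the construction and parallels \cite[\S3--4]{MR4205976}; adapting the parameters to the range $\tau<\lambda$ is routine.
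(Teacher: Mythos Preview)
Your overall architecture---Laver preparation, L\'evy collapse $\mathrm{Coll}(\lambda,{<}\kappa)$, Magidor forcing $\mathbb{M}$, then invoke Theorem~\ref{outer_model_thm} for $\inds(\kappa,\theta)$---matches the paper, and you correctly identify the preservation of $\neg\square(\kappa,\tau)$ through the Magidor forcing as the crux. But the argument you give for that step contains a genuine error.

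You assert that in $V_1=V^{\mathrm{Coll}(\lambda,{<}\kappa)}$, ``$\kappa$ remains weakly compact by a classical preservation result,'' and you later invoke ``the weak compactness of $\kappa$ in $V_1$'' to run a $\Pi^1_1$-reflection argument. This is impossible: in $V_1$ you have $\kappa=\lambda^+$, so $\kappa$ is not even inaccessible there, let alone weakly compact. (It is true that $\square(\kappa,\tau)$ fails in $V_1$ for $\tau<\lambda$, but this follows from the supercompactness of $\lambda$, not from any large-cardinal property of $\kappa$.) Consequently, your proposed reflection ``to a $\kappa$-sized transitive model in $V_1$'' has no engine behind it.

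The paper's fix is to apply the weak compactness of $\kappa$ in the \emph{ground model} $V$, before the collapse, to the two-step \emph{name}. One first fixes a club $C\subseteq\kappa$ of inaccessibles $\delta$ at which the iteration factors: $\mathbb{C}_\delta*\dot{\mathbb{M}}_\delta$ completely embeds into $\mathbb{C}*\dot{\mathbb{M}}$, where $\mathbb{C}_\delta=\mathrm{Coll}(\lambda,{<}\delta)$ and $\dot{\mathbb{M}}_\delta=\dot{\mathbb{M}}\cap V_\delta$. Given a $\mathbb{C}*\dot{\mathbb{M}}$-name $\dot{\mathcal D}$ for a putative $\square(\kappa,\tau)$-sequence, the assertion ``$\dot{\mathcal D}$ is forced to be a $\square(\kappa,\tau)$-sequence'' is $\Pi^1_1$ over $(V_\kappa,\in,\mathbb{C}*\dot{\mathbb{M}},\dot{\mathcal D})$, so weak compactness in $V$ reflects it to some inaccessible $\delta\in C$: $\dot{\mathcal D}\cap V_\delta$ is a $\mathbb{C}_\delta*\dot{\mathbb{M}}_\delta$-name for a $\square(\delta,\tau)$-sequence. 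In $V^{\mathbb{C}*\dot{\mathbb{M}}}$ we have $\cf(\delta)=\theta>\omega$, so any member of $\mathcal D_\delta$ threads the reflected sequence; but the quotient $(\mathbb{C}*\dot{\mathbb{M}})/(\mathbb{C}_\delta*\dot{\mathbb{M}}_\delta)$ cannot add such a thread, by the Levine--Sinapova argument \cite[Lemma~4.7]{MR4205976}. This is the contradiction. The point is that the reflection happens at the level of names in $V$, not inside $V_1$.
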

\begin{proof}
  By forcing with the Laver preparation forcing if necessary, we may assume that the
  supercompactness of $\lambda$ is indestructible under $\lambda$-directed closed
  forcing. Following \cite[\S 4]{MR4205976}, let $\mathbb{C} := \mathrm{Coll}(\lambda,
  {<}\kappa)$, and let $\dot{\mathbb{M}}$ be a $\mathbb{C}$-name for the Magidor forcing
  that turns $\lambda$ into a singular cardinal of cofinality $\theta$.

  For every inaccessible $\delta < \kappa$ above $\lambda$, let $\mathbb{C}_\delta
  := \mathrm{Coll}(\lambda, {<}\delta)$. By \cite[Proposition~4.3]{MR4205976},
  there is a club $C \subseteq \kappa$ such that, for every inaccessible
  $\delta \in C$, $\dot{\mathbb{M}}_\delta := \dot{\mathbb{M}} \cap V_\delta$ is
  a $\mathbb{C}_\delta$-name for a Magidor forcing to turn $\lambda$ into a singular
  cardinal of cofinality $\theta$ such that there is a complete embedding of
  $\mathbb{C}_\delta \ast \dot{\mathbb{M}}_\delta$ into $\mathbb{C} \ast \dot{\mathbb{M}}$.

  In $V^{\mathbb{C}}$, $\mathbb{M}$ has the $\lambda^+$-cc and therefore
  preserves $\lambda^+$. Therefore, applying Theorem~\ref{outer_model_thm} to the
  models $V^{\mathbb{C}}$ and $V^{\mathbb{C} * \dot{\mathbb{M}}}$ shows that
  $\inds(\kappa, \theta)$ holds in $V^{\mathbb{C} * \dot{\mathbb{M}}}$. It
  remains to show that $\square(\kappa, \theta)$ fails in
  $V^{\mathbb{C} * \dot{\mathbb{M}}}$. In fact, we will show that
  $\square(\kappa, \tau)$ fails for every $\tau < \lambda$.

  Suppose for sake of contradiction that $\tau < \lambda$ and $\dot{\mathcal{D}} =
  \langle \dot{\mathcal{D}}_\alpha \mid \alpha \in \acc(\kappa) \rangle$
  is a $\mathbb{C} * \dot{\mathbb{M}}$-name for a $\square(\kappa, \tau)$-sequence.
  This is a $\Pi^1_1$ statement about the structure $(V_\kappa, \in,
  \mathbb{C} * \dot{\mathbb{M}}, \dot{\mathcal{D}})$ (the sole universal
  quantification over subsets of $V_\kappa$ is the assertion that there exists
  no $\mathbb{C} * \dot{\mathbb{M}}$-name for a thread through
  $\dot{\mathcal{D}}$). Therefore, by the weak compactness of $\kappa$, we can
  find an inaccessible $\delta \in C$ such that $\dot{\mathcal{D}}^* :=
  \dot{\mathcal{D}} \cap V_\delta$ is a
  $\mathbb{C}_\delta * \dot{\mathbb{M}}_\delta$-name for a $\square(\delta, \tau)$-sequence.

  Note that $\delta = \lambda^+$ in $V^{\mathbb{C}_\delta * \dot{\mathbb{M}}_\delta}$.
  In $V^{\mathbb{C} * \dot{\mathbb{M}}}$, we have $\cf(\delta) = \theta > \omega$.
  Therefore, in $V^{\mathbb{C} * \dot{\mathbb{M}}}$, any element of
  $\mathcal{D}_\delta$ is a thread through $\mathcal{D}^*$. However, the proof of
  \cite[Lemma~4.7]{MR4205976} shows that forcing over $V^{\mathbb{C}_\delta *
  \dot{\mathbb{M}}_\delta}$ with $(\mathbb{C} * \dot{\mathbb{M}})/(\mathbb{C}_\delta * \dot{\mathbb{M}}_\delta)$ cannot add a thread to a $\square(\delta, \tau)$-sequence.
  (\cite[Lemma~4.7]{MR4205976} is about $\square_{\lambda, \tau}$-sequences,
  but the exact same proof still works for $\square(\delta, \tau)$-sequences.)
  This is a contradiction, thus completing the proof.
\end{proof}

\begin{remark}Suppose that $\theta\in\reg(\kappa)$
is such that there exists a closed subadditive witness to $\U(\kappa,2,\theta,2)$.
By the proof Lemma~\ref{lemma35}, if
$\kappa$ is  $({<}\theta)$-inaccessible,
then there exists a $\kappa$-Aronszajn tree with a $\theta$-ascent path.
In particular, such a tree exists in the model of Theorem~\ref{thm48}.
We note that by a combination of Theorem~\ref{thm67}, \cite[Theorem~6.11]{paper23}
and a minor variation of \cite[Theorem~4.44]{paper23},
if $\kappa=\kappa^{<\kappa}$ is a successor cardinal which is $\theta$-inaccessible,
then there moreover exists a $\kappa$-Souslin tree with a $\theta$-ascent path.
\end{remark}

The \emph{Mapping Reflection Principle} ($\mrp$), introduced by Moore in \cite{moore_mrp}, is a useful consequence of
$\PFA$.

\begin{cor}\label{cor47} $\mrp$ implies that for every regular cardinal $\kappa\ge\aleph_2$, there exists no closed, subadditive witness to $\U(\kappa,2,\omega,2)$.
\end{cor}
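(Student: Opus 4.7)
The plan is to reduce, using Lemma~\ref{uPLUSsub}(2) and Lemma~\ref{lemma35}, to a known refutation of trees with ascent paths under $\mrp$.

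Fix a regular $\kappa \geq \aleph_2$ and suppose, towards a contradiction, that $c:[\kappa]^2 \to \omega$ is a closed, subadditive witness to $\U(\kappa, 2, \omega, 2)$. Since $\cf(\omega) = \omega$, Lemma~\ref{uPLUSsub}(2) automatically upgrades $c$ to a witness to $\U(\kappa, 2, \omega, \omega)$, so that $c$ is a closed and, in particular, somewhere-closed witness to $\US(\kappa, 2, \omega, \omega)$. Lemma~\ref{lemma35} then produces a tree $\mathcal{T}$ of height $\kappa$ admitting an $\omega$-ascent path but no cofinal branch.

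It remains to invoke the theorem of Lambie-Hanson and L\"ucke that $\mrp$ implies no such tree exists at any regular $\kappa \geq \aleph_2$. Its proof encodes $\mathcal{T}$ together with its $\omega$-ascent path as a narrow $\kappa$-system of width $\omega$ and uses the reflection provided by $\mrp$ to force the cofinal-branch property down to an $\omega_1$-sized substructure, where absence of a cofinal branch becomes untenable.

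The main obstacle is locating the external $\mrp$-theorem in the precise form required; an equivalent alternative route is to use Theorem~\ref{thm67} and Theorem~\ref{thm45} to pass first to an $\inds(\kappa, \omega)$-sequence and then to a $\square^{\ind}(\kappa, \omega)$-sequence, and then cite the same $\mrp$-refutation in its indexed-square formulation. Either formulation yields the desired contradiction immediately.
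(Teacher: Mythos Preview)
Your alternative route is exactly what the paper does: Theorem~\ref{thm67} gives $\inds(\kappa,\omega)$, Theorem~\ref{thm45} upgrades this to $\square^{\ind}(\kappa,\omega)$, hence $\square(\kappa,\omega)$, and Strullu's theorem \cite[Theorem~1.8]{MR2895405} says $\mrp$ refutes $\square(\kappa,\omega)$ for all regular $\kappa\ge\aleph_2$. That is the entire argument.

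Your primary route through Lemma~\ref{lemma35} and ascent paths is a detour rather than a genuine alternative. The external result you invoke is not, to my knowledge, stated in \cite{lh_lucke} in the form ``$\mrp$ implies every $\kappa$-tree with an $\omega$-ascent path has a cofinal branch''; the known $\mrp$ refutations are phrased in terms of $\square(\kappa,\omega)$, so to complete your Route~1 you would in any case pass back through square principles, gaining nothing over the direct route. Moreover, your sketch of how that external result would be proved (``encode as a narrow $\kappa$-system \dots\ use the reflection provided by $\mrp$'') conflates $\mrp$ with the narrow system property $\nsp$; these are distinct principles, and $\mrp$ is not known to imply $\nsp(\omega_1,\kappa)$ in general. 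The argument stands because your second route is correct, but the first route as written has a citation gap and a misleading proof sketch.
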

\begin{proof} By Theorems \ref{thm67} and \ref{thm45}, if there exists a closed subadditive witness to $\U(\kappa,2,\omega,2)$, then
$\square^{\ind}(\kappa,\omega)$ holds. In particular, $\square(\kappa,\omega)$ holds.
However, by \cite[Theorem~1.8]{MR2895405}, for every regular cardinal $\kappa\ge\aleph_2$, $\square(\kappa,\omega)$ is refuted by $\mrp$.
This is also implicit in \cite[\S7]{MR2385636}.
\end{proof}

In \cite{MR2895405}, Strullu proves that $\mrp + \MA$ refutes $\square(\kappa, \omega_1)$ for all regular $\kappa \geq \aleph_2$.
In light of this fact and Theorem~\ref{pid_cor}, it is natural to raise the following question.

\begin{q} Does $\mrp+\MA$ refute $\inds(\kappa,\omega_1)$? How about $\PFA$ or $\MM$?\footnote{Recall that, by Theorem~\ref{thm310} and the discussion
after Theorem~\ref{pid_cor}, $\MM$ does not refute $\US(\kappa, \kappa, \omega_1, \omega_1)$.}
\end{q}

  By \cite[Theorem~3.4]{lh_lucke}, for every $\theta\in\reg(\kappa)$,
  $\square(\kappa)$ implies $\square^{\ind}(\kappa, \theta)$.
  Essentially the same proof of that theorem establishes
  that for every $\theta \in \reg(\kappa)$, $\square(\kappa, \sq_{\theta})$
  implies $\inds(\kappa, \theta)$.\footnote{$\square(\kappa, \sq_{\theta})$ is a weakening of $\square(\kappa)$; its definition may be found as Definition~1.4 of \cite{paper28}.}
  Here, we shall prove a generalization that also yields the second part of Theorem~A.
\begin{thm}\label{revised_lh_lucke}
  Suppose that $\kappa\ge\aleph_2$, $\theta \in \reg(\kappa)$, $T \subseteq E^\kappa_\theta$ and
  there is a $\square(\kappa, \sq_{\theta})$-sequence that avoids $T$.
  Then:
  \begin{enumerate}
  \item There is a $\inds(\kappa, \theta)$-sequence $\langle C_{\alpha, i} \mid
  \alpha \in \Gamma, ~ i(\alpha) \leq i < \theta \rangle$ such that
  $\Gamma \cap T = \emptyset$;
  \item There is a closed witness $c$ to $\US(\kappa,2,\theta,2)$ such that
  $T \subseteq \partial(c)$.
\end{enumerate}
\end{thm}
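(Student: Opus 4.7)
The plan is to prove Clause~(1) by modifying the construction of \cite[Theorem~3.4]{lh_lucke} so that the set $T$ is kept outside of the domain $\Gamma$, and then to derive Clause~(2) as an immediate consequence of Clause~(1) via the implication $(1)\Rightarrow(2)$ of Theorem~\ref{thm67}.

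For Clause~(1) I would fix a $\square(\kappa,\sq_{\theta})$-sequence $\vec{\mathcal C}=\langle \mathcal C_\alpha\mid\alpha\in\acc(\kappa)\rangle$ avoiding $T$, and, at each $\alpha\in\acc(\kappa)$, construct by recursion on $i<\theta$ a $\s$-increasing sequence $\langle C_{\alpha,i}\mid i<\theta\rangle$ of closed bounded subsets of $\alpha$ as follows: start from a canonical element $C_{\alpha,0}\in\mathcal C_\alpha$; at stage $i+1$, extend $C_{\alpha,i}$ by gluing in, via the $\sq_\theta$-coherence of $\vec{\mathcal C}$, the previously constructed data associated with elements of $\acc(C_{\alpha,i})$; and take unions at limit stages. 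Let $\Gamma$ be the set of $\alpha\in\acc(\kappa)$ for which $\bigcup_{i<\theta}C_{\alpha,i}$ is cofinal in $\alpha$, and, for each such $\alpha$, let $i(\alpha)$ be the least $i$ with $\sup(C_{\alpha,i})=\alpha$; then pass to the tail $\langle C_{\alpha,i}\mid i(\alpha)\le i<\theta\rangle$.

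The verification of the four clauses of Definition~\ref{inds} follows the template of \cite{lh_lucke}. Clauses~(2) and~(3) propagate from the $\sq_\theta$-coherence of $\vec{\mathcal C}$ through the recursive definition; Clause~(1) (i.e.\ $\acc(\kappa)\cap E^\kappa_{\ne\theta}\s\Gamma$) uses a standard cofinality argument showing that the recursion cannot get stuck in a bounded segment through $\theta$-many stages at points of cofinality different from $\theta$; and Clause~(4) reduces to the non-threadedness of $\vec{\mathcal C}$, since any thread $D$ through the constructed matrix patches together into a thread of $\vec{\mathcal C}$ via the coherence of the blocks below points of $\acc(D)\cap\Gamma$. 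The remaining task is to verify $\Gamma\cap T=\emptyset$: for each $\beta\in T$, the avoidance hypothesis would be invoked to show that cofinality into $\beta$ via the construction would manufacture a thread of $\vec{\mathcal C}$ localized at $\beta$; hence $\beta\notin\Gamma$. I would also take care to arrange that the construction keeps $\Gamma$ cofinal below each $\beta\in T$ (which is automatic as long as the successor-stage gluing records enough auxiliary ordinals of cofinality distinct from $\theta$). The main obstacle is precisely this part of the bookkeeping, since ``avoidance of $T$'' must be leveraged to block cofinality at $T$-points while \emph{not} blocking cofinality just below them.

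For Clause~(2), the implication $(1)\Rightarrow(2)$ of Theorem~\ref{thm67} applied to the $\inds(\kappa,\theta)$-sequence from Clause~(1) furnishes a closed subadditive witness $c:[\kappa]^2\rightarrow\theta$ to $\U(\kappa,2,\theta,2)$. To see $T\s\partial(c)$, fix $\beta\in T$ and $i<\theta$; since $\beta\notin\Gamma$, we have $\tilde\beta:=\min(\Gamma\setminus\beta)>\beta$, and the definition of $c$ forces any $\alpha<\beta$ with $c(\alpha,\beta)\le i$ to satisfy $\tilde\alpha\in\acc(C_{\tilde\beta,i})\cap\beta$. Since $\acc(C_{\tilde\beta,i})\s\Gamma$ whereas $\beta\notin\Gamma$, the closed set $\acc(C_{\tilde\beta,i})\cap\beta$ does not accumulate to $\beta$; combined with the cofinality of $\Gamma$ below $\beta$, this forces $D^c_{\le i}(\beta)$ to be bounded in $\beta$, so $\beta\in\partial(c)$ by Lemma~\ref{lemma38}, as desired.
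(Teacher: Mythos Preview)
Your high-level strategy is correct: prove Clause~(1) by adapting the \cite[Theorem~3.4]{lh_lucke} construction, then derive Clause~(2) via Theorem~\ref{thm67}. Your argument for Clause~(2) is also essentially right, provided that $\Gamma$ is cofinal below each $\beta\in T$ (a point you correctly flag).

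However, there are two substantive gaps in your treatment of Clause~(1).

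First, your notation suggests a misunderstanding of $\square(\kappa,\sq_\theta)$: it is a \emph{single} $C$-sequence $\langle C_\alpha\mid\alpha<\kappa\rangle$ satisfying a $\sq_\theta$-coherence condition, not a sequence of families $\mathcal C_\alpha$; so ``a canonical element $C_{\alpha,0}\in\mathcal C_\alpha$'' does not type-check.

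Second, and more seriously, you have not supplied a mechanism for $\Gamma\cap T=\emptyset$. Your suggestion that cofinality of the construction at $\beta\in T$ would ``manufacture a thread of $\vec{\mathcal C}$ localized at $\beta$'' is not a contradiction: a thread through a $\square(\kappa,\sq_\theta)$-sequence is a club in $\kappa$, whereas a club in a fixed $\beta<\kappa$ always exists and obstructs nothing. You yourself identify this as ``the main obstacle,'' and indeed this is where the real content lies.

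The paper resolves this quite differently from your sketch. Rather than defining $\Gamma$ implicitly as the set of points where a recursion on $i$ reaches cofinality, the paper defines $\Gamma$ \emph{explicitly and in advance}: writing $\Delta$ for the set of $\delta$ at which full coherence holds, one sets $\Omega:=E^\kappa_{>\theta}\cup\bigcup\{\acc(C_\delta)\mid\delta\in\Delta\}$ and $\Gamma:=\Omega\cup(\acc(\kappa)\cap E^\kappa_{<\theta})$. The avoidance hypothesis then gives $\Omega\cap T=\emptyset$ directly (elements of $\Omega$ are either in $E^\kappa_{>\theta}$ or are accumulation points of some $C_\delta$, which by hypothesis miss $T$), and $T\subseteq E^\kappa_\theta$ disposes of the remaining piece. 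The matrix $\langle C_{\alpha,i}\rangle$ is then built by recursion on $\alpha\in\Gamma$ (not on $i$), via a truncated modification $\vec D$ of $\vec C$ and a lengthy case analysis (Cases 1a--1c, 2a--2d); the verification of Clause~(4) of Definition~\ref{inds} requires a separate preparatory argument producing, for each $i<\theta$, stationarily many points with a prescribed value of $j(\cdot)$. All of this is substantially more intricate than your outline indicates.
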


\begin{proof} The proof of Theorem~\ref{thm67} makes it clear that $(1)\implies(2)$,
  so we focus on proving Clause~(1).
  Fix a $\square(\kappa, \sq_{\theta})$-sequence $\vec C=\langle C_\beta \mid
  \beta < \kappa \rangle$ that avoids $T$, i.e., for all $\beta < \kappa$,
  we have $\acc(C_\beta) \cap T = \emptyset$.
  Set $\Delta:=\{\delta\in\acc(\kappa)\mid \forall
  \bar\delta\in\acc(C_\delta)[C_{\bar\delta}=C_\delta\cap\bar\delta]\}$.
  By the $\sq_\theta$-coherence of $\vec{C}$, we  have $E^\kappa_{\geq \theta} \subseteq \Delta$.
  Set $\Omega := E^\kappa_{>\theta} \cup \bigcup\{ \acc(C_\delta)\mid \delta\in
  \Delta\}$, and note that $\Omega\subseteq \Delta\setminus T$.
  As $\kappa\ge\aleph_2$, $\Omega$ is stationary.
  So, by \cite[Lemma~1.23 and Lemma~1.7]{paper29},
  we may fix $\zeta<\kappa$ such that
  $\{\beta \in \Omega \mid \otp(C_\beta) > \zeta \text{ and } C_\beta(\zeta) \geq \tau\}$
  is stationary for all $\tau < \kappa$. By successive applications of Fodor's
  Lemma, we may then recursively construct a strictly increasing sequence of ordinals
  $\langle \tau_i\mid i<\theta\rangle$ such that $\tau_0>\min(\Omega)$
  and such that, for every $i<\theta$,
  $\{\beta \in \Omega \mid \otp(C_\beta) > \zeta \text{ and } C_\beta(\zeta) = \tau_i\}$
  is stationary.

  Next, define $\vec{D} = \langle D_\beta \mid \beta < \kappa \rangle$, as follows:
  \begin{itemize}
  \item Let $D_0:=\emptyset$;
  \item For every $\beta<\kappa$, let $D_{\beta+1}:=\{\beta\}$;
  \item For every $\beta\in\Delta$ such that $\otp(C_\beta)\le\zeta$, let $D_\beta:=C_\beta$;
  \item For every $\beta\in\Delta$ such that $\otp(C_\beta)>\zeta$, let $D_\beta:=C_\beta\setminus C_\beta(\zeta)$;
  \item For every $\beta\in\acc(\kappa)\setminus\Delta$, pick a club $D_\beta$ in $\beta$ of order-type $\cf(\beta)$ such that $D_\beta\cap T=\emptyset$.
  \end{itemize}

  For every ordinal $\beta<\kappa$, let
  $$j(\beta):=\begin{cases}
  i,&\text{if }\min(D_\beta)=\delta_i;\\
  0,&\text{otherwise}.
  \end{cases}$$

  For every $i<\theta$, set $\Gamma_i:=\{\beta\in \Omega\mid j(\beta)=i\}$.
  Then set $\Gamma := \Omega \cup (\acc(\kappa)\cap E^\kappa_{<\theta})$.

  \begin{claim}\label{cf_claim}
   \begin{enumerate}
    \item $(E^\kappa_{\neq \theta} \cap \acc(\kappa)) \s \Gamma \s \acc(\kappa)$ and $\Gamma\cap T=\emptyset$.
  \item For all $\beta\in \Delta$ and $\alpha \in \acc(D_\beta)$,
    we have:
    \begin{itemize}
    \item $\alpha\in\Omega$,
    \item $D_\alpha = D_\beta \cap \alpha$, and
    \item $j(\alpha)=j(\beta)$.
    \end{itemize}
  \item    For every $\beta \in \Gamma\setminus\acc^+(\Gamma)$, we have $\otp(D_\beta\setminus\sup(\Gamma\cap\beta))=\omega$.
  \item For every $i<\theta$, $\Gamma_i$ is stationary.
  \item $\min(\Omega)\in\Gamma_0$.
    \end{enumerate}
  \end{claim}

  \begin{cproof} (1) This follows directly from the definition of $\Gamma$.

  (2)   Let $\beta\in \Delta$ and $\alpha \in \acc(D_\beta)$ be arbitrary. There two cases to consider:
  \begin{itemize}
  \item[$\br$] If $\otp(C_\beta)\le\zeta$, then $\alpha\in\acc(D_\beta)=\acc(C_\beta)$,
  so $C_\alpha=C_\beta\cap\alpha$ and $\otp(C_\alpha)<\zeta$.
  Since $\beta \in \Delta$ and $\alpha \in \acc(C_\beta)$, the definition
  of $\Omega$ implies that $\alpha \in \Omega \subseteq \Delta$.
  We therefore have $D_\alpha = C_\alpha = C_\beta \cap \alpha =
  D_\beta \cap \alpha$. In particular, $\min(D_\alpha) = \min(D_\beta)$, so
  $j(\alpha)=j(\beta)$.
	\item[$\br$] If $\otp(C_\beta)>\zeta$, then $\alpha\in\acc(D_\beta)=\acc(C_\beta\setminus C_\beta(\zeta))$,
  so $C_\alpha=C_\beta\cap\alpha$ and $\otp(C_\alpha)>\zeta$. As
  in the previous case, we have $\alpha \in \Omega \subseteq \Delta$, and
  therefore $D_\alpha = C_\alpha\setminus C_\alpha(\zeta)=(C_\beta\setminus
  C_\beta(\zeta))\cap\alpha=D_\beta\cap\alpha$. In particular,
  $j(\alpha)=j(\beta)$.
  \end{itemize}

	(3) Let $\beta \in \Gamma$. Note the following:
	\begin{itemize}
	\item[$\br$]If $\omega < \cf(\beta)\le\theta$, then  $E^\beta_\omega\s \acc(\kappa)\cap E^\kappa_{<\theta}\s\Gamma$,
	and hence $\beta\in\acc^+(\Gamma)$.

	\item[$\br$] If $\cf(\beta)>\theta$, then $\acc(C_\beta)\s\Omega\s \Gamma$,
	so again $\beta\in\acc^+(\Gamma)$.
	\end{itemize}

  Therefore, if $\beta\in \Gamma \setminus\acc^+(\Gamma)$, then $\cf(\beta)=\omega$.
	So, if $\otp(D_\beta)>\omega$,
	then $\beta\in\Delta$, and then $\acc(C_\beta)\s\Omega\s\Gamma$.
	As we assume that $\beta\notin\acc^+(\Gamma)$, it in particular follows that $\sup(\acc(C_\beta))<\beta$,
	and $\otp(C_\beta\setminus\sup(\Gamma\cap\beta))=\omega$.
	As $D_\beta$ is a final segment of $C_\beta$, it also follows that
	$\otp(D_\beta\setminus\sup(\Gamma\cap\beta))=\omega$.

	(4) This follows directly from the choice of $\langle \tau_i\mid i<\theta\rangle$.

	(5) For every $i<\theta$, we have $\tau_i\ge\tau_0>\min(\Omega)$,
  so it is impossible for $j(\min(\Omega))$ to be greater than $0$.
  \end{cproof}

  We now construct a $\inds(\kappa, \theta)$-sequence
  $\langle C_{\alpha, i} \mid \alpha \in \Gamma, ~ i(\alpha) \leq i <
  \theta \rangle$.
  We will maintain the requirement that, for all $\alpha \in \Omega$,
  we have $i(\alpha) = j(\alpha)$ and $\acc(D_\alpha) \subseteq \acc(C_{\alpha, i(\alpha)})$.

  As a base case, if $\beta = \min(\Gamma)$, then by Claim~\ref{cf_claim}(3),
  we have $\otp(D_\beta) =  \omega$. Set $i(\beta):=0$,
  and let $C_{\beta, i} := D_\beta$ for all $i<\theta$.
  Note that if $\beta\in\Omega$, then
  by Claim~\ref{cf_claim}(5), indeed $i(\beta)=j(\beta)$.

  Suppose now that $\beta \in \Gamma \setminus \{\min(\Gamma)\}$
  and we have defined $\langle C_{\alpha, i} \mid \alpha \in \Gamma
  \cap \beta, ~ i(\alpha) \leq i < \theta \rangle$ satisfying all
  relevant instances of Clauses (2) and (3) of Definition~\ref{inds}
  as well as our recursive requirement.
  The construction breaks into a number of different cases based on the
  identity of $\beta$. In all cases, the verification that our sequence
  satisfies our recursive requirement and Clauses (2) and (3) of
  Definition~\ref{inds} at $\beta$ is routine and therefore largely left
  to the reader.

   \begin{description}
  \item[Case 1] $\beta \in \Gamma \setminus \Omega$.
  In particular,
  we have $\cf(\beta) < \theta$. We now split into subcases depending on
  the behavior of $\Gamma \cap \beta$.

  \item[Case 1a] \textbf{$\sup(\Gamma \cap \beta) < \beta$ and $\max(\Gamma
  \cap \beta)$ exists.}
  Let $\alpha := \max(\Gamma \cap \beta)$. By Claim~\ref{cf_claim}(3), we know
  that $D^-_\beta := D_\beta \setminus \alpha$ has order type $\omega$.
  Let $i(\beta) := i(\alpha)$ and, for all $i \in [i(\beta), \theta)$, let
  $C_{\beta, i} = C_{\alpha, i} \cup \{\alpha\} \cup D^-_\beta$.

  \item[Case 1b] \textbf{$\sup(\Gamma \cap \beta) < \beta$ but $\max(\Gamma
  \cap \beta)$ does not exist.}
  Let $\alpha := \sup(\Gamma \cap \beta)$. As in Case (1a), we know that
  $D^-_\beta := D_\beta \setminus \alpha$ has order type
  $\omega$.
  Since $\alpha\notin\Gamma$, it follows that $\alpha\in E^\kappa_\theta$.
  Since $\cf(\beta) < \theta$, we know that $\theta > \omega$, and hence
  $\acc(D_\alpha)$ is unbounded in $\alpha$. Let
  $\langle \alpha_i \mid i < \theta
  \rangle$ be a strictly increasing sequence of elements of
  $\acc(D_\alpha)$ converging to $\alpha$.
  As $\alpha\in E^\kappa_\theta\s\Delta$,
  it follows from Claim~\ref{cf_claim}(2) that, for all $i<\theta$,
  $\alpha_i\in \Omega\s\Gamma$ and $j(\alpha_i)=j(\alpha)$.
  So, by the fact that our
  construction thus far satisfies all of our requirements, we know that,
  for all $i < \theta$, we have $i(\alpha_i) = j(\alpha)$ and
$\acc(D_\alpha) \cap \alpha_i = \acc(D_{\alpha_i})\s \acc(C_{\alpha_i, j(\alpha)})$.
  Set $i(\beta):= j(\alpha)$ and $C_{\beta, i} := C_{\alpha_i, i} \cup
  \{\alpha_i\} \cup D^-_\beta$ for all $i\in[i(\beta),\theta)$.

  \item[Case 1c] $\sup(\Gamma \cap \beta) = \beta$.
  In this case, let
  $\langle \alpha_\eta \mid \eta < \cf(\beta) \rangle$ be an increasing
  sequence of elements of $\Gamma$ converging to $\beta$. Let $i(\beta)$ be
  the least ordinal $i < \theta$ such that, for all $\eta < \xi < \cf(\beta)$,
  we have $i(\alpha_\eta), i(\alpha_\xi) \leq i$ and $\alpha_\eta \in
  \acc(C_{\alpha_\xi, i})$. Such an ordinal exists because $\cf(\beta)
  < \theta$ and our sequence so far satisfies Clauses (2) and (3) of
  Definition~\ref{inds}. Then, for all $i \in [i(\beta), \theta)$, let
  $C_{\beta, i} = \bigcup_{\eta < \cf(\beta)} C_{\alpha_\eta, i}$.

  \item[Case 2] $\beta \in \Omega$. In this case, we are required to
  set $i(\beta) := j(\beta)$. We again split into subcases depending on the
  behavior of $\Gamma \cap \beta$ and $\acc(D_\beta)$.

  \item[Case 2a] \textbf{$\sup(\Gamma \cap \beta) < \beta$ and $\max(\Gamma
  \cap \beta)$ exists.} Let $\alpha := \max(\Gamma \cap \beta)$. As in
  Case (1a), we know that $D^-_\beta := D_\beta \setminus \alpha$ has order
  type $\omega$.

  $\br$ If $\alpha\in\acc(D_\beta)$, then $i(\alpha)=j(\alpha)=j(\beta)=i(\beta)$,
  so we let $C_{\beta, i} := C_{\alpha, i} \cup \{\alpha\}  \cup D^-_\beta$
  for all $i\in[i(\beta),\theta)$.
  Note that
  \[
    \acc(D_\beta) = \acc(D_{\alpha}) \cup \{\alpha\} \subseteq
    \acc(C_{\alpha, i(\alpha)}) \cup \{\alpha\} \subseteq \acc(C_{\beta, i(\beta)}),
  \]
  so we have satisfied our recursive hypothesis.

  $\br$   If $\acc(D_\beta) \neq \emptyset$ but $\alpha>\max(\acc(D_\beta))$,
  then let $\alpha^* := \max(\acc(D_\beta))$
  and let $i^*\in[i(\alpha),\theta)$ be least such that
  $\alpha^* \in \acc(C_{\alpha, i^*})$.
  Note that $i(\beta)=i(\alpha^*)\le i^*$.
  Let $C_{\beta, i} := C_{\alpha^*, i} \cup \{\alpha^*\}  \cup D^-_\beta$
  for all $i\in[i(\beta),i^*)$,
  and let $C_{\beta, i} := C_{\alpha, i} \cup \{\alpha\} \cup D^-_\beta$
  for all $i\in[i^*,\theta)$.

  Note that we have satisfied our recursive hypothesis.

  $\br$ If $\acc(D_\beta)=\emptyset$, then let $C_{\beta,i}:=D_\beta^-$ for all $i\in[i(\beta),i(\alpha))$,
  and let $C_{\beta, i} := C_{\alpha, i} \cup \{\alpha\} \cup D^-_\beta$
  for all $i\in[\max\{i(\beta),i(\alpha)\},\theta)$.

  \item[Case 2b] \textbf{$\sup(\Gamma \cap \beta) < \beta$ but $\max(\Gamma
  \cap \beta)$ does not exist.} Let $\alpha := \sup(\Gamma \cap \beta)$.
  Once again, it follows that $D^-_\beta := D_\beta \setminus \alpha$ has order
  type $\omega$. As in Case (1b), we have $\alpha \in E^\kappa_\theta$,
  and, for all $\bar\alpha\in\acc(D_\alpha)$, we have
  $\bar \alpha\in \Omega$ and $j(\bar \alpha)=j(\alpha)$.
  Let $\alpha^*:=\max(\acc(D_\beta))$ if $\acc(D_\beta) \neq \emptyset$, and
  $\alpha^* := \min(\Gamma)$ otherwise. In either case, note
  that $\alpha^* < \alpha$ and $i(\alpha^*) \le i(\beta)$.

  Our construction now depends on whether or not $\theta = \omega$.

  $\br$ If $\theta = \omega$, then let $\langle \alpha_n \mid n < \omega \rangle$
  be a strictly increasing sequence of elements of $\Gamma \cap \alpha$
  converging to $\alpha$ such that $\alpha_0 = \alpha^*$.
  Let $\langle i_n \mid n < \omega \rangle$
  be a strictly increasing sequence of natural numbers such that,
  for all $n < \omega$, $\max\{i(\alpha_m)\mid m\le n\}\le i_n$
   and $\{\alpha_m \mid m<n\}\s \acc(C_{\alpha_n,  i_n})$.

  Finally, for all $i \in [i(\beta), \omega)$,
  let $n < \omega$ be such that $i_n \leq i < i_{n+1}$, and set
  $C_{\beta, i} := C_{\alpha_n, i} \cup \{\alpha_n\} \cup D^-_\beta$.

  $\br$ If $\theta > \omega$, then let $\langle \alpha_i \mid i < \theta
  \rangle$ be a strictly increasing sequence of elements of $\acc(D_\alpha)$
  converging to $\alpha$ such that $\alpha_0 > \alpha^*$.
  As noted earlier, we have $i(\alpha_i) = j(\alpha)$.
  Let $i^*\in[j(\alpha),\theta)$ be least such that $\alpha^* \in \acc(C_{\alpha_0, i^*})$.
  In particular, for all
  $i \in [i^*, \theta)$, we have $\alpha^* \in \acc(C_{\alpha_i, i})$,
  so that $\acc(D_\beta)\s \acc(C_{\alpha_i, i})$.

  Now, let
  $C_{\beta, i} := C_{\alpha^*, i} \cup \{\alpha^*\} \cup D^-_\beta$
  for all $i\in[i(\beta),i^*)$,
  and let $C_{\beta, i} := C_{\alpha_i, i} \cup \{\alpha_i\} \cup D^-_\beta$
  for all $i\in[i^*,\theta)$.

  \item[Case 2c] \textbf{$\sup(\Gamma \cap \beta) = \beta$ but
  $\sup(\acc(D_\beta)) < \beta$.} In this case,
  $\otp(D_\beta\setminus\sup(\acc(D_\beta)))=\omega$.
  Let $\alpha^*:=\max(\acc(D_\beta))$ if $\acc(D_\beta) \neq \emptyset$, and
  $\alpha^* := \min(\Gamma)$ otherwise. In either case,
  $\alpha^* < \alpha$ and $i(\alpha^*) \le i(\beta)$.
  Let $\langle \alpha_n \mid n < \omega \rangle$ be an increasing sequence of elements of $\Gamma$
  converging to $\beta$ with $\alpha_0 := \alpha^*$.
  Fix an increasing sequence $\langle i_n \mid n < \omega \rangle$ of ordinals below
  $\theta$ such that
  for all $n < \omega$, $\max\{i(\alpha_m)\mid m\le n\}\le i_n$
  and $\{\alpha_m \mid m<n\}\s \acc(C_{\alpha_n,  i_n})$.

  Finally, fix $i\in[i(\beta),\theta)$. If there is
  $k < \omega$ such that $i_k \leq i < i_{k+1}$, then set
  $C_{\beta, i} := C_{\alpha_k, i} \cup \{\alpha_n \mid k \leq n < \omega\}$
  for this unique $k$. Otherwise, set
  $C_{\beta, i} := \bigcup_{n < \omega} C_{\alpha_n, i}$.

  \item[Case 2d] \textbf{$\sup(\acc(D_\beta)) = \beta$.} Note that, for
  all $\alpha \in \acc(D_\beta)$, we have $\alpha \in \Omega$,
  $i(\alpha) = i(\beta)$ and $\acc(D_\beta) \cap \alpha =
  \acc(D_\alpha) \subseteq C_{\alpha, i(\alpha)}$. Therefore, for all $i\in[i(\beta),\theta)$,
  we can simply set $C_{\beta, i} := \bigcup_{\alpha \in \acc(D_\beta)}
  C_{\alpha, i}$.
  \end{description}

  Our construction has yielded a matrix satisfying clauses
  (1)--(3) of Definition~\ref{inds} and such that $\Gamma \cap T =
  \emptyset$. It remains to verify clause (4) of Definition~\ref{inds}.
    Towards a contradiction, suppose that $D$ is a club in $\kappa$
    satisfying that, for every $\alpha\in\acc(D) \cap \Gamma$,
      there exists $i<\theta$ such that $D\cap\alpha\neq C_{\alpha,i}$.
      Fix $i<\theta$ for which $G:=\{\gamma\in \Gamma\mid D\cap\gamma=C_{\gamma,i}\}$
      is stationary.
      Recalling Clause~\ref{cf_claim}(4),
    let us now fix $\beta\in\Gamma_{i+1}\cap\acc(D)$. Pick $\gamma\in G$ above $\beta$.
    Then $\beta\in\acc(D\cap\gamma)=\acc(C_{\gamma,i})$, so $i(\beta)\le i$,
    contradicting the fact that $i(\beta)=j(\beta)=i+1$.
\end{proof}

\begin{cor} $(\aleph_{\omega+1},\aleph_\omega)\twoheadrightarrow(\aleph_1,\aleph_0)$ is compatible with $\US(\aleph_{\omega+1},\aleph_{\omega+1},\allowbreak\theta,\aleph_\omega)$ holding for every infinite cardinal $\theta<\aleph_\omega$.
\end{cor}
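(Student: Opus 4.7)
The plan is to work in a model $W$ of the Chang's Conjecture produced by the Levinski--Magidor--Shelah construction from a huge cardinal, in which $\aleph_\omega^W$ arises, via a Prikry-type singularization of cofinality $\omega$, from a cardinal $\lambda$ that is inaccessible in an intermediate model $V' \subseteq W$, with $(\lambda^+)^{V'} = \aleph_{\omega+1}^W$ preserved. I would furthermore arrange that $\gch$ holds in $W$ up to and including $\aleph_\omega$ (in particular, that $2^{\aleph_\omega} = \aleph_{\omega+1}$).

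For the case $\theta = \omega$, I would apply Theorem~\ref{outer_model_thm} to the pair $V' \subseteq W$: with $\lambda$ inaccessible in $V'$, $\cf^W(\lambda) = \omega$, and $(\lambda^+)^{V'} = (\lambda^+)^W$, its conclusion is that $\inds(\aleph_{\omega+1},\omega)$ holds in $W$. Since $\sup(\reg(\aleph_{\omega+1})) = \aleph_\omega$, the implication $(1) \Rightarrow (3)$ of Theorem~A then supplies a closed, subadditive witness to $\U(\aleph_{\omega+1},\aleph_{\omega+1},\omega,\aleph_\omega)$, establishing the required instance of $\US$.

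For $\theta = \aleph_n$ with $n \geq 1$, i.e., $\theta \in \reg(\aleph_\omega)\setminus(\omega+1)$, I would invoke Theorem~B. Its hypotheses require a scale $\vec{f}$ for $\aleph_\omega$ in some product $\prod\vec{\lambda}$ together with $\sd(\vec{\lambda})$; under $\gch$ at $\aleph_\omega$ these are provided by Shelah's pcf-theoretic constructions of scales and club-guessing at successors of singular cardinals. Theorem~B then yields a $\Sigma$-closed, subadditive witness to $\U(\aleph_{\omega+1},\aleph_{\omega+1},\theta,\aleph_\omega)$ for every such $\theta$, completing the proof.

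The principal obstacle is verifying that the auxiliary hypotheses---the preservation of $\gch$ at $\aleph_\omega$ in the LMS extension and the existence in $W$ of a scale accompanied by $\sd(\vec{\lambda})$---actually hold. This requires a careful analysis of cardinal arithmetic and of club-guessing sequences through the Prikry-type stages of the LMS forcing, leveraging that the intermediate model $V'$ inherits strong combinatorial structure from the huge cardinal and that Prikry-type forcings are sufficiently mild---adding no bounded subsets of $\aleph_\omega$---to transmit the relevant pcf structure from $V'$ to $W$.
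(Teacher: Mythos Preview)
Your approach has a genuine gap in the treatment of $\theta \geq \aleph_1$. The principle $\sd(\vec\lambda)$ is not a consequence of $\gch$, nor does it follow from Shelah's club-guessing theorems: by Remark~\ref{remark7}, $\sd(\vec\lambda)$ entails $\square(\lambda_j^+)$ for every $j$, which is a genuine square principle independent of $\zfc+\gch$. Your closing paragraph acknowledges verifying $\sd(\vec\lambda)$ in the LMS model as an obstacle but does not resolve it; the hand-wave to ``strong combinatorial structure inherited from the huge cardinal'' and ``mildness of Prikry-type forcings'' does not explain why $\square(\aleph_{n+1})$ should hold for a cofinal set of $n$'s in that model, and in fact there is no reason to expect it.

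The paper's proof avoids all of this by a much simpler route: start from \emph{any} model of $(\aleph_{\omega+1},\aleph_\omega)\twoheadrightarrow(\aleph_1,\aleph_0)$, and force to add a $\square(\aleph_{\omega+1})$-sequence via the standard $\aleph_{\omega+1}$-strategically closed poset. Such forcing preserves the Chang's Conjecture instance, and a single $\square(\aleph_{\omega+1})$-sequence yields $\inds(\aleph_{\omega+1},\theta)$ for \emph{every} $\theta\in\reg(\aleph_\omega)$ simultaneously by Theorem~\ref{revised_lh_lucke}, whence Theorem~\ref{thm67} gives the desired closed subadditive witnesses to $\U(\aleph_{\omega+1},\aleph_{\omega+1},\theta,\aleph_\omega)$. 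There is no need to analyze the internal structure of the LMS model, no need for Theorem~B or $\sd(\vec\lambda)$, and no case split on $\theta$.
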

\begin{proof} Starting with a ground model in which $(\aleph_{\omega+1},\aleph_\omega)\twoheadrightarrow(\aleph_1,\aleph_0)$ holds,
one can add a $\square(\aleph_{\omega+1})$-sequence via an $\aleph_{\omega+1}$-strategically closed forcing,
hence preserving the principle $(\aleph_{\omega+1},\aleph_\omega)\twoheadrightarrow(\aleph_1,\aleph_0)$.
By Theorems \ref{thm67} and \ref{revised_lh_lucke}, in the extension,
$\US(\aleph_{\omega+1},\aleph_{\omega+1},\allowbreak\theta,\aleph_\omega)$ holds for every infinite cardinal $\theta<\aleph_\omega$.
\end{proof}
By Theorem~\ref{thm310}, for any $\theta\in\reg(\kappa)$,
there exists a $\theta^+$-directed closed, $\kappa$-strategically closed forcing notion
that introduces a somewhere-closed witness to $\US(\kappa, \kappa,\theta, \theta)$.
In order to force a fully closed witness, it seems that we must decrease the degree of
closure of the poset by one cardinal. The next theorem forms Clause~(2) of Theorem~C.
\begin{thm}\label{add_inds}   Suppose that $\theta \in\reg(\kappa)$. Then there exists a
  $\theta$-directed closed, $\kappa$-strategically closed forcing notion that introduces a closed witness $c:[\kappa]^2\rightarrow\theta$ to $\US(\kappa, \kappa,\theta, \theta)$
  for which $\partial(c)$ is stationary.
\end{thm}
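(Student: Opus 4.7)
The plan is to mimic the proof of Theorem~\ref{thm310}(1), but to impose on every $p\in\mathbb P$ the stronger requirement of full closedness (i.e.\ $(\gamma_p+1)$-closedness in the sense of Definition~\ref{def21}(4)) in place of the weaker $E^{\gamma_p+1}_{\geq\theta}$-closedness used there. Concretely, $\mathbb P$ will consist of all subadditive and closed colorings $p:[\gamma_p+1]^2\to\theta$, ordered by reverse extension, with $\emptyset$ as the maximum.

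For $\theta$-directed closedness, the tree-like structure of $\mathbb P$ reduces matters to $\theta$-closedness, and the single-$i^*$ construction of Case~$\xi<\theta$ of Claim~\ref{directed_closed_claim} supplies a candidate lower bound $q$, after first replacing each $p_\eta$ by an extension with successor $\gamma_\eta$. To check full closedness of $q$ at the new top $\gamma^*$, fix $i<\theta$ and $A\subseteq D^q_{\leq i}(\gamma^*)$ with $\beta:=\sup(A)\notin A$; then $\beta$ is a limit ordinal, hence $\beta\neq\gamma_\eta$ for any $\eta$. Pick the least $\eta^*<\xi$ with $\beta<\gamma_{\eta^*}$; subadditivity of $p_{\eta^*}$, together with the uniform bound $i^*$ on $c$-values between chain points, yields $A\subseteq D^{p_{\eta^*}}_{\leq i}(\gamma_{\eta^*})$, and closedness of $p_{\eta^*}$ then places $\beta$ in this set, so a further use of subadditivity gives $q(\beta,\gamma^*)\leq i$. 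For $\kappa$-strategic closedness, Player~II plays the winning strategy from the proof of Theorem~\ref{thm310}(1); the same verification shows that the lower bounds produced at limit even stages are in fact fully closed, not merely $E^{\gamma_\xi+1}_{\geq\theta}$-closed.

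To force $\partial(\dot c)$ stationary, given $p\in\mathbb P$ and a $\mathbb P$-name $\dot D$ for a club in $\kappa$, I would use strategic closedness to build a strictly decreasing chain $\langle p_\eta\mid\eta<\theta\rangle$ below $p$, interleaved with ordinals $\alpha_\eta\in\dot D$ satisfying $\gamma_\eta<\alpha_\eta<\gamma_{\eta+1}$ and with each $\gamma_\eta$ a successor ordinal. At the top, I would invoke the Case~$\xi=\theta$ construction of Claim~\ref{directed_closed_claim} --- with a strictly increasing sequence $\langle i_\eta\mid\eta<\theta\rangle$ of ordinals below $\theta$ dominating the $c$-values between successive $\gamma_\eta$'s --- to produce a lower bound $q$. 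The successorness of the $\gamma_\eta$'s again rules out the otherwise problematic equality $\beta=\gamma_{\eta^{**}}$ in the closedness verification (where $\eta^{**}$ denotes the largest $\eta$ with $i_\eta\leq i$). By construction, for each $i<\theta$, $D^q_{\leq i}(\gamma^*)$ is bounded below $\gamma^*$, since $q(\alpha,\gamma^*)\leq i$ forces $\alpha<\gamma_{\eta^{**}}$. Hence $q\forces_{\mathbb P}\gamma^*\in\partial(\dot c)\cap\dot D$, and together with Lemma~\ref{lemma39} this yields $\forces_{\mathbb P}\US(\kappa,\kappa,\theta,\theta)$.

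The main obstacle throughout is the verification of full closedness of the lower bounds at limit stages: whereas the $E^{\gamma_p+1}_{\geq\theta}$-closedness argument of Theorem~\ref{thm310}(1) could ignore suprema of cofinality below $\theta$, here every cofinality below $\gamma^*$ must be handled. The successor-trick for the $\gamma_\eta$'s eliminates the awkward case $\beta=\gamma_\eta$, and combining both kinds of subadditivity with the closedness of earlier approximations then closes the remaining cases.
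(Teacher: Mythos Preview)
Your approach --- directly forcing a closed subadditive coloring by strengthening the conditions of Theorem~\ref{thm310}(1) to full closedness --- is different from the paper's, which is indirect: the paper invokes the forcing $\mathbb{P}(\kappa,\theta)$ from \cite[\S3.3]{paper28} to add a $\square(\kappa,\sq_\theta)$-sequence with additional properties, then in the extension modifies the generic sequence to avoid a stationary $T\subseteq E^\kappa_\theta$, and finally appeals to Theorem~\ref{revised_lh_lucke}(2) and Lemma~\ref{uPLUSsub}(3). The paper thus sidesteps entirely the question of verifying closedness of limit-stage lower bounds for colorings.

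Your direct approach is plausible and can likely be completed, but the closedness verification for the stage-$\theta$ lower bound $q$ has a genuine gap. You rule out $\beta=\gamma_\eta$ via the successorness of the $\gamma_\eta$'s, and this (together with your argument) suffices when $\{\eta<\theta\mid i_\eta\le i\}$ has a maximum $\eta^{**}$. But if this set is a limit ordinal $\zeta<\theta$ (so that $i_\zeta>i\ge\sup_{\eta<\zeta}i_\eta$), then $\delta:=\sup_{\eta<\zeta}\gamma_\eta$ is a limit ordinal lying strictly below the successor ordinal $\gamma_\zeta$, whence $\eta_\delta=\zeta$ and $q(\delta,\gamma^*)\ge i_\zeta>i$; yet $\delta$ can arise as the supremum of a subset of $D^q_{\le i}(\gamma^*)$ (for instance, one containing cofinally many $\gamma_\eta$ with $\eta+1<\zeta$). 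This problem does not arise in your $\kappa$-strategic-closedness verification precisely because condition~(a) of the strategy makes the sequence $\langle\gamma_\eta\mid\eta\text{ even}\rangle$ continuous, so that $\eta_\beta$ is never a limit; here, by contrast, the successorness of every $\gamma_\eta$ forces a discontinuity at every limit $\zeta<\theta$.

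A repair would require building the chain $\langle p_\eta\mid\eta<\theta\rangle$ and the sequence $\langle i_\eta\mid\eta<\theta\rangle$ in tandem so that $\langle i_\eta\rangle$ is continuous, which in turn means controlling the values $p_\zeta(\gamma_\eta,\gamma_\zeta)$ when extending past the strategy's limit-stage output (whose top is $\sup_{\eta<\zeta}\gamma_\eta$) to a condition with successor top $\gamma_\zeta$. This is feasible but is more bookkeeping than your proposal supplies.
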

\begin{proof}
  By \cite[\S 7]{narrow_systems}, there is a $\theta$-directed closed,
  $\kappa$-strategically closed forcing notion that adds an
  $\square^{\ind}(\kappa, \theta)$-sequence and therefore, by Theorem~\ref{thm67},
  it adds a closed witness $c$ to $\US(\kappa, \kappa,\theta, \theta)$. However, unless $\theta=\omega$,
  it is unclear whether $\partial(c)$ can be made to be stationary in this case.
  Thus, instead, we appeal to the forcing $\mathbb P(\kappa,\theta)$
  from \cite[\S3.3]{paper28}. This is a $\theta$-directed closed,
  $\kappa$-strategically closed forcing notion
  that introduces a witness $\vec C=\langle C_\alpha\mid\alpha<\kappa\rangle$ to $\p^-(\kappa,2,\allowbreak{\sq_\theta},1,\{\kappa\},2,\sigma)$
  for every $\sigma<\kappa$.
  Now, utilizing the instance $\sigma:=\theta$,
  the proof of \cite[Theorem~4.1]{lh_trees_squares_reflection}
  makes it clear that the
  set $\{\alpha\in E^\kappa_\theta\mid \exists\eta<\alpha[\otp(C_\alpha\setminus\eta)=\theta]\}$ is stationary.
  Fix some $\eta<\kappa$ for which $T:=\{\alpha\in E^\kappa_\theta\mid \otp(C_\alpha\setminus\eta)=\theta\}$ is stationary.
  Note that, by the $\sq_\theta$-coherence of $\vec C$,
  the set $\Gamma:=\{\alpha\in\acc(\kappa)\mid \forall\bar\alpha\in\acc(C_\alpha)[C_{\bar\alpha}\sq C_\alpha]\}$
  covers $E^\kappa_{\ge\theta}$.
  Now, define a $C$-sequence $\vec{D} = \langle D_\alpha \mid \alpha < \kappa \rangle$ as follows:
  \begin{itemize}
  \item Let $D_0:=\emptyset$.
  \item For every $\alpha<\kappa$, let $D_{\alpha+1}:=\{\alpha\}$.
  \item For every $\alpha\in\acc(\kappa)\setminus\Gamma$, let $D_\alpha$ be some club in $\alpha$ of order-type $\cf(\alpha)$ such that $\nacc(D_\alpha)\s\nacc(\alpha)$.
  \item For every $\alpha\in \Gamma$ such that $\otp(C_\alpha\setminus\eta)\le\theta$, let $D_\alpha:=C_\alpha$.
  \item For any other $\alpha$, let $D_\alpha:=\{\beta\in C_\alpha\mid \otp((C_\alpha\setminus\eta)\cap\beta)>\theta\}$.
  \end{itemize}

  Note that, for every $\alpha\in\kappa\setminus\Gamma$, $\otp(D_\alpha)=\cf(\alpha)<\theta$, so $\acc(D_\alpha)\cap T=\emptyset$.

  \begin{claim}   $\vec D$ is a $\square(\kappa,\sq_\theta)$-sequence that avoids $T$.
  \end{claim}
  \begin{cproof} As any $\sq_\theta$-coherent $C$-sequence that avoids a stationary set is a $\square(\kappa,\sq_\theta)$-sequence,
  we shall focus on verifying that $\vec D$ is $\sq_\theta$-coherent and avoids $T$.
  By the definition of $\vec D$, it  suffices to verify that for all $\alpha\in\Gamma$ and $\bar\alpha\in\acc(D_\alpha)$, $D_{\bar\alpha}\sq D_\alpha$
  and $\bar\alpha\notin T$.
  Now, given such a pair $\bar\alpha<\alpha$, since $\alpha\in\Gamma$, we infer that
  $C_{\bar\alpha}\sq C_\alpha$ and $\bar\alpha\in\Gamma$. We shall verify that $D_{\bar\alpha}\sq D_\alpha$
  and that $\bar\alpha\notin T$.

  $\br$ If $\otp(C_\alpha\setminus\eta)\le\theta$, then $\otp(C_{\bar\alpha}\setminus\eta)<\theta$, so
  $D_{\bar\alpha}=C_{\bar\alpha}\sq C_\alpha=D_\alpha$
  and $\bar\alpha\notin T$.

  $\br$ If $\otp(C_\alpha\setminus\eta)>\theta$, then $D_\alpha=\{\beta\in C_\alpha\mid \otp((C_\alpha\setminus\eta)\cap\beta)>\theta\}$,
  and so, from $\bar\alpha\in\acc(D_\alpha)$ and $C_{\bar\alpha}=C_\alpha\cap\bar\alpha$, it follows that $\otp(C_{\bar\alpha}\setminus\eta)>\theta$,
  so again $D_{\bar\alpha}\sq D_\alpha$ and $\bar\alpha\notin T$.
  \end{cproof}
  Now, by Theorem~\ref{revised_lh_lucke}(2) and Lemma~\ref{uPLUSsub}(3),
  there is a closed witness $c$ to $\US(\kappa, \kappa,\theta, \theta)$ such that $T\s\partial(c)$.
\end{proof}

It follows that there can be dramatic failures of monotonicity
in the third coordinate of $\US(\ldots)$.

\begin{cor} Suppose that $\theta$ is an indestructible supercompact cardinal below $\kappa$.
Then there is a forcing extension in which
all cofinalities $\leq \kappa$ are preserved and all of the following hold:
\begin{itemize}
\item $\US(\kappa,\kappa,\theta,\theta)$ holds and is witnessed by a closed coloring;
\item for all $\theta'<\theta$, no witness to $\U(\kappa,2,\theta',2)$ is subadditive of the second kind;
\item for all $\theta'<\theta$ such that $\kappa$ is not the successor of a singular cardinal of cofinality $\theta$', no witness to $\U(\kappa,2,\theta',2)$ is weakly subadditive of the first kind.
\end{itemize}
\end{cor}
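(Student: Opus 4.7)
The plan is to force over $V$ with the poset $\mathbb{P}$ supplied by Theorem~\ref{add_inds}, which is $\theta$-directed closed and $\kappa$-strategically closed and which adds a closed coloring $c$ witnessing $\US(\kappa,\kappa,\theta,\theta)$ (with $\partial(c)$ stationary, though that additional property will not be needed here). The $\kappa$-strategic closure of $\mathbb{P}$ ensures $({<}\kappa)$-distributivity, and hence preserves all cofinalities $\leq \kappa$. The coloring $c$ itself is exactly what the first bullet requires.

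For the remaining two bullets, I would exploit that the indestructibility of the supercompactness of $\theta$, together with the $\theta$-directed closure of $\mathbb{P}$, guarantees that $\theta$ remains supercompact in $V^{\mathbb{P}}$. In particular, $\theta$ remains strongly compact there, so, in the extension, for every $\theta' < \theta$ the interval $(\theta',\kappa]$ contains a strongly compact cardinal, namely $\theta$ itself.

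Having secured this hypothesis, the second and third bullets follow by direct appeal to Corollary~\ref{cor64}, applied in $V^{\mathbb{P}}$ with $\theta'$ in the role of the $\theta$ appearing there. Clause~(3) of that corollary immediately gives that no witness to $\U(\kappa,2,\theta',2)$ is subadditive of the second kind, establishing the second bullet. Clause~(2), under the auxiliary assumption that $\kappa$ is not the successor of a singular cardinal of cofinality $\theta'$, yields that no such witness is weakly subadditive of the first kind, establishing the third. The only point worth verifying is that indestructible supercompactness does survive $\theta$-directed closed forcing, but this is precisely the content of the indestructibility assumption, so no genuine obstacle remains; the corollary is essentially a packaging of Theorem~\ref{add_inds} and Corollary~\ref{cor64}.
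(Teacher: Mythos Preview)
Your proposal is correct and follows essentially the same approach as the paper: force with the poset of Theorem~\ref{add_inds}, use its $\kappa$-strategic closure to preserve cofinalities $\leq\kappa$ and its $\theta$-directed closure together with indestructibility to keep $\theta$ supercompact (hence strongly compact) in the extension, and then invoke Corollary~\ref{cor64} for each $\theta'<\theta$. The paper's own proof is just a terser version of exactly this argument.
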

\begin{proof}
  By Theorem~\ref{add_inds}, there is a $\theta$-directed closed,
  $\kappa$-strategically closed forcing notion $\mathbb{P}$ that adds
  a closed witness to $\US(\kappa, \kappa, \theta, \theta)$.
  Since $\mathbb{P}$ is $\theta$-directed closed, $\theta$ remains supercompact,
  so, we may appeal to Corollary~\ref{cor64}.
\end{proof}

\section{Successors of singular cardinals} \label{singular_sec}

\begin{defn}
  Suppose that $\vec\lambda=\langle \lambda_j\mid j<\nu\rangle$ is a sequence of infinite
  cardinals, each greater than $\nu$. The principle $\sd(\vec\lambda)$
  asserts the existence of a sequence $\langle X_\alpha\mid\alpha<\sup(\vec\lambda)\rangle$,
  and, for all $j<\nu$, a sequence $\vec {C}^j=\langle C_\alpha^j\mid \alpha<\lambda_j^+\rangle$
  such that
  \begin{itemize}
    \item for all $j<\nu$ and $\alpha\in\acc(\lambda_j^+)$, $C_\alpha^j$ is a
      club in $\alpha$, and for all $\bar\alpha\in\acc(C^j_\alpha)$,
      \begin{itemize}
        \item $C^j_{\bar\alpha}=C^j_\alpha\cap\bar\alpha$;
        \item $X_{\bar\alpha}=X_\alpha\cap\bar\alpha$, provided that $\alpha>\lambda_j$.
      \end{itemize}
    \item for all  $X\s\sup(\vec\lambda)$ and $p\in H_{\Upsilon}$, there exists $N\prec H_\Upsilon$
      such that
      \begin{itemize}
        \item $p \in N$;
        \item $|N| < \sup(\vec{\lambda})$;
        \item $N$ is internally approachable of length $\nu^+$;
        \item for all $j < \nu$, we have $X \cap \sup(N \cap \lambda_j^+) =
          X_{\sup(N \cap \lambda_j^+)}$.
      \end{itemize}
  \end{itemize}
\end{defn}

\begin{remark}\label{remark7}
  Note that, if $\sd(\vec{\lambda})$ holds for a sequence $\vec{\lambda} = \langle \lambda_j \mid j < \nu \rangle$,
  then $2^\mu\le\lambda$ for all $\mu<\lambda$,
  and $\square(\lambda_j^+)$ holds for each $j < \nu$, as witnessed by the sequence
  $\vec{C^j}$ from the above definition.
\end{remark}

We will soon show that $\sd(\vec{\lambda})$ entails certain instances of $\US(\lambda^+,
\ldots)$. We first need the following lemma.

\begin{lemma}\label{lem32}
  Suppose that $\langle C_\alpha \mid \alpha < \kappa \rangle$ is a $\square(\kappa)$-sequence,
  $\theta \in \reg(\kappa)$, and $h:\kappa \rightarrow \theta$ is any function such that
  \begin{enumerate}
    \item for all $i < \theta$, $h^{-1}\{i\}$ is stationary;
    \item for all $\alpha < \kappa$ and all $\bar\alpha \in \acc(C_\alpha)$, we have
      $h(\bar\alpha) = h(\alpha)$.
  \end{enumerate}
  Then there is a closed, subadditive witness $c$ to $\U(\kappa, \kappa, \theta,
  \sup(\reg(\kappa)))$ such that, for all $(\alpha, \beta) \in [\kappa]^2$
  with $\alpha \in \acc(\kappa)$, we have $c(\alpha, \beta) \geq h(\alpha)$.
\end{lemma}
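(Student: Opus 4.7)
The strategy is to construct from $\vec C$ and $h$ an $\inds(\kappa,\theta)$-matrix $\vec{\mathcal C}=\langle C_{\alpha,i}\mid \alpha\in\acc(\kappa),\, h(\alpha)\le i<\theta\rangle$ in which $\Gamma=\acc(\kappa)$ and $i(\alpha)=h(\alpha)$ for every $\alpha\in\Gamma$. Once this is done, Theorem~\ref{thm67}$(1)\implies(2)$ extracts from $\vec{\mathcal C}$ a closed, subadditive witness $c:[\kappa]^2\rightarrow\theta$ to $\U(\kappa,2,\theta,2)$; applying Lemma~\ref{uPLUSsub}(3) with $\Sigma:=E^\kappa_{\ge\chi}$ for each $\chi\in\reg(\kappa)$ lifts this to a witness of $\U(\kappa,\kappa,\theta,\chi)$, hence of $\U(\kappa,\kappa,\theta,\sup(\reg(\kappa)))$. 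The extra inequality $c(\alpha,\beta)\ge h(\alpha)$ for $\alpha\in\acc(\kappa)$ then falls out of the defining formula $c(\alpha,\beta)=\min\{i\ge i(\tilde\beta)\mid \tilde\alpha\in\acc(C_{\tilde\beta,i})\}$: since $\Gamma=\acc(\kappa)$, one has $\tilde\alpha=\alpha$ for such $\alpha$, and the coherence clause of $\inds$ forces any such $i$ to satisfy $i\ge i(\alpha)=h(\alpha)$.

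The matrix is built by recursion on $\alpha\in\acc(\kappa)$, following the construction in the proof of Theorem~\ref{revised_lh_lucke} with two modifications. First, in place of the auxiliary level function $j$ derived there from the tail ordinal $\zeta$ and the $\tau_i$-partition, we use $h$ directly: hypothesis~(1) supplies the required stationarity of each $\{\alpha\in\acc(\kappa)\mid h(\alpha)=i\}$, while hypothesis~(2) supplies the required coherence of $h$ on $\acc(C_\alpha)$, giving the analogue of Claim~\ref{cf_claim}(2) of that proof for free. Second, to arrange $\Gamma=\acc(\kappa)$ (in contrast with Theorem~\ref{revised_lh_lucke}, whose $\Gamma$ need not meet $E^\kappa_\theta$), an extra case is needed in the recursion for $\alpha\in E^\kappa_\theta$: fix a continuous, strictly increasing sequence $\langle\alpha_j\mid j<\theta\rangle\subseteq\acc(\kappa)\cap E^\kappa_{<\theta}$ cofinal in $\alpha$, and for each $i\in[h(\alpha),\theta)$ define $C_{\alpha,i}$ by grafting the recursively-defined $C_{\alpha_i,i}$ onto the tail $\{\alpha_j\mid i\le j<\theta\}$, in the spirit of Cases~(2c)--(2d) of that proof. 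Clauses~(1)--(3) of Definition~\ref{inds} are then verified case by case, using the $\square$-coherence of $\vec C$ together with the $h$-invariance from hypothesis~(2).

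For the non-threading (Clause~(4) of $\inds$), suppose for contradiction that some club $D\subseteq\kappa$ threads $\vec{\mathcal C}$, that is, for every $\alpha\in\acc(D)$ there is $i_\alpha\in[h(\alpha),\theta)$ with $D\cap\alpha=C_{\alpha,i_\alpha}$. By pigeonhole, fix $i^*<\theta$ and a stationary $S\subseteq\acc(D)$ such that $i_\alpha=i^*$ for every $\alpha\in S$; since $\acc(D)\cap h^{-1}\{i^*+1\}$ is stationary (by hypothesis~(1)) and $\acc^+(S)$ is a club, we can pick $\alpha\in\acc(D)\cap h^{-1}\{i^*+1\}\cap\acc^+(S)$ and then some $\gamma\in S$ with $\alpha<\gamma$. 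The identity $D\cap\gamma=C_{\gamma,i^*}$ gives $\alpha=\sup(C_{\gamma,i^*}\cap\alpha)$, and since $C_{\gamma,i^*}$ is a club in $\gamma$ this places $\alpha\in\acc(C_{\gamma,i^*})$; the coherence clause~(3) of $\inds$ then forces $i(\alpha)\le i^*$, i.e., $h(\alpha)\le i^*$, contradicting $h(\alpha)=i^*+1$.

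The main obstacle is the bookkeeping in the recursion, especially verifying the coherence $C_{\bar\alpha,i}=C_{\alpha,i}\cap\bar\alpha$ for \emph{every} $\bar\alpha\in\acc(C_{\alpha,i})$ when $\alpha\in E^\kappa_\theta$ --- not just for those $\bar\alpha$ inherited from $\acc(C_\alpha)$, but also for the new accumulation points introduced by the grafting step. Here the full $\square(\kappa)$ coherence of $\vec C$ (stronger than the $\square(\kappa,\sq_\theta)$ hypothesis of Theorem~\ref{revised_lh_lucke}) plays an essential role, as does the compatibility of $h$ with $\vec C$ granted by hypothesis~(2).
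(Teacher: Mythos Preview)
Your overall strategy is exactly the paper's: construct a $\square^{\ind}(\kappa,\theta)$-sequence (that is, an $\inds$-matrix with $\Gamma=\acc(\kappa)$) satisfying $i(\alpha)=h(\alpha)$, then read off $c$ via the $(1)\Rightarrow(2)$ direction of Theorem~\ref{thm67} and upgrade via Lemma~\ref{uPLUSsub}(3). The paper simply cites \cite[Theorem~3.4]{lh_lucke} for the matrix construction, observing that its proof yields $i(\alpha)=h(\alpha)$ under the stated hypotheses on $h$; your derivation of the inequality $c(\alpha,\beta)\ge h(\alpha)$ from the formula in Theorem~\ref{thm67} and your non-threading argument are both correct.

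There is, however, a genuine gap in your handling of $\alpha\in E^\kappa_\theta$ when $\theta>\omega$. You fix an \emph{arbitrary} continuous sequence $\langle\alpha_j\mid j<\theta\rangle\subseteq\acc(\kappa)\cap E^\kappa_{<\theta}$ and graft $C_{\alpha_i,i}$ onto the tail $\{\alpha_j\mid i\le j<\theta\}$. At any limit $j\in(i,\theta)$ the ordinal $\alpha_j$ then lies in $\acc(C_{\alpha,i})$, so Clause~(3) of Definition~\ref{inds} would force $C_{\alpha_j,i}=C_{\alpha_i,i}\cup\{\alpha_{j'}\mid i\le j'<j\}$. But $C_{\alpha_j,i}$ was already fixed at the earlier stage $\alpha_j$ of the recursion, with no reference to the sequence you are choosing now at stage $\alpha$; invoking ``full $\square(\kappa)$-coherence'' cannot retroactively force agreement when the $\alpha_j$'s bear no relation to $\vec C$. (There is also the smaller issue that nothing you wrote guarantees $h(\alpha_i)\le i$, which is needed for $C_{\alpha_i,i}$ even to be defined.)

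The fix, which is what \cite{lh_lucke} does and what Case~2d of Theorem~\ref{revised_lh_lucke} encodes, is to take the cofinal sequence from $\acc(C_\alpha)$ itself rather than choosing it freely. Since $\theta>\omega$ gives $\sup(\acc(C_\alpha))=\alpha$, and hypothesis~(2) gives $h(\bar\alpha)=h(\alpha)$ for every $\bar\alpha\in\acc(C_\alpha)$, one may simply set $C_{\alpha,i}:=\bigcup\{C_{\bar\alpha,i}\mid\bar\alpha\in\acc(C_\alpha)\}$ for $i\ge h(\alpha)$; the maintained recursive invariant $\acc(C_{\bar\alpha})\subseteq\acc(C_{\bar\alpha,h(\bar\alpha)})$ together with $\square(\kappa)$-coherence then makes these pieces patch together correctly. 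In short, no genuinely new case beyond those already in Theorem~\ref{revised_lh_lucke} is needed; one only has to observe that with full $\square(\kappa)$ every $\alpha\in E^\kappa_\theta$ falls under Case~2d once the restriction to $\Omega$ is dropped.
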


\begin{proof}
  By the proof of \cite[Theorem~3.4]{lh_lucke}, the hypotheses of the lemma entail
  the existence of a $\square^{\ind}(\kappa, \theta)$-sequence
  $\langle C_{\alpha, i} \mid \alpha \in \acc(\kappa), ~ i(\alpha) \leq i < \theta \rangle$
  such that $i(\alpha) = h(\alpha)$ for all $\alpha \in \acc(\kappa)$. The
  construction in the proof of the implication $(1) \implies (2)$ of Theorem~\ref{thm67}
  then yields a closed subadditive function $c:[\kappa]^2 \rightarrow \theta$
  witnessing $\U(\kappa, 2, \theta, 2)$ such that, for all $(\alpha, \beta) \in [\kappa]^2$,
  if $\alpha \in \acc(\kappa)$, then $c(\alpha, \beta) \geq i(\alpha) = h(\alpha)$.
  By Fact~\ref{uPLUSsub}, $c$ is moreover a witness to $\U(\kappa, \kappa, \theta,
  \sup(\reg(\kappa)))$.
\end{proof}

Our next result is Theorem~B. It is very much in the spirit of \cite[Corollary~3.10]{MR2078366};
both results take instances of incompactness below a singular cardinal $\lambda$
and use them together with a scale of length $\lambda^+$ to produce an instance
of incompactness at $\lambda^+$.

\begin{thm}\label{sdlambda}
  Suppose that $\lambda$ is a singular cardinal, $\theta \in \reg(\lambda) \setminus (\cf(\lambda) + 1)$,
  and $\vec \lambda=\langle \lambda_j \mid j < \cf(\lambda) \rangle$ is an increasing sequence
  of cardinals, converging to $\lambda$, such that
  \begin{itemize}
    \item $\sd(\vec{\lambda})$ holds;
    \item $\prod_{j < \cf(\lambda)} \lambda_j^+$ carries a scale $\vec{f}$ of length
      $\lambda^+$.
  \end{itemize}
  Then there is a $\Sigma$-closed, subadditive witness to $\U(\lambda^+, \lambda^+,
  \theta, \lambda)$, where $\Sigma \s E^{\lambda^+}_{\neq \cf(\lambda)}$ denotes the set of good points for $\vec{f}$.
\end{thm}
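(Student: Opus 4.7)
The plan is to construct $c:[\lambda^+]^2\to\theta$ by gluing together closed subadditive colorings at each $\lambda_j^+$ via the scale $\vec f$. Fix $j^*<\cf(\lambda)$ large enough that $\lambda_{j^*}>\theta$ and a surjection $\phi:\mathcal{P}(\theta)\to\theta$. For each $j\in[j^*,\cf(\lambda))$, replace $\vec C^j$ by $\langle C^j_\alpha\setminus\lambda_j\mid\alpha<\lambda_j^+\rangle$ (which remains a $\square(\lambda_j^+)$-sequence by Remark~\ref{remark7}), and define $h_j:\lambda_j^+\to\theta$ by $h_j(\alpha):=\phi(X_\alpha\cap\theta)$ for $\alpha\ge\theta$ and $h_j(\alpha):=0$ otherwise. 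The $\sd$-coherence $X_{\bar\alpha}=X_\alpha\cap\bar\alpha$ for $\alpha>\lambda_j$ and $\bar\alpha\in\acc(C^j_\alpha)$, combined with the trimming, ensures that $h_j$ satisfies the coherence hypothesis of Lemma~\ref{lem32}. The diamond-guessing part of $\sd(\vec\lambda)$ implies that each fiber $h_j^{-1}\{i\}$ meets every club in $\lambda_j^+$: apply $\sd$ with $X$ chosen so that $\phi(X\cap\theta)=i$ and $p$ encoding the given club; the resulting $\alpha^*:=\sup(N\cap\lambda_j^+)$ lies in the club and satisfies $h_j(\alpha^*)=i$. Hence Lemma~\ref{lem32} furnishes a closed subadditive $c_j:[\lambda_j^+]^2\to\theta$ witnessing $\U(\lambda_j^+,\lambda_j^+,\theta,\sup(\reg(\lambda_j^+)))$ with $c_j(\alpha,\beta)\ge h_j(\alpha)$ for every $\alpha\in\acc(\lambda_j^+)$.

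Next, for $\alpha<\beta<\lambda^+$, define
$$c(\alpha,\beta):=\limsup_{j\to\cf(\lambda)}c_j(f_\alpha(j),f_\beta(j)),$$
with the convention that $c_j(x,y):=0$ if $j<j^*$ or $x\ge y$. Since $\cf(\lambda)<\theta$ and $\theta$ is regular, the limsup is bounded below $\theta$. Subadditivity is now immediate: given $\alpha<\beta<\gamma<\lambda^+$, the scale relations $f_\alpha<f_\beta<f_\gamma$ hold coordinatewise on a tail of $[j^*,\cf(\lambda))$, on which the subadditivity of each $c_j$ applies; the identity $\limsup_j\max(a_j,b_j)=\max(\limsup_j a_j,\limsup_j b_j)$ for ordinal-valued sequences then transfers both inequalities to $c$. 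For $\Sigma$-closedness, let $\alpha\in\Sigma$, $\beta>\alpha$, and $A\subseteq D^c_{\le i}(\beta)$ be cofinal in $\alpha$; by goodness, pass to a cofinal $A'\subseteq A$ and find $j_1<\cf(\lambda)$ such that $\langle f_\delta(j)\mid\delta\in A'\rangle$ is strictly increasing and cofinal in $f_\alpha(j)$ for every $j\in[j_1,\cf(\lambda))$. Each $\delta\in A'$ admits a $j(\delta)<\cf(\lambda)$ with $c_j(f_\delta(j),f_\beta(j))\le i$ for all $j\ge j(\delta)$; using $\cf(\alpha)\ne\cf(\lambda)$ (pigeonhole when $\cf(\alpha)>\cf(\lambda)$, or trivially when $\cf(\alpha)<\cf(\lambda)$) one extracts a cofinal $A''\subseteq A'$ and $j^\#<\cf(\lambda)$ with $j(\delta)\le j^\#$ for all $\delta\in A''$. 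The full closedness of each $c_j$ then yields $c_j(f_\alpha(j),f_\beta(j))\le i$ for all $j\in[\max\{j_1,j^\#\},\cf(\lambda))$, so $c(\alpha,\beta)\le i$.

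The main obstacle is verifying the $\U(\lambda^+,\lambda^+,\theta,\lambda)$ property. Given a pairwise disjoint family $\mathcal{A}\subseteq[\lambda^+]^{<\lambda}$ of size $\lambda^+$ and $i<\theta$, the strategy is to encode $(\mathcal{A},\vec f,\langle c_j\rangle,\langle\vec C^j\rangle)$ as a subset $X\subseteq\lambda$ with $\phi(X\cap\theta)>i$, and apply $\sd(\vec\lambda)$ to obtain an internally approachable $N\prec H_\Upsilon$ of length $\cf(\lambda)^+$ and size $<\lambda$ that realizes $X\cap\alpha^*_j=X_{\alpha^*_j}$ at every $\alpha^*_j:=\sup(N\cap\lambda_j^+)$. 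The coherence then forces $h_j(\alpha^*_j)>i$ for all sufficiently large $j<\cf(\lambda)$, so any accumulation point $\eta>\alpha^*_j$ satisfies $c_j(\alpha^*_j,\eta)\ge h_j(\alpha^*_j)>i$. Inside $N$, one extracts $\lambda^+$-many candidates $a\in\mathcal{A}$ such that $\min(f_\gamma(j))>\alpha^*_j$ for every $\gamma\in a$ cofinally in $j$, yielding cofinally large $j$ at which $c_j(\alpha^*_j,f_\gamma(j))>i$. A further application of subadditivity of $c_j$ (using $\alpha^*_j$ as a pivot between $f_{\gamma'}(j)$ and $f_\gamma(j)$ for pairs drawn from distinct members of the subfamily) propagates the inequality to $c_j(f_{\gamma'}(j),f_\gamma(j))>i$ on a tail of $j$'s, and taking limsup gives $c(\gamma',\gamma)>i$ for all $(\gamma',\gamma)\in a\times b$, producing the desired $\mathcal{B}$. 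The delicate step is synchronizing the single-shot guessing of $\sd(\vec\lambda)$ across all coordinates $j$ simultaneously so as to realize the inequality cofinally in $j$; this uses crucially the internal approachability of $N$ of length $\cf(\lambda)^+$, ensuring that $N$ sees each $\alpha^*_j$ from below in a coherent way, combined with the $\U(\lambda_j^+,\ldots)$-property of each $c_j$ invoked inside $N$ to arrange the candidates.
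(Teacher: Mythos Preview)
Your construction of the $c_j$'s via Lemma~\ref{lem32} and the $\limsup$ glue, together with your verifications of subadditivity and $\Sigma$-closedness, are essentially the paper's argument (your surjection $\phi$ is an unnecessary flourish --- the paper just takes $X:=i+1\in\theta$ and sets $h_j(\alpha):=X_\alpha$ whenever $X_\alpha\in\theta$; also note one may take $\vec f$ continuous without affecting $\Sigma$, which you need for ``cofinal in $f_\alpha(j)$'').

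The genuine gap is in your verification of the $\U$-property. First, you attempt $\U(\lambda^+,\lambda^+,\theta,\lambda)$ directly; this is unnecessary and is what gets you into trouble. Since $c$ is subadditive and $\Sigma$-closed, and $\Sigma\cap E^{\lambda^+}_{\ge\chi}$ is stationary for every $\chi<\lambda$ (Shelah), Lemma~\ref{uPLUSsub}(3) reduces everything to $\U(\lambda^+,2,\theta,2)$: given cofinal $A\subseteq\lambda^+$ and $i<\theta$, find a single pair. Second, your ``pivot'' step is simply false: subadditivity furnishes upper bounds, not lower bounds, so from $c_j(\alpha^*_j,f_{\gamma'}(j))>i$ and $c_j(\alpha^*_j,f_\gamma(j))>i$ nothing follows about $c_j(f_{\gamma'}(j),f_\gamma(j))$.

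The missing idea is this. With $N$ as you describe (put $A,\vec f$ into the parameter $p$, not into $X$) and $\beta:=\sup(N\cap\lambda^+)$, internal approachability of length $\cf(\lambda)^+$ plus continuity of $\vec f$ force $f_\beta(j)=\alpha^*_j$ for all large $j$; hence $c_j(f_\beta(j),\eta)\ge h_j(\alpha^*_j)>i$ for all $\eta>f_\beta(j)$, so $c(\beta,\gamma)>i$ for every $\gamma>\beta$. Now fix $\gamma\in A\setminus(\beta+1)$. Since $\beta\in\Sigma$ and $c$ is $\Sigma$-closed, $\beta\notin D^c_{\le i}(\gamma)$ implies $D^c_{\le i}(\gamma)\cap\beta$ is bounded below $\beta$; pick $\alpha\in A\cap(\epsilon,\beta)$ above that bound (possible since $A\in N$), and you are done. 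The lower bound you sought comes from $\Sigma$-closedness of $c$ at $\beta$, not from a coordinatewise subadditivity pivot.
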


\begin{proof}
  Without loss of generality, assume that $\lambda_0>\theta$.
  Fix sequences $\langle X_\alpha \mid \alpha < \lambda \rangle$ and $\vec{C}^j
  = \langle C^j_\alpha \mid \alpha < \lambda_j^+ \rangle$ for $j < \cf(\lambda)$
  witnessing $\sd(\vec{\lambda})$.

  Let $j<\cf(\lambda)$ be arbitrary.
  It is clear from the definition of $\sd(\vec{\lambda})$ that we may assume
  that  $\min(C^j_\alpha) \geq \lambda_j$ whenever $\alpha \in(\lambda_j, \lambda_j^+)$.
  Now, define a function $h_j:\lambda_j^+ \rightarrow \theta$ by letting, for all $\alpha<\lambda_j^+$,
  $$h_j(\alpha):=\begin{cases}X_\alpha &\text{if }\alpha>\lambda_j\ \&\ X_\alpha\in\theta;\\
  0 &\text{otherwise}.\end{cases}$$
  Recalling the definition of $\sd(\vec{\lambda})$ and Remark~\ref{remark7}, it is evident that
  $\vec{C}^j$ and $h_j$ satisfy the hypotheses of Lemma~\ref{lem32},
  so we may fix a closed, subadditive witness $c_j:[\lambda_j^+]^2 \rightarrow \theta$ to
  $\U(\lambda_j^+, \lambda_j^+, \theta, \lambda_j)$ such that $c_j(\alpha, \beta)
  \geq h_j(\alpha)$ for all $(\alpha, \beta) \in [\lambda_j^+]^2$ with
  $\alpha \in \acc(\lambda_j^+)$.

  Next, let $\vec{f} = \langle f_\beta \mid \beta < \lambda^+ \rangle$ be a continuous scale in
  $\prod_{j < \cf(\lambda)} \lambda_j^+$, and let $\Sigma \s E^{\lambda^+}_{\neq \cf(\lambda)}$
  denote the set of good points for $\vec{f}$. Define a coloring $c:[\lambda^+]^2
  \rightarrow \theta$ by setting, for all $\beta<\gamma<\lambda^+$,
  $$
    c(\beta, \gamma) := \limsup_{j \rightarrow \cf(\lambda)} c_j(f_\beta(j), f_\gamma(j)).
  $$
  Note that, for all $(\beta, \gamma) \in [\lambda^+]^2$ and all sufficiently large
  $j < \cf(\lambda)$, we have $f_\beta(j) < f_\gamma(j)$, so the above expression
  is well-defined. We claim that $c$ is a $\Sigma$-closed,
  subadditive witness to
  $\U(\lambda^+, \lambda^+, \theta, \lambda)$. Let us verify each of these
  requirements in turn.

  \begin{claim}
    $c$ is $\Sigma$-closed.
  \end{claim}

  \begin{cproof}
    Suppose that $\gamma < \lambda^+$, $i < \theta$, and $B \s D^c_{\leq i}(\gamma)$,
    with $\beta := \sup(B)$ in $(\gamma \cap \Sigma) \setminus B$.
    We will show that $\beta \in D^c_{\leq i}(\gamma)$.

    Since $\beta \in \Sigma$, we can assume, by thinning out $B$ if necessary,
    that there is $j_0 < \cf(\lambda)$ such that, for all $(\alpha, \alpha') \in
    [B]^2$, we have $f_\alpha <_{j_0} f_{\alpha'}$. Since $\vec{f}$ is continuous,
    there is $j_1 \in [j_0, \cf(\lambda))$ such that, for all
    $j \in [j_1, \cf(\lambda))$, we have $f_\beta(j) = \sup\{f_\alpha(j) \mid
    \alpha \in B\}$. Finally, since $B \s D^c_{\leq i}(\gamma)$, we can assume,
    by thinning out $B$ again if necessary, that there is $j_2$ with
    $j_1 \leq j_2 < \cf(\lambda)$ such that, for all $j \in [j_2, \cf(\lambda))$,
    \begin{itemize}
      \item $f_\beta(j) < f_\gamma(j)$;
      \item for all $\alpha \in B$, $c_j(f_\alpha(j), f_\gamma(j)) \leq i$.
    \end{itemize}
    Since each $c_j$ is closed, it follows that, for all $j \in [j_2,\cf(\lambda))$,
    we have $\sup\{f_\alpha(j) \mid \alpha \in B\} = f_\beta(j)$ and
    $c_j(f_\beta(j), f_\gamma(j)) \leq i$, and hence $c(\beta, \gamma) \leq i$.
  \end{cproof}

  \begin{claim}
    $c$ is subadditive.
  \end{claim}

  \begin{cproof}
    Let $\alpha < \beta < \gamma < \lambda^+$ be arbitrary. For all sufficiently
    large $j < \cf(\lambda)$, we have $f_\alpha(j) < f_\beta(j) < f_\gamma(j)$
    and hence, since each $c_j$ is subadditive, we have
    \begin{itemize}
      \item $c_j(f_\alpha(j), f_\gamma(j)) \leq \max\{c_j(f_\alpha(j), f_\beta(j)),
        c_j(f_\beta(j), f_\gamma(j))\}$; and
      \item $c_j(f_\alpha(j), f_\beta(j)) \leq \max\{c_j(f_\alpha(j), f_\gamma(j)),
        c_j(f_\beta(j), f_\gamma(j))\}$.
    \end{itemize}
    It follows immediately from the definition of $c$ that $c(\alpha, \gamma)
    \leq \max\{c(\alpha, \beta), c(\beta, \gamma)\}$ and $c(\alpha, \beta)
    \leq \max\{c(\alpha, \gamma), c(\beta, \gamma)\}$.
  \end{cproof}

  \begin{claim}
    $c$ witnesses $\U(\lambda^+, \lambda^+, \theta, \lambda)$.
  \end{claim}

  \begin{proof} \renewcommand{\qedsymbol}{\ensuremath{\boxtimes \ \square}}
    By a theorem of Shelah, $\Sigma \cap E^{\lambda^+}_{\ge\chi}$  is stationary for all $\chi<\lambda$.
    So, since $c$ is $\Sigma$-closed and subadditive,
    Lemma~\ref{uPLUSsub}(3) implies that it suffices to verify that $c$ witnesses
    $\U(\lambda^+, 2, \theta, 2)$. To this end, fix $A \in [\lambda^+]^{\lambda^+}$
    and a color $i < \theta$. We will find $(\alpha, \gamma) \in [A]^2$ such that
    $c(\alpha, \gamma) > i$.

    Apply $\sd(\vec\lambda)$ with $X := i+1$ and $p:=\{\vec\lambda,\vec f,A\}$ to find $N \prec H_\Upsilon$
    such that
    \begin{itemize}
      \item $p\in N$;
      \item $|N| < \lambda$;
      \item $N$ is internally approachable of length $\cf(\lambda)^+$;
      \item for all $j < \cf(\lambda)$, we have $X_{\sup(N \cap \lambda_j^+)} = i+1$.
    \end{itemize}

    As $|N|<\lambda$ and $\vec\lambda\in N$, there exists a function $\chi_N\in\prod_{j<\cf(\lambda)}\acc(\lambda_j^+)$
    such that $\chi_N(j)=\sup(N\cap\lambda_j^+)$ for all sufficiently large $j<\cf(\lambda)$.
    Let $\beta:=\sup(N\cap\lambda^+)$.
    As $N$ is internally approachable of length $\cf(\lambda)^+$, we know that $\beta\in \Sigma \cap E^{\lambda^+}_{\cf(\lambda)^+}$.
    Since $\vec{f} \in N$, $\vec{f}$ is continuous,
    and $N$ is internally approachable, we know that $f_\beta(j) = \chi_N(j)$ for all sufficiently large $j<\cf(\lambda)$.
    Thus, for all sufficiently large $j<\cf(\lambda)$ and all $\eta \in (f_\beta(j),     \lambda_j^+)$,
    \[
      c_j(f_\beta(j), \eta) \geq h_j(f_\beta(j))= h_j(\chi_N(j))=X_{\sup(N\cap\lambda_j^+)}=i+1.
    \]
    Consequently, $c(\beta, \gamma)>i$ for all $\gamma>\beta$.

    Fix $\gamma \in A \setminus (\beta + 1)$.
    Since $\beta \in \Sigma$
    and $c$ is $\Sigma$-closed, there is $\epsilon < \beta$ such that
    $c(\alpha, \gamma)>i$ for all $\alpha \in (\epsilon, \beta)$. As $A\in N$, the
    elementarity of $N$ entails that $\sup(A \cap \beta) = \beta$, so we may
    fix $\alpha \in A \cap (\epsilon, \beta)$. Then $c(\alpha, \gamma)>i$,
    as desired.
  \end{proof}
  \let\qed\relax
\end{proof}

We now connect the above finding with the
notion of the \emph{$C$-sequence spectrum}, introduced in Part~II of this series.

\begin{defn}[\cite{paper35}]\label{cspecdef}
  \begin{enumerate}
    \item For every $C$-sequence $\vec{C} = \langle C_\beta \mid \beta < \kappa \rangle$,
      $\chi(\vec{C})$ is the least cardinal $\chi \leq \kappa$ such that there exist
      $\Delta \in [\kappa]^\kappa$ and $b:\kappa\rightarrow[\kappa]^\chi$ with
      $\Delta\cap\alpha\s\bigcup_{\beta\in b(\alpha)}C_\beta$
      for every $\alpha<\kappa$.
    \item $\cspec(\kappa) := \{\chi(\vec{C}) \mid \vec{C}$ is a $C$-sequence over $\kappa\} \setminus \omega$.
  \end{enumerate}

\end{defn}

The next result yields the ``In particular'' part of Theorem~B.

\begin{cor}
  Suppose that $\lambda$ is a singular cardinal, $\vec \lambda$ is an increasing $\cf(\lambda)$-sequence
  of cardinals, converging to $\lambda$, such that $\sd(\vec{\lambda})$ holds, and  $\tcf(\prod\vec\lambda,<^*)=\lambda^+$.
  Then $\reg(\lambda)\s \cspec(\lambda^+)$.
\end{cor}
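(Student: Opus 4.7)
The plan is to reduce the corollary to Theorem~\ref{sdlambda}, together with the observation that the subadditive witnesses it provides translate into $C$-sequences of prescribed characteristic. Fix $\theta \in \reg(\lambda)$. The main case is $\theta \in \reg(\lambda) \setminus (\cf(\lambda) + 1)$: here Theorem~\ref{sdlambda} furnishes a $\Sigma$-closed, subadditive witness $c_\theta:[\lambda^+]^2 \to \theta$ to $\U(\lambda^+, \lambda^+, \theta, \lambda)$, where $\Sigma$ is the set of good points of the scale. From $c_\theta$ I would extract a $C$-sequence $\vec C^\theta = \langle C^\theta_\beta \mid \beta < \lambda^+ \rangle$ of characteristic exactly $\theta$, by declaring $C^\theta_\beta := \cl(D^{c_\theta}_{\le i^*(\beta)}(\beta))$ for the least $i^*(\beta) < \theta$ making this set cofinal in $\beta$, with a default club choice at the (non-stationarily-many) $\beta$ where no such $i^*(\beta)$ exists.

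The upper bound $\chi(\vec C^\theta) \le \theta$ would be verified as follows: given any cofinal $\Delta \subseteq \lambda^+$, I would define $b:\lambda^+ \to [\lambda^+]^\theta$ so that $b(\alpha)$ consists of $\theta$-many $\beta \in (\alpha, \lambda^+)$ chosen so that the subadditivity of $c_\theta$ forces $\Delta \cap \alpha \subseteq \bigcup_{\beta \in b(\alpha)} C^\theta_\beta$, exploiting the nesting $D^{c_\theta}_{\le i}(\gamma) \cap \beta \subseteq D^{c_\theta}_{\le \max\{i, c_\theta(\beta, \gamma)\}}(\beta)$ that subadditivity yields. The lower bound $\chi(\vec C^\theta) \ge \theta$ is where the $\U$-witnessing property is decisive: if some cofinal $\Delta$ and some $b:\lambda^+ \to [\lambda^+]^{<\theta}$ covered $\Delta \cap \alpha$ for every $\alpha$, a pigeonhole argument would localize to a single color $i < \theta$ and a cofinal $\Delta' \subseteq \Delta$ with $c_\theta \restriction [\Delta']^2 \le i$, contradicting $\U(\lambda^+, \lambda^+, \theta, \lambda)$.

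For the remaining $\theta \in \reg(\lambda) \cap (\cf(\lambda)+1)$, I would argue as follows. The scale $\vec f = \langle f_\beta \mid \beta < \lambda^+ \rangle$ in $\prod \vec\lambda$ supplies a natural $C$-sequence: using the $\square(\lambda_j^+)$-sequences $\vec C^j$ granted by $\sd(\vec\lambda)$ (cf.~Remark~\ref{remark7}), one lifts them through the scale by setting $C_\beta := \bigcup_{j<\cf(\lambda)} \{f_\beta(j)\} \cup (\text{tail data from } C^j_{f_\beta(j)})$, appropriately coded as a cofinal subset of $\beta$. A parallel verification to the one above gives $\chi(\vec C) = \cf(\lambda)$, placing $\cf(\lambda) \in \cspec(\lambda^+)$; closure of $\cspec(\lambda^+) \cap \reg(\lambda)$ downward within $\reg(\lambda)$---obtained by composing with surjections $\cf(\lambda) \twoheadrightarrow \theta$ and re-indexing---then delivers every smaller $\theta \in \reg(\lambda)$.

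The main obstacle is matching the characteristic $\chi(\vec C^\theta)$ to $\theta$ \emph{exactly}; the upper bound $\chi(\vec C^\theta) \le \theta$ is bookkeeping, but the lower bound $\chi(\vec C^\theta) \ge \theta$ is the step that genuinely uses both the subadditivity and the unboundedness properties of $c_\theta$ and mirrors the connection between $\U(\lambda^+, \ldots)$ and $\cspec(\lambda^+)$ developed in Part~II of this series \cite{paper35}.
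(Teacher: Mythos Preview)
Your treatment of the main case $\theta > \cf(\lambda)$ is in the right direction and matches the paper's approach: invoke Theorem~\ref{sdlambda} and then pass from the resulting somewhere-closed subadditive witness to $\U(\lambda^+,\lambda^+,\theta,\lambda)$ to the statement $\theta\in\cspec(\lambda^+)$. The paper black-boxes this second step by citing \cite[Corollary~5.21]{paper35}, whereas you sketch a direct construction; the sketch is plausible, though since $c_\theta$ is only $\Sigma$-closed (not fully closed), the upper-bound verification needs a bit more care than you indicate.

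The genuine gap is in the case $\theta \le \cf(\lambda)$. Your ``closure of $\cspec(\lambda^+)$ downward via composing with surjections $\cf(\lambda) \twoheadrightarrow \theta$ and re-indexing'' does not work: $\cspec(\kappa)$ is defined via covering numbers of $C$-sequences, not via colorings, so there is nothing to compose a surjection with, and the spectrum is not known to be downward closed in general. Your scale-based construction of a $C$-sequence with characteristic exactly $\cf(\lambda)$ is also too vague to evaluate---lifting the $\square(\lambda_j^+)$-sequences through the scale does not obviously yield clubs in ordinals below $\lambda^+$, let alone a $C$-sequence with the prescribed characteristic. The paper handles these subcases by entirely separate citations, neither involving Theorem~\ref{sdlambda}: for $\theta = \cf(\lambda)$ it invokes \cite[Lemma~4.11]{paper35}, a general fact asserting $\cf(\lambda)\in\cspec(\lambda^+)$ for any singular $\lambda$; for $\theta < \cf(\lambda)$ it first observes via Remark~\ref{remark7} that $\sd(\vec\lambda)$ forces $2^{\cf(\lambda)} < \lambda$, and then applies \cite[Theorem~5.29(1)]{paper35} to conclude $\reg(\cf(\lambda))\subseteq\cspec(\lambda^+)$.
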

\begin{proof} $\br$ By Remark~\ref{remark7}, $2^{\cf(\lambda)}<\lambda$.
So, by \cite[Theorem~5.29(1)]{paper35}, $\reg(\cf(\lambda))\s\cspec(\lambda^+)$.

$\br$ By \cite[Lemma~4.11]{paper35}, $\cf(\lambda)\in\cspec(\lambda^+)$.

$\br$ By Theorem~\ref{sdlambda} and \cite[Corollary~5.21]{paper35}, $\reg(\lambda)\setminus(\cf(\lambda)+1)\s \cspec(\lambda^+)$.
\end{proof}

Our next goal is to improve the following fact from Part~II,
and present a weaker sufficient condition for $\cspec(\lambda^+)$ to cover $\reg(\cf(\lambda))$.

\begin{fact}[{\cite[Theorem~5.29(2)]{paper35}}]\label{fact57}
Suppose that $\lambda$ is a singular cardinal of successor cofinality $\mu^+$.
Then:
\begin{itemize}
\item $\reg(\mu)\s\cspec(\lambda^+)$;
\item If $2^{\mu}\le\lambda$, then $\reg(\cf(\lambda))\s\cspec(\lambda^+)$.
\end{itemize}
\end{fact}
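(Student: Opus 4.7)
The overall approach is to produce, for each regular cardinal $\theta$ in the target set, an explicit $C$-sequence $\vec C^\theta = \langle C_\beta \mid \beta < \lambda^+ \rangle$ realizing $\chi(\vec C^\theta) = \theta$. The construction will combine two standard ingredients from singular cardinal combinatorics: a Shelah scale of length $\lambda^+$ and nonreflecting stationary structure at cofinality $\theta$. Setup: I would first fix a strictly increasing continuous sequence $\langle \lambda_j \mid j < \mu^+ \rangle$ of regular cardinals in $(\mu^+, \lambda)$ converging to $\lambda$, together with a scale $\vec f = \langle f_\beta \mid \beta < \lambda^+ \rangle$ in $\prod_{j<\mu^+} \lambda_j$ of length $\lambda^+$, whose existence is guaranteed by Shelah's pcf theorem.

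For the first clause, fix $\theta \in \reg(\mu)$, so $\theta^+ \le \mu < \cf(\lambda)$. The plan is to invoke the ZFC theorem $E^{\lambda^+}_\theta \in I[\lambda^+]$ to obtain a stationary $S_\theta \subseteq E^{\lambda^+}_\theta$ carrying a coherent sequence $\langle e_\beta \mid \beta \in S_\theta \rangle$ of cofinal subsets of order type $\theta$. Setting $C_\beta := e_\beta$ for $\beta \in S_\theta$ and extending arbitrarily at other limits yields $\vec C^\theta$. The upper bound $\chi(\vec C^\theta) \le \theta$ then comes from taking $\Delta := S_\theta$ and letting $b(\alpha)$ collect the coherent extensions needed to cover $\Delta \cap \alpha$: coherence guarantees that $\theta$-many $C$-sets suffice. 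Moreover, one arranges $S_\theta$ so that it does not reflect at cofinalities above $\theta$, which will be the crucial ingredient for the lower bound.

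For the second clause, when $\mu$ is regular the only new content is $\mu \in \cspec(\lambda^+)$, since $\reg(\mu^+) \setminus \reg(\mu) \subseteq \{\mu\}$. Under the hypothesis $2^\mu \le \lambda$, Hausdorff's formula gives $(\lambda^+)^\mu = \lambda^+$, enabling a $\diamondsuit$-style bookkeeping: enumerate all candidate small covers $\langle (\Delta_\xi, b_\xi) \mid \xi < \lambda^+ \rangle$ and, at each point $\beta \in E^{\lambda^+}_\mu$, use the scale $\vec f$ to assign a club $C_\beta$ of order type $\mu$ specifically chosen to defeat the $\beta$-th candidate. The upper bound $\chi(\vec C) \le \mu$ comes for free by using the cofinal set of such $\beta$, while the diagonalization ensures that no $(<\mu)$-sized covering system works.

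The main obstacle in both clauses will be the lower bound $\chi(\vec C^\theta) \ge \theta$. The upper bound is essentially built in, but ruling out every candidate $(<\theta)$-sized cover requires a careful pressing-down argument, combining Fodor's lemma with either the nonreflection of $S_\theta$ at cofinalities above $\theta$ (first clause) or the scale-based diagonalization (second clause) to derive a contradiction from any hypothetical efficient covering.
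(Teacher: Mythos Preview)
This statement is recorded in the paper as a \emph{Fact} cited from Part~II of the series \cite[Theorem~5.29(2)]{paper35}; the present paper does not supply a proof. So there is no proof here to compare against directly. What the paper \emph{does} prove immediately afterward is a strengthening (dropping the successor-cofinality hypothesis in favor of ``$\cf(\lambda)$ is not greatly Mahlo''), and that argument proceeds by an entirely different route: it goes through closed witnesses to $\U(\lambda^+,\lambda^+,\theta,\cf(\lambda))$ and then invokes \cite[Corollary~5.21]{paper35} to transfer this to $\theta\in\cspec(\lambda^+)$. No explicit $C$-sequence with prescribed $\chi$-value is built directly.

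Your proposal, by contrast, tries to construct $\vec C^\theta$ explicitly and compute $\chi(\vec C^\theta)$ by hand. There is a genuine gap in your upper-bound argument for the first clause. You propose setting $C_\beta:=e_\beta$ for $\beta\in S_\theta$, where each $e_\beta$ has order type $\theta$, taking $\Delta:=S_\theta$, and then claiming that coherence lets $\theta$-many such $C_\beta$'s cover $\Delta\cap\alpha$. But $\Delta\cap\alpha=S_\theta\cap\alpha$ can have cardinality $|\alpha|$, which may be far larger than $\theta$; since each $e_\beta$ has size $\theta$, the union of $\theta$-many of them has size at most $\theta$ and cannot cover $S_\theta\cap\alpha$ for large $\alpha$. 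Coherence controls how the $e_\beta$'s relate along a single branch, but it does not help you cover a large stationary set with few small clubs. The upper bound therefore fails as stated; you would need a completely different choice of $\Delta$ or of the $C_\beta$'s at points outside $S_\theta$. The lower-bound sketch is also too vague to assess, and the second-clause diagonalization is underspecified (it is unclear what the ``candidate small covers'' are or how the scale defeats them). The route through $\U(\ldots)$ and \cite[Corollary~5.21]{paper35} is substantially more tractable.
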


\begin{thm} Suppose that $\lambda$ is a singular cardinal whose cofinality $\nu$ is not greatly Mahlo.
Then, for every infinite regular $\theta\le\cf(\nu)$, there exists a closed witness to $\U(\lambda^+,\lambda^+,\theta,\cf(\lambda))$.
\end{thm}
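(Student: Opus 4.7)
The plan is to follow the template of the proof of Theorem~\ref{sdlambda}, replacing the role played by the hypothesis $\sd(\vec\lambda)$ with combinatorial data furnished by Shelah's club-guessing theorem at non-greatly-Mahlo cardinals. To begin, I would fix an increasing sequence $\vec\lambda=\langle \lambda_j\mid j<\nu\rangle$ of regular cardinals in $(\theta,\lambda)$ converging to $\lambda$, together with a scale $\vec f=\langle f_\beta\mid \beta<\lambda^+\rangle$ in $\prod\vec\lambda$. By Shelah's scale theorems, the set $\Sigma$ of good points for $\vec f$ is contained in $E^{\lambda^+}_{\neq\nu}$ and meets $E^{\lambda^+}_{\geq\chi}$ stationarily for every $\chi<\lambda$. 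For each $j<\nu$, using Lemma~\ref{fact219}(1), I would fix a closed, locally small, subadditive witness $c_j:[\lambda_j^+]^2\to\lambda_j$ to $\U(\lambda_j^+,\lambda_j^+,\lambda_j,\lambda_j)$.

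Next, I would invoke the hypothesis that $\nu$ is not greatly Mahlo to extract, via Shelah's club-guessing theorem, a stationary set $T\s E^\nu_\theta$ together with a sequence $\langle e_\delta\mid\delta\in T\rangle$ such that each $e_\delta$ is a cofinal subset of $\delta$ of order-type $\theta$ and such that for every club $D\s\nu$, stationarily many $\delta\in T$ satisfy $e_\delta\s D$. Let $\pi_\delta:\theta\to e_\delta$ denote the order-preserving bijection for each $\delta\in T$. I would then define $c:[\lambda^+]^2\to\theta$ by the following scheme: given $\beta<\gamma$, first extract canonically from $\vec f$ an ordinal $\delta(\beta,\gamma)\in T$ and a threshold $k(\beta,\gamma)<\nu$, and then let $c(\beta,\gamma)$ be the least $i<\theta$ for which the $c_{\pi_{\delta(\beta,\gamma)}(i)}$-value of the pair $(f_\beta(\pi_{\delta(\beta,\gamma)}(i)),f_\gamma(\pi_{\delta(\beta,\gamma)}(i)))$ exceeds $k(\beta,\gamma)$, with a suitable default value when no such $i$ exists.

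The verification proceeds in three stages. Closedness of $c$ should be inherited from closedness of each $c_j$ together with the continuity of $\vec f$ at good points $\beta\in\Sigma$, provided $\delta(\beta,\cdot)$ and $k(\beta,\cdot)$ are chosen to stabilize along cofinal sequences approaching a good point. To witness $\U(\lambda^+,\lambda^+,\theta,\cf(\lambda))$, by Lemma~\ref{uPLUSsub}(3) it suffices to produce, for any $A\in[\lambda^+]^{\lambda^+}$ and any prescribed color $i<\theta$, a pair $(\alpha,\gamma)\in[A]^2$ with $c(\alpha,\gamma)>i$. This is done, as in the proof of Theorem~\ref{sdlambda}, by passing to an internally approachable $N\prec H_\Upsilon$ of length $\nu^+$ with $A\in N$ and $|N|<\lambda$, setting $\beta:=\sup(N\cap\lambda^+)\in\Sigma$, and applying the club-guessing property to the club $\{j<\nu\mid f_\beta(j)=\sup(N\cap\lambda_j^+)\}$ to locate $\delta\in T$ and cofinally many $j\in e_\delta$ on which $f_\beta(j)$ is sufficiently approachable for the desired color to be hit by any $\gamma\in A\setminus(\beta+1)$.

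The main obstacle will be coordinating the club-guessing object on $\nu$ with the good-point / internally-approachable-submodel argument on $\lambda^+$ so that the single function $c$ simultaneously achieves closedness and the positive partition property. Unlike in Theorem~\ref{sdlambda}, no diamond-like structure is available to globally predict the target color; rather, the club-guessing at $\nu$ must be used locally at each good point $\beta\in\Sigma$ to recover, along a cofinal set of coordinates in some $e_\delta$, every prescribed color $i<\theta$. Getting the definition of $\delta(\beta,\gamma)$ and $k(\beta,\gamma)$ to be simultaneously canonical enough for closedness and flexible enough for the club-guessing pigeonhole to fire is the delicate technical point.
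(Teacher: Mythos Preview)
Your approach diverges substantially from the paper's, and as written it contains genuine gaps rather than merely omitted routine details.

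First, you misidentify what the hypothesis ``$\nu$ is not greatly Mahlo'' buys. Club guessing at $E^\nu_\theta$ is, in the relevant range, a $\zfc$ phenomenon and has nothing to do with great Mahloness. What the hypothesis actually provides (via a theorem from \cite{paper47}) is a coloring $d:[\nu]^2\to\nu$ such that for every cofinal $B\s\nu$ there exist $\eta<\nu$ and $T\in[\nu]^\nu$ with $\{\beta\in B\mid d(\eta,\beta)=\tau\}$ unbounded in $\nu$ for every $\tau\in T$. The paper never touches scales or the template of Theorem~\ref{sdlambda}; instead it quotes Claim~4.21.3 of \cite{paper34}, which reduces the conclusion to showing that a certain ideal $\mathcal I$ on $\lambda^+$ --- defined from a fixed club-guessing sequence on $E^{\lambda^+}_{\cf(\lambda)}$ --- is not weakly $\nu$-saturated. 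Fixing a club $\Lambda\s\lambda$ of order-type $\nu$, the coloring $d$ is then used to find $\eta<\nu$ for which $\nu$-many of the sets $\Gamma^\tau_\eta:=\{\gamma<\lambda^+\mid d(\eta,\otp(\Lambda\cap\cf(\gamma)))=\tau\}$ are $\mathcal I$-positive, witnessing non-saturation.

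Second, even setting aside the comparison, your construction is not a proof but a sketch with an acknowledged open obstacle: you never specify $\delta(\beta,\gamma)$ or $k(\beta,\gamma)$, and your final paragraph concedes that making these choices simultaneously canonical enough for closedness and flexible enough for the pigeonhole step is unresolved. In addition, you invoke Lemma~\ref{uPLUSsub}(3) to amplify a two-point instance to $\U(\lambda^+,\lambda^+,\theta,\cf(\lambda))$, but that lemma requires \emph{subadditivity}, which your scheme does not secure (and which the target theorem does not assert). If you intended Fact~\ref{pumpclosed} instead, note that its hypothesis already demands a $\Sigma$-closed witness to $\U(\lambda^+,2,\theta,\cf(\lambda))$, not merely to $\U(\lambda^+,2,\theta,2)$, so you cannot avoid dealing with families of size ${<}\cf(\lambda)$ from the outset.
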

\begin{proof} By \cite[Corollary~4.17]{paper34}, there exists a closed witness to $\U(\lambda^+,\lambda^+,\cf(\lambda),\allowbreak\cf(\lambda))$,
so assume that $\lambda$ has uncountable cofinality.
Recalling Claim~4.21.3 from the proof of \cite[Theorem~4.21]{paper34},
it suffices to prove that the ideal $\mathcal I$ defined there in ``Case 1: Uncountable cofinality''
is not weakly $\nu$-saturated.

Let us first remind the reader that the definition of the ideal $\mathcal I$ goes through first fixing a stationary subset $\Delta\s E^{\lambda^+}_{\cf(\lambda)}$
  and a sequence $\vec e=\langle e_\delta\mid \delta\in \Delta\rangle$ such that
  \begin{itemize}
    \item for every $\delta\in \Delta$, $e_\delta$ is a club in $\delta$ of order type $\cf(\lambda)$;
    \item for every $\delta\in \Delta$, $\langle \cf(\gamma)\mid \gamma\in \nacc(e_\delta)\rangle$
      is strictly increasing and converging to $\lambda$;
    \item for every club $D$ in $\lambda^+$, there exists $\delta\in \Delta$ such that $e_\delta\s D$.
  \end{itemize}
Then, the ideal $\mathcal I$ consists of all subsets $\Gamma\s\lambda^+$ for which
  there exists a club $D\s\lambda^+$ such that
  $\sup(\nacc(e_\delta)\cap D\cap \Gamma)<\delta$   for every $\delta\in \Delta\cap D$.

Now, since $\nu$ is not a greatly Mahlo cardinal,
by a theorem from \cite{paper47},
we may fix a coloring $c:[\nu]^2\rightarrow\nu$ satisfying that,
for every cofinal $B\s\nu$, there exist $(\eta,T)\in(\nu,[\nu]^\nu)$ such that,
for all $\tau\in T$, $\sup\{\beta\in B\mid c(\eta,\beta)=\tau\}=\nu$.

Fix a club $\Lambda$ in $\lambda$ of order-type $\nu$.
For every $\eta<\nu$ and $\tau<\nu$, let
$$\Gamma^\tau_\eta:=\{ \gamma<\lambda^+\mid c(\eta,\otp(\Lambda\cap\cf(\gamma))=\tau\}.$$

\begin{claim} There exists $\eta<\nu$ such that $|\{ \tau<\nu\mid \Gamma^\tau_\eta\in\mathcal I^+\}|=\nu$.
\end{claim}
\begin{cproof} Suppose not. Then, for every $\eta<\nu$, the set $T_\eta:=\{ \tau<\nu\mid \Gamma^\tau_\eta\in\mathcal I^+\}$ is bounded in $\nu$.
For each $\eta<\nu$ and $\tau\in\nu\setminus T_\eta$, fix a club $D_\eta^\tau\s\lambda^+$ such that
$\sup(\nacc(e_\delta)\cap D_\eta^\tau\cap \Gamma_\eta^\tau)<\delta$   for every $\delta\in \Delta\cap D^\eta_\tau$.
Let $D:=\bigcap\{ D_\eta^\tau\mid \eta<\nu, \tau\in \nu\setminus T_\eta\}$.
Now, using the choice of $\vec e$,
let us fix $\delta\in \Delta$ such that $e_\delta\s D$. In particular, $\delta\in D$.
As $\langle \cf(\gamma)\mid \gamma\in \nacc(e_\delta)\rangle$
is strictly increasing and converging to $\lambda$,
it follows that $B:=\{ \otp(\Lambda\cap\cf(\gamma))\mid \gamma\in\nacc(e_\delta)\}$ is cofinal in $\nu$.
Thus, by the choice of $c$, we may now find
$(\eta,T)\in(\nu,[\nu]^\nu)$ such that,
for all $\tau\in T$, $\sup\{\beta\in B\mid c(\eta,\beta)=\tau\}=\nu$.
Pick $\tau\in T\setminus T_\eta$.
As $\delta\in \Delta\cap D\s \Delta\cap D_\eta^\tau$,
we infer that $\sup(\nacc(e_\delta)\cap D_\eta^\tau\cap \Gamma_\eta^\tau)<\delta$.
In particular, $A:=\{\otp(\Lambda\cap\cf(\gamma))\mid \gamma\in \nacc(e_\delta)\cap D_\eta^\tau\cap \Gamma_\eta^\tau\}$ is bounded in $\nu$.
Pick $\beta\in B$ above $\sup(A\cup(\eta+1))$ such that $c(\eta,\beta)=\tau$.
Then, find $\gamma\in\nacc(e_\delta)$ such that $\beta=\otp(\Lambda\cap\cf(\gamma))$.
Altogether:
\begin{itemize}
\item $\gamma\in\nacc(e_\delta)$;
\item $\gamma\in e_\delta$, so that $\gamma\in D\s D^\tau_\eta$;
\item $c(\eta,\otp(\Lambda\cap\cf(\gamma))=c(\eta,\beta)=\tau$, so that $\gamma\in \Gamma^\tau_\eta$;
\item $\otp(\Lambda\cap\cf(\gamma))=\beta>\sup(A)$, so that $\gamma\notin \nacc(e_\delta)\cap D_\eta^\tau\cap \Gamma_\eta^\tau$.
\end{itemize}
This is a contradiction.
\end{cproof}

It follows that $\mathcal I$ is indeed not weakly $\nu$-saturated, so we are done.
\end{proof}

As successor cardinals are non-Mahlo, the following indeed improves Fact~\ref{fact57}.

\begin{cor} Suppose that $\lambda$ is a singular cardinal of cofinality $\nu$.
If $\nu$ is not greatly Mahlo, then $\reg(\cf(\lambda))\s\cspec(\lambda^+)$.
\end{cor}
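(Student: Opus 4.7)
My plan is to chain the preceding theorem with an application of \cite[Corollary~5.21]{paper35}, following the very template used earlier in the paper to derive the ``In particular'' clause of Theorem~B from Theorem~\ref{sdlambda}. The point is that the corollary is a \emph{direct combination} of two results that are already available: the preceding theorem supplies the coloring, and Corollary~5.21 of Part~II converts such a coloring into membership in $\cspec(\lambda^+)$.

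First I would note that, since $\nu = \cf(\lambda)$ is itself a regular cardinal, one trivially has $\cf(\nu) = \nu$. Consequently, the hypothesis ``$\theta$ is an infinite regular cardinal with $\theta \le \cf(\nu)$'' in the preceding theorem is equivalent to ``$\theta \in \reg(\nu) \cup \{\nu\}$''. Applying the preceding theorem, I therefore obtain, for each infinite regular cardinal $\theta \le \nu$, a closed witness $c_\theta:[\lambda^+]^2\rightarrow\theta$ to $\U(\lambda^+,\lambda^+,\theta,\cf(\lambda))$.

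Next I would feed each such witness $c_\theta$ into \cite[Corollary~5.21]{paper35}, which is exactly the device that translates the existence of a (closed) witness to an instance of $\U(\lambda^+,\lambda^+,\theta,\cf(\lambda))$ into the conclusion $\theta \in \cspec(\lambda^+)$. Doing this for every infinite regular $\theta<\nu$ yields $\reg(\nu) \s \cspec(\lambda^+)$, which is precisely the statement $\reg(\cf(\lambda)) \s \cspec(\lambda^+)$. Finally, I would close with the one-line comment that since successor cardinals are not Mahlo, this genuinely improves upon the first bullet of Fact~\ref{fact57} (and, being free of the assumption $2^\mu\le\lambda$, also upon its second bullet in the successor-cofinality case).

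I do not anticipate any real obstacle here: all of the combinatorial work has already been carried out inside the preceding theorem (the construction of closed unbounded colorings using the not-greatly-Mahlo assumption to refute weak $\nu$-saturation of the ideal $\mathcal I$) and inside Corollary~5.21 of Part~II. The present corollary is merely the packaging of these two results.
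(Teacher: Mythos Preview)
Your proposal is correct and matches the paper's own proof essentially line for line: the paper simply notes that, by \cite[Corollary~5.21]{paper35}, it suffices to exhibit a closed witness to $\U(\lambda^+,\lambda^+,\theta,\theta)$ for each $\theta\in\reg(\cf(\lambda))$, and this is exactly what the preceding theorem provides. Your additional observation that $\cf(\nu)=\nu$ (so that the theorem applies to all $\theta\in\reg(\nu)$) and your closing remark about improving Fact~\ref{fact57} are spelled out a bit more explicitly than in the paper, but the argument is the same.
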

\begin{proof} By \cite[Corollary~5.21]{paper35}, to show that an infinite regular cardinal $\theta$ is  in $\cspec(\lambda^+)$,
it suffices to prove that there exists a closed witness to $\U(\lambda^+,\lambda^+,\theta,\theta)$.
This is the content of the preceding theorem.
\end{proof}

\section{Stationarily layered posets} \label{stat_layered_sec}

In Part~I of this project \cite{paper34}, we motivated the study of
$\U(\kappa, \mu, \theta, \chi)$ by showing that it places limits on
the infinite productivity of the $\kappa$-Knaster condition. Here, we
present an analogous result, indicating that closed witnesses to
$\US(\kappa, 2,\allowbreak \theta, 2)$ place limits on the
infinite productivity of the property of being $\kappa$-stationarily layered,
which is a strengthening of the $\kappa$-Knaster condition.

\begin{defn}[\cite{cox_layered}]
  A partial order $\mathbb{P}$ is \emph{$\kappa$-stationarily layered} if the
  collection of regular suborders of $\mathbb{P}$ of size less than $\kappa$
  is stationary in $\mathcal{P}_\kappa(\mathbb{P})$.
\end{defn}

By \cite[Lemma 1.5]{MR3620068}, any $\kappa$-stationarily layered poset is
also $\kappa$-Knaster. We now recall a useful equivalence.

\begin{fact}[{\cite[Lemma 2.3]{MR3620068}}] \label{stat_layered_fact}
  Given a poset $\mathbb{P}$, the following are equivalent.
  \begin{enumerate}
    \item $\mathbb{P}$ is $\kappa$-stationarily layered.
    \item There is a regular cardinal $\Upsilon$ with $\mathcal{P}_\kappa(\mathbb{P})
      \in H_\Upsilon$ and an elementary substructure $M \prec H_\Upsilon$ such that
      \begin{enumerate}
        \item $\kappa,\mathbb{P}\in M$;
        \item $\kappa \cap M \in \kappa$;
        \item $\mathbb{P} \cap M$ is a regular suborder of $\mathbb{P}$.
      \end{enumerate}
  \end{enumerate}
\end{fact}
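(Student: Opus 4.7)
The plan is to derive both implications from the standard characterization of stationarity in $\mathcal{P}_\kappa(X)$: a set $\mathcal{S}\s\mathcal{P}_\kappa(X)$ is stationary iff, for every regular $\Upsilon$ with $X\in H_\Upsilon$ and every $p\in H_\Upsilon$, there exists $M\prec H_\Upsilon$ such that $p\in M$, $|M|<\kappa$, $\kappa\cap M\in\kappa$, and $M\cap X\in \mathcal{S}$. Once this tool is in hand, both directions of the equivalence are essentially immediate.

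For $(2)\Rightarrow(1)$, I would fix an arbitrary function $F:[\mathbb{P}]^{<\omega}\rightarrow\mathbb{P}$ (representing a typical club-generating data in $\mathcal{P}_\kappa(\mathbb{P})$), choose a regular $\Upsilon$ large enough that $F,\mathbb{P}\in H_\Upsilon$, and apply clause (2) to obtain $M\prec H_\Upsilon$ with $F,\kappa,\mathbb{P}\in M$, $\kappa\cap M\in\kappa$, and $\mathbb{P}\cap M$ a regular suborder of $\mathbb{P}$. By elementarity, $\mathbb{P}\cap M$ is closed under $F$, and from $\kappa\in M$ together with $\kappa\cap M\in\kappa$ one gets $|\mathbb{P}\cap M|<\kappa$. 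Thus $\mathbb{P}\cap M$ lies in the family of regular suborders of size $<\kappa$ and is closed under $F$. Since $F$ was arbitrary, this family is stationary in $\mathcal{P}_\kappa(\mathbb{P})$, proving (1).

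For $(1)\Rightarrow(2)$, I would fix a regular $\Upsilon$ with $\mathcal{P}_\kappa(\mathbb{P})\in H_\Upsilon$, let $\mathcal{S}$ be the stationary collection of regular suborders of $\mathbb{P}$ of size less than $\kappa$ guaranteed by (1), and apply the characterization of stationarity above with $X:=\mathbb{P}$ and parameter $p:=\{\kappa,\mathbb{P}\}$. This directly yields an $M\prec H_\Upsilon$ with $\kappa,\mathbb{P}\in M$, $\kappa\cap M\in\kappa$, and $\mathbb{P}\cap M\in\mathcal{S}$, so that $\mathbb{P}\cap M$ is a regular suborder of $\mathbb{P}$. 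This $M$ is exactly as required by (2).

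There is no genuine obstacle here beyond correctly invoking the standard Skolem-hull characterization of stationarity in $\mathcal{P}_\kappa$; the only minor care needed is to observe that the cardinality condition $|\mathbb{P}\cap M|<\kappa$ follows automatically from $\kappa\in M$ and $\kappa\cap M\in\kappa$, so that the two notions of ``small'' regular suborder (the one in the definition of $\kappa$-stationarily layered and the one arising from the Skolem hull) are matched.
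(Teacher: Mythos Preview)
The paper does not prove this statement; it is recorded as a Fact cited from \cite{MR3620068}, so there is no proof in the paper to compare against. I will therefore just comment on the correctness of your proposal.

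Your argument for $(1)\Rightarrow(2)$ is fine. Your argument for $(2)\Rightarrow(1)$, however, has a genuine gap. Clause~(2) is an \emph{existential} statement: it asserts the existence of a single $\Upsilon$ and a single $M$ satisfying (a)--(c). It does not say that for every sufficiently large $\Upsilon$ and every parameter $p$ one can find such an $M$ with $p\in M$. So when you ``fix an arbitrary function $F$'' and then ``apply clause~(2) to obtain $M$ with $F\in M$,'' you are using more than (2) provides.

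The standard fix is to work with the single $M$ given by~(2) and argue by elementarity. Suppose the family $\mathcal{R}$ of regular suborders of $\mathbb{P}$ of size $<\kappa$ were nonstationary in $\mathcal{P}_\kappa(\mathbb{P})$. Since $\kappa,\mathbb{P}\in M$, the set $\mathcal{R}$ is definable in $H_\Upsilon$ from parameters in $M$, and since $\mathcal{P}_\kappa(\mathbb{P})\in H_\Upsilon$, the structure $H_\Upsilon$ correctly sees that $\mathcal{R}$ is nonstationary. By elementarity, there is a club $\mathcal{C}\in M$ disjoint from $\mathcal{R}$. The usual argument then shows $\mathbb{P}\cap M\in\mathcal{C}$; but $\mathbb{P}\cap M$ is a regular suborder of $\mathbb{P}$ of size $<\kappa$ (using $\kappa\cap M\in\kappa$ exactly as you noted), so $\mathbb{P}\cap M\in\mathcal{R}$, a contradiction. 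This is the same idea you had in mind, just with the quantifiers handled correctly.
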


The following is Theorem~D:
\begin{thm} \label{stat_layered_thm}
  Suppose that $\theta \leq \chi < \kappa$ are infinite, regular cardinals,
  $\kappa$ is $({<}\chi)$-inaccessible, and there exists a closed witness $c$ to $\U(\kappa,2,\theta,2)$ that is subadditive of the second kind. Then:
  \begin{itemize}
  \item There is a sequence of posets $\langle\mathbb{P}_i \mid  i < \theta\rangle$ such that:
  \begin{enumerate}
    \item for all $i < \theta$, $\mathbb{P}_i$ is well-met and $\chi$-directed
      closed with greatest lower bounds;
    \item for all $j < \theta$, $\prod_{i < j} \mathbb{P}_i$ is $\kappa$-stationarily layered;
    \item $\prod_{i < \theta} \mathbb{P}_i$ is not $\kappa$-cc.
  \end{enumerate}
  \item If $\partial(c)\cap E^\kappa_\chi$ is stationary, then there is a poset $\mathbb P$ such that:
  \begin{enumerate}
    \item $\mathbb{P}$ is well-met and $\chi$-directed closed with greatest lower bounds;
    \item for all $j < \theta$, $\mathbb{P}^j$ is $\kappa$-stationarily layered;
    \item $\mathbb{P}^\theta$ is not $\kappa$-cc.
  \end{enumerate}

  \end{itemize}
\end{thm}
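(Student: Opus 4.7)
The plan is to define, for each $i<\theta$, the forcing
$$\mathbb P_i := \{p\in[\kappa]^{<\chi} : \sup(c``[p]^2) \le i\},$$
ordered by reverse inclusion (with $\emptyset$ the greatest element). Since $\chi$ is regular, the union of a directed family of fewer than $\chi$ such conditions has size less than $\chi$, and by directedness every pair lies in a common earlier condition, preserving the color bound; hence each $\mathbb P_i$ is $\chi$-directed closed with greatest lower bounds and well-met. The product $\prod_{i<j}\mathbb P_i$ inherits these closure properties.

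For the failure of $\kappa$-cc of $\prod_{i<\theta}\mathbb P_i$, I would consider the singleton sequences $\bar p_\alpha := \langle\{\alpha\} : i<\theta\rangle$, and observe that $\bar p_\alpha, \bar p_\beta$ are incompatible in the product iff $\{\alpha,\beta\}\notin\mathbb P_0$, iff $c(\alpha,\beta)>0$. The task thus reduces to producing $A\in[\kappa]^\kappa$ with $c$ never $0$ on $[A]^2$. Subadditivity of the second kind enters decisively here: for every $\gamma<\kappa$, the set $D^c_{\le 0}(\gamma)\cup\{\gamma\}$ is a ``zero-clique'', since any two of its elements $\alpha<\beta$ satisfy $c(\alpha,\beta)\le\max\{c(\alpha,\gamma),c(\beta,\gamma)\}=0$, and by $\U(\kappa,2,\theta,2)$ every zero-clique has size less than $\kappa$. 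A transfinite recursion, choosing at each stage an ordinal avoiding the (necessarily small) ``zero neighborhoods'' of the previously chosen ordinals, then yields the desired $A$.

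For the $\kappa$-stationary layeredness of $\prod_{i<j}\mathbb P_i$ with $j<\theta$, I invoke Fact~\ref{stat_layered_fact}: it suffices to produce a single elementary submodel $M\prec H_\Upsilon$ with the required properties. Using the $({<}\chi)$-inaccessibility of $\kappa$, I select $M$ of cardinality less than $\kappa$, containing $c,j,\kappa,\langle\mathbb P_i\mid i<\theta\rangle$ as elements, with $[M]^{<\chi}\subseteq M$ and $\delta:= M\cap\kappa \in\kappa$. For $\bar q=\langle q^i : i<j\rangle$ in the product, the candidate reduction in $(\prod_{i<j}\mathbb P_i)\cap M$ is $\bar q' := \langle q^i\cap\delta : i<j\rangle$, which lies in $M$ by closure. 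One then verifies that every $\bar p\in M$ extending $\bar q'$ is compatible with $\bar q$: the only cross pairs requiring attention have $\alpha\in p^i\setminus q^i$ with $\alpha<\delta$ and $\beta\in q^i\setminus p^i$ with $\beta\ge\delta$, and the required bound $c(\alpha,\beta)\le i$ is extracted from the closedness of $c$ (which, when $q^i\cap\delta$ is cofinal in $\delta$, forces $\delta\in D^c_{\le i}(\beta)$) together with subadditivity of the second kind applied to triples drawn from $q^i$ above $\beta$.

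The main technical obstacle is precisely this final compatibility verification: producing the bound $c(\alpha,\beta)\le i$ using only closedness and second-kind subadditivity (as opposed to full subadditivity) requires a delicate case analysis based on the cofinality behavior of $q^i\cap\delta$. For the final clause of the theorem, the stationarity of $\partial(c)\cap E^\kappa_\chi$ allows me to arrange $\delta\in\partial(c)\cap E^\kappa_\chi$ when selecting $M$; by the defining property of $\partial(c)$, this forces each $q^i\cap\delta$ to be bounded strictly below $\delta$ uniformly in $i<\theta$, which in turn permits the replacement of the sequence $\langle\mathbb P_i \mid i<\theta\rangle$ by a single poset $\mathbb P$ (assembled, for instance, by attaching an auxiliary $\theta$-valued component that tracks the coordinate in which incompatibility will be witnessed), with the singleton antichain argument in $\mathbb P^\theta$ and the reduction analysis in $\mathbb P^j$ for $j<\theta$ both going through exactly as before.
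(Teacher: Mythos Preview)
Your reduction for the layeredness claim does not work. Take $j<\theta$ and a condition $q^i=\{\beta\}$ with $\beta\ge\delta$; then $q^i\cap\delta=\emptyset$, so every singleton $\{\alpha\}$ with $\alpha<\delta$ extends your proposed reduction. But there is no reason whatsoever for $c(\alpha,\beta)\le i$; indeed, by Lemma~\ref{lemma24}(3) there is $\epsilon<\kappa$ (and we may take $\epsilon<\delta$) for which stationarily many $\beta$ satisfy $c(\epsilon,\beta)>i$, and for such $\beta$ the condition $\{\epsilon\}$ extends your reduction yet is incompatible with $q^i$. The ``delicate case analysis'' you gesture at cannot rescue this: when $q^i\cap\delta$ is empty (or merely bounded in $\delta$), closedness gives no handle on $D^c_{\le i}(\beta)\cap\delta$, and second-kind subadditivity only lets you pull colors \emph{down} from a common upper point, not push them up to $\beta$ from below.

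Your antichain argument is also incomplete. You need $A\in[\kappa]^\kappa$ with $c(\alpha,\beta)>0$ for all pairs, and you propose a recursion avoiding the ``zero neighborhoods'' of previously chosen points. But the upward neighborhood $\{\beta>\alpha: c(\alpha,\beta)=0\}$ need not be small: second-kind subadditivity does not make it a $0$-clique, so your appeal to $\U(\kappa,2,\theta,2)$ does not bound it. The paper sidesteps both problems by using a richer poset: conditions are pairs $(i,f)$ with $f:\kappa\to2$ a partial function of size ${<}\chi$, and the ordering carries a blocking clause tied to a fixed ordinal $\epsilon$ (chosen via Lemma~\ref{lemma24}(3)) and the sets $\Gamma_i=\{\gamma>\epsilon: c(\epsilon,\gamma)\le i\}$. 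This design makes the antichain immediate (no recursion needed) and allows the reduction of $p$ to $M_\beta$ to record, for each $\gamma\in\dom(f_\eta)\cap\Gamma_{i_\eta}\setminus\beta$, the ordinal $\max(D^c_{\le i_\eta}(\gamma)\cap\beta)$, so that the blocking clause forces any extension in $M_\beta$ to be compatible with $p$; closedness and second-kind subadditivity are exactly what make this maximum exist and behave correctly.
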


\begin{proof} Using Lemma~\ref{lemma24}(3) (with $S:=E^\kappa_{\chi}$),
let us fix $\epsilon<\kappa$ such that, for every $j<\theta$, $\{\beta\in E^\kappa_{\chi}\mid c(\epsilon,\beta)>j\}$ is stationary.
For every $i<\theta$, let
\begin{itemize}
\item $\Gamma_i:=\{\gamma< \kappa\mid \epsilon < \gamma \text{ and } c(\epsilon,\gamma)\le i\}$,
\item $P_i$ denote the collection of all pairs
$(i,f)$ where $f:\kappa\rightarrow 2$ is a partial function of size less than $\chi$.
\end{itemize}

Set $P:=\{\emptyset\}\cup \biguplus_{i<\theta}P_i$.
Define an ordering $\le$ of $P$ as follows:
\begin{enumerate}
\item $\emptyset$ is the top element of $(P,\le)$.
\item For all $(i,f),(j,g)\in P\setminus\{\emptyset\}$,
we let $(j,g)\le (i,f)$  iff
  \begin{itemize}
    \item $i=j$,
    \item $g \supseteq f$, and
    \item for all $\gamma \in \dom(f) \cap \Gamma_i$ and
    $\alpha \in \gamma \cap \dom(g)\setminus \dom(f)$, if $c(\alpha,\gamma)\le i$,
     then $g(\alpha) = 0$.
  \end{itemize}
\end{enumerate}

We then let $\mathbb P:=(P,\le)$ and $\mathbb P_i:=(\{\emptyset\}\cup P_i,\le)$ for all $i<\theta$.
\begin{claim} $\le$ is transitive.
\end{claim}
  \begin{cproof} Suppose $(i,h)\le (i,g)$ and $(i,g)\le (i,f)$.
  It is clear that $h\supseteq f$. Let
      $\gamma \in \dom(f) \cap \Gamma_i$ and
    $\alpha \in \gamma \cap \dom(h)\setminus \dom(f)$ with $c(\alpha,\gamma)\le i$.
    Clearly, $\gamma\in \dom(g)\cap \Gamma_i$.

$\br$ If $\alpha\notin\dom(g)$, then from $(i,h)\le (i,g)$, it follows that $h(\alpha) = 0$, as sought.

$\br$ If $\alpha\in\dom(g)$, then from $(i,g)\le (i,f)$, it follows that $h(\alpha) = g(\alpha)= 0$, as sought.
  \end{cproof}
 Clearly, $\mathbb P$ and each of the $\mathbb P_i$'s is a well-met poset which is $\chi$-directed closed with greatest
  lower bounds.

  \begin{claim}\label{claim333}
  \begin{enumerate}
    \item $\prod_{i < \theta} \mathbb{P}_i$ does not satisfy the $\kappa$-cc;
	\item $\mathbb{P}^\theta$ does not satisfy the $\kappa$-cc.
	\end{enumerate}
  \end{claim}

  \begin{cproof} (1)     For every $\alpha<\kappa$, define $p_\alpha\in\prod_{i < \theta}\mathbb{P}_i$
    by setting $p_\alpha(i):=(i,\{(\alpha, 1)\})$ for all $i<\theta$.
    To see that $\{p_\alpha\mid \alpha<\kappa\}$ is
    an antichain (of size $\kappa$) in $\prod_{i < \theta}\mathbb{P}_i$,
    fix arbitrary $(\alpha,\gamma)\in[\kappa]^2$.
    Let $i:=\max\{c(\epsilon,\gamma),c(\alpha,\gamma)\}$.
    Then $p_\alpha(i)$ and $p_\gamma(i)$
    are incompatible in $\mathbb{P}_i$, so $p_\alpha$ and $p_\gamma$ are incompatible
    in $\prod_{i < \theta}\mathbb{P}_i$.

    (2) This follows from Clause~(1).
  \end{cproof}

  \begin{claim} Let $j < \theta$.
  \begin{enumerate}
  \item $\prod_{i < j} \mathbb{P}_i$ is $\kappa$-stationarily layered;
  \item If $\partial(c)\cap E^\kappa_\chi$ is stationary, then $\mathbb P^j$  is $\kappa$-stationarily layered.
  \end{enumerate}
  \end{claim}

  \begin{proof} \renewcommand{\qedsymbol}{\ensuremath{\boxtimes \ \square}}
    Let $\mathbb{Q}$ denote $\prod_{i < j} \mathbb{P}_i$ (resp.~$\mathbb P^j$).
    Let $\Upsilon$ be a regular cardinal with $\mathcal{P}_\kappa(\mathbb{Q}) \in H_\Upsilon$.
    Using the fact that $\kappa$ is $({<}\chi)$-inaccessible, construct an $\in$-increasing, continuous
    sequence $\vec M=\langle M_\beta \mid \beta < \kappa \rangle$ such that,
    for all $\beta < \kappa$,
    \begin{itemize}
      \item $M_\beta \prec H_\Upsilon$;
      \item $|M_\beta| < \kappa$;
      \item $\kappa,\mathbb{Q} \in M_\beta$;
      \item ${^{{<}\chi}}M_{\beta} \s M_{\beta + 1}$.
    \end{itemize}

    $\br$ If $\mathbb{Q}=\prod_{i < j} \mathbb{P}_i$, then,
    using our choice of $\epsilon$, we fix $\beta\in E^\kappa_{\chi}$
    with $\kappa\cap M_\beta=\beta>\epsilon$
    such that $c(\epsilon,\beta)>j$.

    $\br$ If $\partial(c)\cap E^\kappa_\chi$ is stationary, then we fix $\beta \in \partial(c)\cap E^\kappa_\chi$
    with $\kappa\cap M_\beta=\beta>\epsilon$.

    Note that by the continuity of the sequence $\vec M$,
     ${^{{<}\chi}}M_\beta \s M_\beta$.
    Now, by Fact~\ref{stat_layered_fact}, it suffices to prove
    that $\mathbb{Q} \cap M_\beta$ is a regular suborder of $\mathbb{Q}$.
    To this end, we will define, for each $p \in \mathbb{Q}$, a \emph{reduction} of $p$ to
    $M_\beta$, i.e., a condition $p \vert M_\beta \in \mathbb{Q} \cap M_\beta$
    such that, for all $q \leq_{\mathbb{Q}} p \vert M_\beta$ with $q \in M_\beta$,
    $q$ is compatible with $p$.

    Fix $p \in \mathbb{Q}$. For every $\eta < j$,
    write $p(\eta)$ as $(i_\eta,f_\eta)$,
    and set $x_\eta := \dom(f_\eta) \cap \Gamma_{i_\eta} \setminus \beta$.
    \begin{subclaim} Let $\eta<j$ and $\gamma\in x_\eta$.
    Then $D^c_{\leq i_\eta}(\gamma)\cap\beta$ is a closed bounded subset of $\beta$.
    \end{subclaim}
    \begin{scproof}
    As $c$ is closed, it suffices to prove that $\sup(D^c_{\leq i_\eta}(\gamma)\cap\beta)<\beta$.
    To avoid trivialities, suppose that $\beta\notin\partial(c)$.
    So, $\mathbb{Q}=\prod_{i < j} \mathbb{P}_i$ and $i_\eta=\eta$.
    Now, as $c$ is subadditive of the second kind,
    $$j<c(\epsilon,\beta)\le\max\{c(\epsilon,\gamma),c(\beta,\gamma)\}.$$
    So, since $c(\epsilon,\gamma)\le i_\eta=\eta<j$,
     it follows that $c(\beta,\gamma)\ge j>i_\eta$.
    As $c$ is closed, it thus follows that
    $\{\alpha < \beta\mid c(\alpha, \gamma) \leq i_\eta\}$ is bounded below $\beta$.
    \end{scproof}

    For every $\eta<j$, by the subclaim and since $|x_\eta|<\chi$,
    $$
      y_\eta := \{\max(D^c_{\leq i_\eta}(\gamma)\cap\beta)\mid \gamma \in x_\eta \ \&\ D^c_{\leq i_\eta}(\gamma)\cap\beta\neq\emptyset\}
    $$
    is a well-defined element of $[\beta]^{<\chi}$.

    For each $\eta < j$, define a partial function $g_\eta:\beta\rightarrow 2$ by letting $$g_\eta:=(f_\eta\restriction \beta)\cup((y_\eta\setminus\dom(f_\eta))\times\{0\}).$$

    Clearly, $g_\eta$ has size less than $\chi$, so as ${^{{<}\chi}}M_\beta \s M_\beta$, we have $(i_\eta,g_\eta)\in M_\beta\cap  P_{i_\eta}$.
    Define a condition $p \vert M_\beta$ in $\mathbb Q$ by letting $(p \vert M_\beta)(\eta) := (i_\eta,g_\eta)$ for all $\eta<j$.

    As ${^{{<}\theta}}M_\beta \s M_\beta$, we have $p \vert M_\beta\in M_\beta\cap\mathbb Q$.
    To see that $p \vert M_\beta$ is a reduction of $p$ to $M_\beta$, fix
    $q \leq_{\mathbb{Q}} p \vert M_\beta$ in $M_\beta$. The only way
    that $q$ can fail to be compatible with $p$ is if, for some $\eta < j$,
    there are $\gamma \in x_\eta$ and $\alpha\in \dom(q(\eta)) \setminus \dom(f_\eta)$ such
    that $c(\alpha,\gamma)\le i_\eta$, but $q(\eta)(\alpha) = 1$.
    So suppose $\eta$, $\alpha$ and $\gamma$ are as described. In particular,
    $\alpha$ is smaller than $\gamma_\eta:=\max(D^c_{\leq i_\eta}(\gamma)\cap\beta)$.
	As $\gamma_\eta\in\dom(g_\eta)$,
	it thus follows that either $\gamma_\eta\notin  \Gamma_{i_\eta}$ or $c(\alpha,\gamma_\eta)> i_\eta$.
	But $c$ is subadditive of the second kind, so $c(\epsilon,\gamma_\eta)\le\max\{c(\epsilon,\gamma),c(\gamma_\eta,\gamma)\}\le i_\eta$
	and  $c(\alpha,\gamma_\eta)\le\max\{c(\alpha,\gamma),c(\gamma_\eta,\gamma)\}\le i_\eta$.
	This is a contradiction.
  \end{proof}

  \let\qed\relax
\end{proof}

\begin{cor}\label{coro64} Suppose that $\kappa$ is $({<}\theta)$-inaccessible and any one of the
  following six statements holds.
\begin{enumerate}
\item $\kappa=\lambda^+$ and $\square_\lambda$ holds.
\item $V = L$ and $\kappa$ is not weakly compact.
\item $\square(\kappa)$ holds after being added generically.
\item There exists a $\square(\kappa)$-sequence that avoids a stationary subset of $E^\kappa_\theta$.
\item $\kappa=\lambda^+$ where $\lambda$ is a former inaccessible that changed its cofinality to $\theta$ via Prikry/Magidor forcing.
\item $\theta=\omega$ and $\square^{\ind}(\kappa,\omega)$ holds after being added generically.
\end{enumerate}
Then there is a poset $\mathbb P$ such that:
  \begin{itemize}
    \item $\mathbb{P}$ is well-met and $\theta$-directed closed with greatest lower bounds;
    \item for all $j < \theta$, $\mathbb{P}^j$ is $\kappa$-stationarily layered;
    \item $\mathbb{P}^\theta$ is not $\kappa$-cc.
  \end{itemize}
\end{cor}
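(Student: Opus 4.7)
The plan is to deduce the conclusion from the ``In addition'' part of Theorem~\ref{stat_layered_thm} with $\chi := \theta$. That application yields the desired poset $\mathbb{P}$ provided one can exhibit a closed, subadditive witness $c$ to $\U(\kappa, 2, \theta, 2)$ for which $\partial(c) \cap E^\kappa_\theta$ is stationary. Since $\partial(c) \s E^\kappa_{\cf(\theta)} = E^\kappa_\theta$, what is needed is simply that $\partial(c)$ be stationary, and by Theorem~\ref{revised_lh_lucke}(2) this in turn follows from the existence of a stationary set $T \s E^\kappa_\theta$ together with a $\square(\kappa, \sq_\theta)$-sequence that avoids $T$.

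Producing such a sequence from each of the six hypotheses is then the task. Case~(4) is the hypothesis verbatim, while Case~(2) invokes Jensen's classical result that, in $L$, any non-weakly-compact cardinal $\kappa$ carries, for every $\theta \in \reg(\kappa)$, a $\square(\kappa)$-sequence that avoids a stationary subset of $E^\kappa_\theta$. For Case~(1), a $\square_\lambda$-sequence $\vec{C}$ satisfies $\otp(C_\alpha) \leq \lambda$ for every $\alpha < \lambda^+$; a pigeonhole argument produces $\eta < \lambda^+$ with $T_\eta := \{\alpha \in E^{\lambda^+}_\theta \mid \otp(C_\alpha \setminus \eta) = \theta\}$ stationary, and then the cut-off surgery from the proof of Theorem~\ref{add_inds}---replacing $C_\alpha$ by the tail $\{\beta \in C_\alpha \mid \otp((C_\alpha \setminus \eta) \cap \beta) > \theta\}$ whenever $\otp(C_\alpha \setminus \eta) > \theta$---delivers a $\square(\lambda^+, \sq_\theta)$-sequence avoiding $T_\eta$. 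Case~(3) is handled by a density argument on the standard forcing to add $\square(\kappa)$: one ensures simultaneously that the generic sequence is $\sq_\theta$-coherent and that the stationary set $\{\alpha \in E^\kappa_\theta \mid \otp(C^G_\alpha) = \theta\}$ is avoided.

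For Case~(5), we appeal directly to Theorem~\ref{outer_model_thm} applied to the Prikry/Magidor extension $W$: it yields a closed, locally small, subadditive witness $c$ to $\U(\lambda^+, 2, \theta, 2)$ in $W$ with $\partial(c) \supseteq (E^{\lambda^+}_\lambda)^V$; standard preservation results for Prikry-type forcings ensure $(E^{\lambda^+}_\lambda)^V$ remains stationary in $W$, and since $\cf^W(\lambda) = \theta$, it lies in $(E^{\lambda^+}_\theta)^W$, so the hypothesis of Theorem~D is met directly, bypassing Theorem~\ref{revised_lh_lucke}. Case~(6) is handled by taking, as the witnessing forcing, the one constructed in Theorem~\ref{add_inds} with $\theta := \omega$: its extension carries a closed, subadditive witness $c$ to $\US(\kappa, \kappa, \omega, \omega)$ with $\partial(c)$ stationary, and by Theorems~\ref{thm67} and \ref{thm45} the same extension also satisfies $\square^{\ind}(\kappa, \omega)$. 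The main obstacle across Cases~(1)--(3) is the careful surgery on $\square$-sequences, but each of these is a routine modification whose core ideas already appear in the proof of Theorem~\ref{add_inds}.
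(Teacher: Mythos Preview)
Your overall strategy---produce a closed subadditive witness $c$ with $\partial(c)$ stationary and then invoke Theorem~\ref{stat_layered_thm}---matches the paper exactly, and your treatment of Cases~(1)--(5) is essentially the same as the paper's (the paper is terser, simply asserting $(1)\implies(4)$, $(2)\implies(4)$, and $(3)\implies(4)$, but your more detailed arguments for (1) and (3) are correct and in the same spirit as the proof of Theorem~\ref{add_inds}).

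There is, however, a genuine gap in your handling of Case~(6). The hypothesis ``$\square^{\ind}(\kappa,\omega)$ holds after being added generically'' refers to the specific forcing from \cite[\S7]{narrow_systems}; the paper makes this explicit. You propose instead to use the forcing from Theorem~\ref{add_inds} and observe that its extension also happens to satisfy $\square^{\ind}(\kappa,\omega)$ (via Theorems~\ref{thm67} and~\ref{thm45}). But this does not establish the implication: you have not shown that the conclusion holds in the extension by the \emph{standard} $\square^{\ind}(\kappa,\omega)$-forcing, only that it holds in some other extension that incidentally satisfies $\square^{\ind}(\kappa,\omega)$. The paper's argument for (6) is different and requires a specific fact about the generic $\square^{\ind}(\kappa,\omega)$-sequence $\vec C$: by \cite[Claim~3.4.1]{paper35}, the set $S:=\{\alpha\in E^\kappa_\omega\mid \forall i\in[i(\alpha),\omega)[\sup(\acc(C_{\alpha,i}))<\alpha]\}$ is stationary. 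One then restricts to $\Gamma:=\acc(\kappa)\setminus S$, sets $D_{\alpha,j}:=\acc(C_{\alpha,j})$ for appropriate $j$, and obtains an $\inds(\kappa,\omega)$-sequence on $\Gamma$ whose associated coloring $c$ satisfies $S\subseteq\partial(c)$.
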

\begin{proof}
  We will show that any one of the assumptions entails the existence of a
  closed witness $c$ to $\US(\kappa, 2, \theta, 2)$ such that $\partial(c)$ is
  stationary. Then the second bullet point of Theorem~\ref{stat_layered_thm}
  will furnish the desired poset $\mathbb{P}$.

  Note first that $(1) \implies (4)$ and $(2)\implies(4)$ (cf.\ \cite[Theorem~6.1]{jensen} or \cite[Theorem VII.1.2']{devlin}).
  In addition, as established in the proof of Theorem~\ref{add_inds},
  $(3)\implies(4)$.
  Next, by Theorem~\ref{revised_lh_lucke}(2), if (4) holds, then there is a
  closed witness $c$ to $\US(\kappa, 2, \theta, 2)$ for which $\partial(c)$ is
  stationary.

  The fact that Clause (5) implies the desired conclusion is a consequence of Theorem~\ref{outer_model_thm}
  and the fact that Prikry forcing and Magidor forcing
  do not kill the stationarity of the ground model's $E^{\lambda^+}_\lambda$.

  It remains to deal with (6). To this end, suppose
  that $\vec{C} = \langle C_{\alpha, i} \mid \alpha \in \acc(\kappa),
  i(\alpha) \leq i < \omega \rangle$ is a generically-added
  $\square^{\mathrm{ind}}(\kappa, \omega)$-sequence, i.e., $V$ is an extension
  of some ground model by the forcing from \cite[\S 7]{narrow_systems} to
  add a $\square^{\mathrm{ind}}(\kappa, \omega)$-sequence, and $\vec{C}$ is the
  sequence generated by the generic filter. Let
  \[
    S:=\{\alpha \in E^\kappa_\omega \mid \forall i \in [i(\alpha), \omega)
    [\sup(\acc(C_\alpha, i)) < \alpha]\}.
  \]
  By \cite[Claim 3.4.1]{paper35}, $S$ is stationary.
  Let $\Gamma := \acc(\kappa) \setminus S$. We now define an
  $\inds(\kappa, \omega)$-sequence $\vec{D} := \langle D_{\alpha, i} \mid
  \alpha \in \Gamma, ~ j(\alpha) \leq i < \omega\rangle$ as follows. For
  each $\alpha \in S$, let $j(\alpha)$ be the least $j \in [i(\alpha), \omega)$
  such that $\sup(\acc(C_\alpha, i)) = \alpha$ and, for all $j \in [j(\alpha), \omega)$,
  let $D_{\alpha, j} := \acc(C_{\alpha, j})$. Using our choice of $S$ and
  the fact that $\vec{C}$ is an $\square^{\ind}(\kappa, \omega)$ sequence, it is
  straightforward to verify that $\vec{D}$ is an $\inds(\kappa, \omega)$-sequence.
  By the proof of Theorem~\ref{thm67} (cf.\ also Theorem~\ref{revised_lh_lucke})
  and the fact that $\Gamma \cap S = \emptyset$, it follows that there is a
  closed witness $c$ to $\US(\kappa, 2, \omega, 2)$ such that $S \subseteq \partial(c)$.
\end{proof}

\section*{Acknowledgments}
The second author was partially supported by the Israel Science Foundation (grant agreement 2066/18)
and by the European Research Council (grant agreement ERC-2018-StG 802756).

The results of this paper were presented by the first author at the \emph{Kobe Set Theory Workshop 2021}, March 2021.
Preliminary results of this paper were presented by the second author at the \emph{15th International Luminy Workshop in Set Theory}, September 2019.
We thank the organizers for the invitations.

\end{document}